\newcommand{\R}{\mathbb{R}}
\newcommand{\Q}{\mathbb{Q}}
\newcommand{\D}{\mathbb{D}}
\newcommand{\E}{\mathbb{E}}
\newcommand{\N}{\mathbb{N}}
\newcommand{\Px}{\mathbb{P}}
\newcommand{\xcn}{\hat{X}^N}
\newcommand{\ivec}[3]{#1_{\lbrace #2 ,#3 \rbrace}}
\newcommand{\ipartial}[2]{\partial_{\lbrace #1 ,#2 \rbrace}}
\newcommand{\interv}[2]{\lbrace #1,\ldots,#2\rbrace}
\newcommand{\iunderset}[2]{\underset{#1}{\underbrace{#2}}}
\newcommand{\set}[1]{\{#1\}}
\newcommand{\indi}[1]{\mathbbm{1}_{#1}}
\newcommand{\blanc}[1]{\vspace{#1\baselineskip}}
\newtheorem{nt_theorem}{Theorem}%[section]
\newenvironment{theorem}{\blanc{0.5}\begin{nt_theorem}---}{\end{nt_theorem}\blanc{0.5}}
\newtheorem{nt_proposition}[nt_theorem]{Proposition}
\newenvironment{proposition}{\blanc{0.5}\begin{nt_proposition}---}{\end{nt_proposition}\blanc{0.5}}
\newtheorem{nt_corollaire}[nt_theorem]{Corollary}
\newenvironment{Corollary}{\blanc{0.5}\begin{nt_corollaire}---}{\end{nt_corollaire}\blanc{0.5}}
\newtheorem{nt_definition}[nt_theorem]{Definition}
\newtheorem{nt_lemma}[nt_theorem]{Lemma}
\newenvironment{lemma}{\blanc{0.5}\begin{nt_lemma}---}{\end{nt_lemma}\blanc{0.5}}
\newtheorem{nt_remark}[nt_theorem]{Remark}
\newenvironment{remark}{\blanc{0.2}\begin{nt_remark}---}{\end{nt_remark}\blanc{0.2}}
\newtheorem{nt_exemple}[nt_theorem]{Example}
\newenvironment{proof}{{\textit{Proof : }}}{\hfill$\Box$ \\}
\newcounter{hypo}
\newcommand*{\symm}{{\mathcal{S}_d(\mathbb R)}}
\newcommand*{\genm}{{\mathcal{M}_d(\mathbb R)}}
\newcommand*{\dpos}{{\mathcal{S}_d^{+,*}(\mathbb R)}}
\newcommand*{\posm}{{\mathcal{S}_d^+(\mathbb R)}}
\newcommand{\Subm}[2]{{#1}^{[#2]}}
\newcommand{\opr}{\mathcal{L}}
\newcommand{\dposs}[1]{\mathcal{S}_{#1}^{+,*}(\mathbb R)}
\newcommand{\corr}{{\mathfrak{C}_{d}(\mathbb R)}}
\newcommand{\corri}{{\mathfrak{C}_{d}^*(\mathbb R)}}
\newcommand{\equi}{\Leftrightarrow}
\newcommand{\corrps}[5]{{{MRC}_{#1}(#2,#3,#4,#5)}}
\newcommand{\corrp}{MRC_{d}(x,\kappa,c,a)}
\newcommand{\corrpst}[5]{{MRC}_{#1}(#2,#3,#4,#5;t)}
\newcommand{\corrpstb}[6]{{MRC}_{#1}(#2,#3,#4,#5;#6)}
\newcommand{\corrpt}{{MRC}_{d}(x,\kappa,c,a;t)}
\newcommand{\kp}{\kappa}
\newcommand*{\dohypo}{\textbf{(${\mathcal H}$\thehypo)}}
\newcommand*{\Tr}{\textup{Tr}} %trace
\newcommand*{\Rg}{\textup{Rk}} %rang
\newcommand*{\adj}{\textup{adj}} %comatrice
\def\hypref#1{\hyperref[hyp:#1]{(${\mathcal H}$\ref*{hyp:#1})}}
\def\hypreff#1#2{\hyperref[hyp:#2]{(${\mathcal H}$\ref*{hyp:#1}-{\it
\ref*{hyp:#2})}}}
\begin{document}
% 
% \vspace{5mm}
%  \title{About correlation matrices}
% \maketitle
% 
\vspace{5mm}
\title{A Mean-Reverting SDE on Correlation Matrices }
\author{Abdelkoddousse Ahdida and Aur\'elien Alfonsi\footnote{Universit\'e Paris-Est, CERMICS,
  Project team MathFi ENPC-INRIA-UMLV,
   Ecole des Ponts, 6-8 avenue Blaise Pascal,
  77455 Marne La Vall\'ee, France. {\tt \{ahdidaa,alfonsi\}@cermics.enpc.fr
}}} 
\maketitle
\begin{abstract} 
We introduce a mean-reverting SDE whose solution is naturally
defined on the space of correlation matrices. This SDE can be seen as an
extension of the well-known Wright-Fisher diffusion. We provide conditions
that ensure weak and strong uniqueness of the SDE, and describe its ergodic
limit. We also shed light on a useful connection with Wishart processes that makes
understand how we get the full SDE. Then, we focus on the simulation of this
diffusion and present discretization schemes that achieve a second-order weak
convergence. Last, we explain how these correlation processes could be used to
model the dependence between financial assets.
\end{abstract}

{ {\it Key words: }   Correlation, Wright-Fisher diffusions, Multi-allele Wright-Fisher
  model, Jacobi processes, Wishart processes, Discretization schemes,
  Multi-asset model.
}

{ {\it AMS Class 2010:}  65C30,  60H35, 91B70.
}\\

{\bf Acknowledgements:} We would like to acknowledge the support of the
Credinext project from Finance Innovation, the ``Chaire Risques Financiers''
of Fondation du Risque and the French National Research Agency (ANR) program BigMC.

%+++++++++++++++++++++++++++++++++++++++++++++++++++++
\section*{Introduction}\label{Chapter1}
%+++++++++++++++++++++++++++++++++++++++++++++++++++++

The scope of this paper is to introduce an SDE that is well defined on the set
of correlation matrices. Our main motivation comes from an application to finance, where the
correlation is commonly used to describe the dependence between assets. More
precisely, a diffusion on correlation matrices can be used to model the
instantaneous correlation between the log-prices of different stocks. 
Thus, it is also very important for practical purpose to be able to sample
paths of this SDE in order to compute expectations (for prices or greeks). This is why
an entire part of this paper is devoted to get  an efficient simulation
scheme. More generally, processes on correlation
matrices can naturally be used to model the dynamics of the dependence between
some quantities and can be applied to a much wider range of applications. 
In this paper, we mainly focus on the
definition, the mathematical properties and the sampling of this SDE. However,
we discuss in Section~\ref{sec_fin} a possible implementation of these processes in
finance to model a basket of risky assets.

There are works on particular Stochastic Differential Equations that are
defined on positive semidefinite matrices such as Wishart processes
(Bru~\cite{Bru}) or their Affine extensions (Cuchiero et
al.~\cite{Teichmann}). On the contrary,  there is to the best of our knowledge
very few literature dedicated to some 
stochastic differential equations that are valued on correlation matrices. Of
course, general results are known for stochastic differential equations on
manifolds. However, no particular SDE defined on correlation matrices has
been studied in detail. In dimension~$d=2$, correlation matrices
are naturally described by a single real~$\rho \in [-1,1]$. The probably most
famous SDE on~$[-1,1]$ is the following Wright-Fisher diffusion:
\begin{equation}\label{WF_1D}dX_t = \kp (\bar{\rho}-X_t)dt + \sigma \sqrt{1-X_t^2}dB_t,\end{equation}
where $\kp \ge 0,\ \bar{\rho} \in [-1,1],\ \sigma \ge 0$, and $(B_t)_{t \geq
  0}$ is a real Brownian motion. Here, we make a slight abuse of language.
Strictly speaking, Wright-Fisher diffusions are defined on~$[0,1]$ and this is
in fact the process $(\frac{1+X_t}{2},t\ge 0)$ that is a Wright-Fisher
one. They have originally been used to model gene frequencies~(see Karlin and Taylor~\cite{KT}). The
marginal law of $X_t$ is known explicitly with its moments, and its density
can be written as an expansion with respect to the Jacobi orthogonal
polynomial basis (see Mazet~\cite{Mazet}). This is why the process $(X_t,t\ge 0)$ is
sometimes also called Jacobi process in the literature. In higher dimension
($d\ge 3$), no similar SDE has been yet considered. To get processes on
correlation matrices, it is instead used parametrization 
of subsets of correlation matrices. For example, one can consider~$X_t$ defined by $(X_t)_{i,j}=\rho_t$ for $1\le i\not = j
\le d$,
where $\rho_t$ is a Wright-Fisher diffusion on $[-1/(d-1),1]$. More
sophisticated examples can be found in~\cite{Kaya}. The main purpose of this
paper is to propose a natural extension of the Wright-Fisher
process~\eqref{WF_1D} that is defined on the whole set of correlation
matrices.

Let us now introduce the process. We first advise the reader to have a look at
our notations for matrices located at the end if this introduction, even though they
are rather standard. We consider $(W_t,t \ge 0)$, a $d$-by-$d$ square matrix
process whose elements are independent real standard Brownian motions, and
focus on the following SDE on the correlation matrices~$\corr$:
\begin{equation}\label{SDE_CORR}
X_t = x + \int_0^t \left( \kp (c-X_s) + (c-X_s) \kp \right) ds+ \sum_{n=1}^da_n \int_0^t \left( \sqrt{X_s-X_s e_d^{n} X_s} dW_se_d^{n}
  +e_d^{n} dW_s^T \sqrt{X_s-X_s e_d^{n} X_s} \right),
\end{equation}
where $x, c \in \corr$ and  $\kp=diag(\kp_1,\dots,\kp_d)$
and $a=diag(a_1,\dots,a_d)$ are nonnegative diagonal matrices such that
\begin{equation} \label{cond_weak_existence}
  \kp c+c \kp -(d-2) a^2 \in \posm \text{ or } d=2.
\end{equation}
Under these assumptions, we will show in
Section~\ref{sec_exist} that this SDE has a unique weak solution which is well-defined
on correlation matrices, i.e. $\forall t \ge 0, X_t \in \corr$. We
will also show that strong uniqueness holds if we assume moreover that $x\in
\corri$ and 
\begin{equation} \label{cond_strong_existence}
 \kp c+c \kp -d a^2 \in \posm.
\end{equation}
Looking at the diagonal coefficients, conditions~\eqref{cond_weak_existence}
and~\eqref{cond_strong_existence} imply respectively $\kp_i\ge (d-2)a_i^2/2$
and $\kp_i\ge da_i^2/2$. This heuristically means that the speed of the
mean-reversion has to be high enough with respect to the noise in order to
stay in~$\corr$. Throughout the paper, we will denote $\corrp$ the law of the process $(X_t)_{t
  \ge 0}$ and $ \corrpt$ the law of $X_t$. Here, $MRC$ stands for Mean-Reverting Correlation process.
 When using these notations, we implicitly assume  that~\eqref{cond_weak_existence} holds.

In dimension $d=2$, the only non trivial component is~$(X_t)_{1,2}$. We can
show easily that there is a real Brownian motion~$(B_t,t\ge 0)$ such that
$$d(X_t)_{1,2}=(\kp_1+\kp_2)(c_{1,2}-(X_t)_{1,2})dt+\sqrt{a_1^2+
  a_2^2}\sqrt{1-(X_t)_{1,2}^2}dB_t. $$
Thus, the process~\eqref{SDE_CORR} is simply a Wright-Fisher diffusion.
Our parametrization is however redundant in dimension~$2$, and we can assume without
loss of generality that $\kp_1=\kp_2$ and
$a_1=a_2$. Then, the condition $\kp c+c \kp \in \posm$  is always
satisfied, while assumption~\eqref{cond_strong_existence} is the condition that
ensures~$\forall t \ge 0, (X_t)_{1,2} \in (-1,1)$. In larger dimensions $d\ge
3$, we can also show that each non-diagonal element of~\eqref{SDE_CORR} follows a Wright-Fisher
diffusion~\eqref{WF_1D}.

The paper is structured as follows. In the first section, we present first
properties of Mean-Reverting Correlation processes. We
calculate the infinitesimal generator and give explicitly their
moments. In particular, this enables us to describe the ergodic limit. We also
present a connection with Wishart processes that clarifies how we get the
SDE~\eqref{SDE_CORR}. It is also useful later in the paper to construct
discretization schemes. Last, we show a link between some MRC processes and
the multi-allele Wright-Fisher model. Then, Section~\ref{sec_exist} is devoted to
the study of the weak existence and strong uniqueness of the
SDE~\eqref{SDE_CORR}. We discuss the extension of these results to time and space
dependent coefficients~$\kp, \ c, \ a$. Also, we exhibit a change of
probability that preserves the family of MRC processes. The third
section is devoted to obtain discretization schemes for~\eqref{SDE_CORR}. This
is a crucial issue if one wants to use MRC processes effectively. To do so, we
use a remarkable splitting of the infinitesimal generator as well as standard
composition technique. Thus, we construct discretization schemes with a weak error of
order~$2$. This can be done either by reusing the second order schemes for
Wishart processes obtained in~\cite{AA} or by an ad-hoc splitting (see
Appendix~\ref{schema_original}). All these schemes are tested numerically and
compared with a (corrected) Euler-Maruyama scheme. In the last section, we
explain how Mean-Reverting Correlation processes and possible extensions
could be used for the modeling of $d$~risky assets. First, we recall the main
advantages of modeling the instantaneous correlation instead of the
instantaneous covariance of the assets. Then, we discuss our model and its
relevance on index option market data.

\subsection*{Notations for real matrices :}\label{Notations_matrices}
\begin{itemize}
\item  For $d\in \N^*$, $\mathcal{M}_{d}(\mathbb R)$ denotes the real $d$
  square matrices;  $\mathcal{S}_{d}(\mathbb R)$,
  $\mathcal{S}_d^+(\mathbb R),\mathcal{S}_d^{+,*}(\mathbb R) $, and
  $\mathcal{G}_d(\mathbb R)$ denote respectively the set of 
  symmetric, symmetric positive semidefinite, symmetric positive definite and
  non singular matrices.

\item The set of correlation matrices is denoted by~$\corr$: 
\begin{equation*}
\corr = \left \lbrace x \in \posm, \forall 1\leq i\leq d,\,\, x_{i,i} = 1 \right \rbrace
\end{equation*}
We also define $\corri=\corr \cap \mathcal{G}_d(\mathbb R)$, the set of the invertible correlation matrices.
\item For $x \in\mathcal{M}_{d}(\mathbb R)$,
  $x^T$, $\adj(x)$, $\det(x)$,  $\Tr(x)$ and $\Rg(x)$ are respectively the
  transpose, the adjugate, the determinant, the trace and the rank of $x$.
\item For $x\in\mathcal{S}_d^+(\mathbb R)$, $\sqrt{x}$ denotes the unique
  symmetric positive semidefinite matrix such that $(\sqrt{x})^2=x$
\item The identity matrix is denoted by $I_d$. We set for   $1\le i,j\le d$,
  $e^{i,j}_d=(\indi{k=i,l=j})_{1\le k,l \le d}$ and $e^i_d=e^{i,i}_d$. Last,
  we define $e^{\{i,j\}}_d=e^{i,j}_d+\indi{i \not = j}e^{j,i}_d$.
\item For $x \in \symm$, we denote by $x_{\set{i,j}}$ the value of $x_{i,j}$,
  so that $x=\sum_{1 \le i\le j\le d} x_{\set{i,j}}e^{\{i,j\}}_d $. We use both notations in the paper:
notation  $(x_{i,j})_{1 \le i,j \le d}$ is of course more convenient for matrix
calculations while $(x_{\set{i,j}})_{1 \le i \le j \le d}$ is preferred to
emphasize that we work on symmetric matrices and that we have
$x_{i,j}=x_{j,i}$.
\item For $\lambda_1,\dots,\lambda_d \in \R$,
  $diag(\lambda_1,\dots,\lambda_d)\in\symm $ denotes the diagonal matrix such
  that $diag(\lambda_1,\dots,\lambda_d)_{i,i}=\lambda_i$.
\item For $x \in \posm$ such that $x_{i,i}>0$ for all $1\le i \le d$, we
  define ${\bf p}(x)\in \corr$ by
  \begin{equation}\label{proj_corr}
    ({\bf p}(x))_{i,j}=\frac{x_{i,j}}{\sqrt{x_{i,i}x_{j,j}}}, \ 1\le i,j \le d.
  \end{equation}
\item For $x\in \symm$ and $ 1 \le i \le d$, we denote by $\Subm{x}{i}\in
  \mathcal{S}_{d-1}(\R)$ the matrix defined by
  $\Subm{x}{i}_{k,l}=x_{k+\indi{k\ge i} ,l+\indi{l\ge i} }$ and $x^i \in
  \R^{d-1}$ the vector defined by $x^i_k=x_{i,k}$ for $1 \le k <i$ and
  $x^i_k=x_{i,k+1}$ for $ i \le k \le d-1$. For $x\in \corr$, we have $\Subm{(x-xe^i_dx)}{i}=\Subm{x}{i}-x^i(x^i)^T$.
\end{itemize}

%\pagebreak

%+++++++++++++++++++++++++++++++++++++++++++++++++++++
\section{Some properties of MRC processes} \label{sec_properties}
%+++++++++++++++++++++++++++++++++++++++++++++++++++++

\subsection{The infinitesimal generator }

We first calculate the quadratic covariation of~$\corrp$. By
Lemma~\ref{calcul_crochet}, we get:
\begin{eqnarray}\label{crochet_MRC}
\langle d(X_t)_{i,j}, d(X_t)_{k,l} \rangle &=&  \Big[a_i^2 (\indi{i=k} 
  (X_t-X_te_d^{i}X_t)_{j,l} + \indi{i=l} (X_t-X_te_d^{i}X_t)_{j,k} )
\nonumber \\
& &    + a_j^2 ( \indi{j=k} 
  (X_t-X_te_d^{j}X_t)_{i,l} +  \indi{j=l} 
  (X_t-X_te_d^{j}X_t)_{i,k} ) \Big]dt \nonumber\\
&=&\Big[ a_i^2 (\indi{i=k}  ((X_t)_{j,l}-(X_t)_{i,j}(X_t)_{i,l}) +
\indi{i=l}  ((X_t)_{j,k}-(X_t)_{i,j}(X_t)_{i,k})) \\
& & +a_j^2 (\indi{j=k}
((X_t)_{i,l}-(X_t)_{j,i}(X_t)_{j,l}) +\indi{j=l}
((X_t)_{i,k}-(X_t)_{j,i}(X_t)_{j,k}) )  \Big]dt. \nonumber
\end{eqnarray}
We remark in particular that $d \langle (X_t)_{i,j}, d(X_t)_{k,l} \rangle=0$ when
$i,j,k,l$ are distinct.

We are now in position to calculate the
infinitesimal generator of~$\corrp$. The infinitesimal generator
on~$\genm$ is
defined by:
$$x\in \corr,\  L^{\mathcal{M}}f(x)=\lim_{t\rightarrow 0^+}
\frac{\E[f(X^x_t)]-f(x)}{t} \text{ for } f \in \mathcal{C}^2(\genm,\R) \text{ with
  bounded derivatives}. $$
By straightforward calculations, we get from~\eqref{crochet_MRC} that:
$$L^{\mathcal{M}} = \sum_{\substack{1 \le i,j \le d \\ j \not = i}}
(\kp_i+ \kp_j)(c_{i,j}-x_{i,j}) \partial_{i,j} + \frac{1}{2} \sum_{\substack{1 \le
  i,j,k \le d \\ j \not = i, k \not = i}} a_i^2
(x_{j,k}-x_{i,j}x_{i,k})[\partial_{i,j}\partial_{i,k}+\partial_{i,j}\partial_{k,i}
+\partial_{j,i}\partial_{i,k}+\partial_{j,i}\partial_{k,i} ].$$
Here, $\partial_{i,j}$ denotes the derivative with respect to the element at
the $i^{th}$ line and $j^{th}$ column. We know however that the process that we consider is valued in~$\corr
\subset \symm$. Though it is equivalent, it is often more convenient to work with
the infinitesimal generator on $\symm$, which is defined by:
$$x\in \corr, \
L f(x)=\lim_{t\rightarrow 0^+} \frac{\E[f(X^x_t)]-f(x)}{t} \text{ for } f \in \mathcal{C}^2(\symm,\R) \text{ with
  bounded derivatives}, $$
since it eliminates redundant coordinates. For $x \in \symm$, we denote by $x_{\set{i,j}}=x_{i,j}=x_{j,i}$ the value of
the  coordinates $(i,j)$ and $(j,i)$, so that $x=\sum_{1\le i \le j \le d}
x_{\set{i,j}} (e^{i,j}_d+ \indi{i \not = j}e^{j,i}_d )$. For $f  \in
\mathcal{C}^2(\symm,\R)$, $\partial_{\set{i,j}}f$ denotes its
derivative with respect to $x_{\set{i,j}}$. For $x \in
\genm$, we set $\pi(x)=(x+x^T)/2$. It is such that $\pi(x)=x$ for $x \in
\symm$, and we have $ L f(x)= L^{\mathcal{M}}f\circ \pi (x). $
By the chain rule, we have for $x\in\symm$, 
  $\partial_{i,j}f \circ \pi (x)=  (\indi{i=j}+\frac{1}{2}\indi{i\neq j})\partial_{\set{i,j}}f (x)$ and we get:

 \begin{equation}   \label{EQUATION_OPERATOR_C}
 L=  \sum_{i=1}^d \left( \sum_{\substack{1 \le j \le d \\ j \not = i}}
\kp_i (c_{\set{i,j}}-x_{\set{i,j}}) \partial_{\{i,j\}} + \frac{1}{2} \sum_{\substack{1 \le
  j,k \le d \\ j \not = i, k \not = i}} a_i^2
(x_{\{j,k\}}-x_{\{i,j\}}x_{\{i,k\}}) \partial_{\{i,j\}}\partial_{\{i,k\}} \right).
 \end{equation}
Then, we will say that a process $(X_t,t\ge 0)$ valued in~$\corr$ solves the
martingale problem of~$\corrp$ if for any $n\in \N^*$, $0\le t_1\le\dots \le t_n\le
s \le t$, $g_1,\dots, g_n \in
\mathcal{C}(\symm,\R)$, $f\in\mathcal{C}^2(\symm,\R)$ we have:
\begin{equation}\label{PbMg}
\E\left[ \prod_{i=1}^n g_i(X_{t_i})\left(f(X_t)-f(X_s)-\int_s^t Lf(X_u)du
  \right) \right]=0,\text{ and } X_0=x
\end{equation}
 Now, we state simple but interesting properties of mean-reverting correlation
 processes. Each non-diagonal coefficient follows a  Wright-Fisher
 type diffusion and any principal submatrix is also a  mean-reverting
 correlation process. This result is a direct consequence of the calculus above and the weak
 uniqueness of the SDE~\eqref{SDE_CORR} obtained in Corollary~\ref{weak_uniq}.

\begin{proposition}\label{prop_el_jac}
Let $(X_t)_{t \geq 0} \sim \corrp$. For $1\le i \not = j \le d$, there is
Brownian motion $(\beta^{i,j}_t,t\ge 0)$ such that
\begin{equation}\label{SDE_elements}
  d(X_t)_{i,j}=(\kp_i+\kp_j)(c_{i,j}-(X_t)_{i,j})dt
+\sqrt{a_i^2+a_j^2}\sqrt{1-(X_t)_{i,j}^2}d\beta^{i,j}_t.
\end{equation}
Let $I=\{k_1<\dots<k_{d'}\} \subset \{1,\dots, d \}$ such that
$1<d'<d$. For $x \in \genm$, we define $x^I \in \mathcal{M}_{d'}(\R)$
by $(x^I)_{i,j}=x_{k_i,k_j}$ for $1\le i,j \le d'$. We have:
$$(X^I_t)_{t \geq 0} \overset{law}{=}MRC_{d'}(x^I,\kp^I,c^I,a^I).$$
\end{proposition}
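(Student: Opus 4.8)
The two assertions are handled separately; both rest on the quadratic covariation formula~\eqref{crochet_MRC}, the generator~\eqref{EQUATION_OPERATOR_C}, and the weak uniqueness of Corollary~\ref{weak_uniq}.

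For~\eqref{SDE_elements}, the plan is to isolate the single coordinate $(X_t)_{i,j}$ in the SDE~\eqref{SDE_CORR}. The $(i,j)$ entry of the drift $\kp(c-X_s)+(c-X_s)\kp$ equals $(\kp_i+\kp_j)(c_{i,j}-(X_s)_{i,j})$, which yields the drift term. For the diffusive part I would set $(k,l)=(i,j)$ in~\eqref{crochet_MRC}: since $i\neq j$ the indicators $\indi{i=j}$ and $\indi{j=i}$ vanish, and because $X_t\in\corr$ we have $(X_t)_{i,i}=(X_t)_{j,j}=1$, so the bracket collapses to $(a_i^2+a_j^2)(1-(X_t)_{i,j}^2)$. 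Writing $(X_t)_{i,j}$ as its drift plus a continuous local martingale $M^{i,j}$ with $d\langle M^{i,j}\rangle_t=(a_i^2+a_j^2)(1-(X_t)_{i,j}^2)\,dt$ (a true martingale, since the coefficient is bounded on $\corr$), the Brownian motion $\beta^{i,j}$ is produced by the standard representation of a continuous local martingale as an integral against a Brownian motion. The one delicate point is the degeneracy of the coefficient where $(X_t)_{i,j}=\pm1$ or $a_i=a_j=0$: there $\beta^{i,j}$ is defined by integrating $dM^{i,j}$ against $(a_i^2+a_j^2)^{-1/2}(1-(X_t)_{i,j}^2)^{-1/2}$ away from the degeneracy set and is completed with an auxiliary independent Brownian motion, in the classical manner.

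For the submatrix statement, the strategy is to verify that $(X^I_t)_{t\ge0}$ solves the martingale problem of $\corrps{d'}{x^I}{\kp^I}{c^I}{a^I}$ and then to conclude by weak uniqueness. First, $X^I_t$ lies in $\mathfrak{C}_{d'}(\R)$ for every $t$, a principal submatrix of a correlation matrix again having unit diagonal and being positive semidefinite. Let $L^I$ denote the operator~\eqref{EQUATION_OPERATOR_C} built from the restricted parameters $\kp^I,c^I,a^I$ in dimension $d'$. The key computation is the following: for $f\in\mathcal{C}^2(\symmm{d'},\R)$ set $\tilde f(x)=f(x^I)\in\mathcal{C}^2(\symm,\R)$. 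Then $\partial_{\{i,j\}}\tilde f$ is nonzero only when $i,j\in I$, and $\partial_{\{i,j\}}\partial_{\{i,k\}}\tilde f$ only when $i,j,k\in I$; relabelling $i=k_{i'},j=k_{j'},k=k_{k'}$ and using $\kp_{k_{i'}}=(\kp^I)_{i'}$, $a_{k_{i'}}=(a^I)_{i'}$, $c_{\{k_{i'},k_{j'}\}}=(c^I)_{\{i',j'\}}$ and $\partial_{\{k_{i'},k_{j'}\}}\tilde f(x)=\partial_{\{i',j'\}}f(x^I)$, the surviving terms reassemble exactly into $(L^If)(x^I)$, that is $L\tilde f(x)=(L^If)(x^I)$.

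With this identity the martingale problem transfers at once. Given $h_1,\dots,h_n\in\mathcal{C}(\symmm{d'},\R)$ and $f\in\mathcal{C}^2(\symmm{d'},\R)$, I would apply~\eqref{PbMg} to the test functions $g_i(x)=h_i(x^I)\in\mathcal{C}(\symm,\R)$ and to $\tilde f$. Since $g_i(X_{t_i})=h_i(X^I_{t_i})$, $\tilde f(X_t)=f(X^I_t)$ and $L\tilde f(X_u)=(L^If)(X^I_u)$, the identity~\eqref{PbMg} becomes precisely the martingale-problem relation for $X^I$ with generator $L^I$; hence $X^I\sim\corrps{d'}{x^I}{\kp^I}{c^I}{a^I}$ by Corollary~\ref{weak_uniq}. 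I expect the only real obstacle to sit in the generator computation $L\tilde f(x)=(L^If)(x^I)$: one must check that no cross-derivative term $\partial_{\{i,j\}}\partial_{\{i,k\}}$ drags an index out of $I$, which is exactly the fact that the coefficients $a_i^2(x_{\{j,k\}}-x_{\{i,j\}}x_{\{i,k\}})$ are closed under restriction to the rows and columns in $I$; the remainder is bookkeeping.
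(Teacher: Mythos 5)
Your proposal is correct and follows exactly the route the paper intends: the paper gives no written proof, stating only that the proposition is ``a direct consequence of the calculus above and the weak uniqueness obtained in Corollary~\ref{weak_uniq}'', which is precisely your argument (read off the drift and the bracket from~\eqref{crochet_MRC}, represent the local martingale part via a Brownian motion, and transfer the martingale problem to the submatrix through the identity $L\tilde f(x)=(L^I f)(x^I)$ before invoking weak uniqueness in dimension $d'$). Your write-up merely supplies the details the authors omit, including the correct treatment of the degenerate set for the martingale representation.
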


\subsection{Calculation of moments and the ergodic law}
We first introduce some notations that are useful to characterise the general
form for moments. For every $x \in \symm, m  \in \mathcal{S}_d(\N), $ we set:
$$x^m = \prod_{1 \leq i \le j \leq d}x_{\{i,j\}}^{m_{\{i,j\}}} \text{ and }
|m|= \sum_{1 \leq i \le j \leq d}m_{\{i,j\}}.$$
A function $f: \symm \rightarrow \R$ is a polynomial function of degree
smaller than $n\in
\N$ if there are real numbers $a_m$ such that $f(x)=\sum_{|m|\le n} a_m x^m$,
and we define the norm of~$f$ by $\|f\|_{\mathbb{P}}=\sum_{|m|\le n} |a_m|$.

We want to calculate the moments $\E[X_t^m]$ of $(X_t,t\ge 0)\sim
\corrp$. Since the diagonal elements are equal to~$1$, we will take
$m_{\{i,i\}}=0$. Let us also remark that  for $i\not = j$ such that $\kp_i=\kp_j=0$, we have
from~\eqref{cond_weak_existence} that $a_i=a_j=0$. Therefore we get
$(X_t)_{i,j}=x_{i,j}$ by~\eqref{SDE_elements}.

\begin{proposition}\label{prop_equ_ODE}
  Let $m\in \mathcal{S}_d(\N)$ such that $m_{i,i}=0$ for $1 \le i\le d$. Let $(X_t)_{t \geq 0}  \sim \corrp$. For
  $m\in\mathcal{S}_d(\N)$, $L x^m =-K_m x^m +f_m(x)$, with
  $$ K_m =\sum_{i=1}^d \sum_{j=1}^d \kp_i m_{\{i,j\}} +\frac{1}{2}\sum_{i=1}^d
  a_i^2 \sum_{j,k=1}^d m_{\{i,j\}}m_{\{i,k\}} $$
  and $$f_m(x)=\sum_{i=1}^d \sum_{j=1}^d \kp_i c_{\{i,j\}}
  m_{\{i,j\}}x^{m-e^{\{i,j\}}_d}+\frac{1}{2}\sum_{i=1}^d
  a_i^2 \sum_{j,k=1}^d m_{\{i,j\}}m_{\{i,k\}}x^{m-e^{\{i,j\}}_d-e^{\{i,k\}}_d+e^{\{j,k\}}_d} $$ is a polynomial function of degree smaller than $|m|-1$. We have
\begin{equation}\label{induc_moments}
\E\left[ X_t^m\right] = x^m \exp(-tK_m)+ \exp(-tK_m)\int_0^t \exp(sK_m) \E[f_{m}(X_s)]ds.
\end{equation}
\end{proposition}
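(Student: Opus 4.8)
The plan is to prove the two assertions separately: first the pointwise identity $Lx^m=-K_m x^m+f_m(x)$ for the action of the generator \eqref{EQUATION_OPERATOR_C} on a monomial, and then the integral representation \eqref{induc_moments}, which will follow from the martingale problem \eqref{PbMg} together with the resolution of a scalar linear ODE.

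For the first part I would simply feed $f(x)=x^m$ into \eqref{EQUATION_OPERATOR_C}. The first derivatives are $\partial_{\{i,j\}}x^m=m_{\{i,j\}}x^{m-e^{\{i,j\}}_d}$ and the second derivatives are $\partial_{\{i,j\}}\partial_{\{i,k\}}x^m=m_{\{i,j\}}(m_{\{i,k\}}-\indi{j=k})\,x^{m-e^{\{i,j\}}_d-e^{\{i,k\}}_d}$. Substituting these, and then expanding the drift coefficient $c_{\{i,j\}}-x_{\{i,j\}}$ and the diffusion coefficient $x_{\{j,k\}}-x_{\{i,j\}}x_{\{i,k\}}$, splits each term into a piece proportional to $x^m$ and a piece of strictly lower degree. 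The $x^m$-proportional pieces arise from $x_{\{i,j\}}x^{m-e^{\{i,j\}}_d}=x^m$ in the drift and from $x_{\{i,j\}}x_{\{i,k\}}x^{m-e^{\{i,j\}}_d-e^{\{i,k\}}_d}=x^m$ in the diffusion; collecting them produces the coefficient $-K_m$. The remaining pieces, rewritten via $x_{\{j,k\}}x^{m-e^{\{i,j\}}_d-e^{\{i,k\}}_d}=x^{m-e^{\{i,j\}}_d-e^{\{i,k\}}_d+e^{\{j,k\}}_d}$, assemble into $f_m$, whose monomials have total degree at most $|m|-1$.

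The delicate point — and the step I expect to cost the most care — is the handling of the diagonal terms $j=k$ in the diffusion sum. There the coefficient $x_{\{j,k\}}$ becomes $x_{\{j,j\}}$, which I replace by $1$ because $X_t\in\corr$ forces $x_{j,j}=1$; this substitution, combined with the falling factorial $m_{\{i,j\}}(m_{\{i,j\}}-1)$ produced by $\partial_{\{i,j\}}^2$, is exactly what keeps $f_m$ a genuine polynomial (no spurious negative exponents) and makes the $x^m$-coefficient collapse to the stated $-K_m$. I would verify this bookkeeping on a low-degree case first, e.g.\ $m=e^{\{i,j\}}_d$, where the diffusion part of $f_m$ must vanish and $Lx^m$ must reduce to $(\kp_i+\kp_j)(c_{i,j}-x_{i,j})$, consistent with \eqref{SDE_elements}, before committing to the general multi-index sums.

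For the second part, I would apply the martingale problem \eqref{PbMg} with $n=1$, $g_1\equiv 1$ and $s=0$ to $f(x)=x^m$, which is smooth with derivatives bounded on the compact set $\corr$, so no integrability issue arises. This yields $\E[X_t^m]=x^m+\int_0^t\E[Lx^m(X_u)]\,du$. Inserting $Lx^m=-K_m x^m+f_m$ and setting $\phi(t)=\E[X_t^m]$ gives the ODE $\phi'(t)=-K_m\phi(t)+\E[f_m(X_t)]$ with $\phi(0)=x^m$, whose solution by the integrating factor $e^{tK_m}$ is precisely \eqref{induc_moments}. Finally, since $f_m$ has degree at most $|m|-1$, the term $\E[f_m(X_s)]$ is a linear combination of moments of strictly lower order; hence \eqref{induc_moments} is genuinely a recursion in $|m|$ solvable by induction, with base case $|m|=0$ where $f_m=0$ and $\E[X_t^m]=1$.
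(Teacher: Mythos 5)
Your route is exactly the paper's: its proof of this proposition is the same two steps you describe, namely a direct evaluation of $Lx^m$ from~\eqref{EQUATION_OPERATOR_C} followed by It\^o's formula (equivalently, the martingale problem with $n=1$, $g_1\equiv 1$, $s=0$), which yields $\frac{d}{dt}\E[X_t^m]=-K_m\E[X_t^m]+\E[f_m(X_t)]$ and hence~\eqref{induc_moments} by an integrating factor. Your second half is therefore fine as is, and you have correctly isolated the only delicate point of the first half: the diagonal terms $j=k$, where $\partial_{\{i,j\}}^2x^m=m_{\{i,j\}}(m_{\{i,j\}}-1)x^{m-2e^{\{i,j\}}_d}$ and where $x_{\{j,j\}}=1$ must be invoked.

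The one claim you should not let stand is that this bookkeeping ``collapses to the stated $-K_m$''. Carried through, your computation gives the quadratic sums with coefficient $m_{\{i,j\}}(m_{\{i,k\}}-\indi{j=k})$, both in $K_m$ and in $f_m$, whereas the displayed formulas carry $m_{\{i,j\}}m_{\{i,k\}}$. The two differ by
$$\frac12\sum_{i=1}^d a_i^2\sum_{j=1}^d m_{\{i,j\}}\left(x^m-x^{m-2e^{\{i,j\}}_d+e^{\{j,j\}}_d}\right),$$
which does not vanish on $\corr$, so you cannot simultaneously keep the falling factorial and reproduce the printed constants. Your own proposed sanity checks adjudicate in favour of your computation: for $m=e^{\{i,j\}}_d$ the printed $K_m$ would equal $\kp_i+\kp_j+\frac12(a_i^2+a_j^2)$ rather than the $\kp_i+\kp_j$ forced by~\eqref{SDE_elements}; the paper's own explicit second-moment constant $K_{i,j,i,j}=2(\kp_i+\kp_j)+a_i^2+a_j^2$ (not $2(\kp_i+\kp_j)+2(a_i^2+a_j^2)$) matches the falling-factorial version; and with the printed coefficient the $j=k$ terms of $f_m$ would contain $x_{\{i,j\}}^{-1}$ whenever $m_{\{i,j\}}=1$, so $f_m$ would not even be a polynomial. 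In short, your method is the paper's method and your algebra is the correct one; state the conclusion with $m_{\{i,j\}}(m_{\{i,k\}}-\indi{j=k})$ in place of $m_{\{i,j\}}m_{\{i,k\}}$ rather than asserting agreement with the formula as displayed, since the displayed version is what your own low-degree verification would refute.
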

\begin{proof}
The calculation of~$L x^m $ is straightforward
from~\eqref{EQUATION_OPERATOR_C}. By using It\^o's formula, we get easily that
$\frac{d\E[X_t^m]}{dt}=-K_mE[X_t^m]+\E[f_{m}(X_t)]$, which gives~\eqref{induc_moments}.
\end{proof}

Equation~\eqref{induc_moments} allows us to calculate explicitly any moment by
induction on~$|m|$. Here are the formula for moments of order~$1$ and~$2$:
\begin{eqnarray*}
  \forall 1 \leq i \neq j \leq d,\,\, \E\left[(X_t)_{i,j} \right]&=&x_{i,j}e^{-t \left( \kappa_{i}+ \kappa_{j}\right)} + c_{i,j}(1-e^{-t(\kappa_i+\kappa_j)})  ,
\end{eqnarray*}
and for given $1 \leq i \neq j \leq d$ and $1 \leq k \neq l \leq d $
such that $\kp_i+\kp_j>0$ and $\kp_k+\kp_l>0$, 
\begin{eqnarray*}
 \E\left[(X_t)_{i,j}(X_t)_{k,l} \right] &=& x_{i,j}x_{k,l}e^{-t{K}_{i,j,k,l}} + (\kappa_i+\kappa_j)c_{i,j}\gamma_{k,l}(t)+ (\kappa_k + \kappa_l)c_{k,l}\gamma_{i,j}(t)\\
&&+ a_i^2\left( \indi{i=k} \gamma_{j,l}(t)+ \indi{i=l} \gamma_{j,k}(t)\right)+a_j^2\left( \indi{j=k} \gamma_{i,l}(t)+ \indi{j=l} \gamma_{i,k}(t)\right),
\end{eqnarray*}
where $K_{i,j,k,l} =  \kappa_i+ \kappa_j + \kappa_k+ \kappa_l + a_i^2\left(
\indi{i=k} + \indi{i=l} \right)+a_j^2\left( \indi{j=k}  + \indi{j=l}  \right)$
and
$$
\forall m,n \in \left \lbrace i,j,k,l \right \rbrace,\, \gamma_{m,n}(t) = c_{m,n}\frac{1-e^{-t{K}_{i,j,k,l}}}{{K}_{i,j,k,l}} + (x_{m,n}-c_{m,n})\frac{e^{-t(\kappa_m+\kappa_n)}-e^{t{K}_{i,j,k,l}}}{{K}_{i,j,k,l}-\kappa_m-\kappa_n}.
$$
Let $f$ be a polynomial function of degree smaller than $n\in \N$. From
Proposition~\ref{prop_equ_ODE}, $L$ is a linear mapping on the polynomial
functions of degree smaller than $n$, and there is a constant $C_n>0$ such
that $\|Lf\|_{\mathbb{P}}\le C_n \|f\|_{\mathbb{P}}$. On the other hand, we
have by It\^o's formula $\E[f(X_t)]=f(x)+\int_0^t\E[Lf(X_s)]ds$, and by
iterating $\E[f(X_t)]=\sum_{i=0}^k\frac{t^i}{i!} L^i f(x) +
\int_0^t\frac{(t-s)^k}{k!}\E[L^{k+1}f(X_s)]ds$. Since $\|L^i
f\|_{\mathbb{P}}\le C_n^i \|f\|_{\mathbb{P}}$, the series converges and we
have
\begin{equation}\label{ito_polynom}
  \E[f(X_t)]=\sum_{i=0}^\infty \frac{t^i}{i!} L^i f(x)
\end{equation}
for any polynomial function~$f$. We also remark that the same iterated It\^o's
formula gives
\begin{equation}\label{controle_erreur}
 \forall f\in
\mathcal{C}^\infty(\symm,\R), \forall k\in \N^*, \exists C>0, \forall x \in \corr, \  |\E[f(X_t)]-\sum_{i=0}^k\frac{t^i}{i!} L^i f(x)|\le C t^{k+1},
\end{equation}
since $L^{k+1}f$ is a bounded function on~$\corr$.

Let us discuss some interesting consequences of Proposition~\ref{prop_equ_ODE}.
Obviously, we can calculate explicitly in the same manner
$\E[X_{t_1}^{m_1}\dots X_{t_n}^{m_n}]$ for $0\le t_1 \le \dots \le t_n$ and
$m_1,\dots,m_n \in \mathcal{S}_d(\N)$. Therefore, the law of
$(X_{t_1},\dots,X_{t_n})$ is entirely determined and we get the weak
uniqueness for the SDE~\eqref{SDE_CORR}.
\begin{Corollary}\label{weak_uniq}
Every solution $(X_t,t\ge 0)$ to the martingale problem~\eqref{PbMg} have the
same law.
\end{Corollary}

Proposition~\ref{prop_equ_ODE} allows us to compute the limit
$\lim_{t\rightarrow +\infty} \E[X_t^m] $ that we note $\E[X_\infty^m]$ by a slight
abuse of notation. Let us observe that $K_m> 0$ if and only if there is $i,j$
such that $\kp_i+\kp_j>0$ and $m_{i,j}>0$.  We have
\begin{eqnarray} \label{ergo_mom}
 \E[X_\infty^m]&=&x^m \text{ if } m\in \mathcal{S}_d(\N) \text{ is such that } m_{\{i,j\}}>0 \iff \kp_i=\kp_j=0,\\
\E[X_\infty^m]&=&\E[f_m(X_\infty)]/K_m \text{ otherwise}. \nonumber
\end{eqnarray}
Thus, $X_t$ converges in law when $t \rightarrow + \infty$, and the moments $\E[X_\infty^m]$ are uniquely
determined by~\eqref{ergo_mom} with an induction on~$|m|$. In addition, if
$\kp_i+\kp_j>0$ for any $1\le i,j\le d$ (which means that at most only one
coefficient of $\kp$ is equal to~$0$), the law of $X_\infty$ does not depend
on the initial condition and is the unique invariant law. In this case the
ergodic moments of order~$1$ and~$2$ are given by:
\begin{eqnarray*}
 \E\left[(X_\infty)_{i,j} \right] &=& c_{i,j},\\
 \E\left[(X_\infty)_{i,j}(X_\infty)_{k,l} \right] &=&
 \frac{(\kappa_i+\kappa_j+\kappa_k+\kappa_l)c_{i,j}c_{k,l}+a_i^2(\indi{i=k}c_{j,l}+\indi{i=l}c_{j,k}
   )+a_j^2(  \indi{j=k}c_{i,l}+ \indi{j=l}c_{i,k} )}{K_{i,j,k,l}}.
\end{eqnarray*}

\subsection{The connection with Wishart processes}

Wishart processes are affine processes on positive semidefinite
matrices. They have been introduced by Bru~\cite{Bru} and solves the
following SDE:
\begin{equation}\label{EDS_Wishart}
Y_t^y =y + \int_{0}^t \left( (\alpha+1) a^T a  + bY_s^y+Y_s^yb^T \right )ds +
\int_{0}^t \left( \sqrt{Y_s^y}dW_sa + a^TdW_s^T\sqrt{Y_s^y}\right ) , 
\end{equation}
where $a,b\in \genm$ and $y \in \posm$.  Strong uniqueness holds when $\alpha
\ge d$ and $y\in\dpos$. Weak
existence and uniqueness holds when $\alpha \ge d-2$. This is in fact very similar to the
results that we obtain for mean-reverting correlation processes. The parameter
$\alpha+1$ is called the number of degrees of freedom, and we denote by
$WIS_d(y,\alpha+1,b,a)$ the law of $(Y^y_t,t\ge 0)$.

Once we have a positive semidefinite matrix~$y \in \posm$ such that
$y_{i,i}>0$ for $1 \le i\le d$, a trivial way to
construct a correlation matrix is to consider ${\bf
  p}(y)$, where ${\bf p}$ is defined by~\eqref{proj_corr}. Thus, it is
somehow natural then to look at the dynamics of ${\bf p}(Y_t^y)$, provided
that the diagonal elements of the Wishart process do not vanish. In general,
this does not lead to an autonomous SDE. However, the
particular case where the Wishart parameters are $a=e^1_d$ and $b=0$ is
interesting since it leads to the SDE satisfied by the
mean-reverting correlation processes, up to a change of time. Obviously, we
have a similar property for $a=e^i_d$ and $b=0$ by a permutation of the $i$th
and the first coordinates. 
\begin{proposition}\label{WIS_MRC}
  Let $\alpha \ge \max(1,d-2)$ and $y\in\posm$ such that $y_{i,i}>0$ for $1
  \le i\le  d$. Let $(Y^y_t)_{t\ge 0} \sim
  WIS_d(y,\alpha+1,0,e^1_d)$. Then, $(Y^y_t)_{i,i}=y_{i,i}$ for $2\le i \le d$ and
  $(Y^y_t)_{1,1}$ follows a squared Bessel process of dimension $\alpha+1$ and
  a.s. never vanishes. We set
  $$ X_t={\bf p}(Y^y_t), \ \phi(t)=\int_0^t \frac{1}{(Y^y_s)_{1,1}} ds.$$
  The function $\phi$ is  a.s. one-to-one on $\R_+$ and defines a time-change
  such that:
  $$ (X_{\phi^{-1}(t)},t\ge 0)\overset{law}{=}\corrps{d}{{\bf
      p}(y)}{\frac{\alpha}{2}e^1_d}{I_d}{e^1_d}.$$
  In particular, there is a weak solution to~$\corrps{d}{{\bf
      p}(y)}{\frac{\alpha}{2}e^1_d}{I_d}{e^1_d}$.
  Besides, the processes $(X_{\phi^{-1}(t)},t\ge 0)$ and $((Y^y_t)_{1,1},t\ge 0)$ are independent. 
\end{proposition}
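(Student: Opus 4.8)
The plan is to exploit the very special structure of the Wishart SDE~\eqref{EDS_Wishart} when $a=e^1_d$ and $b=0$. First I would read off the behaviour of the entries of $Y^y$. Since the drift $(\alpha+1)(e^1_d)^Te^1_d=(\alpha+1)e^1_d$ and the two diffusion terms $\sqrt{Y^y_s}dW_se^1_d$ and $e^1_d dW_s^T\sqrt{Y^y_s}$ only affect the first column and the first row, every entry $(Y^y_t)_{i,j}$ with $i,j\ge 2$ is constant, equal to $y_{i,j}$; in particular $(Y^y_t)_{i,i}=y_{i,i}$ for $i\ge 2$. Writing out the $(1,1)$ entry gives $d(Y^y_t)_{1,1}=(\alpha+1)dt+2\sum_m(\sqrt{Y^y_t})_{1,m}(dW_t)_{m,1}$, whose bracket is $4(Y^y_t)_{1,1}dt$, so $(Y^y_t)_{1,1}$ is a squared Bessel process of dimension $\alpha+1$. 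As $\alpha\ge 1$ this dimension is at least $2$, hence $(Y^y_t)_{1,1}$ never vanishes and, by a standard property of squared Bessel processes of dimension at least two, $\int_0^\infty ds/(Y^y_s)_{1,1}=+\infty$ almost surely. This makes $\phi$ a continuous strictly increasing bijection of $\R_+$, which proves the first assertions and shows that $\phi^{-1}$ is well defined on all of $\R_+$.

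The heart of the proof is an It\^o computation for $X_t={\bf p}(Y^y_t)$. Only the entries $(X_t)_{1,j}=(Y^y_t)_{1,j}/\sqrt{(Y^y_t)_{1,1}\,y_{j,j}}$, $j\ge 2$, are non constant (those with $i,j\ge 2$ are fixed and $(X_t)_{1,1}=1$). From~\eqref{EDS_Wishart} the relevant Wishart brackets are $d\langle(Y^y)_{1,j},(Y^y)_{1,k}\rangle=(Y^y)_{j,k}dt$, $d\langle(Y^y)_{1,j},(Y^y)_{1,1}\rangle=2(Y^y)_{1,j}dt$ and $d\langle(Y^y)_{1,1}\rangle=4(Y^y)_{1,1}dt$. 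Applying It\^o's formula to the map $(u,v)\mapsto vu^{-1/2}$ with $u=(Y^y)_{1,1}$ and $v=(Y^y)_{1,j}$, I would obtain after simplification that the drift of $(X_t)_{1,j}$ equals $-\tfrac{\alpha}{2}(X_t)_{1,j}\,(Y^y_t)_{1,1}^{-1}$ and that $d\langle(X_t)_{1,j},(X_t)_{1,k}\rangle=\big((X_t)_{j,k}-(X_t)_{1,j}(X_t)_{1,k}\big)(Y^y_t)_{1,1}^{-1}dt$, with $d\langle(X_t)_{1,j}\rangle=(1-(X_t)_{1,j}^2)(Y^y_t)_{1,1}^{-1}dt$. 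The crucial point is that both drift and bracket carry the common factor $(Y^y_t)_{1,1}^{-1}$, so the clock $d\phi=(Y^y_t)_{1,1}^{-1}dt$ removes it exactly. Comparing the result with~\eqref{crochet_MRC} and the generator~\eqref{EQUATION_OPERATOR_C} for the parameters $\kp=\tfrac{\alpha}{2}e^1_d$, $c=I_d$, $a=e^1_d$, the time-changed process $X_{\phi^{-1}(\cdot)}$ solves the martingale problem~\eqref{PbMg} of $\corrps{d}{{\bf p}(y)}{\frac{\alpha}{2}e^1_d}{I_d}{e^1_d}$ with initial value ${\bf p}(y)$. Weak uniqueness (Corollary~\ref{weak_uniq}) then identifies its law and yields in passing the claimed weak existence; note that for these parameters the condition~\eqref{cond_weak_existence} reads $\alpha\ge d-2$, precisely the standing hypothesis.

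It remains to prove independence of $(X_{\phi^{-1}(t)})_{t\ge 0}$ and $((Y^y_t)_{1,1})_{t\ge 0}$, which I expect to be the main obstacle. The computation above gives $d\langle(X_t)_{1,j},(Y^y_t)_{1,1}\rangle=0$ for every $j$, i.e. the martingale part of $X$ is orthogonal to that of $(Y^y)_{1,1}$; but orthogonality of continuous local martingales does not by itself yield independence. To upgrade it, I would argue by weak uniqueness of the Wishart process. Starting from a squared Bessel process $\check U$ of dimension $\alpha+1$ and an \emph{independent} process $\check X\sim\corrps{d}{{\bf p}(y)}{\frac{\alpha}{2}e^1_d}{I_d}{e^1_d}$, I would set $\check\phi(t)=\int_0^t\check U_s^{-1}ds$ and reconstruct a matrix $Y'$ by $(Y'_t)_{1,1}=\check U_t$, $(Y'_t)_{1,j}=(\check X_{\check\phi(t)})_{1,j}\sqrt{\check U_t\,y_{j,j}}$ and $(Y'_t)_{i,j}=y_{i,j}$ for $i,j\ge 2$. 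Reversing the It\^o computation of the previous paragraph shows that $Y'$ solves the martingale problem of $WIS_d(y,\alpha+1,0,e^1_d)$, so by weak uniqueness of Wishart processes $Y'\overset{law}{=}Y^y$; since by construction the pair obtained from $Y'$ by the same measurable operations as $(X_{\phi^{-1}(\cdot)},(Y^y)_{1,1})$ equals $(\check X,\check U)$, it is independent, and therefore so is $\big((X_{\phi^{-1}(t)})_{t\ge0},((Y^y_t)_{1,1})_{t\ge0}\big)$. The delicate points of this last step are checking that $Y'$ stays in $\posm$ and that it genuinely solves the Wishart martingale problem (equivalently, producing enough driving Brownian noise); alternatively one can deduce the independence from Knight's theorem applied to the mutually orthogonal martingales once they are time-changed to Brownian motions.
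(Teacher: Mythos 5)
Your proof is correct, and its first two steps (reading off the constant entries and the squared Bessel $(1,1)$-entry, the It\^o computation for ${\bf p}(Y^y)$, the cancellation of the common factor $(Y^y_t)_{1,1}^{-1}$ under the clock $d\phi$, and the identification via Corollary~\ref{weak_uniq}) coincide with the paper's argument; the paper invokes its Lemma~\ref{lemma_change_time} for the divergence of $\int_0^\infty ds/(Y^y_s)_{1,1}$ where you cite the standard squared-Bessel property. Where you genuinely diverge is the independence step. The paper works in the ``analysis'' direction: it introduces the bijection $\Psi$ rescaling only the first row, shows that $\Psi(Y^y)$ solves an SDE on $\symm$ with a unique weak solution (uniqueness being pulled back through $\Psi^{-1}$ to Bru's weak uniqueness for the Wishart process), then builds a weak solution $\bar Y$ of that SDE whose $(1,1)$-component is driven by a Brownian motion $B$ independent of the matrix Brownian motion $W$ driving the first row, identifies $\bar Y\overset{law}{=}\Psi(Y^y)$, and reads off the independence from the decoupled noises after the time change. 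You argue in the ``synthesis'' direction: start from an independent pair $(\check U,\check X)$ with $\check U$ a squared Bessel process and $\check X\sim\corrps{d}{{\bf p}(y)}{\frac{\alpha}{2}e^1_d}{I_d}{e^1_d}$, reassemble a matrix $Y'$, verify (by reversing the It\^o computation) that $Y'$ solves the Wishart martingale problem, and conclude by Bru's weak uniqueness that $Y'\overset{law}{=}Y^y$, so that the pair $(X_{\phi^{-1}(\cdot)},(Y^y)_{1,1})$ inherits the independence built into $(\check X,\check U)$. Both routes ultimately rest on the same weak-uniqueness input, but yours buys a slightly cleaner conclusion --- the independence is immediate from the construction rather than deduced from the decoupling of driving noises after a $\check U$-measurable time change --- at the cost of the reverse It\^o verification (which does go through: the drift of $(Y'_t)_{1,j}$ cancels exactly and the brackets match the Wishart ones, and $Y'_t\in\posm$ since ${\bf p}(Y'_t)=\check X_{\check\phi(t)}\in\corr$ and $Y'_t$ is a diagonal congruence of it). Your closing remark that mere orthogonality of the local martingales would not suffice, and that Knight's theorem offers an alternative, is accurate.
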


\begin{proof}
  From~\eqref{EDS_Wishart}, $a=e^1_d$ and $b=0$, we get $d(Y_t^y)_{i,j}=0$ for
  $2 \le i,j \le d$ and
  \begin{equation}\label{dyn_Y11}d(Y_t^y)_{1,1}=(\alpha+1)dt + 2 \sum_{k=1}^d
  (\sqrt{Y_t^y})_{1,k}(dW_t)_{k,1}, \
  d(Y_t^y)_{1,i}=\sum_{k=1}^d(\sqrt{Y_t^y})_{i,k}(dW_t)_{k,1}.
\end{equation}
In particular, $d\langle (Y_t^y)_{1,1}\rangle=4(Y_t^y)_{1,1} dt$ and
$(Y_t^y)_{1,1}$ is a squared Bessel process of dimension $\alpha+1$. Since
$\alpha+1\ge 2$ it almost surely  never
vanishes. Thus, $(X_t,t\ge 0)$ is well defined, and we get:
\begin{eqnarray}\label{dyn_Xi}
  d(X_t)_{1,i}&=&-\frac{\alpha}{2}(X_t)_{1,i}
  \frac{dt}{(Y_t^y)_{1,1}}+\sum_{k=1}^d \left(\frac{(\sqrt{Y_t^y})_{i,k}}{\sqrt{(Y_t^y)_{1,1}
  y_{i,i}}} - (X_t)_{1,i} \frac{(\sqrt{Y_t^y})_{1,k}}{(Y_t^y)_{1,1}} \right)(dW_t)_{k,1}
\end{eqnarray}
%%and $d\langle (Y_t^y)_{1,1},(Y_t)_{1,i}\rangle =0. $
By Lemma~\ref{lemma_change_time}, $\phi(t)$ is a.s. one-to-one on $\R_+$, and we consider the
Brownian motion $(\tilde{W}_t,t\ge 0)$ defined by
$(\tilde{W}_{\phi(t)})_{i,j}=\int_0^t
\frac{(dW_s)_{i,j}}{\sqrt{(Y_s^y)_{1,1}}}ds$. We have by straightforward
calculus
\begin{eqnarray}\label{dyn_X}
&&  d(X_{\phi^{-1}(t)})_{1,i}=-\frac{\alpha}{2}(X_{\phi^{-1}(t)})_{1,i}
  dt+\sum_{k=1}^d \left(\frac{(\sqrt{Y_{\phi^{-1}(t)}^y})_{i,k}}{\sqrt{
  y_{i,i}}} - (X_{\phi^{-1}(t)})_{1,i} \frac{(\sqrt{Y_{\phi^{-1}(t)}^y})_{1,k}}{\sqrt{(Y_{\phi^{-1}(t)}^y)_{1,1}}} \right)(d\tilde{W}_t)_{k,1}\\
&&  d\langle
  (X_{\phi^{-1}(t)})_{1,i},(X_{\phi^{-1}(t)})_{1,j}\rangle=[{(X_{\phi^{-1}(t)})_{i,j}-(X_{\phi^{-1}(t)})_{1,i}(X_{\phi^{-1}(t)})_{1,j}}]  dt, \nonumber
\end{eqnarray}
which shows by uniqueness of the solution of the martingale problem
(Corollary~\ref{weak_uniq}) that $(X_{\phi^{-1}(t)},t\ge
0)\overset{law}{=}\corrps{d}{{\bf p}(y)}{\frac{\alpha}{2}e^1_d}{I_d}{e^1_d}$.

Let us now show the independence. We can check easily that
\begin{equation}\label{crochet_nul}
  d\langle
  (X_t)_{1,i},(X_t)_{1,j}\rangle=\frac{1}{(Y_t^y)_{1,1}}[(X_t)_{i,j}-(X_t)_{1,i}(X_t)_{1,j}]
  \text{ and }d\langle
  (X_t)_{1,i},(Y^y_t)_{1,1}\rangle=0.
\end{equation}
 We define $\Psi(y) \in \symm$ for $y\in \posm$ such that
  $y_{i,i}>0$ by $\Psi(y)_{1,i}=\Psi(y)_{i,1}=y_{1,i}/\sqrt{y_{1,1}
    y_{i,i}}$ and $\Psi(y)_{i,j}=y_{i,j}$ otherwise. By~\eqref{dyn_Y11}
  and~\eqref{dyn_Xi}, $(\Psi({Y}_t),t\ge 0)$ solves an SDE on~$\symm$. This
  SDE has a unique weak solution. Indeed, we can check that for any solution
  $(\tilde{Y}_t,t\ge 0)$ starting from $\Psi(y)$, 
  $(\Psi^{-1}(\tilde{Y}_t),t\ge 0)\sim WIS_d(y,\alpha+1,0,e^1_d) $, which
  gives our claim since $\Psi$ is one-to-one and weak uniqueness holds for
  $WIS_d(y,\alpha+1,0,e^1_d) $ (see~\cite{Bru}).  Let $(B_t,t\ge 0)$ denote a real Brownian motion
  independent of~$(W_t,t\ge 0)$. We consider a weak solution to the SDE
  \begin{eqnarray*}
    d(\bar{Y}_t)_{1,1}&=&(\alpha+1)dt + 2 \sqrt{(\bar{Y}_t)_{1,1}}dB_t,\ 
    d(\bar{Y}_t)_{i,j}=0 \text{ for } 2\le i,j \le d,\\
    d(\bar{Y}_t)_{1,i}&=&-\frac{\alpha}{2}(\bar{Y}_t)_{1,i}
  \frac{dt}{(\bar{Y}_t)_{1,1}}+\sum_{k=1}^d \left(\frac{(\sqrt{\bar{Y}_t})_{i,k}}{\sqrt{(\bar{Y}_t)_{1,1}
  y_{i,i}}} - (\bar{Y}_t)_{1,i}
\frac{(\sqrt{\bar{Y}_t})_{1,k}}{(\bar{Y}_t)_{1,1}} \right)(dW_t)_{k,1}, \ i=2,\dots,d
  \end{eqnarray*}
  that starts from  $\bar{Y}_0=\Psi(y)$. It solves the same martingale problem
  as $\Psi(Y_t)$, and therefore $(\Psi(Y_t),t\ge 0)
  \overset{law}{=}(\bar{Y}_t,t\ge 0)$. We set $\bar{\phi}(t)= \int_0^t
  \frac{1}{(\bar{Y}_s)_{1,1}}ds$. As above,
  $((\bar{Y}_{\bar{\phi}^{-1}(t)})_{1,i},i=2\dots,d)$ solves an SDE driven by
  $(W_t,t\ge0)$ and is therefore independent of $((\bar{Y}_t)_{1,1},t\ge 0)$,
  which gives the desired independence.
\end{proof}
\begin{remark}
  There is   a connection between squared-Bessel processes and
  one-dimensional Wright-Fisher diffusions that is similar to Proposition~\ref{WIS_MRC}.
  Let us consider $Z^i_t=z_i+\beta_i t + \int_0^t \sigma \sqrt{Z^i_s}d B^i_s, i=1,2$
  two squared Bessel processes driven by independent Brownian motions. We
  assume that $\beta_1,\beta_2,\sigma \ge 0$ and $\sigma^2 \le
  2(\beta_1+\beta_2)$ so that $Y_t=Z^1_t+Z^2_t$ is a squared Bessel processes
  that never reaches~$0$. By using It\^o calculus, there is a real Brownian
  $(B_t,t\ge 0)$
  motion such that $X_t=Z^1_t/Y_t$ satisfies 
  $$dX_t=(\beta_1+\beta_2)(\frac{\beta_1}{\beta_1+\beta_2}-X_t)\frac{dt}{Y_t}+\sigma
  \sqrt{X_t(1-X_t)} \frac{d B_t}{\sqrt{Y_t}},$$
  and we have $\langle dX_t,dY_t \rangle =0$. Thus, we can use the same
  argument as in the proof above: we set $\phi(t)=\int_0^t 1/(Y_s)ds$ and get
  that $(X_{\phi^{-1}(t)},t\ge 0)$ is a one-dimensional Wright-Fisher
  diffusion that is independent of $(Y_t,t\ge 0)$. This property obviously
  extends the well known identity between Gamma and Beta laws. This kind of
  change of time have also been considered in the literature
  by~\cite{Fernholz_Karatzas} or~\cite{Gourieroux} for similar but different
  multi-dimensional settings.\end{remark}

\subsection{A remarkable splitting of the infinitesimal generator}

In this section, we present a remarkable splitting for the mean-reverting
correlation matrices. This result will play a key role in the simulation part. In
fact, we have already obtained in~\cite{AA} very similar properties for Wishart
processes. Of course, these properties are related through
Proposition~\ref{WIS_MRC}, which is illustrated in the proof below.

\begin{theorem}\label{theorem_spliopper}
Let $\alpha \ge d-2$.  Let $L$ be the generator associated to the 
  $\corrps{d}{x}{\frac{\alpha}{2} a^2}{I_d}{a}$ on $\corr$ and $L_i$ be the generator associated to
  $\corrps{d}{x}{\frac{\alpha}{2} e^i_d}{I_d}{e^i_d}$, for $i \in \lbrace 1,\ldots,d \rbrace$.
Then, we have  
 \begin{equation}\label{Split_Can}
   L= \sum_{i=1}^d a_i^2 L_i\text{   and    }\,\,\,\,\forall i,j \in \lbrace
  1,\ldots,d \rbrace, \,  L_iL_j= L_jL_i.
\end{equation}
\end{theorem}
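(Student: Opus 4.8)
The plan is to treat the two assertions separately: the splitting identity is a direct reading of the explicit generator, while the commutation relation is the substantive part, which I would derive from the Wishart connection of Proposition~\ref{WIS_MRC} rather than by brute force. First I would establish $L=\sum_{i=1}^d a_i^2 L_i$ directly from the closed form~\eqref{EQUATION_OPERATOR_C}. For $\corrps{d}{x}{\frac{\alpha}{2}a^2}{I_d}{a}$ the parameters are $\kp_i=\frac{\alpha}{2}a_i^2$, $c=I_d$ (so $c_{\{i,j\}}=0$ whenever $j\neq i$), and diffusion coefficients $a_i$, whereas for $\corrps{d}{x}{\frac{\alpha}{2}e^i_d}{I_d}{e^i_d}$ only the outer index equal to $i$ survives. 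Substituting into~\eqref{EQUATION_OPERATOR_C} shows that the $i$-th summand of $L$ is exactly $a_i^2 L_i$, where
$$L_i=-\frac{\alpha}{2}\sum_{j\neq i} x_{\{i,j\}}\partial_{\{i,j\}}+\frac{1}{2}\sum_{j,k\neq i}(x_{\{j,k\}}-x_{\{i,j\}}x_{\{i,k\}})\partial_{\{i,j\}}\partial_{\{i,k\}},$$
and linearity gives the splitting at once.

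For the commutation I would exploit the intertwining between $L_i$ and the generator $\mathcal{L}_i$ of $WIS_d(y,\alpha+1,0,e^i_d)$. The computations~\eqref{dyn_Xi}--\eqref{dyn_X} in the proof of Proposition~\ref{WIS_MRC}, together with the permutation of the $i$-th and first coordinates, yield the operator identity
$$\mathcal{L}_i\big[h\circ {\bf p}\big](y)=\frac{1}{y_{i,i}}\,(L_i h)\big({\bf p}(y)\big)$$
for every smooth $h$ on $\corr$, the prefactor $1/y_{i,i}$ coming from the time change $\phi(t)=\int_0^t (Y^y_s)_{i,i}^{-1}ds$. The operator $\mathcal{L}_i$ only differentiates the entries of row and column $i$; hence for $i\neq j$ it treats $y_{j,j}$ as a constant and $L_j h$ remains a function of the correlation coordinates alone. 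Applying the identity twice then gives
$$\mathcal{L}_i\mathcal{L}_j\big[g\circ {\bf p}\big](y)=\frac{1}{y_{i,i}\,y_{j,j}}\,(L_iL_j g)\big({\bf p}(y)\big),$$
and symmetrically with $i$ and $j$ exchanged. Since the Wishart generators $\mathcal{L}_i$ commute, which is the commuting splitting established in~\cite{AA}, the two right-hand sides coincide, and because ${\bf p}$ is onto $\corr$ (any correlation matrix is its own image under ${\bf p}$) I conclude $L_iL_jg=L_jL_i g$ for all smooth $g$, i.e. $L_iL_j=L_jL_i$.

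The main obstacle is the intertwining identity itself: one has to check carefully that, although ${\bf p}(y)$ depends on $y_{i,i}$ and on the whole of row $i$, all the $y_{i,i}$-derivatives produced by $\mathcal{L}_i$ reassemble into the single prefactor $1/y_{i,i}$ times the correlation generator $L_i$, with no residual coupling to the radial coordinate $y_{i,i}$. This is precisely the decoupling and time change in Proposition~\ref{WIS_MRC}, so I would either quote it verbatim or redo the short It\^o computation behind~\eqref{dyn_Xi}--\eqref{dyn_X}. A minor point is the range of $\alpha$: Proposition~\ref{WIS_MRC} requires $\alpha\ge\max(1,d-2)$, but since the coefficients of $L_iL_j-L_jL_i$ are polynomial in $\alpha$, the identity extends from that range to every $\alpha\ge d-2$ by polynomial continuation. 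I note that a purely direct verification of $[L_i,L_j]=0$ from the expression for $L_i$ above is also possible, but the bookkeeping of the first- and second-order terms is considerably heavier than the Wishart argument.
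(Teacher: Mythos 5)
Your proof is correct, and for the commutation relation it takes a genuinely different route from the paper's. Both arguments ultimately rest on the commutation of the Wishart generators $L_i^W$ from~\cite{AA} and transfer it to the $L_i$ through the projection ${\bf p}$, but the transfer mechanisms differ. The paper does it probabilistically: it composes the two Wishart flows, uses the time change and the independence statement of Proposition~\ref{WIS_MRC} together with the moment expansions of $\phi^1(t)$ (Lemma~\ref{lemma_change_time2}, which forces $\alpha+1\ge 6$) to expand $\E[f({\bf p}(Y^{1,Y^{2,x}_t}_t))]$ and $\E[f({\bf p}(Y^{2,Y^{1,x}_t}_t))]$ to order $t^2$, identifies the coefficients, and then removes the restriction $\alpha\ge\max(5,d-2)$ by matching powers of~$\alpha$. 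Your argument replaces all of this by the pointwise intertwining $L_i^W(h\circ{\bf p})(y)=y_{i,i}^{-1}(L_ih)({\bf p}(y))$, valid for every $y\in\posm$ with positive diagonal; since $L_j^W$ for $j\ne i$ only differentiates row and column $j$, the prefactor $y_{i,i}^{-1}$ passes through it, the composition intertwines exactly, and the commutation follows with no asymptotic expansion. This is essentially the content of the paper's Remark~\ref{rem_oper} (which the paper derives \emph{a posteriori} from its expansion and notes "can be checked by basic calculations"), promoted to the starting point of the proof. Two small points to make explicit if you write this up: the intertwining must be established at general $y$ (not only $y\in\corr$), which is exactly what the drift and bracket computations \eqref{dyn_Xi} and \eqref{crochet_nul} give before the time change; and since that identity is a pure chain-rule computation requiring no probabilistic input, your polynomial-continuation step in $\alpha$ is actually unnecessary — the whole argument is an operator identity valid for all $\alpha$. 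What your approach buys is a shorter, restriction-free proof; what the paper's buys is the by-product expansion \eqref{compo_wis_proj} and the quantitative form of Remark~\ref{rem_oper}, which it reuses to build the second-order schemes in Section~\ref{Sec_simu}.
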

\begin{proof}
The formula $ L= \sum_{i=1}^d a_i^2 L_i$ is obvious
from~\eqref{EQUATION_OPERATOR_C}. The commutativity property can be obtained
directly by a tedious but simple calculus, which is made in
Appendix~\ref{proof_theorem_spliopper}. Here, we give another proof that uses
the link between Wishart and Mean-Reverting Correlation processes given by Proposition~\ref{WIS_MRC}.

Let $L^W_i$ denotes the generator
of~$WIS_d(x,\alpha+1,0,e^i_d)$. From~\cite{AA}, we have $L^W_iL^W_j=L^W_j
L^W_i$ for $1\le i,j\le d$.  Let us consider $\alpha\ge \max(5,d-2)$ and $x \in \corr$. We set for $i=1,2$
 $(Y^{i,x}_t,t\ge 0)\sim WIS_d(x,\alpha+1,0,e^i_d)$, and we
  assume that the Brownian motions of their associated SDEs are independent. Since  $L^W_1L^W_2=L^W_2 L^W_1$,
we know from~\cite{AA} that $Y^{1,Y^{2,x}_t}_t\overset{law}{=}Y^{2,Y^{1,x}_t}_t$ and thus $$\E[f({\bf
  p}(Y^{1,Y^{2,x}_t}_t))]=\E[f({\bf p}(Y^{2,Y^{1,x}_t}_t))],$$
for any polynomial function $f$. By Proposition~\ref{WIS_MRC}, ${\bf
  p}(Y^{1,Y^{2,x}_t}_t)\overset{law}{=}X^{1,{\bf
    p}(Y^{2,x}_t)}_{(\phi^1)^{-1}(\phi^1(t))} $, where $(X^{1,{\bf
    p}(Y^{2,x}_t)}_{(\phi^1)^{-1}(u)},u\ge 0)$ is a mean-reverting correlation
process independent of $\phi^1(t)=\int_0^t\frac{1}{(Y^{1,Y^{2,x}_t}_s)_{1,1}}ds $. Since
$(Y^{2,x}_t)_{1,1}=1$, $(Y^{1,Y^{2,x}_t}_s)_{1,1}$ follows a squared Bessel
of dimension~$\alpha+1$ starting from~$1$. Using the independence, we get by~\eqref{controle_erreur}
$$\E[f({\bf p}(Y^{1,Y^{2,x}_t}_t))|Y^{2,x}_t,\phi^1(t)]=f({\bf
  p}(Y^{2,x}_t))+\phi^1(t) L_1 f({\bf
  p}(Y^{2,x}_t))+ \frac{\phi^1(t)^2}{2}L_1^2 f({\bf
  p}(Y^{2,x}_t)) + O(\phi^1(t)^3).$$ By Lemma~\ref{lemma_change_time2}, we have
$\E[\phi^1(t)]=t+\frac{3-\alpha}{2} t^2 + O(t^3)$, $\E[\phi^1(t)^2]=t+O(t^3)$,
$\E[\phi^1(t)^3]=O(t^3)$. Thus, we get:
$$\E[f({\bf p}(Y^{2,Y^{1,x}_t}_t))|Y^{2,x}_t]=f({\bf
  p}(Y^{2,x}_t))+ t L_1 f({\bf
  p}(Y^{2,x}_t)) + \frac{t^2}{2} [L_1^2 f({\bf
  p}(Y^{2,x}_t)) +(3-\alpha)L_1f({\bf
  p}(Y^{2,x}_t))]+ O(t^3).$$ Once again, we use  Proposition~\ref{WIS_MRC} and~\eqref{controle_erreur}
 to get similarly that $E[f({\bf
  p}(Y^{2,x}_t))]=f(x)+t L_2f(x)+ \frac{t^2}{2}[L_2^2f(x) + (3-\alpha)
L_2f(x)] + O(t^3)$ for any polynomial function~$f$. We finally get:
$$ \E[f({\bf p}(Y^{1,Y^{2,x}_t}_t))]=f(x)+t(L_1+ L_2)f(x)+
\frac{t^2}{2}[L_1^2f(x) + 2 L_2 L_1f(x)+
L_2^2f(x) + (3-\alpha)(L_1+
L_2)f(x)] + O(t^3).$$
Similarly, we also have
\begin{equation}\label{compo_wis_proj} \E[f({\bf p}(Y^{2,Y^{1,x}_t}_t))]=f(x)+t(L_1+ L_2)f(x)+
\frac{t^2}{2}[L_1^2f(x) + 2 L_1 L_2f(x)+
L_2^2f(x) + (3-\alpha)(L_1+
L_2)f(x)]t^2 + O(t^3),
\end{equation}
and since both expectations are equal, we get $L_1L_2f(x)=L_2L_1f(x)$ for any
$\alpha\ge \max(5,d-2)$. However, we can write $L_i=\frac{1}{2}(\alpha L_i^D+L_i^M)$, with
$$L^D_i= \sum_{\substack{1 \le j \le d \\ j \not = i}}
x_{\set{i,j}} \partial_{\{i,j\}} \text{ and }L_i^M=\sum_{\substack{1 \le
  j,k \le d \\ j \not = i, k \not = i}} 
(x_{\{j,k\}}-x_{\{i,j\}}x_{\{i,k\}}) \partial_{\{i,j\}}\partial_{\{i,k\}}.$$
Thus, we have  $\alpha^2 L_1^D L_2^D + \alpha  ( L_1^D L_2^M+ L_1^M L_2^D) +
L_1^M L_2^M =\alpha^2 L_2^D L_1^D + \alpha  ( L_2^D L_1^M+ L_2^M L_1^D) +
L_2^M L_1^M$ for any $\alpha \ge \max(5,d-2)$. This gives $L_1^D L_2^D=L_2^D
L_1^D$, $ L_1^D L_2^M+ L_1^M L_2^D=L_2^D L_1^M+ L_2^M L_1^D$, $L_1^M L_2^M
=L_2^M L_1^M$, and therefore $L_1L_2=L_2L_1$ holds without restriction on~$\alpha$.
\end{proof}

\begin{remark}\label{rem_oper}
Let $x\in\corr$, $(Y^{1,x}_t,t\ge 0)\sim WIS_d(x,\alpha+1,0,e^1_d)$ and $L^W_1$ its
infinitesimal generator. Equation~\eqref{compo_wis_proj} and the formula $E[f({\bf
  p}(Y^{1,x}_t))]=f(x)+t L_1f(x)+ \frac{t^2}{2}[L_1^2f(x) + (3-\alpha)
L_1f(x)] + O(t^3)$ used in the proof above lead formally to the following
identities for $x \in \corr$ and  $f \in \mathcal{C}^\infty(\symm,\R)$, 
$$ L_1^W (f\circ {\bf p})
(x)=L_1 f(x), \ (L_1^W)^2 (f\circ {\bf p}) (x)=L_1^2 f(x)+(3-\alpha)L_1
f(x), \ L_1^W L_2^W (f\circ {\bf p}) (x)=L_1L_2 f(x),$$
that can be checked by basic calculations.
\end{remark}

The property given by Theorem~\ref{theorem_spliopper} will help us to prove  the weak existence of
mean-reverting correlation processes. It plays also a key
role to construct discretization scheme for these diffusions. In fact, it
gives a simple way to sample the law $\corrpst{d}{x}{\frac{\alpha}{2}
  a^2}{I_d}{a}$. Let $x \in \corr$. We construct iteratively:
\begin{itemize}
  \item  $X^{1,x}_t \sim \corrpst{d}{x}{\frac{\alpha}{2} a_1^2 e^1_d}{I_d}{ a_1 e^1_d}$,
 %% \item Conditionally to $X^{1,x}_t$, $X^{2,X^{1,x}_t}_t \sim
%% \corrpst{d}{X^{1,x}_t}{\frac{\alpha}{2} a_2 e^2_d}{I_d}{ a_2 e^2_d}$ is
%% sampled independently according to the distribution of a mean-reverting
%% correlation process at time~$t$ with parameters $(\frac{\alpha}{2} a_2
%% e^2_d,I_d,\frac{\alpha}{2} a_2 e^2_d)$ starting from $X^{1,x}_t$.
\item For $2\le i\le d$, conditionally to $X^{i-1,\dots^{X^{1,x}_t}}_t$, $X^{i,\dots^{X^{1,x}_t}}_t\sim
  \corrpst{d}{X^{i-1,\dots^{X^{1,x}_t}}_t}{\frac{\alpha}{2} a_i^2 e^i_d}{I_d}{ a_i
    e^i_d}$ is sampled independently according to the distribution of a mean-reverting
correlation process at time~$t$ with parameters $(\frac{\alpha}{2} a_i^2
e^i_d,I_d, a_i e^i_d)$ starting from
$X^{i-1,\dots^{X^{1,x}_t}}_t$.
\end{itemize}
\begin{proposition}\label{sim_exact}
Let $X^{d,\dots^{X^{1,x}_t}}_t$ be defined as above. Then,
$X^{d,\dots^{X^{1,x}_t}}_t\sim \corrpst{d}{x}{\frac{\alpha}{2}
  a^2}{I_d}{a}$.
\end{proposition}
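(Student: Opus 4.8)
The plan is to show that the iterated sampling procedure composes the one-step semigroups associated with each $L_i$, and that this composition reproduces the semigroup of the full generator $L=\sum_{i=1}^d a_i^2 L_i$ precisely because the operators commute (Theorem~\ref{theorem_spliopper}). Let me denote by $P^i_t$ the transition semigroup of $\corrps{d}{\cdot}{\frac{\alpha}{2}a_i^2 e^i_d}{I_d}{a_i e^i_d}$, i.e. the Markov kernel sending $f$ to $x\mapsto \E[f(X^{i,x}_t)]$, and by $P_t$ the semigroup of the target process $\corrps{d}{\cdot}{\frac{\alpha}{2}a^2}{I_d}{a}$. By the construction of $X^{d,\dots^{X^{1,x}_t}}_t$ via successive conditionally independent sampling, its law at time~$t$ is exactly $P^d_t P^{d-1}_t \cdots P^1_t f(x)=\E[f(X^{d,\dots^{X^{1,x}_t}}_t)]$ for every test function $f$. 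So the claim reduces to the operator identity $P^d_t\cdots P^1_t = P_t$ acting on a rich enough class of functions (polynomials suffice, since by weak uniqueness in Corollary~\ref{weak_uniq} the law of an MRC process is determined by its moments).

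First I would restrict attention to polynomial functions $f$, on which each $P^i_t$ and $P_t$ act nicely: by~\eqref{ito_polynom} we have the norm-convergent expansions $P^i_t f = \sum_{k\ge 0}\frac{t^k}{k!}(a_i^2 L_i)^k f$ and $P_t f = \sum_{k\ge 0}\frac{t^k}{k!}L^k f$, where I absorb the factor $a_i^2$ so that $a_i^2 L_i$ is the generator of the process with noise coefficient $a_i e^i_d$. The heart of the matter is then purely algebraic: composing the exponential series gives
\begin{equation*}
P^d_t\cdots P^1_t f = \exp\!\big(t\,a_d^2 L_d\big)\cdots\exp\!\big(t\,a_1^2 L_1\big) f,
\end{equation*}
and since the $L_i$ pairwise commute by Theorem~\ref{theorem_spliopper}, the product of exponentials equals the exponential of the sum,
\begin{equation*}
\exp\!\big(t\,a_d^2 L_d\big)\cdots\exp\!\big(t\,a_1^2 L_1\big) = \exp\!\Big(t\sum_{i=1}^d a_i^2 L_i\Big)=\exp(tL)=P_t,
\end{equation*}
again using $L=\sum_{i=1}^d a_i^2 L_i$ from~\eqref{Split_Can}. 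Applying this to all polynomials $f$ shows that $X^{d,\dots^{X^{1,x}_t}}_t$ and a draw from $\corrpt$ have the same moments, hence the same law.

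The main obstacle is making the formal manipulation of exponential series rigorous: I must justify that each $L_i$ maps polynomials of degree $\le n$ into polynomials of degree $\le n$ with a controlled $\|\cdot\|_{\mathbb P}$-norm, so that all the series converge absolutely and can be rearranged and multiplied termwise. This control is exactly what Proposition~\ref{prop_equ_ODE} and the estimate $\|L^i f\|_{\mathbb P}\le C_n^i\|f\|_{\mathbb P}$ preceding~\eqref{ito_polynom} provide for $L$; the same bound applies verbatim to each $L_i$ since $L_i$ is itself the generator of an MRC process of the same form. Given that estimate, the commutativity of the $L_i$ upgrades from the formal identity $L_iL_j=L_jL_i$ on polynomials to the genuine commutation of the semigroups $P^i_t$, and the Baker–Campbell–Hausdorff collapse for commuting operators is then rigorous on the Banach space of polynomials of each fixed degree. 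A minor point to check is that each intermediate composition stays within the correlation matrices $\corr$ so that the conditional sampling in the construction is well-defined, which follows from the weak existence of each $\corrps{d}{\cdot}{\frac{\alpha}{2}a_i^2 e^i_d}{I_d}{a_i e^i_d}$ guaranteed by Proposition~\ref{WIS_MRC}.
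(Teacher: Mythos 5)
Your proof is correct and follows essentially the same route as the paper: both expand $\E[f(X^{d,\dots}_t)]$ via the convergent polynomial series~\eqref{ito_polynom}, iterate the conditional expectations to obtain the product of exponentials $\exp(t\,a_d^2L_d)\cdots\exp(t\,a_1^2L_1)f$, and collapse it to $\exp(tL)f$ using the commutativity from Theorem~\ref{theorem_spliopper}, concluding by matching moments on the compact set $\corr$. Your explicit absorption of the factors $a_i^2$ into the generators and your remark on the $\|\cdot\|_{\mathbb P}$-norm control are welcome clarifications of details the paper leaves implicit, but the argument is the same.
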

Let us notice that $\corrpst{d}{x}{\frac{\alpha}{2} a_i^2 e^i_d}{I_d}{ a_i
    e^i_d}\overset{law}{=}\corrpstb{d}{x}{\frac{\alpha}{2} e^i_d}{I_d}{ 
    e^i_d}{a_i^2 t}$ and that $\corrpst{d}{x}{\frac{\alpha}{2} e^i_d}{I_d}{
    e^i_d}$ and $\corrpst{d}{x}{\frac{\alpha}{2} e^1_d}{I_d}{ 
    e^1_d}$ are the same law up to the permutation of the first and the $i$-th
  coordinate. Thus, it is sufficient to be able to sample this latter law in
  order to sample  $\corrpst{d}{x}{\frac{\alpha}{2}
  a^2}{I_d}{a}$ by Proposition~\ref{sim_exact}.

\begin{proof}
Let $f$ be a polynomial function and $X^x_t\sim\corrpst{d}{x}{\frac{\alpha}{2}
  a^2}{I_d}{a}$. By~\eqref{ito_polynom}, $\E[f(X^x_t)]=\sum_{j=0}^\infty
\frac{t^j}{j!} L^jf(x)$.
Using once again~\eqref{ito_polynom}, 
$\E[f(X^{d,\dots^{X^{1,x}_t}}_t)]= \E[\E[f(X^{d,\dots^{X^{1,x}_t}}_t)|X^{d-1,\dots^{X^{1,x}_t}}_t]]$\\$=\sum_{j=0}^\infty
\frac{t^j}{j!} \E[ L_d^j f(X^{d-1,\dots^{X^{1,x}_t}}_t)]$, and we finally
obtain by iterating
$$\E[f(X^{d,\dots^{X^{1,x}_t}}_t)]=
\sum_{j_1,\dots,j_d=0}^\infty\frac{t^{j_1+\dots+j_d}}{j_1!\dots
  j_d!}L_1^{j_1}\dots L_d^{j_d}f(x)=\sum_{j=0}^d
\frac{t^j}{j!}(L_1+\dots+L_d)^j f(x)=\E[f(X^x_t)],$$
since the operators commute.
\end{proof}

We can also extend Proposition~\ref{sim_exact} to the limit laws. More
precisely, let us denote by $\corrpstb{d}{x}{\kp}{c}{ a}{\infty}$ the law
characterized by~\eqref{ergo_mom}. We define similarly for $x \in \corr$,
$X^{1,x}_\infty \sim \corrpstb{d}{x}{\frac{\alpha}{2} a_1^2 e^1_d}{I_d}{ a_1
  e^1_d}{\infty}$ and, conditionally to $X^{i-1,\dots^{X^{1,x}_\infty}}_\infty$,
 $X^{i,\dots^{X^{1,x}_\infty}}_\infty \sim
  \corrpstb{d}{X^{i-1,\dots^{X^{1,x}_\infty}}_\infty}{\frac{\alpha}{2} a_i^2 e^i_d}{I_d}{ a_i
    e^i_d}{\infty}$ for $2\le i\le d$. We have:
  \begin{equation}\label{commut_infty}X^{d,\dots^{X^{1,x}_\infty}}_\infty \sim \corrpstb{d}{x}{\frac{\alpha}{2}
  a^2}{I_d}{a}{\infty}.
\end{equation}
To check this we consider $(X_t,t\ge 0)\sim \corrps{d}{x}{\frac{\alpha}{2}
  a^2}{I_d}{a}$ and $m\in \mathcal{S}_d(\N)$ such that $m_{i,i}=0$. By
Proposition~\ref{prop_equ_ODE}, $\E[X^m_t]$ is a polynomial function of~$x$
that we write $\E[X^m_t]=\sum_{m' \in \mathcal{S}_d(\N),|m'|\le |m|} \gamma_{m,m'}(t) x^{m'}$. From the
convergence in law~\eqref{ergo_mom}, we get that the coefficients
$\gamma_{m,m'}(t)$ go to a limit $\gamma_{m,m'}(\infty)$ when $t\rightarrow +
\infty$, and $\E[X^m_\infty]=\sum_{|m'|\le |m|} \gamma_{m,m'}(\infty)
x^{m'}$. Similarly, the moment~$m$ of $\corrpstb{d}{x}{\frac{\alpha}{2} a_i^2 e^i_d}{I_d}{ a_i
    e^i_d}{t}$ can be written as $\sum_{|m'|\le |m|} \gamma^i_{m,m'}(t)
  x^{m'}$. We get from Proposition~\ref{sim_exact}:
  $$\E[X^m_t]=\sum_{|m_1|\le \dots\le |m_d| \le|m|}
    \gamma^d_{m,m_d}(t)\gamma^{d-1}_{m_d,m_{d-1}}(t) \dots
    \gamma^{1}_{m_2,m_{1}}(t) x^{m_1},$$
    which gives~\eqref{commut_infty} by letting $t\rightarrow + \infty$.

\subsection{A link with the multi-allele Wright-Fisher model}

Theorem~\ref{theorem_spliopper} and Proposition~\ref{sim_exact} have shown
that any law $\corrpst{d}{x}{\frac{\alpha}{2} a^2}{I_d}{a}$ can be obtained by
composition with the elementary law $\corrpst{d}{x}{\frac{\alpha}{2}
}{I_d}{e^1_d}$. By the next proposition, we can go further  and focus on the case where
$(x_{i,j})_{2 \le i,j\le d}=I_{d-1}$.
\begin{proposition}\label{prop_reduction}
Let $x \in \corr$. Let  $u
\in \mathcal{M}_{d-1}(\R)$ and $\check{x} \in \corr$ such that
$x=\left( \begin{array}{cc} 1& 0\\ 0 & u \end{array} \right)
\check{x}\left( \begin{array}{cc} 1& 0\\ 0 & u^T \end{array} \right)$ and
$(\check{x})_{2 \le i,j\le d}=I_{d-1}$ (Lemma~\ref{Lemma_Decompo} gives a construction of such
matrices). Then, for $\alpha \ge 2$,
$$ \corrps{d}{x}{ \frac{\alpha}{2} e^1_d}{I_d}{ e^1_d}\overset{law}{=} \left( \begin{array}{cc} 1& 0\\ 0 & u \end{array}
\right)\corrps{d}{\check{x}}{ \frac{\alpha}{2}   e^1_d}{I_d}{ e^1_d}  \left( \begin{array}{cc} 1& 0\\ 0 & u^T \end{array}
\right). $$
\end{proposition}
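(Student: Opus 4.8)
The plan is to reduce everything to a weak-uniqueness argument based on Corollary~\ref{weak_uniq}. Write $D=\left(\begin{array}{cc}1&0\\0&u\end{array}\right)$, let $\check X\sim\corrps{d}{\check x}{\frac{\alpha}{2}e^1_d}{I_d}{e^1_d}$, and set $Z_t:=D\,\check X_t\,D^T$; I will show that $(Z_t)_{t\ge0}$ solves the martingale problem~\eqref{PbMg} of $\corrps{d}{x}{\frac{\alpha}{2}e^1_d}{I_d}{e^1_d}$, after which the stated identity in law follows at once. To this end I first record the structure of the elementary law: reading the generator~\eqref{EQUATION_OPERATOR_C} with $\kp=\frac{\alpha}{2}e^1_d$, $c=I_d$, $a=e^1_d$, only the index $i=1$ contributes, so no coefficient acts on the coordinates $x_{\{j,k\}}$ with $j,k\ge2$. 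Hence the block $\Subm{(\check X_t)}{1}$ stays frozen at $\Subm{\check x}{1}=I_{d-1}$, and $\check X_t$ is entirely encoded by its first-row off-diagonal vector $\check V_t:=\check X_t^1\in\R^{d-1}$, a diffusion with drift $-\frac{\alpha}{2}\check V_t$ and, using the identity recalled in the Notations together with~\eqref{crochet_MRC}, quadratic covariation
$$d\langle \check V_t\rangle=\Subm{(\check X_t-\check X_t e^1_d\check X_t)}{1}\,dt=(I_{d-1}-\check V_t\check V_t^{\,T})\,dt.$$
The same reading of~\eqref{EQUATION_OPERATOR_C}--\eqref{crochet_MRC} shows that $\corrps{d}{x}{\frac{\alpha}{2}e^1_d}{I_d}{e^1_d}$ is encoded by a first-row vector $V_t$ with the same drift, frozen block $\Subm{x}{1}$, and covariation $(\Subm{x}{1}-V_tV_t^{\,T})\,dt$.

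Next I would transform. A block computation gives $(Z_t)_{1,1}=1$, first-row vector $V^Z_t:=u\check V_t$, and lower block $u\,I_{d-1}\,u^T=uu^T$. Since $x=D\check x D^T$ with $\Subm{\check x}{1}=I_{d-1}$ forces $\Subm{x}{1}=uu^T$, and $x\in\corr$ imposes $(uu^T)_{i,i}=1$, the matrix $Z_t$ has unit diagonal; being a congruence of a positive semidefinite matrix it is positive semidefinite, so $Z_t\in\corr$. It\^o's formula gives $dV^Z_t=-\frac{\alpha}{2}V^Z_t\,dt+u\,d\check M_t$ and
$$d\langle V^Z_t\rangle=u(I_{d-1}-\check V_t\check V_t^{\,T})u^T\,dt=\big(uu^T-(u\check V_t)(u\check V_t)^T\big)\,dt=\big(\Subm{x}{1}-V^Z_t(V^Z_t)^T\big)\,dt,$$
with $V^Z_0=u\check x^1=x^1$. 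This diffusion-coefficient identity is the algebraic heart of the proof: the special form $\Subm{\check x}{1}=I_{d-1}$ is exactly what lets the $I_{d-1}$ in the covariance of $\check V$ turn into $uu^T=\Subm{x}{1}$ after the congruence, so that $V^Z$ obeys the same SDE as the first-row vector $V$ of the target process, with the same initial value and the same frozen block.

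Finally I would assemble the martingale problem. For $f\in\mathcal{C}^2(\symm,\R)$, It\^o applied to $f(Z_t)$ involves only the first-row coordinates since the block is frozen, and because the lower block of $Z_t$ equals $\Subm{x}{1}$ we have $d\langle(Z_t)_{1,j},(Z_t)_{1,k}\rangle=((Z_t)_{j,k}-(Z_t)_{1,j}(Z_t)_{1,k})\,dt$, which matches~\eqref{crochet_MRC}; the resulting finite-variation part is precisely $Lf(Z_t)\,dt$ for the generator $L$ of $\corrps{d}{x}{\frac{\alpha}{2}e^1_d}{I_d}{e^1_d}$. Thus $Z$ solves~\eqref{PbMg} with $Z_0=x$, and Corollary~\ref{weak_uniq} gives $Z\overset{law}{=}\corrps{d}{x}{\frac{\alpha}{2}e^1_d}{I_d}{e^1_d}$, which is the claim. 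The hypothesis $\alpha\ge2$ is used only to guarantee weak existence and uniqueness of the elementary processes involved (and the availability of the decomposition of Lemma~\ref{Lemma_Decompo}). The main obstacle is the covariance-matching step together with checking $Z_t\in\corr$; once these hold the martingale-problem verification is routine.
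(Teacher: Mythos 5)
Your proof is correct and follows essentially the same route as the paper's: define $Z_t=\left(\begin{array}{cc}1&0\\0&u\end{array}\right)\check X_t\left(\begin{array}{cc}1&0\\0&u^T\end{array}\right)$, observe that the lower block is frozen at $uu^T=(x_{i,j})_{2\le i,j\le d}$, compute the quadratic covariations of the first-row entries via~\eqref{crochet_MRC} to see they match those of the target process, and conclude by the weak uniqueness of Corollary~\ref{weak_uniq}. The extra checks you include (drift matching, $Z_t\in\corr$) are fine and only make explicit what the paper leaves implicit.
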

\begin{proof}
Let $(\check{X}_t,t\ge 0)\sim \corrps{d}{\check{x}}{ \frac{\alpha}{2} 
  e^1_d}{I_d}{ e^1_d}$. We set $X_t=\left( \begin{array}{cc} 1& 0\\ 0 & u \end{array}
\right) \check{X}_t \left( \begin{array}{cc} 1& 0\\ 0 & u^T \end{array}
\right).$ Clearly,  $((\check{X}_t)_{i,j})_{2\le i,j\le d}= I_{d-1}$ and the matrix
$((X_t)_{i,j})_{2\le i,j\le d}$ is constant and 
equal to $uu^T=(x_{i,j})_{2\le i,j\le d}$. We have for $2\le i\le d$,
$(X_t)_{1,i}=\sum_{k=2}^d u_{i-1,k-1}(\check{X}_t)_{1,k}$. By~\eqref{crochet_MRC}, we get
$\langle d  (\check{X}_t)_{1,k},d  (\check{X}_t)_{1,l}
\rangle=[\indi{k=l}-(\check{X}_t)_{1,k}(\check{X}_t)_{1,l}]dt$. Therefore,
the quadratic variations
\begin{eqnarray*}
  \langle d  (X_t)_{1,i},d  (X_t)_{1,j} \rangle&=& \left(
\sum_{k=2}^du_{i-1,k-1}u_{j-1,k-1} -
\sum_{k,l=2}^du_{i-1,k-1}(\check{X}_t)_{1,k}u_{i-1,l-1}(\check{X}_t)_{1,l}
\right)dt \\
&=& \left((X_t)_{i,j}-(X_t)_{1,i}(X_t)_{1,j}\right)dt ,
\end{eqnarray*}
are by~\eqref{crochet_MRC} the one of $\corrps{d}{x}{ \frac{\alpha}{2} 
  e^1_d}{I_d}{ e^1_d}$. This gives the claim by using the weak uniqueness
(Corollary~\ref{weak_uniq}). \end{proof}

For $x \in \symm$ such that $(x_{i,j})_{2 \le i,j\le d}=I_{d-1}$ and
$x_{1,1}=1$, we have $\det(x)=1-\sum_{i=2}^d x_{1,i}^2$ and
therefore
\begin{equation}\label{cns_corr}x \in \corr \iff  \sum_{i=2}^d x_{1,i}^2 \le 1.
\end{equation}
The process $(X_t)_{t\ge 0} \sim\corrpst{d}{x}{\frac{\alpha}{2}
}{I_d}{e^1_d}$ is such that $((X_t)_{i,j})_{2 \le i,j\le d}=I_{d-1}$. In this
case, the only non constant elements are on the first row (or column). More
precisely, $((X_t)_{1,i})_{i=2,\dots,d}$  is a vector process on the unit ball
in dimension~$d-1$ such that
$$d \langle (X_t)_{1,i}, (X_t)_{1,j} \rangle = (\indi{i=j}-(X_t)_{1,i}
(X_t)_{1,j})dt.$$
For $i=1,\dots,d-1$, we set $\zeta^i_t=(X_t)_{1,i+1}^2$. We have
 $\langle d\zeta^i_t, d\zeta^j_t
\rangle=4\zeta^i_t(\indi{i=j}-\zeta^j_t)dt$ and the drift of $\zeta^i_t$ is
$(1-(1+2\alpha)\zeta^i_t)dt$ . Thus, $(\zeta^i_t)_{1\le i\le
  d-1}$ satisfies $\sum_{i=1}^{d-1}\zeta^i_t \le 1$ and has the following infinitesimal generator
$$\sum_{i=1}^{d-1}[1-(1+2 \alpha)  z_i] \partial_{z_i} +2 \sum_{1 \le
  i,j\le d-1} z_i(\indi{i=j}-z_j)\partial_{z_i}\partial_{z_j}$$
This is a particular case of the multi-allele Wright-Fisher diffusion (see for
example Etheridge~\cite{Etheridge}), where
$(\zeta^1_t,\dots,\zeta^{d-1}_t,1-\sum_{i=1}^{d-1}\zeta^i_t)$ describes
population ratios along the time. Similar diffusions have also been considered
by Gourieroux and Jasiak~\cite{Gourieroux2} in a different context. Roughly speaking,
$((X_t)_{1,i})_{2 \le i\le d}$ can be seen as a square-root of a
multi-allele Wright-Fisher diffusion that is such that its drift coefficient
remains linear.

Also, the identity in law given by Proposition~\ref{prop_reduction} allows us to
compute more explicitly the ergodic limit law. Let $x\in \corr$ such that
$(x_{i,j})_{2 \le i,j\le d}=I_{d-1}$,  $(X^x_t)_{t\ge 0} \sim
\corrps{d}{x}{\frac{\alpha}{2} e^1_d}{I_d}{e^1_d}$ and $(Y^x_t)_{t\ge 0} \sim
WIS_d(x,\alpha+1,0,e^1_d)$. We know by~\cite{AA} that $((Y^x_t)_{i,j})_{1\le i,j\le
  d}=I_{d-1}$ and
$$((Y^x_t)_{1,i})_{1\le i \le d}\overset{law}{=}
(Z^{x_{1,1}}_t+\sum_{i=2}^d(x_{1,i}+\sqrt{t} N_i)^2,x_{1,2}+\sqrt{t} N_2
,\dots,x_{1,d}+\sqrt{t} N_d), $$
where $N_i \sim \mathcal{N}(0,1)$ are independent standard Gaussian variables
and $Z^{x_{1,1}}_t=x_{1,1}+(\alpha+2-d)t + 2 \int_0^t \sqrt{Z^{x_{1,1}}_u}d
\beta_u$ is a Bessel process independent of the Gaussian variables starting from~$x_{1,1}$. By a time
scaling, we have $Z^{x_{1,1}}_t \overset{law}{=} t Z^{x_{1,1}/t}_1$, and thus:
  $$ ({\bf p}(Y^x_t)_{1,i})_{2 \le i \le
    d}\overset{law}{=}\frac{\left(\frac{x_{1,2}}{\sqrt{t}}
    +N_2,\dots, \frac{x_{1,d}}{\sqrt{t}}+ N_d \right)}{\sqrt{Z^{x_{1,1}/t}_1+\sum_{i=2}^d
      (\frac{x_{1,i}}{\sqrt{t}} +N_i})^2} \underset{t \rightarrow
    +\infty}{\rightarrow}\frac{\left(N_2,\dots, N_d
    \right)}{\sqrt{Z^{0}_1+\sum_{i=2}^d N_i^2}}. $$
On the other hand, we know that $X^x_t$ converges in law when $t\rightarrow +
\infty$, and Proposition~\ref{WIS_MRC} immediately gives, with the help of Lemma~\ref{lemma_change_time} that $((X^x_\infty)_{1,i})_{2 \le i \le
    d}\overset{law}{=}\frac{\left(N_2,\dots, N_d
    \right)}{\sqrt{Z^{0}_1+\sum_{i=2}^d N_i^2}}$. By simple calculations, we
  get that $((X^x_\infty)_{1,i})_{2 \le i \le
    d}$ has the following density:
  \begin{equation}\label{sqrt_dirichlet} \indi{ \sum_{i=2}^d z_i^2 \le 1}
  \frac{\Gamma\left(\frac{\alpha+1}{2}\right)
  }{(\sqrt{\pi})^{d-1}\Gamma\left(\frac{\alpha+2-d}{2}\right)}
  \left(1-\sum_{i=2}^d z_i^2 \right).
\end{equation}
In particular, we can check that $((X^x_\infty)^2_{1,i})_{2 \le i \le
    d}$ follows a Dirichlet law, which is known as the ergodic limit of
  multi-allele Wright-Fisher models. Last, let us mention that we can get an
  explicit but cumbersome expression of the density of the law $\corrpstb{d}{x}{\frac{\alpha}{2}
  a^2}{I_d}{a}{\infty}$ by
  combining~\eqref{commut_infty}, Proposition~\ref{prop_reduction} and~\eqref{sqrt_dirichlet}.

%+++++++++++++++++++++++++++++++++++++++++++++++++++++
\section{Existence and uniqueness results for MRC processes}\label{sec_exist}
%+++++++++++++++++++++++++++++++++++++++++++++++++++++

In this section we show weak and strong existence results for the
SDE~\eqref{SDE_CORR}, respectively under
assumptions~\eqref{cond_weak_existence}
and~\eqref{cond_strong_existence}. These assumptions are of the same nature as the one
known for Wishart processes. To prove the strong existence and uniqueness, we
make assumptions on the coefficients that ensures that $X_t$ remains in the
set of the invertible correlation matrices where the coefficients are locally
Lipschitz. This is similar to the proof given by Bru~\cite{Bru} for 
Wishart processes. Then, we  prove the weak existence by introducing a
sequence of processes defined on~$\corr$, which is tight such that any
subsequence limit solves the martingale problem~\eqref{PbMg}. Next, we extend
our existence results when the parameters are no longer constant. Last, we
exhibit some change of probability that preserves the global dynamics of our
Mean-Reverting Correlation processes.

\subsection{Strong existence and uniqueness}

\begin{theorem}\label{thm_strong} Let $x \in \corri$. We assume
  that~\eqref{cond_strong_existence} holds. Then, there is a unique strong
  solution of the SDE~\eqref{SDE_CORR} that is such that $\forall t\ge 0, X_t
  \in \corri$.
\end{theorem}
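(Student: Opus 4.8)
The plan is to establish strong existence and uniqueness by a standard localization argument exploiting that the diffusion coefficients of \eqref{SDE_CORR} are locally Lipschitz on the open set $\corri$ of invertible correlation matrices, and then to show that the solution never leaves $\corri$ before any finite time. Let me think about how the coefficients behave.

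The drift $\kp(c-x)+(c-x)\kp$ is linear, hence globally Lipschitz. The diffusion coefficient involves $\sqrt{x-xe_d^n x}$. Now $x-xe_d^n x$ is positive semidefinite for $x\in\corr$ (this is the quadratic covariation matrix, cf. \eqref{crochet_MRC}), and the square-root map is only Hölder-$1/2$ near the boundary where eigenvalues vanish, but it is smooth (locally Lipschitz, even $C^\infty$) on the set where $x-xe_d^n x$ is positive definite. So the natural strategy is: identify the region where all the $x-xe_d^n x$ are nondegenerate, show it contains $\corri$, run the standard Lipschitz SDE theory locally, and prove non-explosion and non-exit.

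So the key technical fact I would want is that for $x\in\corri$, each matrix $x-xe_d^n x$ has rank $d-1$ with a one-dimensional kernel (spanned by $e_n$-related directions), and that $\sqrt{\cdot}$ restricted to the relevant symmetric PSD matrices of this fixed-rank type is locally Lipschitz; using the submatrix identity $\Subm{(x-xe^i_dx)}{i}=\Subm{x}{i}-x^i(x^i)^T$ from the notation section, one reduces to a full-rank $(d-1)$-square-root which is smooth. Hence the coefficients are locally Lipschitz on $\corri$, giving a unique strong solution up to the explosion/exit time $\tau=\inf\{t: X_t\notin\corri\}$.

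The main obstacle, and the heart of the proof, is showing that $\tau=+\infty$ almost surely, i.e. that $\det(X_t)$ stays strictly positive. I would compute $d\log\det(X_t)$ by It\^o's formula, using $\partial_{x_{i,j}}\log\det(x)=(x^{-1})_{j,i}$ and the second derivatives, together with the explicit covariation \eqref{crochet_MRC}. The quadratic (It\^o) correction terms will be controlled, and the drift of $\log\det(X_t)$ should be bounded below precisely when $\kp c+c\kp-d\,a^2\in\posm$, which is exactly assumption \eqref{cond_strong_existence}; this is the analogue of the classical McKean/Bru argument for the Bessel/Wishart determinant. Concretely I expect to derive a bound of the form $d\log\det(X_t)\ge (\text{const}\,\ge 0)\,dt+dM_t$ with $M$ a martingale, or more robustly to show that $\log\det(X_t)$ dominates a process that cannot reach $-\infty$ in finite time, so $X_t$ cannot hit the boundary $\{\det=0\}$. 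Combining non-exit with the local strong uniqueness yields a global strong solution staying in $\corri$, and pathwise uniqueness follows on $\corri$ from the local Lipschitz property by the usual Gronwall estimate applied up to exit times and then passing to the limit.

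I would organize the write-up as: first verify the coefficients are locally Lipschitz on $\corri$ (reducing the square-root regularity to full-rank submatrices via $\Subm{(x-xe^i_dx)}{i}=\Subm{x}{i}-x^i(x^i)^T$); second, invoke standard SDE theory to get a unique strong solution on $[0,\tau)$; third, carry out the determinant computation to show $\tau=\infty$ under \eqref{cond_strong_existence}; and finally conclude uniqueness globally. The determinant estimate is where condition \eqref{cond_strong_existence} enters sharply, so I expect that step to require the most care.
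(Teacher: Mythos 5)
Your proposal follows essentially the same route as the paper: local Lipschitz regularity of $\sqrt{x-xe_d^nx}$ on $\corri$ via the submatrix identity $\Subm{(x-xe^n_dx)}{n}=\Subm{x}{n}-x^n(x^n)^T$, local strong existence and uniqueness up to the exit time, and the McKean--Bru argument on $\log\det(X_t)$, where \eqref{cond_strong_existence} makes the drift nonnegative. The only step you leave implicit is checking that the diagonal entries remain equal to $1$ (the paper observes $(f^n(X_s)dW_se_d^n)_{i,i}=0$, hence $d(X_t)_{i,i}=2\kp_{i}(1-(X_t)_{i,i})dt$ and $(X_t)_{i,i}\equiv 1$), which is needed because the localized SDE is posed on an open subset of $\symm$ rather than on the correlation matrices themselves.
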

\begin{proof}
  By Lemma~\ref{lemma_correlmatrix}, we have 
$\Subm{(\sqrt{x-xe^n_dx})}{n}=\sqrt{\Subm{x}{n}-x^n(x^n)^T}$ and
$\Subm{x}{n}-x^n(x^n)^T \in \mathcal{S}_{d-1}^{+,*}(\R)$ when $x \in \corri$.
For $x \in \dpos$ such that $\Subm{x}{n}-x^n(x^n)^T\in
\mathcal{S}_{d-1}^{+,*}(\R)$, we define $f^n(x)\in \posm$ by $(f^n(x))_{n,j}=0$
for $1\le j\le d$ and
$\Subm{(f^n(x))}{n}=\sqrt{\Subm{x}{n}-x^n(x^n)^T}$. The function $f^n$ is well
defined on an open set of $\symm$ that includes $\corri$, and is such that
$f^n(x)=\sqrt{x-xe^n_dx}$ for $x \in \corri$. Since the square-root of a
positive semi-definite matrix is locally Lipschitz on the positive definite
matrix set, we get that the SDE
$$X_t = x + \int_0^t \left( \kp (c-X_s) + (c-X_s) \kp \right) ds+ \sum_{n=1}^da_n \int_0^t \left( f^n(X_s) dW_se_d^{n}
  +e_d^{n} dW_s^T f^n(X_s) \right), $$
has a unique strong solution for $0 \le t <\tau$, where
$$ \tau= \inf\{t \ge 0, X_t \not \in \dpos \text{ or } \exists i \in
\{1,\dots,d\}, \Subm{X_t}{i}-X_t^i(X_t^i)^T \not \in
\mathcal{S}_{d-1}^{+,*}(\R)\},\ \inf \emptyset=+\infty.$$
For $1\le i\le d$, we have $(f^n(X_s)dW_s
e_d^{n})_{i,i}=\indi{i=n}\sum_{j=1}^d f^n(X_s)_{n,j}(dW_s)_{j,n}=0$ and then:
$$d(X_t)_{i,i}=2\kp_{i,i}(1-(X_t)_{i,i})dt,$$
which immediately gives $(X_t)_{i,i}=1$ for $0\le t < \tau$. Thus, $X_t \in
\corri$ for  $0\le t < \tau$ and $\tau=\inf\{t \ge 0, X_t \not \in \corri \}$
by Lemma~\ref{lemma_correlmatrix}, and the process $X_t$ is solution of~\eqref{SDE_CORR} up to time~$\tau$.
We set $Y_t=\log(\det(X_t))+\Tr(2\kp-a^2)t$. By Lemma~\ref{lemma_det_sde}, we
have
\begin{eqnarray*}
  Y_t&=&Y_0 +\int_0^t \Tr[X_s^{-1}(\kp c+c \kp-d a^2)]ds +2
  \int_0^t\sqrt{\Tr[a^2(X_t^{-1}-I_d)]}d\beta_s\\
  &\ge &Y_0  +2
  \int_0^t\sqrt{\Tr[a^2(X_t^{-1}-I_d)]}d\beta_s,
\end{eqnarray*}
since $\kp c+c \kp-d a^2 \in \posm$ by Assumption~\eqref{cond_strong_existence}. Now, we use the McKean
argument exactly like Bru~\cite{Bru} did for Wishart processes: on
$\{\tau<\infty\}$,  $Y_t \underset{t\rightarrow \tau}{\rightarrow}- \infty$,
and the local martingale
$\int_0^t\sqrt{\Tr[a^2(X_t^{-1}-I_d)]}d\beta_s\underset{t\rightarrow
  \tau}{\rightarrow}- \infty$, which is almost surely not possible. We deduce
that $\tau=+\infty,$ a.s.
\end{proof}

\subsection{Weak existence and uniqueness}\label{sec_weak_ex}

The weak uniqueness has already been obtained in
Proposition~\ref{prop_equ_ODE}, and we provide in this section
a constructive proof of a weak solution to the
SDE~\eqref{SDE_CORR}.   In the case $d=2$, this result is already well-known. In
fact, by Proposition~\ref{prop_el_jac}, the associated martingale problem is
the one of a one-dimensional Wright-Fisher process. For this SDE,
strong (and therefore weak) existence and uniqueness holds since the diffusion
coefficient is $1/2$-Hölderian.

Thus, we can assume without loss of generality that $d\ge 3$. The first step
is to focus on the existence when  $a=diag(a_1,\dots,a_d) \in \posm$,
$\alpha \ge d-2$, $\kp=\frac{\alpha}{2} a^2$ and $c=I_d$. By Proposition~\ref{WIS_MRC}, we know that weak existence
holds for $\corrps{d}{x}{\frac{\alpha}{2}e^1_d}{I_d}{e^1_d}$, and thus for
$\corrps{d}{x}{\frac{\alpha}{2}a_i^2 e^i_d}{I_d}{a_i e^i_d}$ for $i=1,\dots,d$
and $a_i\ge 0$, by using a permutation of the coordinates and a linear
time-scaling. Therefore, by using Proposition~\ref{sim_exact}, the
distribution $\corrpst{d}{x}{\frac{\alpha}{2}a^2}{I_d}{a}$ is also
well-defined on~$\corr$ for any $t\ge 0$.
Let $T>0$ be a time-horizon, $N \in \N^*$, and $t_i^N=iT/N$. We define
$(\hat{X}^N_t,t\in [0,T])$ as follows.
\begin{itemize}
  \item We set $\hat{X}^N_0=x$.
  \item For $i=0,\dots,N-1$, $\hat{X}^N_{t^N_{i+1}}$ is sampled 
according to the law
$\corrpstb{d}{\hat{X}^N_{t^N_{i}}}{\frac{\alpha}{2}a^2}{I_d}{a}{T/N}$, conditionally to $\hat{X}^N_{t^N_{i}}$.
\item For $t\in[t^N_i,t^N_{i+1}]$,
  $\hat{X}^N_t=\frac{t-t^N_i}{T/N}\hat{X}^N_{t^N_{i}}+\frac{t^N_{i+1}-t}{T/N}\hat{X}^N_{t^N_{i+1}}=\hat{X}^N_{t^N_{i}}+
  \frac{t-t^N_i}{T/N}(\hat{X}^N_{t^N_{i+1}}-\hat{X}^N_{t^N_{i}})$.
\end{itemize}
The process $(\hat{X}^N_t,t\in [0,T])$ is continuous and such that almost
surely, $\forall t \in [0,T], \hat{X}^N_t \in \corr$. We endow the set of
matrices with the norm $\|x\|=\left(\sum_{i,j=1}^d
  x_{i,j}^4\right)^{1/4}$. The sequence of processes $(\hat{X}^N_t,t\in [0,T])_{ N\ge 1}$
satisfies the following Kolmogorov tightness criterion.
\begin{lemma}\label{lemme_kolmo}
Under the assumptions above, there is a constant $K>0$ such that:
\begin{equation}\label{kolmo}\forall 0\le s\le t \le T,\  \E[\|\hat{X}^N_t-\hat{X}^N_s\|^4] \le K(t-s)^2.
\end{equation}
\end{lemma}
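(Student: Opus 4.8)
The plan is to establish the bound coordinate by coordinate and to reduce an arbitrary increment to increments taken between the grid points $t^N_i=iT/N$. Writing $\delta=T/N$ and using $\|x\|^4=\sum_{k,l}x_{k,l}^4$, it suffices to produce, for each pair $(k,l)$, a constant $C$ independent of $N$ with $\E[((\hat{X}^N_t)_{k,l}-(\hat{X}^N_s)_{k,l})^4]\le C(t-s)^2$, and then to sum the $d^2$ resulting estimates. For $s\in[t^N_i,t^N_{i+1}]$ and $t\in[t^N_j,t^N_{j+1}]$ with $i<j$, the explicit linear interpolation lets me write $\hat{X}^N_t-\hat{X}^N_s=A+B+C$, where $A=\tfrac{t-t^N_j}{\delta}(\hat{X}^N_{t^N_{j+1}}-\hat{X}^N_{t^N_j})$ and $C=\tfrac{t^N_{i+1}-s}{\delta}(\hat{X}^N_{t^N_{i+1}}-\hat{X}^N_{t^N_i})$ are fractions of a single grid increment, while $B=\hat{X}^N_{t^N_j}-\hat{X}^N_{t^N_{i+1}}=\sum_{m=i+1}^{j-1}(\hat{X}^N_{t^N_{m+1}}-\hat{X}^N_{t^N_m})$ telescopes into full grid increments. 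The elementary inequality $(a+b+c)^4\le 27(a^4+b^4+c^4)$ then reduces the task to bounding $\E[A_{k,l}^4]$, $\E[B_{k,l}^4]$ and $\E[C_{k,l}^4]$ separately; the case $i=j$ is a single fraction $\tfrac{t-s}{\delta}(\hat{X}^N_{t^N_{i+1}}-\hat{X}^N_{t^N_i})$ and is handled exactly like the $A$ term.

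The engine of the proof is a set of one-step conditional moment estimates, uniform in the starting matrix. Since $\hat{X}^N_{t^N_{m+1}}$ is sampled from $\corrpstb{d}{\hat{X}^N_{t^N_m}}{\frac{\alpha}{2}a^2}{I_d}{a}{\delta}$, conditionally on $\mathcal{F}_{t^N_m}=\sigma(\hat{X}^N_{t^N_0},\dots,\hat{X}^N_{t^N_m})$ the increment is distributed as $X^x_\delta-x$ for $x=\hat{X}^N_{t^N_m}\in\corr$, whose moments are exactly those produced by the recursion of Proposition~\ref{prop_equ_ODE}; in particular the one-step law obeys the polynomial expansion~\eqref{ito_polynom}, hence~\eqref{controle_erreur}. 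Applying the latter with $k=1$ to the polynomials $f(y)=(y_{k,l}-x_{k,l})^p$ for $p=1,2,4$ --- whose value vanishes at the starting point, and for $p=4$ whose first and second derivatives also vanish there so that $Lf(x)=0$ --- I obtain, with $\xi_m=(\hat{X}^N_{t^N_{m+1}}-\hat{X}^N_{t^N_m})_{k,l}$,
\[ \big|\E[\xi_m\mid\mathcal{F}_{t^N_m}]\big|\le C\delta,\qquad \E[\xi_m^2\mid\mathcal{F}_{t^N_m}]\le C\delta,\qquad \E[\xi_m^4\mid\mathcal{F}_{t^N_m}]\le C\delta^2. \]
The constants are uniform because $L^2 f$ is a polynomial whose coefficients stay bounded as $x_{k,l}$ ranges over $[-1,1]$, and $\corr$ is compact.

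With these estimates the fractional terms are immediate: keeping the prefactor, $\E[A_{k,l}^4]=(\tfrac{t-t^N_j}{\delta})^4\E[\xi_j^4]\le(\tfrac{t-t^N_j}{\delta})^4C\delta^2\le C(t-t^N_j)^2\le C(t-s)^2$, because $t-t^N_j\le\delta$ and $t^N_j\ge s$, and symmetrically for $C_{k,l}$. For the main term I split each increment into its predictable part $D_m=\E[\xi_m\mid\mathcal{F}_{t^N_m}]$ and a martingale increment $M_m=\xi_m-D_m$, so that $B_{k,l}=\sum_{m=i+1}^{j-1}(D_m+M_m)$. The drift sum is controlled pathwise, $\big|\sum_m D_m\big|\le(j-i-1)C\delta=C(t^N_j-t^N_{i+1})\le C(t-s)$, whence $\E[(\sum_m D_m)^4]\le C(t-s)^4\le CT^2(t-s)^2$. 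For the martingale sum, the discrete Burkholder--Davis--Gundy inequality gives
\[ \E\Big[\big(\sum_m M_m\big)^4\Big]\le C\,\E\Big[\big(\sum_m M_m^2\big)^2\Big]=C\sum_m\E[M_m^4]+2C\sum_{m<m'}\E[M_m^2M_{m'}^2], \]
and conditioning the cross term on $\mathcal{F}_{t^N_{m'}}$ turns each summand into $\E[M_m^2\,\E[M_{m'}^2\mid\mathcal{F}_{t^N_{m'}}]]\le C\delta\,\E[M_m^2]\le C\delta^2$ (using $\E[M_m^2\mid\mathcal{F}_{t^N_m}]\le C\delta$ and $\E[M_m^4\mid\mathcal{F}_{t^N_m}]\le C\delta^2$, which follow from the one-step bounds). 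Summing over the $O((j-i)^2)$ pairs and the $O(j-i)$ diagonal terms yields $\E[B_{k,l}^4]\le C((j-i-1)\delta)^2\le C(t-s)^2$. Collecting the three contributions and summing over $(k,l)$ proves~\eqref{kolmo}.

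The delicate point is precisely the estimate of $\E[B_{k,l}^4]$. A naive bound $\E[(\sum_m\xi_m)^4]\le(j-i)^3\sum_m\E[\xi_m^4]$ would carry a factor growing with the number of steps and would not be uniform in $N$; it is the martingale decomposition that rescues the scaling. Within it, the decisive contribution is not the diagonal $\sum_m\E[M_m^4]=O((j-i)\delta^2)$ but the off-diagonal sum $\sum_{m<m'}\E[M_m^2M_{m'}^2]=O((j-i)^2\delta^2)$, which reproduces exactly the required $(t-s)^2$ order. A last routine point is the uniformity of all constants in $N$ and in the random starting matrices, guaranteed by the fact that the one-step estimates hold uniformly over the compact set $\corr$.
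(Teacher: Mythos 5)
Your proof is correct, but it takes a genuinely different route from the paper. The paper's argument collapses the whole multi-step increment in one stroke: by Proposition~\ref{sim_exact} (the commutation of the $L_i$'s), the composition of the exact one-step kernels over $l-k$ steps \emph{is} the exact kernel of $\corrps{d}{\cdot}{\frac{\alpha}{2}a^2}{I_d}{a}$ at the elapsed time $t^N_l-t^N_k$; each entry of that law is the marginal of a one-dimensional Wright--Fisher diffusion with the parameters of~\eqref{SDE_elements}, and Proposition~\ref{moments_jacobi} gives $\E[(X^x_t-x)^4]\le Kt^2$ uniformly in the starting point directly (by the very same ``$f$ and $Lf$ vanish at the starting point, $L^2f$ is bounded on a compact'' observation you use). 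No telescoping, no martingale decomposition, no Burkholder--Davis--Gundy is needed; the off-grid case is then treated exactly as you do. Your version replaces this collapse by the generic Markov-chain machinery: Doob decomposition of the telescoped sum, one-step conditional moment bounds extracted from the polynomial expansion~\eqref{ito_polynom}/\eqref{controle_erreur} (legitimate here, since your test functions are polynomials and the one-step marginals are characterized by Proposition~\ref{prop_equ_ODE} via Proposition~\ref{sim_exact}, so no circularity with the not-yet-established existence of the full process), and discrete BDG with the off-diagonal sum $\sum_{m<m'}\E[M_m^2M_{m'}^2]$ carrying the $(t-s)^2$ scaling. What your approach buys is robustness: it only uses the one-step moment asymptotics of the kernel, not the fact that the composed kernels reproduce the exact law, so it would apply verbatim to the second chain of Section~\ref{sec_weak_ex} (the one built with the ODE flow $\xi$), for which the paper has to switch to a continuous-time BDG argument on the auxiliary SDE $\bar{X}^N$. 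What it costs is length and the need to track the uniformity of the constants in the starting matrix, which the paper's route gets for free from Proposition~\ref{moments_jacobi}.
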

\begin{proof}
  We first consider the case $s=t^N_k$ and $t=t^N_l$ for some $0\le k\le l\le
  N$.  Then, by Proposition~\ref{sim_exact}, we know that conditionally on
  $\hat{X}^N_{t^N_k}$, $\hat{X}^N_{t^N_l}$ follows the law of
  $\corrps{d}{\hat{X}^N_{t^N_k}}{\frac{\alpha}{2} a^2}{I_d}{a}$. In
  particular, each element~$(\hat{X}^N_{t^N_l})_{i,j}$ follows the marginal
  law of a
  one-dimensional Wright-Fisher process with parameters given by
  equation~\eqref{SDE_elements}. Thus, by Proposition~\ref{moments_jacobi}
  there is a constant still denoted by~$K>0$ such that for any $1\le i,j \le d$,
  $\E[((\hat{X}^N_{t^N_l})_{i,j}-(\hat{X}^N_{t^N_k})_{i,j})^4] \le K (t^N_l-t^N_k)^2$,
  and therefore
  $$\E[\| \hat{X}^N_{t^N_l}-\hat{X}^N_{t^N_k}\|^4] \le K d^2  (t^N_l-t^N_k)^2.$$
  Let us consider now $0\le s\le t \le T$. If there exists $0\le k\le N-1$,
  such that $s,t\in[t^N_k,t^N_{k+1}]$, then  $\E[\|\hat{X}^N_t-\hat{X}^N_s\|^4]=
  \left(\frac{s-t}{T/N}\right)^4 \E[\|
  \hat{X}^N_{t^N_{k+1}}-\hat{X}^N_{t^N_k}\|^4]\le K d^2 (s-t)^2$. Otherwise,
  there are $k\le l$ such that $t^N_k-T/N<s\le t^N_k \le t^N_l\le t<t^N_l+T/N$,
  and $\E[\|\hat{X}^N_t-\hat{X}^N_s\|^4]\le Kd^2[(t^N_k-s)^2+(t-t^N_l)^2+(t^N_l-t^N_k)^2]\le K'(t-s)^2$ for some constant $K'>0$. 
\end{proof}

The sequence $(\hat{X}^N_t,t\in [0,T])_{ N\ge 1}$ is tight, and we will show
that any limit of subsequence solves the martingale
problem~\eqref{PbMg}. More precisely, we will show that for any $n\in \N^*$, $0\le t_1\le\dots \le t_n\le
s\le t\le T$, $g_1,\dots, g_n \in
\mathcal{C}(\symm,\R)$, $f\in\mathcal{C}^\infty(\symm,\R)$ we have:
\begin{equation}\label{lim_pbmg}
\lim_{N\rightarrow +\infty} \E\left[ \prod_{i=1}^n g_i(\hat{X}^N_{t_i})\left(f(\hat{X}^N_t)-f(\hat{X}^N_s)-\int_s^t Lf(\hat{X}^N_u)du
  \right) \right]=0.
\end{equation}
We set $k^N(s)$ and $l^N(t)$ the indices such that $t^N_{k^N(s)}-T/N<s\le
t^N_{k^N(s)}$ and $t^N_{l^N(t)}\le t<t^N_{l^N(t)}+T/N$. Clearly, $f$ is
Lipschitz and $Lf$ is bounded on~$\corr$. It is therefore sufficient to show that
\begin{equation}\label{lim_pbmg2}
\lim_{N\rightarrow +\infty} \E\left[ \prod_{i=1}^n g_i(\hat{X}^N_{t_i})\left(f(\hat{X}^N_{t^N_{l^N(t)}})-f(\hat{X}^N_{t^N_{k^N(s)}})-\int_{t^N_{k^N(s)}}^{t^N_{l^N(t)}} Lf(\hat{X}^N_u)du
  \right) \right]=0.
\end{equation}
We decompose the expectation as the sum of 
\begin{equation}\label{PbMg_intermed1}
  \E\left[ \prod_{i=1}^n g_i(\hat{X}^N_{t_i}) \int_{t^N_{k^N(s)}}^{t^N_{l^N(t)}}
  (Lf(\hat{X}^N_{t^N_{l^N(u)}})- Lf(\hat{X}^N_u))du
\right]+\E\left[ \prod_{i=1}^n g_i(\hat{X}^N_{t_i}) \left(
    \sum_{j=k^N(s)}^{l^N(t)-1}
    f(\hat{X}^N_{t^N_{j+1}})-f(\hat{X}^N_{t^N_{j}})-\frac{T}{N}Lf(\hat{X}^N_{t^N_{j}}) \right)
\right]
\end{equation}
To get that the first expectation goes to~$0$, we claim that:
\begin{equation}\label{PbMg_intermed2}\E\left[ \int_{t^N_{k^N(s)}}^{t^N_{l^N(t)}} |\beta(u,\hat{X}^N_u)-
\beta(t^N_{l^N(u)},\hat{X}^N_{t^N_{l^N(u)}})|du \right]\rightarrow 0
\end{equation}
when $\beta:(t,x) \in [0,T] \times \corr \rightarrow \R$ is continuous. This
formulation will be reused later on. By Lemma~\ref{lemme_kolmo},
\eqref{PbMg_intermed2} holds when $\beta$ is Lipschitz with respect
to~$(t,x)$. If $\beta$ is not Lipschitz, we can still approximate it uniformly on the
compact set $[0,T]\times\corr$ by using for example the Stone-Weierstrass
theorem, which gives~\eqref{PbMg_intermed2}.

On the other hand,
we know by~\eqref{controle_erreur} that the second expectation  goes
to~$0$. To be precise,~\eqref{controle_erreur} has been obtained by using
It\^o's formula while we do not know yet at this stage that the process
$\corrps{d}{x}{\frac{\alpha}{2}a^2}{I_d}{a}$
exists. It is nevertheless true: \eqref{controle_erreur}~holds for
$\corrps{d}{x}{\frac{\alpha}{2}a_i^2e^i_d}{I_d}{e^i_d}$ since this process is
already known to be well
defined, and we get by using Proposition~\ref{sim_exact} and Proposition~\ref{prop_compo_schemas} that
$\exists K>0,
|f(\hat{X}^N_{t^N_{j+1}})-f(\hat{X}^N_{t^N_{j}})-(T/N)Lf(\hat{X}^N_{t^N_{j}})|\le
K/N^2$.
Thus, $(\hat{X}^N_t,t\in [0,T])_{ N\ge 1}$ converges in law to a solution
of the martingale problem~\eqref{PbMg}. This concludes the existence of
$\corrps{d}{x}{\frac{\alpha}{2}a^2}{I_d}{a}$.

Now, we are in position to show the existence of
$\corrps{d}{x}{\kp}{c}{a}$ under Assumption~\eqref{cond_weak_existence}. We denote by $\xi(t,x)$ the solution
to the linear ODE:
\begin{equation}\label{def_xi}
  \xi'(t,x)=\kp(c-x)+(c-x)\kp -\frac{d-2}{2}[a^2(I_d-x)+(I_d-x)a^2], \
  \xi(0,x)=x \in \corr .
\end{equation}
By Lemma~\ref{lemm_somme_ODE}, we know that $\forall t\ge 0,  \xi'(t,x)\in
\corr$. It is also easy to check that:
$$\exists K>0, \forall x\in \corr, \ \|\xi(t,x)-x\| \le Kt.$$
Now, we define $(\hat{X}^N_t,t\in [0,T])$ as follows.
\begin{itemize}
  \item We set $\hat{X}^N_0=x\in \corr$.
  \item For $i=0,\dots,N-1$, $\hat{X}^N_{t^N_{i+1}}$ is sampled 
according to $\corrpstb{d}{\xi(T/N,
  \hat{X}^N_{t^N_{i}})}{\frac{d-2}{2}a^2}{I_d}{a}{T/N}$, conditionally to
$\hat{X}^N_{t^N_{i}}$. More precisely, we denote by $(\bar{X}^N_t, t\in
    [t^N_i,t^N_{i+1}])$ a solution to
    \begin{eqnarray*}
      \bar{X}^N_t &=&\xi(T/N,\hat{X}^N_{t^N_i})+\frac{d-2}{2}\int_{t^N_i}^t \left[
      a^2(I_d-\bar{X}^N_u)+(I_d-\bar{X}^N_u)a^2\right]du \\
      && +\sum_{n=1}^da_n \int_{t^N_i}^t \left( \sqrt{ \bar{X}^N_u- \bar{X}^N_u e_d^{n} \bar{X}^N_u} dW_ue_d^{n}
  +e_d^{n} dW_u^T \sqrt{ \bar{X}^N_u- \bar{X}^N_u e_d^{n}  \bar{X}^N_u}\right),
\end{eqnarray*}
and we set $\hat{X}^N_{t^N_{i+1}}=\bar{X}^N_{t^N_{i+1}}$.

\item For $t\in[t^N_i,t^N_{i+1}]$,
  $\hat{X}^N_t=\hat{X}^N_{t^N_{i}}+
  \frac{t-t^N_i}{T/N}(\hat{X}^N_{t^N_{i+1}}-\hat{X}^N_{t^N_{i}})$.
\end{itemize}
We proceed similarly and show that the Kolmogorov
criterion~\eqref{kolmo} holds for $(\hat{X}^N_t,t\in[0,T])_{N\ge 1}$. As already shown in Lemma~\ref{lemme_kolmo}, it is
sufficient to check that this criterion holds for $s=t^N_k\le t=t^N_l$. We have
\begin{eqnarray*}
  \|\hat{X}^N_{t^N_l}-\hat{X}^N_{t^N_k} \|^4 &=&\|
  \sum_{j=k}^{l-1}\hat{X}^N_{t^N_{j+1}}-\xi(T/N,\hat{X}^N_{t^N_j})+\xi(T/N,\hat{X}^N_{t^N_j})-\hat{X}^N_{t^N_j}
  \|^4 \\
  &\le &2^3 \left( \|
  \sum_{j=k}^{l-1}\bar{X}^N_{t^N_{j+1}}-\bar{X}^N_{t^N_{j}}\|^4
  +(l-k)^4\left(\frac{KT}{N}\right)^4 \right).
\end{eqnarray*}
Since $(\bar{X}^N_t,t\in[0,T])$ is valued in the compact set~$\corr$, we get
easily by using Burkholder-Davis-Gundy inequality that $\E[ \|
  \sum_{j=k}^{l-1}\bar{X}^N_{t^N_{j+1}}-\bar{X}^N_{t^N_{j}}\|^4]\le K
  (t_l-t_k)^2$ and then $\E [\|\hat{X}^N_{t^N_l}-\hat{X}^N_{t^N_k} \|^4]\le
  K(t_l-t_k)^2$ for some constant $K>0$ that does not depend on~$N$. 

Thus, $(\hat{X}^N_t,t\in[0,T])_{N\ge 1}$ satisfies the Kolmogorov criterion
and is tight. It remains to show that any subsequence converges in law to the
solution of the martingale problem~\eqref{PbMg}. We proceed as before and
reuse the same notations. From~\eqref{PbMg_intermed1}, it is sufficient to
show that
$$ \exists K>0,
|f(\hat{X}^N_{t^N_{j+1}})-f(\hat{X}^N_{t^N_{j}})-(T/N)Lf(\hat{X}^N_{t^N_{j}})|\le
K/N^2.$$
Once again, we cannot directly use~\eqref{controle_erreur} since we do not
know at this stage that $\corrps{d}{x}{\kp}{c}{a}$
exists. We have $L=L^{\xi}+\tilde{L}$, where $L^{\xi}$ is the operator
associated to~$\xi(t,x)$ and $\tilde{L}$ is the infinitesimal generator of
$\corrps{d}{x}{\frac{d-2}{2}a^2}{I_d}{a}$. We have: $\exists K>0,\forall x \in
\corr, |f(\xi(t,x))-f(x)-tL^{\xi}f(x)|\le Kt^2$, and~\eqref{controle_erreur}
holds for~$\tilde{L}$. By Proposition~\ref{prop_compo_schemas}, we get: $\exists K>0,\forall x \in
\corr, |f(\xi(t,x))-f(x)-tf(x)|\le Kt^2$, which gives~\eqref{lim_pbmg} and
concludes the proof of the weak existence.

\begin{theorem}\label{thm_we}
Under assumption~\eqref{cond_weak_existence}, there is a unique weak solution
$(X_t,t\ge 0)$
to SDE~\eqref{SDE_CORR} such that $\Px (\forall t \ge 0, X_t \in \corr)=1$.
\end{theorem}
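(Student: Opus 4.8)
The plan is to separate the statement into its two halves --- uniqueness in law and existence --- and to notice that uniqueness is essentially already available. Any weak solution of \eqref{SDE_CORR} taking values in $\corr$ solves the martingale problem \eqref{PbMg}: applying It\^o's formula to $f\in\mathcal{C}^2(\symm,\R)$ and using that $L$ of \eqref{EQUATION_OPERATOR_C} is the associated generator shows that $f(X_t)-f(X_s)-\int_s^t Lf(X_u)\,du$ has zero conditional expectation. Corollary~\ref{weak_uniq} then asserts that all solutions of \eqref{PbMg} share the same law, so uniqueness follows at once. The whole difficulty thus reduces to exhibiting one $\corr$-valued solution.

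For existence I would first dispose of $d=2$: by Proposition~\ref{prop_el_jac} the single nontrivial coordinate solves the one-dimensional Wright-Fisher SDE \eqref{WF_1D}, whose diffusion coefficient is $\tfrac12$-H\"olderian, so weak existence on $[-1,1]$ is classical. For $d\ge 3$ the strategy is a two-stage approximation-and-tightness argument. In the first stage I treat the canonical parameters $\kp=\tfrac{\alpha}{2} a^2$, $c=I_d$, $\alpha\ge d-2$, where one can \emph{exactly} sample the one-step transition: Proposition~\ref{WIS_MRC} provides weak existence of the elementary laws $\corrps{d}{x}{\frac{\alpha}{2}a_i^2e^i_d}{I_d}{a_i e^i_d}$, and Theorem~\ref{theorem_spliopper} together with Proposition~\ref{sim_exact} builds $\corrps{d}{x}{\frac{\alpha}{2}a^2}{I_d}{a}$ by composition. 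I then define the piecewise-linearly interpolated Markov chain $\hat X^N$ on a grid of mesh $T/N$, which lives in the compact set $\corr$ by construction. The Kolmogorov bound $\E[\|\hat X^N_t-\hat X^N_s\|^4]\le K(t-s)^2$ of Lemma~\ref{lemme_kolmo}, obtained by reducing to grid points and using the fourth-moment control of the one-dimensional marginals, yields tightness; any subsequential limit is then continuous and $\corr$-valued.

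Identifying such a limit as a solution of \eqref{PbMg} is the crux. I would split the defining expectation as in \eqref{PbMg_intermed1}: the drift--Riemann-sum error vanishes via the continuity claim \eqref{PbMg_intermed2} (the Lipschitz case following from Lemma~\ref{lemme_kolmo}, the general case by Stone--Weierstrass on the compact $[0,T]\times\corr$), while the one-step consistency error is governed by the second-order estimate $|f(\hat X^N_{t^N_{j+1}})-f(\hat X^N_{t^N_j})-\tfrac{T}{N}Lf(\hat X^N_{t^N_j})|\le K/N^2$, itself coming from \eqref{controle_erreur} and the composition accuracy of Proposition~\ref{prop_compo_schemas}. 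In the second stage I pass to general $(\kp,c,a)$ satisfying \eqref{cond_weak_existence} by absorbing the excess drift into the deterministic flow $\xi(t,x)$ of \eqref{def_xi}, which keeps $\corr$ invariant by Lemma~\ref{lemm_somme_ODE} and obeys $\|\xi(t,x)-x\|\le Kt$. Writing $L=L^{\xi}+\tilde L$ with $\tilde L$ the generator of the already-constructed law $\corrps{d}{x}{\frac{d-2}{2}a^2}{I_d}{a}$, I compose one flow step with one exact diffusion step to define $\hat X^N$; a Burkholder--Davis--Gundy argument on $\corr$ again gives the Kolmogorov bound, and the consistency error splits into a $Kt^2$ term from $\xi$ and the canonical-diffusion term, giving \eqref{lim_pbmg}.

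The hard part is precisely this limit identification, and more specifically the consistency estimate: the natural control \eqref{controle_erreur} was derived by It\^o's formula for a process whose existence is exactly what we are trying to establish, so it cannot be invoked circularly. The resolution I would carry out is to bootstrap it from the elementary laws $\corrps{d}{x}{\frac{\alpha}{2}a_i^2e^i_d}{I_d}{a_i e^i_d}$, for which existence is genuinely known, and then propagate the second-order local accuracy through the composition via Proposition~\ref{prop_compo_schemas}, rather than assume any a priori regularity of the target diffusion. Once this estimate is secured, assembling the $d=2$ case, the canonical construction, and the flow-split construction produces a weak solution, which combined with Corollary~\ref{weak_uniq} completes the proof.
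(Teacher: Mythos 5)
Your proposal is correct and follows essentially the same route as the paper: weak uniqueness via Corollary~\ref{weak_uniq}, the $d=2$ reduction to a one-dimensional Wright--Fisher diffusion, the canonical case $\kp=\frac{\alpha}{2}a^2$, $c=I_d$ built from Proposition~\ref{WIS_MRC} and Proposition~\ref{sim_exact}, tightness of the interpolated chain via Lemma~\ref{lemme_kolmo}, identification of the limit through the decomposition~\eqref{PbMg_intermed1}--\eqref{PbMg_intermed2}, and the passage to general parameters by splitting off the flow~$\xi$ of~\eqref{def_xi}. You also correctly identify and resolve the one delicate point --- that~\eqref{controle_erreur} cannot be invoked circularly and must be bootstrapped from the elementary laws via Proposition~\ref{prop_compo_schemas} --- exactly as the paper does.
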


\begin{remark} Assumption~\eqref{cond_weak_existence} has only be used in the
  proof of Theorem~\ref{thm_we} to ensure that $\xi$ defined by~\eqref{def_xi} satisfies
  \begin{equation}\label{cond_weak_existence_bis}
    \forall t\ge 0 ,x \in \corr, \   \xi(t,x) \in \corr.
\end{equation}
As pointed by Remark~\ref{rem_CNS}, this is a sufficient but
  not necessary condition. In fact, a weak solution of~\eqref{SDE_CORR} exists
  under~\eqref{cond_weak_existence_bis}, which is more general but less
  tractable condition than~\eqref{cond_weak_existence}.

\end{remark}

\subsection{Extension to non-constant coefficients}

In this paragraph, we consider the SDE~\eqref{SDE_CORR} with time and space
dependent coefficients:
\begin{eqnarray}\label{SDE_CORR_coefgen}
X_t &=& x + \int_0^t \left[ \kp(s,X_s) (c(s,X_s)-X_s) + (c(s,X_s)-X_s)
  \kp(s,X_s) \right] ds
\\&&+ \sum_{n=1}^da_n(s,X_s) \int_0^t \left( \sqrt{X_s-X_s e_d^{n} X_s} dW_se_d^{n}
  +e_d^{n} dW_s^T \sqrt{X_s-X_s e_d^{n} X_s} \right), \nonumber
\end{eqnarray}
where $\kp(t,x)$, $c(t,x)$ and $a(t,x)$ are measurable functions such that for
any $t\ge 0$ and $x \in \corr$,
$\kp(t,x)$ and $a(t,x)$ are nonnegative diagonal matrices and $c(t,x)\in
\corr$. Then, under the following assumption
\begin{eqnarray} \label{cond_strong_existence_gen}
&&\hspace{-0.5cm}  \forall T>0, \sup_{t\in [0,T]}|\kp(t,I_d)|<\infty, \  \forall t\in[0,T], \exists K>0,
  \|f(t,x)-f(t,y)\|\le K \|x-y\| \text{ for } f\in \{\kp,c,a\} ,\nonumber \\
&&\hspace{-0.5cm}  \forall t \ge 0, x \in \corr, \ \kp(t,x) c(t,x)+c(t,x) \kp(t,x) -d a^2(t,x) 
\in \posm \text{ and } X_0 \in \corri,
\end{eqnarray}
strong existence and uniqueness holds for~\eqref{SDE_CORR_coefgen}. To get
this result, we observe that ${\bf p}(x)$ is Lipschitz on $\{ x \in \posm \
s.t. \
 \forall 1\le i \le d, 1/2 \le x_{i,i} \le 2 \}$. Therefore,
 the SDE $X_t = x + \int_0^t \big( \kp(s,{\bf p}(X_s)) [c(s,{\bf p}(X_s))-X_s]$ $+
   [c(s,{\bf p}(X_s))-X_s] \kp(s,{\bf p}(X_s)) \big) ds+ \sum_{n=1}^d
 \int_0^t a_n(s,{\bf p}(X_s)) \left( f^n(X_s) dW_se_d^{n}
  +e_d^{n} dW_s^T f^n(X_s) \right) $ has a unique solution up to time
$ \tau= \inf\{t \ge 0, X_t \not \in \dpos \text{ or } \exists i \in
\{1,\dots,d\}, \Subm{X_t}{i}-X_t^i(X_t^i)^T \not \in
\mathcal{S}_{d-1}^{+,*}(\R) \text{ or } (X_t)_{i,i} \not \in [1/2,2] \},$ and we proceed then
exactly as for the proof of Theorem~\ref{thm_strong}.

Also, weak existence holds for~\eqref{SDE_CORR_coefgen} if we assume that:
\begin{eqnarray} \label{cond_weak_existence_gen}
&&  \kp(t,x), c(t,x), a(t,x) \text{ are continuous on } \R_+ \times \corr
\nonumber \\
&& \forall t \ge 0, x \in \corr, \ \kp(t,x) c(t,x)+c(t,x) \kp(t,x) -(d-2) a^2(t,x) 
\in \posm.
\end{eqnarray}
To get this result, we proceed as in Section~\ref{sec_weak_ex} and define
$(\hat{X}^N_t,t\in[0,T])$ as follows.
\begin{itemize}
  \item We set $\hat{X}^N_0=x$.
  \item For $i=0,\dots,N-1$,  we denote by $(\bar{X}^N_t, t\in
    [t^N_i,t^N_{i+1}])$ a solution to
    \begin{eqnarray*}
      \bar{X}^N_t &=&\hat{X}^N_{t^N_i}+\int_{t^N_i}^t  \left[
      \kp(t^N_i,\hat{X}^N_{t^N_i}) (c(t^N_i,\hat{X}^N_{t^N_i})-\bar{X}^N_u)+(c(t^N_i,\hat{X}^N_{t^N_i})-\bar{X}^N_u)\kp(t^N_i,\hat{X}^N_{t^N_i})\right]du \\
      && +\sum_{n=1}^da_n(t^N_i,\hat{X}^N_{t^N_i}) \int_{t^N_i}^t \left( \sqrt{ \bar{X}^N_u- \bar{X}^N_u e_d^{n} \bar{X}^N_u} dW_ue_d^{n}
  +e_d^{n} dW_u^T \sqrt{ \bar{X}^N_u- \bar{X}^N_u e_d^{n}  \bar{X}^N_u}\right),
\end{eqnarray*}
and we set $\hat{X}^N_{t^N_{i+1}}=\bar{X}^N_{t^N_{i+1}}$.
\item For $t\in[t^N_i,t^N_{i+1}]$,
  $\hat{X}^N_t=\hat{X}^N_{t^N_{i}}+
  \frac{t-t^N_i}{T/N}(\hat{X}^N_{t^N_{i+1}}-\hat{X}^N_{t^N_{i}})$.
\end{itemize}
We can check that $(\hat{X}^N_t,t\in[0,T])$ satisfies the Kolmogorov criterion
and is tight. To obtain~\eqref{lim_pbmg}, we proceed as in
Section~\ref{sec_weak_ex}. More precisely, let us denote for $u\in [0,T]$
$L_u$ the infinitesimal generator of~\eqref{SDE_CORR_coefgen}, and $\hat{L}_u$
the infinitesimal generator with frozen coefficient at
$(t_i,\hat{X}^N_{t^N_{i}})$ when $u\in[t^N_{i},t^N_{i+1})$.
In~\eqref{PbMg_intermed1}, the first term $
 \E\left[ \prod_{i=1}^n g_i(\hat{X}^N_{t_i}) \int_{t^N_{k^N(s)}}^{t^N_{l^N(t)}}
  (\hat{L}_uf(\hat{X}^N_{t^N_{l^N(u)}})- L_uf(\hat{X}^N_u))du
\right]\rightarrow 0$ thanks to~\eqref{PbMg_intermed2}, and the second term
goes to~$0$ as before.

To sum up, it is rather easy to extend our results of strong existence and uniqueness, and
weak existence when the coefficients are not constant. However, we can no longer get explicit formulas
for the moments in this case. Thus, if the coefficients
satisfy~\eqref{cond_weak_existence_gen} but
not~\eqref{cond_strong_existence_gen}, the weak uniqueness remains an open  
question, which is beyond the scope of this paper. 

\subsection{A Girsanov Theorem}

In this section, we will use an alternative writing of the
SDE~\eqref{SDE_CORR}. In fact, by Lemma~\eqref{calcul_crochet}, the SDE
\begin{equation}\label{SDE_CORR_alt}
X_t = x + \int_0^t \left( \kp (c-X_s) + (c-X_s) \kp \right) ds+ \sum_{n=1}^da_n \int_0^t \left( h_n(X_s) dW_se_d^{n}
  +e_d^{n} dW_s^T h_n(X_s)^T \right),
\end{equation}
is associated to the same martingale problem as~$\corrp$ for any functions
$h_n:\symm\rightarrow \genm$ such that $h_n(x)h_n(x)^T=x-x e^n_d x$ for $x \in \corr$. In this
paper, we have arbitrarily decided to take the symmetric version
$h_n(x)=\sqrt{x-x e^n_d x}$. Obviously, other choices are possible. An
interesting choice is the following one:
\begin{equation}\label{choix_hn}x \in \posm,
h_n(x)=\sqrt{x}\sqrt{I_d-\sqrt{x}e^n_d\sqrt{x}}=\sqrt{x}(I_d-\sqrt{x}e^n_d\sqrt{x}),
\end{equation}
where the second equality comes from
Lemma~\ref{lemma_correlmatrix2}. Obviously, our weak existence and uniqueness
results (Theorem~\ref{thm_we}) applies to~\eqref{SDE_CORR_alt} since
\eqref{SDE_CORR} and~\eqref{SDE_CORR_alt} solve the same martingale problem. However, we have
to show again that strong uniqueness holds for~\eqref{SDE_CORR_alt} under
Assumption~\eqref{cond_strong_existence} and $x\in \corri$. The proof is in fact very similar to
Theorem~\ref{thm_strong}. We know that there is one strong solution to 
$ X_t = x + \int_0^t \left( \kp (c-X_s) + (c-X_s) \kp \right) ds$ $+ \sum_{n=1}^da_n \int_0^t \left(\sqrt{X_s}(I_d-\sqrt{X_s}e^n_d\sqrt{X_s})   dW_se_d^{n}
  +e_d^{n}(I_d-\sqrt{X_s}e^n_d\sqrt{X_s}) \sqrt{X_s} dW_s^T  \right) $
up to time $\tau=\inf \{t\ge 0, X_t \not \in \posm \}$. On $t\in [0,\tau)$,
there are real Brownian motions $\beta^i_t$ such
that $$d(X_t)_{i,i}=2\kp_{i}(1-(X_t)_{i,i})dt+2 a_i
(1-(X_t)_{i,i})\sqrt{(X_t)_{i,i}}d\beta^i_t,$$
which gives $(X_t)_{i,i}=1$ by strong uniqueness of this SDE. We then conclude
as in the proof of Theorem~\ref{thm_strong} and get in particular that $X_t \in \corri$ for
$t\ge 0$.

We consider now a solution to~\eqref{SDE_CORR_alt}, and a progressively measurable process $(H_s)_{s \geq 0},$ valued in $\genm,$ such that 
\vspace{-3mm}\begin{equation}\label{mg_exp}
 \mathcal{E}_t^H = \exp\left(\int_0^t \Tr(H_s^TdW_s)-\frac{1}{2}\int_0^t \Tr(H_s^TH_s)ds\right)
\end{equation}
 is a martingale. For a given time horizon~$T>0$, we
denote by $\Q$ the probability measure, if it exists, defined as 
\vspace{-0.5mm}\begin{equation}\label{def_Q} 
 \frac{d \mathbb{Q}}{d \Px} \Big|_{\mathcal F_T} =  \mathcal{E}_T^H,
\end{equation}
where $(\mathcal F_t)_{t \geq 0}$ is the natural filtration of the process $(X_t)_{t \geq 0}.$ 
Then, $W^{\Q}_t = W_t - \int_0^t H_s ds$ is a $d\times d$ Brownian matrix
under~$\Q$, and the process $(X_t)_{t \geq 0}$ satisfies
\begin{eqnarray}\label{SDE_under_new_measure}
X_t&=& x+ \int_0^t \left( \kappa(c-X_s) + (c-X_s)\kappa  \right)ds\\ \nonumber
&& +   \int_0^t \left(\sum_{i=1}^da_i\left\lbrace \sqrt{X_s}\left[{I_d-\sqrt{X_s}e_d^i\sqrt{X_s}}\right]H_se_d^i + e_d^iH_s^T\left[{I_d-\sqrt{X_s}e_d^i\sqrt{X_s}}\right]\sqrt{X_s} \right \rbrace \right)ds\\ \nonumber
&& + \sum_{i=1}^da_i\int_0^t \left( \sqrt{X_s}\left[{I_d-\sqrt{X_s}e_d^i\sqrt{X_s}}\right]dW^{\Q}_se_d^i + e_d^id(W^{\Q}_s)^T\left[{I_d-\sqrt{X_s}e_d^i\sqrt{X_s}}\right]\sqrt{X_s} \right). \nonumber
\end{eqnarray}
We present now changes of probability such that $(X_t,t\ge 0)$ is also a
mean-reverting correlation process under~$\Q$.

\begin{proposition}
We assume~\eqref{cond_weak_existence}. We consider $(X_t,t\ge 0)\sim \corrp$
and take $H_t=\sqrt{X_t} \lambda$, with
$\lambda=diag(\lambda_1,\dots,\lambda_d) \in \symm$. Then, \eqref{mg_exp} is a
martingale and $(X_t,t\ge 0)\sim \corrp$ under $\Q$.
\end{proposition}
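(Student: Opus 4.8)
The plan is to verify the two assertions separately: that $\mathcal{E}^H$ in \eqref{mg_exp} is a genuine martingale, so that $\Q$ in \eqref{def_Q} is well defined on $\mathcal{F}_T$, and then that $(X_t,t\ge 0)$ solves the same SDE under $\Q$.

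First I would establish the martingale property by Novikov's criterion. Since $(X_t,t\ge 0)$ is valued in $\corr$, each $X_t$ has unit diagonal, hence $\Tr(X_t)=d$; being symmetric positive semidefinite, its spectrum lies in $[0,d]$, so $\sqrt{X_t}$ has operator norm at most $\sqrt{d}$. As $\lambda$ is a fixed diagonal matrix, the integrand $H_t=\sqrt{X_t}\lambda$ is uniformly bounded, whence $\Tr(H_s^TH_s)$ is bounded on $[0,T]$ and $\E[\exp(\frac{1}{2}\int_0^T\Tr(H_s^TH_s)\,ds)]<\infty$. Novikov's condition therefore holds for every horizon $T$, giving that $\mathcal{E}^H$ is a true martingale and that $\Q$ is a probability measure.

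Next I would substitute $H_s=\sqrt{X_s}\lambda$ into the dynamics \eqref{SDE_under_new_measure} furnished by Girsanov's theorem and show that the Girsanov drift correction vanishes identically. I expect this cancellation to be the heart of the argument. Writing $Y=\sqrt{X_s}$ and using that $\lambda$ is diagonal, so $\lambda e_d^i=\lambda_i e_d^i$, the $i$-th summand of the correction reduces to $a_i\lambda_i\big(X_s e_d^i - X_s e_d^i X_s e_d^i\big)$ together with its transpose. Because $X_s\in\corr$ has unit diagonal entries, $e_d^i X_s e_d^i=(X_s)_{i,i}\,e_d^i=e_d^i$, whence $X_s e_d^i X_s e_d^i=X_s e_d^i$ and the bracket is zero. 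This is precisely where the hypotheses that $\lambda$ be diagonal and that $X_s$ be a correlation matrix (unit diagonal) are both used; the computation is otherwise routine, so I do not anticipate a genuine obstacle beyond organizing this cancellation cleanly.

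Consequently, under $\Q$ the process $(X_t,t\ge 0)$ satisfies \eqref{SDE_CORR_alt} with the diffusion coefficient \eqref{choix_hn} and driving Brownian matrix $W^{\Q}$, the drift $\kp(c-X_s)+(c-X_s)\kp$ being unchanged. Since this SDE is associated with the same martingale problem as $\corrp$, weak uniqueness (Corollary~\ref{weak_uniq}), which is available under assumption \eqref{cond_weak_existence} in force throughout, yields $(X_t,t\ge 0)\sim\corrp$ under $\Q$, as claimed.
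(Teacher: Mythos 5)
Your proposal is correct and follows essentially the same route as the paper: boundedness of $X_t$ on the compact set $\corr$ gives the martingale property of $\mathcal{E}^H$, and the Girsanov drift correction vanishes through the identity $e_d^i X_s e_d^i = (X_s)_{i,i}e_d^i = e_d^i$, which is exactly the cancellation $\lambda_i(X_s-X_se_d^iX_s)e_d^i=0$ the paper uses. The concluding appeal to weak uniqueness of the martingale problem is the same identification the paper leaves implicit.
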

\begin{proof}
Since the process $(X_t,t\ge 0)$ is bounded,  \eqref{mg_exp} is clearly a
martingale.  For $y \in \corr$, $e^i_d y e^i_d=e^i_d$ and  we have
$\left(\sqrt{y}{(I_d-\sqrt{y}e_d^i\sqrt{y})}\right)\sqrt{y} \lambda e_d^i=
\lambda_i {(y-ye_d^iy)} e_d^i=0$, which gives the result by~\eqref{SDE_under_new_measure}.
\end{proof}

\begin{proposition}\label{prop_Girsanov}
Let $x\in \corri$. We consider $(X_t,t\ge 0)\sim
\corrps{d}{x}{\kp^1}{c^1}{a}$ and assume that $\kp^1,c^1,a$
satisfy~\eqref{cond_strong_existence}. Let $c^2\in \corr$ and $\kp^2$ be a
real diagonal matrix such that $a_i=0\implies \kp^2_i=0$ and $\kp^1
c^1+c^1\kp^1+ \kp^2 c^2+c^2\kp^2 -da^2 \in \posm$. We set:
$$\lambda=diag(\lambda_1,\dots,\lambda_d) \text{ with }
\lambda_i=\begin{cases}\kp^2_i/a_i \text{ if } a_i>0 \\
  0 \text{ otherwise } \end{cases} \text{ and }H_t=(\sqrt{X_t})^{-1} c^2\lambda
$$
This defines with~\eqref{mg_exp} and~\eqref{def_Q} a change of probability such that
$$(X_t,t\ge 0)\sim \corrp \text{ under }\Q,$$
where $\kp=diag(\kp_1,\dots,\kp_d)\in \posm$ and $c\in \corr$ are defined as
in Lemma~\ref{lemm_somme_ODE}.
\end{proposition}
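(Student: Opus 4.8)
The plan is to read off the dynamics of $(X_t)$ under $\Q$ from the Girsanov identity \eqref{SDE_under_new_measure}, which is available precisely because $(X_t)$ solves \eqref{SDE_CORR_alt} with the choice \eqref{choix_hn} of diffusion coefficient. Since the change of measure modifies only the finite-variation part, under $\Q$ the process keeps exactly the diffusion of $\corrps{d}{x}{\kp}{c}{a}$; the whole point is therefore to check that the extra drift generated by $H_t=(\sqrt{X_t})^{-1}c^2\lambda$ is a second mean-reverting drift $\kp^2(c^2-X_t)+(c^2-X_t)\kp^2$, and then to combine it with the original drift through Lemma~\ref{lemm_somme_ODE}.

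First I would substitute $H_s=(\sqrt{X_s})^{-1}c^2\lambda$ into the drift correction of \eqref{SDE_under_new_measure}. The key simplifications are the algebraic identity $\sqrt{X_s}\,[I_d-\sqrt{X_s}e^i_d\sqrt{X_s}]\,(\sqrt{X_s})^{-1}=I_d-X_se^i_d$ (legitimate since $X_s\in\corri$), together with $e^i_d\lambda=\lambda_ie^i_d$, the relation $a_i\lambda_i=\kp^2_i$ built into the definition of $\lambda$ (which also absorbs the degenerate case $a_i=0\Rightarrow\kp^2_i=0$), and the fact that $e^i_dc^2e^i_d=(c^2)_{i,i}e^i_d=e^i_d$ because $c^2\in\corr$. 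Summing over $i$ with $\kp^2=\sum_i\kp^2_ie^i_d$ then collapses the correction to $(c^2-X_s)\kp^2+\kp^2(c^2-X_s)$. Adding the original drift gives a total $\Q$-drift
\[
\kp^1(c^1-X_s)+(c^1-X_s)\kp^1+\kp^2(c^2-X_s)+(c^2-X_s)\kp^2,
\]
which by Lemma~\ref{lemm_somme_ODE} equals $\kp(c-X_s)+(c-X_s)\kp$ for the stated $\kp\in\posm$, $c\in\corr$. Reading the diagonal of $\kp^1c^1+c^1\kp^1+\kp^2c^2+c^2\kp^2-da^2\in\posm$ moreover gives $\kp_i=\kp^1_i+\kp^2_i\ge da_i^2/2\ge0$, so $\kp$ is genuinely nonnegative and $(\kp,c,a)$ satisfies \eqref{cond_strong_existence}.

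The main obstacle is the martingale property of $\mathcal{E}^H$ in \eqref{mg_exp}: unlike the previous proposition where $H$ was bounded, here $H_s$ contains $(\sqrt{X_s})^{-1}$, which is unbounded on $[0,T]$ and rules out a direct Novikov argument. I would handle this by localization, following Bru \cite{Bru}. Introduce $\tau_n=\inf\{t\ge0:\det(X_t)\le 1/n\}$; by Theorem~\ref{thm_strong} applied to $(\kp^1,c^1,a)$ we have $X_t\in\corri$ for all $t$, so $\tau_n\uparrow\infty$ $\Px$-a.s. On $[0,\tau_n]$ the integrand $H$ is bounded, hence $\mathcal{E}^H_{\cdot\wedge\tau_n}$ is a true martingale and defines a probability $\Q_n$; by the drift computation above, under $\Q_n$ the stopped process $(X_{t\wedge\tau_n})$ is a mean-reverting correlation process with parameters $(\kp,c,a)$ stopped at $\tau_n$. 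To upgrade $\mathcal{E}^H$ to a genuine martingale it then suffices to show $\Q_n(\tau_n\le T)=\E_\Px[\mathcal{E}^H_{T\wedge\tau_n}\indi{\tau_n\le T}]\to0$. Since $(\kp,c,a)$ satisfies \eqref{cond_strong_existence}, I would rerun the McKean argument of Theorem~\ref{thm_strong} under $\Q_n$: the process $Y_t=\log\det(X_t)+\Tr(2\kp-a^2)t$ dominates the local martingale $2\int_0^t\sqrt{\Tr[a^2(X_s^{-1}-I_d)]}d\beta_s$, so $\det(X_t)$ cannot approach $0$, forcing $\Q_n(\tau_n\le T)\to0$. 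This yields $\E_\Px[\mathcal{E}^H_T]=1$, so $\mathcal{E}^H$ is a martingale and $\Q$ is well defined.

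With $\Q$ legitimately in hand, Girsanov gives \eqref{SDE_under_new_measure} on all of $[0,T]$, and the drift identification shows that $(X_t)$ solves the martingale problem of $\corrp$ under $\Q$; weak uniqueness (Corollary~\ref{weak_uniq} and Theorem~\ref{thm_we}) then identifies the law, i.e.\ $(X_t,t\ge0)\sim\corrp$ under $\Q$. The only genuinely delicate step is the martingale property of $\mathcal{E}^H$; the drift algebra and the combination of the two mean-reverting terms are routine once the identities above are recorded.
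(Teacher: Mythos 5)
Your proposal is correct and follows essentially the same route as the paper: the same algebraic collapse of the Girsanov drift correction to $\kp^2(c^2-X_s)+(c^2-X_s)\kp^2$ followed by Lemma~\ref{lemm_somme_ODE}, and the same localization of $\mathcal{E}^H$ by the hitting times of $\{\det(X_t)\le\varepsilon\}$ (the Rydberg / Cheridito--Filipovic--Yor argument), concluding because the target process with parameters $(\kp,c,a)$ satisfies~\eqref{cond_strong_existence} and hence never approaches the boundary. The only cosmetic difference is that you identify the law of the stopped process under $\Q_n$ directly, whereas the paper couples $X$ pathwise with the strong solution $\bar{X}^\varepsilon$ driven by $W^\varepsilon$; both yield the same limit $\Q_n(\tau_n\le T)\to 0$.
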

\begin{proof}
We have $a^1_i\sqrt{y}(I_d-\sqrt{y}e^i_d \sqrt{y})\sqrt{y}^{-1}c^2 \lambda e^i_d=
\kp^2_i(c^2 e^i_d -ye^i_dc^2e^i_d)= \kp^2_i(c^2-y)e^i_d$, which gives the
claim by~\eqref{SDE_under_new_measure}, provided that $\E[\mathcal{E}^H_T]=1
$ for any $T>0$. We prove now this martingale property with an argument already used in
Rydberg~\cite{Rydberg} and Cheridito, Filipovic, and Yor~(\cite{Yor5}, Theorem 2.4).

Let $(X_t,t\ge 0)$ (resp. $(\bar{X}_t,t\ge 0)$) be a strong solution
to~\eqref{SDE_CORR_alt} with parameters $\kp^1,\ c^1,\ a$ (resp. $\kp,\ c,\
a$) and Brownian motion~$(W_t,t\ge 0)$.
For $\varepsilon>0$, we define:
$$\tau^\varepsilon=\inf\{t\ge 0, \det(X_t)\le\varepsilon \}, \ H_t^\varepsilon=\indi{\tau^\varepsilon \ge
  t}(\sqrt{X_t})^{-1} c^2\lambda.$$ We have $\lim_{\varepsilon \rightarrow
  0^+}\tau^\varepsilon=+\infty,$ a.s. and therefore
$$\E[\mathcal{E}_T^H ]=\lim_{\varepsilon \rightarrow 0}\E[\mathcal{E}_T^H
\indi{\tau^\varepsilon \ge T} ].$$
On the other hand, we have $\E[\mathcal{E}_T^H \indi{\tau^\varepsilon \ge T} ]=\E[\mathcal{E}_T^{H^\varepsilon}
\indi{\tau^\varepsilon \ge T} ]$. We clearly have
$\E[\mathcal{E}_T^{H^\varepsilon}]=1$ and
$W^\varepsilon_t=W_t-\int_0^tH^\varepsilon_sds$ is a Brownian motion under
$\frac{d\Q^\varepsilon}{d\Px}=\mathcal{E}_T^{H^\varepsilon}$. Let
$(\bar{X}^\varepsilon_t,t\in[ 0,T])$ be the strong solution
to~\eqref{SDE_CORR_alt} with the Brownian motion~$W^\varepsilon_t$ and
parameters $\kp,\ c,\ a$. By construction, $\bar{X}^\varepsilon_t=X_t$ for $0\le t\le T \wedge
\tau^\varepsilon$ and thus $\indi{\tau^\varepsilon\ge
  T}=\indi{\bar{\tau}^\varepsilon\ge T}$, where $\bar{\tau}^\varepsilon=\inf\{t\ge 0, \det(\bar{X}^\varepsilon_t)\le
\varepsilon \}$. We deduce that $\E[\mathcal{E}_T^{H^\varepsilon}
\indi{\tau^\varepsilon \ge T} ]=\Q^\varepsilon(\bar{\tau}^\varepsilon\ge
T)=\Px(\inf\{t\ge 0, \det(\bar{X}_t) \le \varepsilon \}\ge T)\underset{\varepsilon
  \rightarrow 0^+}{\rightarrow} 1$, since $\kp c+c\kp-da^2=\kp^1
c^1+c^1\kp^1+ \kp^2 c^2+c^2\kp^2 -da^2 \in \posm$.
\end{proof}

Let us assume now that $a_i>0$ for any $1\le i \le d$.
A consequence of Proposition~\ref{prop_Girsanov} is that the probability
measures induced by $\corrps{d}{x}{\kp}{c}{a}$ and $\corrps{d}{x}{\kp'}{c'}{a}$
are equivalent as soon as~\eqref{cond_strong_existence} holds for $\kp, c, a$
and $\kp', c', a$.  By transitivity, it is in fact sufficient to check this
for $\kp'=\frac{d}{2}a^2$ and $c'=I_d$. By Lemma~\ref{lemm_somme_ODE}, there
is a diagonal nonnegative matrix $\tilde{\kp}$ and $\tilde{c} \in \corr$ such
that $\tilde{\kp}\tilde{c}+\tilde{c}\tilde{\kp}=\kp c+c\kp-da^2$. We get then
the probability equivalence by using twice Proposition~\ref{prop_Girsanov}
with $\kp^1=\frac{d}{2}a^2$, $c^1=I_d$, $\kp^2=\tilde{\kp}$, $c^2=\tilde{c}$
and $\kp^1=\kp$, $c^1=c$, $\kp^2=-\tilde{\kp}$, $c^2=\tilde{c}$.

%***********************************************************************
\section{Second order discretization schemes for MRC processes}\label{Sec_simu}
%***********************************************************************

In the previous sections, we focused on the existence of Mean-Reverting
Correlation processes~\eqref{SDE_CORR} and some of their mathematical
properties. From a practical perspective, it is also very important to be able
to sample such processes. By sampling, we mean here that we have an
algorithm to generate the process on a given time-grid. 
Through this section, we will consider for sake of simplicity a regular time
grid $t^N_i=iT/N, \ i=0,\dots,N$ for a given time horizon~$T>0$.
Despite our investigations, the sampling of the exact distribution does not
seem trivial, and we will focus on discretization schemes. Anyway,
discretization schemes are in practice equally or more efficient than exact
sampling, at least in the case of  square-root diffusions such as
Cox-Ingersoll-Ross process and Wishart process (see respectively~\cite{Alfonsi}
and~\cite{AA}).
First, let us say that usual schemes such as Euler-Maruyama fail to be defined for~\eqref{SDE_CORR}
as well as for other square-root diffusions. Indeed, this scheme is given by
\begin{eqnarray}\label{Euler}
  \hat{X}^N_{t^N_{i+1}} &=& \hat{X}^N_{t^N_{i}} + \left( \kp (c-\hat{X}^N_{t^N_{i}}) + (c-\hat{X}^N_{t^N_{i}}) \kp
  \right) \frac{T}{N} \\
  &&+ \sum_{n=1}^da_n  \left( \sqrt{\hat{X}^N_{t^N_{i}}-\hat{X}^N_{t^N_{i}} e_d^{n}
      \hat{X}^N_{t^N_{i}}} (W_{t^N_{i+1}}-W_{t^N_{i}}) e_d^{n}
  +e_d^{n} (W_{t^N_{i+1}}-W_{t^N_{i}})^T \sqrt{\hat{X}^N_{t^N_{i}}-\hat{X}^N_{t^N_{i}} e_d^{n} \hat{X}^N_{t^N_{i}}} \right).\nonumber
\end{eqnarray}
Thus, even if $\hat{X}^N_{t^N_{i}}\in \corr$, $\hat{X}^N_{t^N_{i+1}}$ can no longer be
in~$\corr$ and the matrix square-root can no longer be defined at the next
time-step. A possible correction is to consider the following modification of the Euler
scheme:
\begin{equation}\label{Euler_mod}\hat{X}^N_{t^N_{i+1}}={\bf p}((\tilde{X}_{t^N_{i+1}})^+) ,
\end{equation}
where $\tilde{X}^N_{t^N_{i+1}}$ denotes the right hand side
of~\eqref{Euler}. Here, $x^+\in \posm$ is defined for $x\in \symm$ as the
unique symmetric semidefinite matrix that shares the same eigenvectors as~$x$,
but the eigenvalues are the positive part of the one of~$x$. Namely, $x^+=o
diag(\lambda_1^+,\dots,\lambda_d^+)o$  for $x \in \symm$ such that $
x=o diag(\lambda_1,\dots,\lambda_d)o$ where $o$ is an orthogonal matrix.  Let us check that this scheme is well defined if we start
from~$\hat{X}^N_{t^N_{0}}\in \corr$. By Lemma~\ref{lemma_correlmatrix}, the
square-roots are well defined, we have $(\tilde{X}_{t^N_{1}})_{i,i}=1$ and thus
$(\tilde{X}_{t^N_{1}})^+_{i,i} \ge 1$ and ${\bf p}((\tilde{X}_{t^N_{1}})^+)$
is well defined. By induction, this modified Euler scheme is always defined and
takes values in the set of correlation matrices. However, as we will see in
the numerical experiments, it is time-consuming and converges rather slowly.

In this section, we present discretization schemes that are obtained by
composition, thanks to a splitting of the infinitesimal generator. This
technique has already been used for square-root type diffusions such as the
Cox-Ingersoll-Ross model~\cite{Alfonsi} and Wishart processes~\cite{AA}, leading to
accurate schemes. The strength of this approach is that we can, by an ad-hoc
splitting of the operator, decompose the sampling of the whole diffusion into
pieces that are more tractable and that we can simulate by preserving the
domain (here, the set of correlation matrices). Besides, it is really easy to
analyze the weak error of these schemes.

\subsection{Some results on the weak error of discretization schemes}
We present now the main results on the splitting technique that can be found
in~\cite{Alfonsi} and~\cite{AA} for the framework of Affine diffusions. Here,
we have in addition further simplifications that comes from the fact that the
domain that we consider $\D\subset \R^\zeta$  is compact (typically $\corr$ or
$\D=\{x\in \R^{d-1}, \sum_{i=1}^{d-1} x_i^2 \}$ in Appendix~\ref{schema_original}). For
$\gamma \in \N^\zeta$, we set $\partial_\gamma
f=\partial^1_{\gamma_1}\dots\partial^\zeta_{\gamma_\zeta}$ and
$|\gamma|=\sum_{i=1}^\zeta \gamma_i$. We denote
by $\mathcal{C}^\infty(\D)$ the set of infinitely differentiable functions
on~$\D$ and say that that $(C_\gamma)_{\gamma \in \N^\zeta}$ is a {\it good
  sequence} for~$f\in \mathcal{C}^\infty(\D)$ if we have $\max_{x \in \D}
|\partial_\gamma f(x)|\le C_\gamma$. A differential
operator $Lf(x)=\sum_{0<|\gamma|\le 2}a_\gamma(x) \partial_\gamma f(x)$
satisfies the {\it  required assumption} if we have $a_\gamma \in
\mathcal{C}^\infty(\D)$ for any~$\gamma$. This property if of course satisfied
by the infinitesimal generator~\eqref{EQUATION_OPERATOR_C} of~$\corrp$ since
the functions $a_\gamma$ are either affine or polynomial functions of second
degree. Since we are considering Markovian processes on~$\D$, we will by a slight
abuse of notation represent a discretization scheme by a probability measure
$\hat{p}_x(t)(dz)$ on~$\D$ that describes the law of the scheme starting
from~$x\in\D$ with a time step~$t>0$. Also, we denote by $\hat{X}^x_t$ a random
variable that follows this law. Then, the discretization scheme on the full
time grid~$(t^N_i,i=0,\dots,N)$ will be obtained by:
\begin{itemize}
\item $\hat{X}^N_{t^N_0}=x \in \D$,
\item conditionally to~$\hat{X}^N_{t^N_i}$, $\hat{X}^N_{t^N_{i+1}}$ is sampled
  according to the probability law $\hat{p}_{\hat{X}^N_{t^N_i}}(T/N)(dz)$, and
  we write with a slight abuse of notation $\hat{X}^N_{t^N_{i+1}}=\hat{X}^{\hat{X}^N_{t^N_i}}_{T/N}$.
\end{itemize}
A discretization scheme $\hat{X}^x_t$ is said to be a {\it potential $\nu$-th
  order scheme for the operator~$L$} if for a sequence $(C_\gamma)_{\gamma \in
  \N^\zeta} \in (\R_+)^{\N^\zeta}$, there are constants $C, \eta>0$ such that for
any function $f\in\mathcal{C}^\infty(\D)$ that admits  $(C_\gamma)_{\gamma \in
  \N^\zeta}$ as a good sequence, we have:
\begin{equation}\forall t \in (0, \eta),x \in \D \ \left|  \E[f(\hat{X}^x_t)]-\left[ f(x)+ \sum_{k=1}^\nu
  \frac{1}{k!}t^k L^k
  f(x)\right] \right| \le Ct^{\nu+1}.\label{def_potential}
\end{equation}
  This is the main assumption that a discretization scheme
should satisfy to get a weak error of order~$\nu$. This is precised by the
following theorem given in~\cite{Alfonsi} that relies on the idea developed by
Talay and Tubaro~\cite{Talay} for the Euler-Maruyama scheme. 

\begin{theorem}\label{Thm_weak} Let~$L$ be an operator satisfying the
  required assumptions on a compact domain~$\D$. We assume that:
  \begin{enumerate}
  \item $\hat{X}^x_t$ is a 
  potential weak $\nu$th-order scheme  for~$L$,
\item $f: \D \rightarrow \R$ is a function such that $u(t,x)=\E[f(X^x_{T-t})]$
    is defined and~$\mathcal{C}^\infty$ on $[0,T] \times \D$, and solves $\forall t \in
    [0,T], \forall x \in \D, \partial_t u(t,x)=-Lu(t,x)$.
\end{enumerate}
Then, there is $K>0$, $N_0 \in \N$, such that $|\E[f(\xcn_{t^N_N})]-\E[f(X^{x}_T)]|
\le K /N^\nu$ for $N \ge N_0$.
\end{theorem}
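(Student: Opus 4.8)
The plan is to follow the classical telescoping argument of Talay and Tubaro~\cite{Talay}, exploiting the backward Kolmogorov equation satisfied by $u$ together with the potential $\nu$-th order property of the scheme. Write $h=T/N$ and note that, by hypothesis~2, $u(T,x)=\E[f(X^x_0)]=f(x)$ while $u(0,x)=\E[f(X^x_T)]$. Since the scheme keeps its values in~$\D$, we have $\xcn_{t^N_i}\in\D$ for all~$i$, so $u(t^N_i,\xcn_{t^N_i})$ is well defined and the global error decomposes as a telescoping sum:
$$\E[f(\xcn_{t^N_N})]-\E[f(X^x_T)]=\E[u(t^N_N,\xcn_{t^N_N})]-u(t^N_0,x)=\sum_{i=0}^{N-1}\E\big[u(t^N_{i+1},\xcn_{t^N_{i+1}})-u(t^N_i,\xcn_{t^N_i})\big].$$
First I would bound each one-step increment. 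Conditioning on $\xcn_{t^N_i}=y\in\D$ and setting $g=u(t^N_{i+1},\cdot)$, the potential $\nu$-th order property~\eqref{def_potential} gives $\E[g(\hat{X}^y_h)]=g(y)+\sum_{k=1}^\nu\frac{h^k}{k!}L^kg(y)+O(h^{\nu+1})$.

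Second I would Taylor expand $u(t^N_i,y)=u(t^N_{i+1}-h,y)$ in the time variable to order~$\nu$ and invoke the PDE. Iterating $\partial_t u=-Lu$ and using that $u$ is $\mathcal{C}^\infty$ yields $\partial_t^k u=(-L)^k u=(-1)^kL^ku$, hence $\frac{(-h)^k}{k!}\partial_t^k u(t^N_{i+1},y)=\frac{h^k}{k!}L^ku(t^N_{i+1},y)$, so that
$$u(t^N_i,y)=u(t^N_{i+1},y)+\sum_{k=1}^\nu\frac{h^k}{k!}L^ku(t^N_{i+1},y)+O(h^{\nu+1}).$$
This matches the expansion of $\E[g(\hat{X}^y_h)]$ term by term, so subtracting leaves each summand $O(h^{\nu+1})$; summing the $N$ of them then produces a global error $O(Nh^{\nu+1})=O(T^{\nu+1}/N^\nu)$, which is the announced $K/N^\nu$.

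The step that requires the most care — and where compactness of~$\D$ is essential — is to make all these $O(\cdot)$ uniform, both in the starting point~$y$ and in the time index~$i$. For this I would observe that $u\in\mathcal{C}^\infty([0,T]\times\D)$ on the compact set $[0,T]\times\D$, so that $C_\gamma:=\max_{[0,T]\times\D}|\partial_\gamma u|$ is finite for every~$\gamma$ and $(C_\gamma)_{\gamma\in\N^\zeta}$ is a common good sequence for all the functions $u(s,\cdot)$, $s\in[0,T]$; in particular the constants $C,\eta$ of~\eqref{def_potential} apply simultaneously to every $g=u(t^N_{i+1},\cdot)$. Likewise the time-Taylor remainder is controlled by $\max_{[0,T]\times\D}|\partial_t^{\nu+1}u|<\infty$, and the coefficients $a_\gamma$ of~$L$, being smooth on the compact~$\D$, are bounded, so that each $L^ku(s,\cdot)$ also has uniformly bounded derivatives. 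Choosing $N_0$ so that $h=T/N<\eta$ for $N\ge N_0$ then yields the uniform per-step bound and, after summation over the $N$ steps, the announced constant~$K$.
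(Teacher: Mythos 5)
Your proof is correct and follows exactly the Talay--Tubaro telescoping argument that the paper itself invokes (the theorem is stated as ``given in~\cite{Alfonsi}'' with no proof reproduced here): decompose the global error via $u$, match the one-step expansion of the scheme against the time-Taylor expansion of $u$ through the backward equation $\partial_t^k u=(-1)^kL^ku$, and use compactness of $[0,T]\times\D$ to extract a single good sequence for all time-slices $u(s,\cdot)$ so that the constants in~\eqref{def_potential} are uniform. No gaps.
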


The mathematical analysis of the Cauchy problem for Mean-Reverting Correlation
processes is beyond the scope of this
paper. This issue has recently been addressed for the case of one-dimensional
Wright-Fisher processes by Epstein and Mazzeo~\cite{EpsteinMazzeo}, and Chen
and Stroock~\cite{ChenStroock} for the absorbing boundary case. In this setting,
Epstein and Mazzeo have shown that $u(t,x)$ is smooth for $f\in
\mathcal{C}^\infty([0,1])$. However, since we have an explicit formula for the
moments~\eqref{induc_moments}, we obtain easily that for any polynomial
function~$f$, the second point of Theorem~\ref{Thm_weak} is satisfied. By
the Stone-Weierstrass theorem, we can approximate for the supremum norm any continuous
function by a polynomial function and get the following interesting
corollary.
\begin{Corollary}\label{Cor_weak}Let $\hat{X}^x_t$ be 
  potential weak $\nu$th-order scheme for~$\corrp$. Let $f$ be a continuous
  function on~$\corr$. Then,
  $$\forall \varepsilon>0, \exists K>0, \ |\E[f(\xcn_{t^N_N})]-\E[f(X^{x}_T)]|
\le \varepsilon +  K /N^\nu.$$
\end{Corollary}

Let us now focus on the first assumption of Theorem~\ref{Thm_weak}. The
property of being a potential weak order scheme is easy to handle by using
scheme composition. This technique is well known in the literature and dates
back to Strang~\cite{Strang} the field of ODEs. In our framework, we recall
results that are stated in~\cite{Alfonsi}.
\begin{proposition}\label{prop_compo_schemas}
Let  $L_1, L_2$ be the generators of SDEs defined on~$\D$
 that satisfies the required assumption on~$\D$. Let 
$\hat{X}^{1,x}_t$ and $\hat{X}^{2,x}_t$ denote respectively two potential weak $\nu$th-order
schemes on~$\D$ for $L_1$ and  $L_2$.
\begin{enumerate}
  \item The scheme $\hat{X}^{2,\hat{X}^{1,x}_t}_t$ is a potential
    weak first order discretization scheme  for~$L_1+L_2$. Besides, if
    $L_1L_2=L_2L_1$, this is a potential  weak $\nu$th-order scheme  for~$L_1+L_2$.
  \item Let $B$ be an independent Bernoulli variable of parameter~$1/2$. If $\nu \ge 2$,
    $$ (a)\ B \hat{X}^{2,\hat{X}^{1,x}_t}_t+(1-B)\hat{X}^{1,\hat{X}^{2,x}_t}_t
    \ \ 
      \text{ and } \ \ \ \ (b) \ \hat{X}^{2,\hat{X}^{1,\hat{X}^{2,x}_{t/2}}_t}_{t/2}$$
    
    are potential weak second order
    schemes for~$L_1+L_2$.
  \end{enumerate}
\end{proposition}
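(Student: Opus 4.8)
The plan is to reduce everything to the defining expansion~\eqref{def_potential}. Introduce the one-step operators $\Phi^i_s f(x):=\E[f(\hat{X}^{i,x}_s)]$, so that a potential $\nu$-th order scheme for $L_i$ means $\Phi^i_s f=\sum_{k=0}^\nu \frac{s^k}{k!}L_i^k f + R^i_s$ with $\sup_{x\in\D}|R^i_s(x)|\le C\,s^{\nu+1}$, the constant $C$ depending only on a good sequence of $f$. A composed scheme corresponds to a product of such operators: by conditioning, $\E[f(\hat{X}^{2,\hat{X}^{1,x}_t}_t)]=\big(\Phi^1_t\Phi^2_t\big)f(x)$ and $\E[f(\hat{X}^{2,\hat{X}^{1,\hat{X}^{2,x}_{t/2}}_t}_{t/2})]=\big(\Phi^2_{t/2}\Phi^1_t\Phi^2_{t/2}\big)f(x)$. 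The whole proof is then bookkeeping of these expansions, closed off by the binomial identity $(L_1+L_2)^m=\sum_{j+k=m}\binom{m}{j}L_1^jL_2^k$, which holds termwise exactly when $L_1L_2=L_2L_1$.

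For item~1 I would carry out a single composition step. Set $g:=\Phi^2_t f=P+R^2_t$ with $P:=\sum_{k=0}^\nu \frac{t^k}{k!}L_2^k f$, apply $\Phi^1_t$, and split $\Phi^1_t g=\Phi^1_t P+\E[R^2_t(\hat{X}^{1,x}_t)]$. The remainder needs no differentiation: since $|R^2_t|\le C\,t^{\nu+1}$ uniformly on $\D$, one gets $|\E[R^2_t(\hat{X}^{1,x}_t)]|\le C\,t^{\nu+1}$ at once. For the polynomial part, the order property of scheme~1 gives $\Phi^1_t P=\sum_{j=0}^\nu \frac{t^j}{j!}L_1^jP+O(t^{\nu+1})$; substituting $P$ back produces $\sum_{j,k=0}^\nu \frac{t^{j+k}}{j!\,k!}L_1^jL_2^kf+O(t^{\nu+1})$. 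The terms with $j+k>\nu$ are $O(t^{\nu+1})$ because each $L_1^jL_2^kf$ is bounded on the compact $\D$, so collecting by total degree $m=j+k$ yields $\sum_{m=0}^\nu \frac{t^m}{m!}\big(\sum_{j+k=m}\binom{m}{j}L_1^jL_2^k\big)f+O(t^{\nu+1})$. When $L_1L_2=L_2L_1$ the inner sum is $(L_1+L_2)^m$ and the order-$\nu$ property for $L_1+L_2$ follows; without commutativity only the degree $0$ and $1$ coefficients, namely $f$ and $(L_1+L_2)f$, are forced to agree, giving potential first order.

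For item~2 (where $\nu\ge2$) I would run the same expansion through order $t^2$. From item~1, the scheme $\hat{X}^{2,\hat{X}^{1,x}_t}_t$ has $t^2$-coefficient $\tfrac12 L_1^2+L_1L_2+\tfrac12 L_2^2$, while swapping the indices, $\hat{X}^{1,\hat{X}^{2,x}_t}_t$ has $t^2$-coefficient $\tfrac12 L_1^2+L_2L_1+\tfrac12 L_2^2$; both agree at orders $0$ and $1$ with $f$ and $(L_1+L_2)f$. Averaging the two with the independent Bernoulli $B$ symmetrizes the cross term into $\tfrac12(L_1L_2+L_2L_1)$, so the $t^2$-coefficient becomes $\tfrac12(L_1+L_2)^2$, which proves~(a). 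For the Strang-type scheme~(b) I would expand $\Phi^2_{t/2}\Phi^1_t\Phi^2_{t/2}=(I+\tfrac{t}{2}L_2+\tfrac{t^2}{8}L_2^2)(I+tL_1+\tfrac{t^2}{2}L_1^2)(I+\tfrac{t}{2}L_2+\tfrac{t^2}{8}L_2^2)+O(t^3)$; the symmetric placement of the two half-steps again gives cross term $\tfrac12(L_1L_2+L_2L_1)$ together with $\tfrac12 L_1^2+\tfrac12 L_2^2$, i.e. $\tfrac12(L_1+L_2)^2$, hence potential second order.

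The main obstacle is not the algebra but the uniformity of constants when the test function itself changes under composition. After one step the new test function $g=\Phi^2_t f$ depends on $t$, so to invoke the bound for scheme~1 I must exhibit a good sequence for the polynomial part $P$ that is uniform for $t\in(0,\eta)$. This is exactly where the required assumption enters: since each $L_i$ is a differential operator of order at most $2$ with $\mathcal{C}^\infty(\D)$ coefficients and $\D$ is compact, every $\partial_\gamma L_i^kf$ is bounded by a constant built from a good sequence of $f$, so $P$ admits the uniform good sequence $C'_\gamma=\sum_{k=0}^\nu \frac{\eta^k}{k!}\max_{\D}|\partial_\gamma L_2^kf|$. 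The remainder, by contrast, is only ever estimated in sup-norm and never differentiated, which is what makes the composition argument close cleanly.
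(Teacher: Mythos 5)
Your proof is correct: the operator-composition bookkeeping, the sup-norm treatment of the remainders, and in particular the construction of a uniform good sequence for the intermediate polynomial part $P=\sum_{k\le\nu}\frac{t^k}{k!}L_2^kf$ (which is the only delicate point) are all handled properly. The paper itself does not prove Proposition~\ref{prop_compo_schemas} but recalls it from~\cite{Alfonsi}, and your argument is essentially the same composition/expansion proof given there, so there is nothing to flag.
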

Here, the composition $\hat{X}^{2,\hat{X}^{1,x}_{t_1}}_{t_2}$ means that we first use
the scheme~1 with time step~$t_1$ and then, conditionally to
$\hat{X}^{1,x}_{t_1}$, we sample the scheme~2 with initial value
$\hat{X}^{1,x}_{t_1}$ and time step~$t_2$. 

\subsection{A second-order scheme for MRC processes}

First, we
split the infinitesimal generator of~$\corrp$ as the sum $$L=L^{\xi}+\tilde{L},$$ where  $\tilde{L}$ is the infinitesimal generator of
$\corrps{d}{x}{\frac{d-2}{2}a^2}{I_d}{a}$ and $L^{\xi}$ is the operator
associated to~$\xi(t,x)$ given by~\eqref{def_xi}. Obviously, the ODE~\eqref{def_xi}
can be solved explicitly and we have to focus on the sampling of
$\corrps{d}{x}{\frac{d-2}{2}a^2}{I_d}{a}$. We use now
Theorem~\ref{theorem_spliopper} and consider the splitting
$$\tilde{L}=\sum_{i=1}^d a_i^2 \tilde{L}_i,$$
where $\tilde{L}_i$ is the infinitesimal generator of
$\corrps{d}{x}{\frac{d-2}{2}e^i_d}{I_d}{e^i_d}$. We claim now that it is sufficient to have a
potential second order scheme for~$\corrps{d}{x}{\frac{d-2}{2}e^1_d}{I_d}{e^1_d}$ in order to get a 
potential second order scheme for~$\corrp$. Indeed, if we have such a scheme,
we also get by a permutation of the coordinates  a
potential second order scheme $\hat{X}^{i,x}_t$
for~$\corrps{d}{x}{\frac{d-2}{2}e^i_d}{I_d}{e^i_d}$. Then, by time-scaling,
$\hat{X}^{i,x}_{a_i^2 t}$ is a potential second order scheme for
$\corrps{d}{x}{\frac{d-2}{2}a_i^2e^i_d}{I_d}{a_i e^i_d}$. Thanks to the commutativity, we get by
Proposition~\ref{prop_compo_schemas} that 
$\hat{X}^{d,\dots^{\hat{X}^{1,x}_{a_1^2t}}}_{a_d^2 t}$ is a potential second order
scheme for~$\tilde{L}$. Last, still by using
Proposition~\ref{prop_compo_schemas} we obtain that
\begin{equation}\label{second_order_sch}
\xi(t/2,\hat{X}^{d,\dots^{\hat{X}^{1,\xi(t/2,x)}_{a_1^2t}}}_{a_d^2 t}) \text{ is a potential second order scheme for } \corrp.
\end{equation}

Now, we focus on getting a second order scheme
for~$\corrps{d}{x}{\frac{d-2}{2}e^1_d}{I_d}{e^1_d}$. It is possible to
construct such a scheme by using an ad-hoc splitting of the infinitesimal
generator. This is made in Appendix~\ref{schema_original}. Here, we achieve this
task by using the connection between Wishart and MRC process and the existing
scheme for Wishart processes.  In Ahdida
and Alfonsi~\cite{AA}, we have obtained a potential second order scheme~$\hat{Y}^{1,x}_t$
for $WIS_d(x,d-1,0,e^1_d)$. 
Besides, this scheme is constructed with discrete
random variables, and we can check that there is a constant $K>0$ such that for
any $1\le i\le d$,
$|(\hat{Y}^{1,x}_t)_{i,i}-1|\le K \sqrt{t}$ holds almost surely (we even have
$(\hat{Y}^{1,x}_t)_{i,i}=1$ for $2\le i \le d$). Therefore, we
have $1/2\le (\hat{Y}^{1,x}_t)_{i,i}\le 3/2$ for $t\le 1/(4K^2)$. Let
$f\in\mathcal{C}^\infty(\corr)$. Then $f({\bf p}(y))$ is $\mathcal{C}^\infty$
with bounded derivatives on $\{y \in \posm, 1/2 \le y_{i,i}\le 3/2 \}$. Since
$\hat{Y}^{1,x}_t$ is a potential second order scheme, it
comes that there are constants $C,\eta>0$ that only depend on a good
sequence of~$f$ such that
\begin{equation}
  \forall t \in (0,\eta), \left|\E[f({\bf p}(\hat{Y}^{1,x}_t))]-f(x)-t \tilde{L}_1^W(f\circ{\bf
  p})(x)-\frac{t^2}{2}(\tilde{L}_1^W)^2(f\circ{\bf
  p})(x) \right|\le C t^3,
\end{equation}
where $\tilde{L}_1^W$ is the generator of~$WIS_d(x,d-1,0,e^1_d)$. Thanks to
Remark~\ref{rem_oper}, we get that there are constants $C,\eta$ depending
only on a good sequence of~$f$ such that
\begin{equation}
  \forall t \in (0,\eta), \left|\E[f({\bf p}(\hat{Y}^{1,x}_t))]-f(x)-\left(t+(5-d)\frac{t^2}{2}\right) \tilde{L}_1f(x)-\frac{t^2}{2}(\tilde{L}_1)^2f(x) \right|\le C t^3.
\end{equation}
In particular, ${\bf p}(\hat{Y}^{1,x}_t)$ is a potential first order scheme
for~$L_1$ and even a second order scheme when $d=5$. We can improve this by
taking a simple time-change. We set:
$$\phi(t)=\begin{cases} t-(5-d)\frac{t^2}{2} \text{ if } d\ge 5\\
\frac{-1+\sqrt{1+2(5-d)t}}{5-d} \text{ otherwise,}\end{cases}
$$
so that in both cases, $\phi(t)=t-(5-d)\frac{t^2}{2}+O(t^3)$. Then, we have
that  there are constants $C,\eta$ still depending
only on a good sequence of~$f$ such that $
  \forall t \in (0,\eta), \left|\E[f({\bf p}(\hat{Y}^{1,x}_{\phi(t)}))]-f(x)-t
    \tilde{L}_1f(x)-\frac{t^2}{2}(\tilde{L}_1)^2f(x) \right|\le C t^3$,
  and therefore
  \begin{equation}\label{second_order_sch_L1}
{\bf p}(\hat{Y}^{1,x}_{\phi(t)})    \text{ is a potential second order scheme for } \corrps{d}{x}{\frac{d-2}{2}e^1_d}{I_d}{e^1_d}.
\end{equation}

\subsection{A faster second-order scheme for MRC processes under Assumption~\eqref{assump_fast}}
We would like to discuss on the time complexity of the
scheme given by~\eqref{second_order_sch} and~\eqref{second_order_sch_L1} with
respect to the dimension~$d$. The second order scheme given in Ahdida and
Alfonsi~\cite{AA} for~$WIS_d(x,d-1,0,e^1_d)$ requires~$O(d^3)$ operations. Since it is
used $d$ times in~\eqref{second_order_sch} to generate a sample, the overall
complexity is in~$O(d^4)$. In the same manner, the second order given in
Appendix~\ref{schema_original} requires~$O(d^4)$ operations.  However, it is possible  to get a faster
second order scheme with complexity~$O(d^3)$ if we make the following assumption:
\begin{equation}\label{assump_fast}
a_1=\dots=a_d\ (i.e.\ a=a_1 I_d) \text{ and }  \kp c+c\kp -(d-1)a^2 \in \posm.
\end{equation}
This latter assumption is stronger than~\eqref{cond_weak_existence} but weaker
than~\eqref{cond_strong_existence}, which respectively ensures weak and strong
solutions to the SDE. Under~\eqref{assump_fast}, we can check by
Lemma~\ref{lemm_somme_ODE} that  
\begin{equation}\label{def_zeta}
  \zeta'(t,x)=\kp(c-x)+(c-x)\kp -\frac{d-1}{2}[a^2(I_d-x)+(I_d-x)a^2], \
  \zeta(0,x)=x \in \corr 
\end{equation}
takes values in~$\corr$. Then, we split the infinitesimal generator
of~$\corrp$ as the sum
$$L=L^\zeta+a_1^2 \bar{L},$$
where $L^\zeta$ is the operator associated to the ODE~$\zeta$, and $\bar{L}$
is the infinitesimal generator of
$\corrps{d}{x}{\frac{d-1}{2}I_d}{I_d}{I_d}$. In~\cite{AA}, it is given a second
order scheme $\hat{Y}^x_t$ for $WIS_d(x,d,0,I_d)$ that has a 
time-complexity in $O(d^3)$. We then consider $f\in\mathcal{C}^\infty(\corr)$
and get by using the same arguments as before that there are constants $C,\eta>0$
depending only on a good sequence of~$f$ such that
$$\forall t \in (0, \eta), \left|\E[f({\bf
    p}(\hat{Y}^x_t))]-f(x)-t\bar{L}^W(f\circ {\bf
    p})(x)-\frac{t^2}{2}(\bar{L}^W)^2(f\circ {\bf p})(x) \right|\le Ct^3,$$
where $\bar{L}^W$ is the infinitesimal generator of~$WIS_d(x,d,0,I_d)$. Thanks
to Remark~\ref{rem_oper}, we get that
$$\forall t \in (0, \eta), \left|\E[f({\bf
    p}(\hat{Y}^x_t))]-f(x)-\left(t+(4-d)\frac{t^2}{2} \right)\bar{L}f(x) - \frac{t^2}{2} \bar{L}^2f(x) \right|\le Ct^3. $$
In particular, ${\bf
    p}(\hat{Y}^x_t)$ is a first order scheme
  for~$\corrps{d}{x}{\frac{d-1}{2}I_d}{I_d}{I_d}$ and by
  Proposition~\ref{prop_compo_schemas},
  \begin{equation}\label{fast_first_order_sch}
\zeta(t,{\bf  p}(\hat{Y}^x_{a_1^2t})) \text{ is a potential first order scheme for }
\corrp.
\end{equation}
As before, we can improve this by using the following time-change:
$\psi(t)=t-(4-d)\frac{t^2}{2}$ if $d\ge 4$ and $\psi(t)=
\frac{-1+\sqrt{1+2(4-d)t}}{4-d} $ otherwise, so that
$\psi(t)=t-(4-d)\frac{t^2}{2}+O(t^3)$ in both cases. We get that ${\bf
    p}(\hat{Y}^x_{\psi(t)})$ is a potential second order scheme for
  $\corrps{d}{x}{\frac{d-1}{2}I_d}{I_d}{I_d}$. Then, we obtain that
\begin{equation}
\zeta(t/2,{\bf  p}(\hat{Y}^{\zeta(x,t/2)}_{a_1^2 \psi(t)})) \text{ is a potential second order scheme for } \corrp\label{fast_second_order_sch}
\end{equation}
by using Proposition~\ref{prop_compo_schemas}. Its time complexity is in $O(d^3)$.

\subsection{Numerical experiments on the discretization schemes}

In this part, we discuss briefly the time needed by the different schemes
presented in the paper. We also illustrate the weak convergence of the schemes
to check that it is in accordance with Corollary~\ref{Cor_weak}. In
Table~\ref{Result_Table}, we have indicated the time required to sample~$10^6$
scenarios for different time-grids in dimension $d=3$ and $d=10$. These times
have been obtained with a 2.50 GHz CPU computer. As expected,
the modified Euler scheme given by~\eqref{Euler_mod} is the most time
consuming. This is mainly due to the computation of the matrix square-roots that
require several diagonalizations. Between the second order schemes that are
defined for any parameters satisfying~\eqref{cond_weak_existence}, the second order scheme given
by~\eqref{second_order_sch} and~\eqref{second_order_sch_L1} is rather faster
than the ``direct'' one presented in
Appendix~\ref{schema_original}. However, it has a larger bias on our example in
Figure~\eqref{MRC01}, and their overall efficiency is similar.
Nonetheless, both are as expected overtaken
by the fast second order scheme~\eqref{fast_second_order_sch}. Let us
recall that it is only defined under Assumption~\eqref{assump_fast} which
is satisfied by our set of parameters. Also, the fast first order scheme given
by~\eqref{fast_first_order_sch} requires roughly the same computation time.
\begin{table}
\centering
\begin{tabular}{|l|c|c|}%{|p{3.6cm}|p{1.2cm}|p{1.2cm}|p{1.2cm}||p{1.2cm}|p{1.2cm}|p{1.2cm}|}
  \hline
& $d=3$ & $d=10$ \\
\hline
%Schemes& $N=5$ & $N=10$ & $N=15$  & $N=5$ & $N=10$ & $N=15$ \\
%\hline
   $2^{nd}$ order ``fast''     & 19 &  224  \\  
  $2^{nd}$ order  &65 &1677 \\
  $2^{nd}$ order ``direct''	 &  90 & 3105   \\
  $1^{st}$ order    ``fast''       &19 & 224  \\
   Corrected Euler          & 400 &  14322  \\
   \hline
\end{tabular}

\caption{{ Computation time in seconds to generate $10^6$ paths up to $T=1$
    with $N=10$ time-steps of the following $MRC$ process:  $\kappa = 1.25 I_d$, $c=I_d$, $a=I_d,$ and
    $x_{i,j} =0.7$ for $i \not = j$.
 }}\label{Result_Table}
\end{table}

Let us switch now to Figure~\ref{MRC01} that illustrates the weak
convergence of the different schemes. To be more precise, we have plotted the
following combinations the moments of order~$3$ and~$1$ (i.e. respectively
\begin{equation}\label{Eqt_MC}
\E\left[\sum_{\substack{1\leq i \neq j \leq 3\\ 1 \leq k \neq l \leq 3 }} \left[(\hat{X}^N_T)_{i,j}(\hat{X}^N_T)_{k,l}^2\right] + (\hat{X}_T^N)_{1,2}(\hat{X}_T^N)_{2,3}(\hat{X}_T^N)_{1,3}\right],
\end{equation}
and $\E\left[        \sum_{1 \leq i \neq j \leq d }(\hat{X}^N_T)_{i,j}\right]$)
in function of the time-step~$T/N$. These expectations can be calculated exactly for the MRC
process thanks to Proposition~\ref{prop_equ_ODE}, and the exact value is
reported in both graphics. As expected, we observe a quadratic convergence for
the second order schemes, and a linear convergence for the first order
scheme. In particular, this demonstrates numerically the gain that we get by considering the
simple change of time~$\psi$ between the schemes~\eqref{fast_first_order_sch}
and~\eqref{fast_second_order_sch}. Last, the modified Euler scheme shows a roughly linear
convergence. It has however a much larger bias and is
clearly not competitive.

\begin{figure}[h]
\centering
  \begin{tabular}{rl}
    \psfrag{Ex}{ {\scriptsize Exact value}}
    \psfrag{O2}{ {\scriptsize 2nd ``direct''}} 
    \psfrag{EL}{ {\scriptsize Euler}}
    \psfrag{O2W}{ {\scriptsize 2nd}}
    \psfrag{O2WF}{ {\scriptsize 2nd ``fast''}}	
    \psfrag{O1}{{\scriptsize 1st ``fast''   }}	
    \hspace{-0.8cm}
    \psfig{file=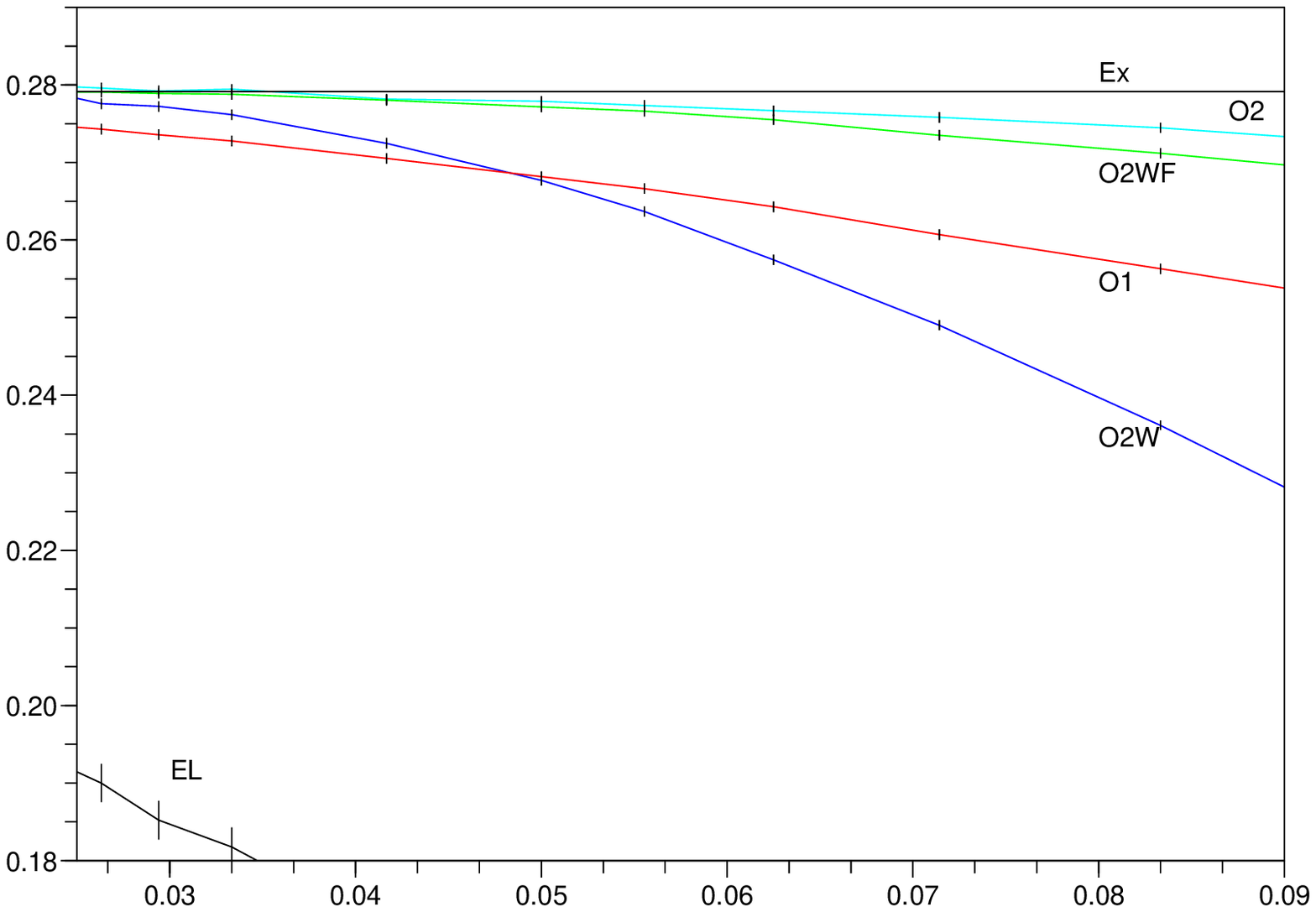,width=8.5cm,height=8cm}
    &\hspace{-0.2cm}
    \psfrag{Ex}{ {\scriptsize Exact value}}
    \psfrag{O2}{ {\scriptsize  2nd ``direct''}} 
    \psfrag{EL}{ {\scriptsize Euler}}
    \psfrag{O2W}{ {\scriptsize 2nd}}
    \psfrag{O2WF}{ {\scriptsize 2nd ``fast''}}	
    \psfrag{O1}{{\scriptsize 1st ``fast''}}
    \psfig{file=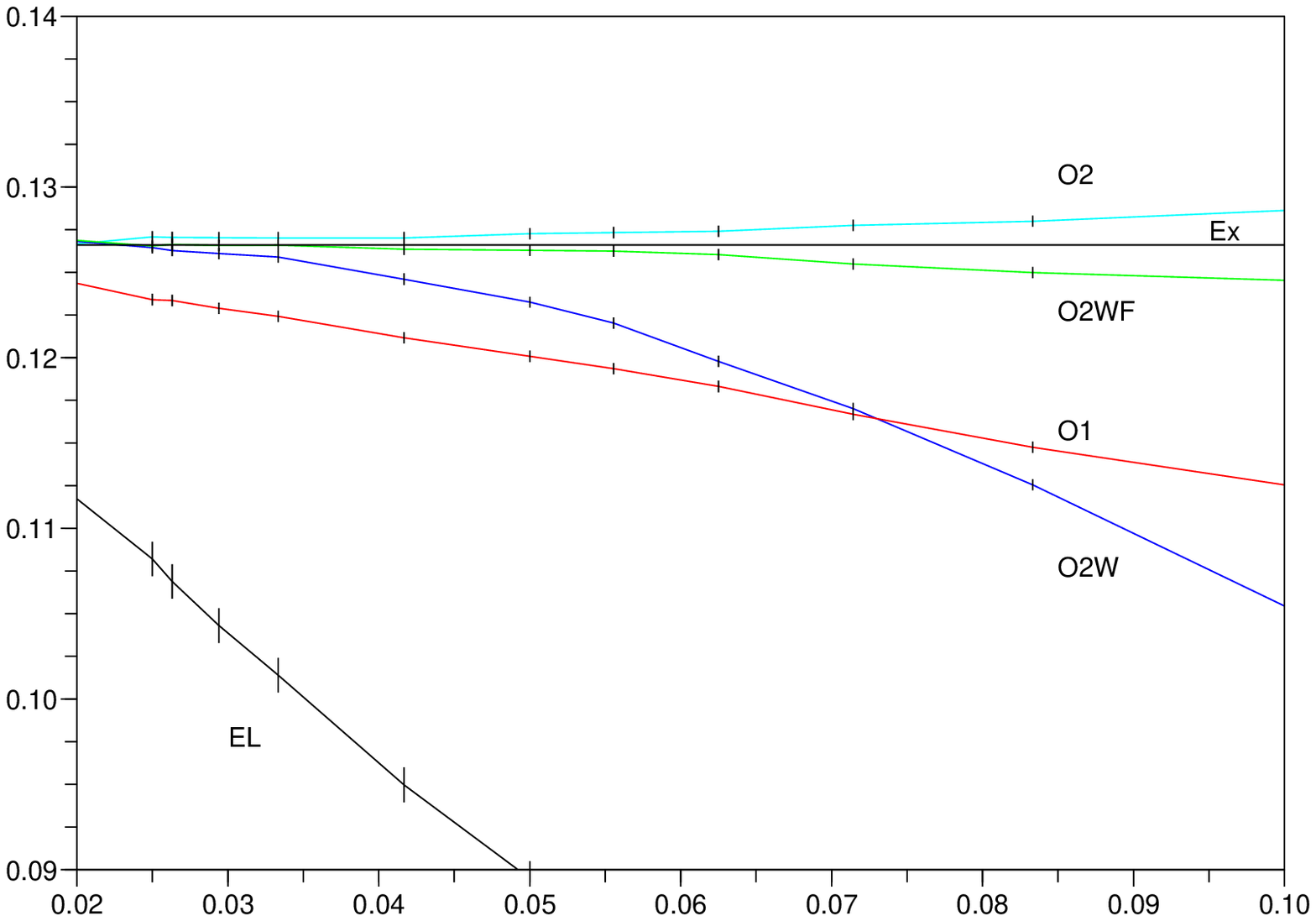,width=8.5cm,height=8cm}
  \end{tabular}
  \caption{{  $d=3$, same parameters as for Table~\ref{Result_Table}. In
      the left (resp. right) side is plotted~\eqref{Eqt_MC} (resp.  $\E\left[
        \sum_{1 \leq i \neq j \leq d }(\hat{X}^N_T)_{i,j}\right]$) in function
      of the time step~$1/N$. The width of each point
    represents the $95\%$ confidence interval ($10^7$ scenarios for the
    modified Euler scheme and $10^8$ for the others).}}
%%       $10^8$ Monte-Carlo samples for  all schemes, only $6$ is with $10^7$ , $T=1. $ MRC parameters $\,\,(x_{i,j})_{1 \leq i \neq j \leq d}=0.7,\,\, a=c=I_d,$ and $\kappa = 1.25 I_d.$  Left: The formula $\eqref{Eqt_MC}.$ Right: $\E\left[ \sum_{1 \leq i \neq j \leq d }(X_t)_{i,j}\right].$ {\small The width of each point
%%     represents the $95\%$ confidence interval.}
%%  }} 
\label{MRC01}
\end{figure}

%% We observe that both graphs  in Figure  ~\ref{MRC01} correspond to the convergence theoretical results. We notice that the corrected Euler scheme  seems to have  a convergence of order $O(1/N),$ and that the scheme $4$ has the same order with an error much better. All schemes $2,3$ and $4$ are well above second order convergence in both graphs. Moreover, we remark that in both cases, the schemes $2$ and $4$ match the exact value very quickly, while the scheme $3$ seems to have a quadratic convergence.

\section{Financial application of correlation processes}\label{sec_fin}

In this section, we focus on the modeling of the dependence between $d$~risky
assets. We will denote by $S^1_t,\dots,S^d_t$ their value at
time~$t$, and we set $\log(S_t)=(\log(S^1_t),\dots,\log(S^d_t))^T$. Basically
if we exclude jumps, we can assume that the assets follow under
a risk neutral probability space $(\Omega,\mathcal{F},\Px)$ the following dynamics:
\begin{equation}\label{general_stock_dyn}
  d \log(S^i_t) = \left(r-\frac{1}{2}
 (\Sigma_t)_{i,i} \right)dt + (\sqrt{\Sigma_t} dB_t)_i, \ 1 \le i \le d
\end{equation}
where $r$ is the interest rate, $(B_t,t\ge 0)$ is a $d$-dimensional standard Brownian
motion and $(\Sigma_t,t\ge 0)$ is an adapted $\posm$-valued process. This
process describes the instantaneous covariance of the stocks
\begin{equation}\label{IDaF_stocks} \langle d \log(S^i_t) , d \log (S^j_t)
  \rangle = (\Sigma_t)_{i,j} dt,
%  , \ \langle d S^i_t , d S^j_t   \rangle = (\Sigma_t)_{i,j} S^i_t S^j_t dt .
\end{equation}
and $\sqrt{(\Sigma_t)_{i,i}}$ is usually called the volatility of the asset $S^i_t$.

Modelling directly the whole covariance process $(\Sigma_t,t\ge 0)$ is not an
easy task. This path has recently been explored by Gourieroux and
Sufana~\cite{Gourieroux}. Their model has been enhanced by Da Fonseca et
al.~\cite{Dafonseca}. They assume that $(\Sigma_t,t\ge 0)$ is a Wishart
process and consider a dynamics that is a natural extension of the famous Heston
model~\cite{Heston} to $d$~stocks. Since Wishart processes are closely
connected to MRC processes, we will discuss in detail their model in
Section~\ref{sec_WSC}. In particular, we explain why, in our opinion, it can be
hardly used in practice when the number of assets is large (say $d\ge 5$).

Instead, the common practice of the market is to start with modeling the volatility
$\sigma^i_t$ of each stock~$S^i_t$. Then, the dependence is modeled by a correlation process $(C_t,t\ge
0)$, so that the covariance process is defined by
$$\Sigma_t=diag(\sigma^1_t,\dots,\sigma^d_t)C_tdiag(\sigma^1_t,\dots,\sigma^d_t).$$
Thus, if we consider the following dynamics for the assets
\begin{equation}\label{general_stock_dyn2}
  d \log(S^i_t) = \left(r-\frac{1}{2}
 (\sigma^i_t)^2 \right)dt + \sigma^i_t(\sqrt{C_t} dB_t)_i, \ 1 \le i \le d,
\end{equation}
we get back the same instantaneous covariance given
as~\eqref{IDaF_stocks}. This bottom-up approach has many advantages. Indeed,
the modelling of individual stocks is well documented and
handled every day by the financial desks. Besides, the choice of the
volatility model is free and can be chosen at one's convenience. For example,
we can take a local volatility model ($\sigma^i_t=\sigma(t,S^i_t)$) or a
stochastic volatility model such as Heston model, or a local
stochastic volatility model~\cite{Alexander}. The calibration of the single
stocks can be thus performed separately, before the calibration of the correlation
process. There is still few literature that brings on fitting the correlation
to market data. Today, the only liquid and quoted derivatives that bring on the
dependence between assets are index options. Recently, Langnau~\cite{Langnau}
and Reghai~\cite{Reghai} focused on the calibration of some particular local
correlation models (i.e. $C_t=C(t,S^1_t,\dots,S^d_t)$) to index option
prices. Under a slightly different setting, Jourdain and Sbai~\cite{Jourdain} have
also considered this issue. However, there is still up to our knowledge very few
studies on stochastic correlation modelling. We will discuss in
Section~\ref{appli_MRC} how MRC processes and possible extensions could be
used for that purpose.

\subsection{The Wishart stochastic covariance model}\label{sec_WSC}
Da Fonseca et al.~\cite{Dafonseca} assume that $(\Sigma_t,t\ge 0)$ follows a Wishart process
\begin{equation}\label{Wish_Cov}d\Sigma_t =\left( {\alpha}aa^T  + b\Sigma_t+\Sigma_tb^T \right )dt +
 \sqrt{\Sigma_t}dW_ta + a^TdW_t^T\sqrt{\Sigma_t}
\end{equation}
with $\alpha\ge d-1$, $a,b \in \genm$. The vector Brownian motion~$B$ and the
 matrix Brownian motion~$W$ may be correlated as follows:
  $$dB_t=dW_t \rho +\sqrt{1-\|\rho\|_2^2} d\tilde{B}_t, \rho \in \R^d \ s.t. \
  \|\rho\|_2\le 1,$$
 where $( \tilde{B}_t,t\ge 0)$ is a $d$-dimensional Brownian motion independent from~$W$.
With this choice, the couple $(\log(S_t),\Sigma_t)$
is an affine process, and its characteristic function can be obtained by
solving Riccati equations (see Da Fonseca et al.~\cite{Dafonseca}).

From~\eqref{Wish_Cov}, we get that there are Brownian motions
$\beta^i_t$, $i=1,\dots,d$ such that the volatility square of~$S^i_t$ follows:
$$ d (\Sigma_t)_{i,i}= (\alpha (aa^T)_{i,i} +2 \sum_{j=1}^{d}
b_{i,j}(\Sigma_t)_{i,j}  dt + 2 \sqrt{(aa^T)_{i,i}} \sqrt{(\Sigma_t)_{i,i}} d
\beta^i_t. $$
The non diagonal elements $(\Sigma_t)_{i,j}$ for $i\not = j$ can be seen as
factors that drive the volatility of each stock. In its full form, the
dynamics of one stock and its volatility is not autonomous. This means in
practice that it is not possible to calibrate this model separately  to single
stock market (mainly, European options on single stocks). This calibration has
to be done at the same time for all the stocks, which is a priori a very
challenging task: many parameters and a lot of data are involved when the
number of assets~$d$ gets large.

Then, one may want to recover autonomous dynamics for each stock in order to
calibrate them separately. Within this model, the
only possible choice is to assume that $b$ is a diagonal matrix. In this case,
$(\Sigma_t)_{i,i}$ follows a CIR diffusion and the couple
$((\Sigma_t)_{i,i},S^i_t)$ follows an Heston model~\cite{Heston}: there are independent Brownian motions
$\beta^i,\ \gamma^i$ such that 
\begin{eqnarray}
 d \log(S_t^i) &=& (r-\frac{(\Sigma_t)_{i,i}}{2})dt + \sqrt{(\Sigma_t)_{i,i}}\left(
   \tilde{\rho}^i d \beta^i_t + \sqrt{1-(\tilde{\rho}^i)^2 }d\gamma^i_t\right) \nonumber\\
d(\Sigma_t)_{i,i}&=& \left( \alpha (a^Ta)_{i,i} + 2b_{i,i}(\Sigma_t)_{i,i}\right)dt+ 2 \sqrt{(a^Ta)_{i,i}}\sqrt{(\Sigma_t)_{i,i}}d\beta_t^i, \nonumber\\ 
\tilde{\rho}^i&=& \frac{(a^T\rho)^i}{\sqrt{(a^Ta)_{i,i}}}= \frac{\sum_{k=1}^da_{k,i}\rho^i}{\sqrt{\sum_{k=1}^da_{k,i}^2}}.  \nonumber
\end{eqnarray}
In practice, each individual stock could be then calibrated like in the Heston
model. Unfortunately, there are further restrictions implied by this
model and especially that $\alpha \ge d-1$, which is the condition that ensures
the existence of the Wishart process. This is unlikely because when
calibrating Heston to market data, it is typical to get values of $\alpha$ around or
below~$1$. When modelling many stocks together (say $d\ge 5$), it is then not
possible to fit conveniently market data because of this restriction on~$\alpha$.

Last, one of the main feature of the model~\eqref{Wish_Cov} is the Affine
property. It allows to obtain the characteristic function of the stocks
by solving Riccati differential equations. Then, the pricing of European style
options can be made by using Fourier inversion. This approach is known to be
very efficient in a one-dimensional framework (see Carr and Madan~\cite{CM}) and
has been used successfully by Da Fonseca et al.\cite{Dafonseca} to
price Best-of options with $d=2$ assets. However, when the number of assets $d$
is much larger,  the Fourier inversion requires an integration in dimension~$d$ and
can no longer be computed quickly. Unless the payoff has a very particular
structure to reduce the dimension, the pricing by Fourier inversion is no longer
competitive with respect to Monte-Carlo methods and the
Affine property does not really give a crucial advantage in terms of
computational methods.

For all these reasons, we believe that this model can be used successfully in
practice for a small number of stocks ($d=2,3$) but is instead inadequate to
model a large basket.

\subsection{Towards a stochastic correlation model}\label{appli_MRC}

Now, we would like to discuss the application of the MRC processes under the
framework~\eqref{general_stock_dyn2}. The first natural idea would be simply
to take~$(C_t)_{t\ge 0} \sim MRC_{d}(C_0,\kp,c,a)$. With this choice, we would
get analytical formulas for correlation swaps. Indeed, a correlation swap between
assets $S^i_t$ and $S^j_t$ ($i\not=j$) with maturity~$T$ has, up to a discount factor, the
following price:
\begin{equation}\label{price_CS} \E\left[\frac{1}{T} \int_0^T (C_t)_{i,j}dt \right]=\frac{1}{(\kp_i+\kp_j)T}\left[(C_0)_{i,j}(1-e^{-(\kp_i+\kp_j)T})-c_{i,j}(1-e^{-(\kp_i+\kp_j)T}-(\kp_i+\kp_j)T) \right].
\end{equation}
To be precise, the payoff of a correlation swap is defined as the average
over the period $[0,T]$ of the daily correlation of log-returns
$\frac{
  \log\left(\frac{S^i_{t+\Delta}}{S^i_{t}}\right)\log\left(\frac{S^j_{t+\Delta}}{S^j_{t}}\right)}{
  {\sqrt{\log\left(\frac{S^i_{t+\Delta}}{S^i_{t}}\right)^2}\sqrt{\log\left(\frac{S^j_{t+\Delta}}{S^j_{t}}\right)^2}}}$
and is here approximated by $\frac{1}{T} \int_0^T
(C_t)_{i,j}dt$. Unfortunately, up to now, correlation swaps are not quoted on
the markets and are only dealt over the counter. It is then not possible to
get data on their prices in order to calibrate~$\kp$, $c$ and $C_0$, which
would have been very tractable thanks to formula~\eqref{price_CS}.

The only quoted options that bring on the dependence between assets are index
options.  We have been kindly given by Julien Guyon at Soci\'et\'e G\'en\'erale market data on the
DAX index at the 4th October 2010. He provides us with data on European index
option prices as well as parametrized local volatility functions
$\sigma^i(t,x)$ that are already calibrated to options price on each
asset. We thus assume in the sequel the dynamics~\eqref{general_stock_dyn2}
for the stocks with $\sigma^i_t=\sigma^i(t,S^i_t)$. We will also denote $$I_t=\sum_{i=1}^d \alpha_i S^i_t$$  the
index value and will assume constant weights $\alpha_i$ such that
$\sum_{i=1}^d \alpha_i= 1$. These weights are given in
Table~\ref{ITable_Euler} for the DAX.

\begin{table}[h]
  \centering
  \begin{tabular}{cccccccccc}
    \hline
{\tiny {SIEMENS }} &   {\tiny {BASF}} &  {\tiny {BAYER}}   &   {\tiny {E-ON}}
&   {\tiny {DAIMLER}} &   {\tiny {ALLIANZ}}  & {\tiny {SAP}} & {\tiny
  {D. TELEKOM}}  &   {\tiny {D. BANK}} &  {\tiny {RWE }}\\
9.91 &   8.03  &  7.97   &   7.67  &   7.35 &   6.97  &  6.01  & 5.6 &   5.06 &  3.86 \\ 
{\tiny {M-RUECK}} & {\tiny {LINDE}}   &   {\tiny {BMW }} &  {\tiny {VW
  }} & {\tiny {D. POST}}  & {\tiny {ADIDAS }}  &  {\tiny {D. BOERSE }} &
{\tiny {FRESENIUS-MC}} &   {\tiny {THYSS. KRUPP}} &  {\tiny {MAN}}\\
  3.23 &  3.06   &   2.92  &  2.26 & 2.05  &  1.76   &  1.65  &  1.63 &   1.52 &  1.47  \\
{\tiny {HENKEL }} & {\tiny {SDF}} & {\tiny {LUFTHANSA }}  &  {\tiny
  {METRO}}  &  {\tiny {INFINEON }}  & {\tiny {HEIDEL.}}  &  {\tiny
  {FRESENIUS}}   &{\tiny {BEIERSDORF}} & {\tiny {COMMERZBANK }}  & {\tiny {MERCK}}\\
 1.29 & 1.17  & 1.16   &  1.12  &  1.02   & 0.94  &  0.9   & 0.86  & 0.84   & 0.74 \\

    \hline
 %   \hline
  \end{tabular}
  \caption{DAX index composition,  in percentage (4 October 2010)}\label{ITable_Euler}
\end{table}

To calibrate MRC processes to index options, it is desirable to reduce the
number of parameters. We will assume in the sequel with a slight abuse of
notation that $\kp=\kp I_d$, $a=a I_d$ with $\kp, a \in \R_+^*$,
$c_{i,j}=\indi{i=j}+\rho\indi{i \not =j}$ for $\rho \in [-1/(d-1),1]$ and
$C_0=c$. Condition~\eqref{cond_weak_existence} is then simply equivalent to
$2\kp(1-\rho)\ge (d-2)a^2$, but we will assume in addition that
$$ 2\kp(1-\rho)\ge (d-1)a^2$$
in order to take advantage of the $O(d^3)$ discretization scheme given
by~\eqref{fast_second_order_sch}.
Thus, $C_t$ follows the following dynamics
\begin{equation}\label{MRC_model} C_t=c +2 \kp \int_0^t (C_s-c)ds + a \sum_{n=1}^d \int_0^t \left( \sqrt{C_s-C_s e_d^{n} C_s} dW_se_d^{n}
  +e_d^{n} dW_s^T \sqrt{C_s-C_s e_d^{n} C_s} \right),
\end{equation}
and we assume that the Brownian motion~$W$ is independent from the Brownian
motion~$B$ that drives the stocks~\eqref{general_stock_dyn2}. We would like to
calibrate such a process to European index option prices. To do so, we first
calculate the value $\rho$ such that the constant correlation model
$\indi{i=j}+\rho\indi{i \not =j}$ fits the at the money implied volatility. Then, we use
this value and look at the impact of~$\kp$ and~$a$. Figure~\ref{too_flat_smile}
illustrates these results. As one could expected, this model gives a smile
which is not enough sloping and is unable to fit the volatility skew. The
parameters~$\kp$ and~$a$ have no impact on the slope of the smile. 
\begin{figure}[h]
  \centering
  \begin{tabular}{rl}
    \hspace{-0.8cm}
    \psfig{file=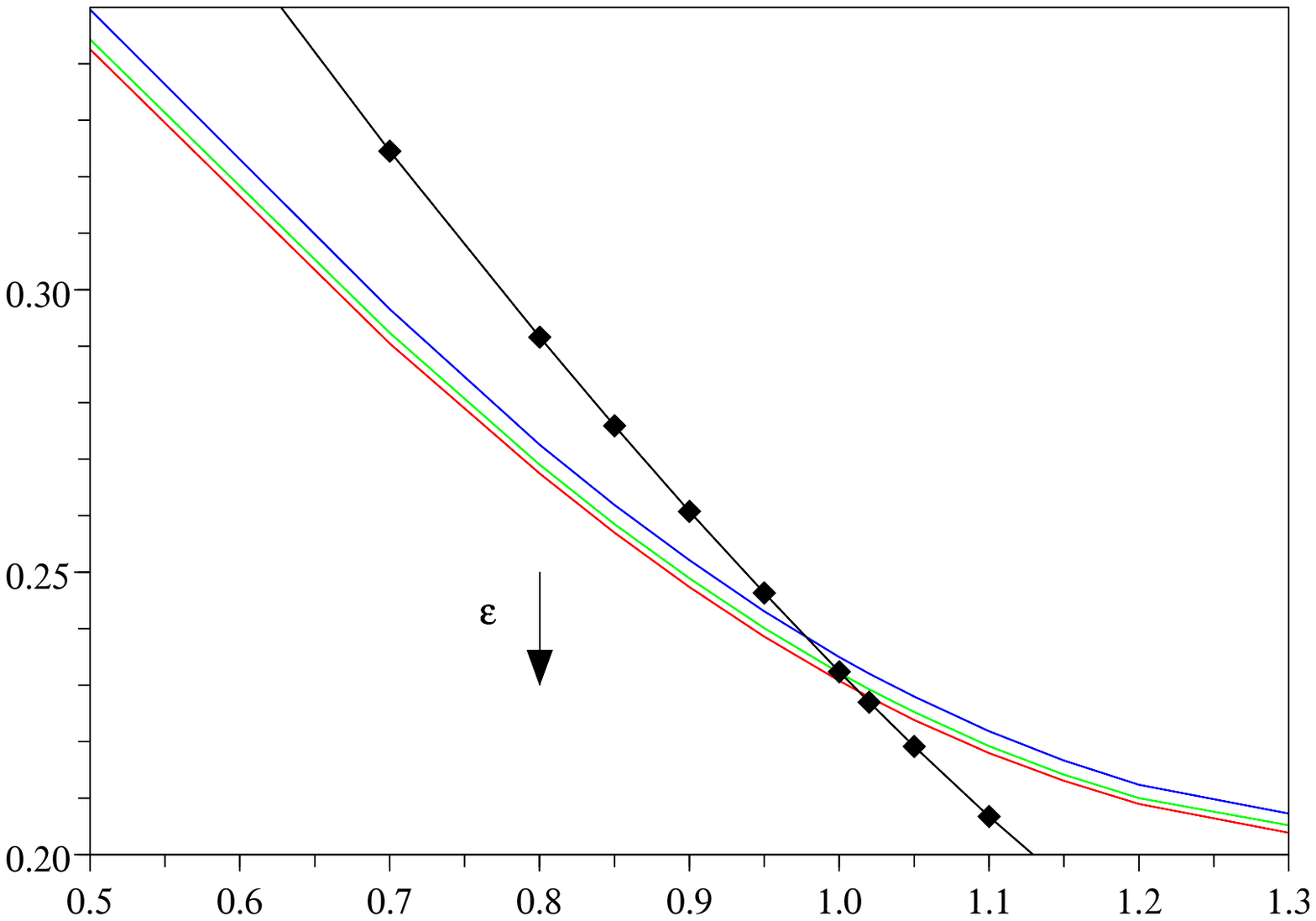,width=8cm,height=7cm}
    &\hspace{-0.2cm}
    \psfig{file=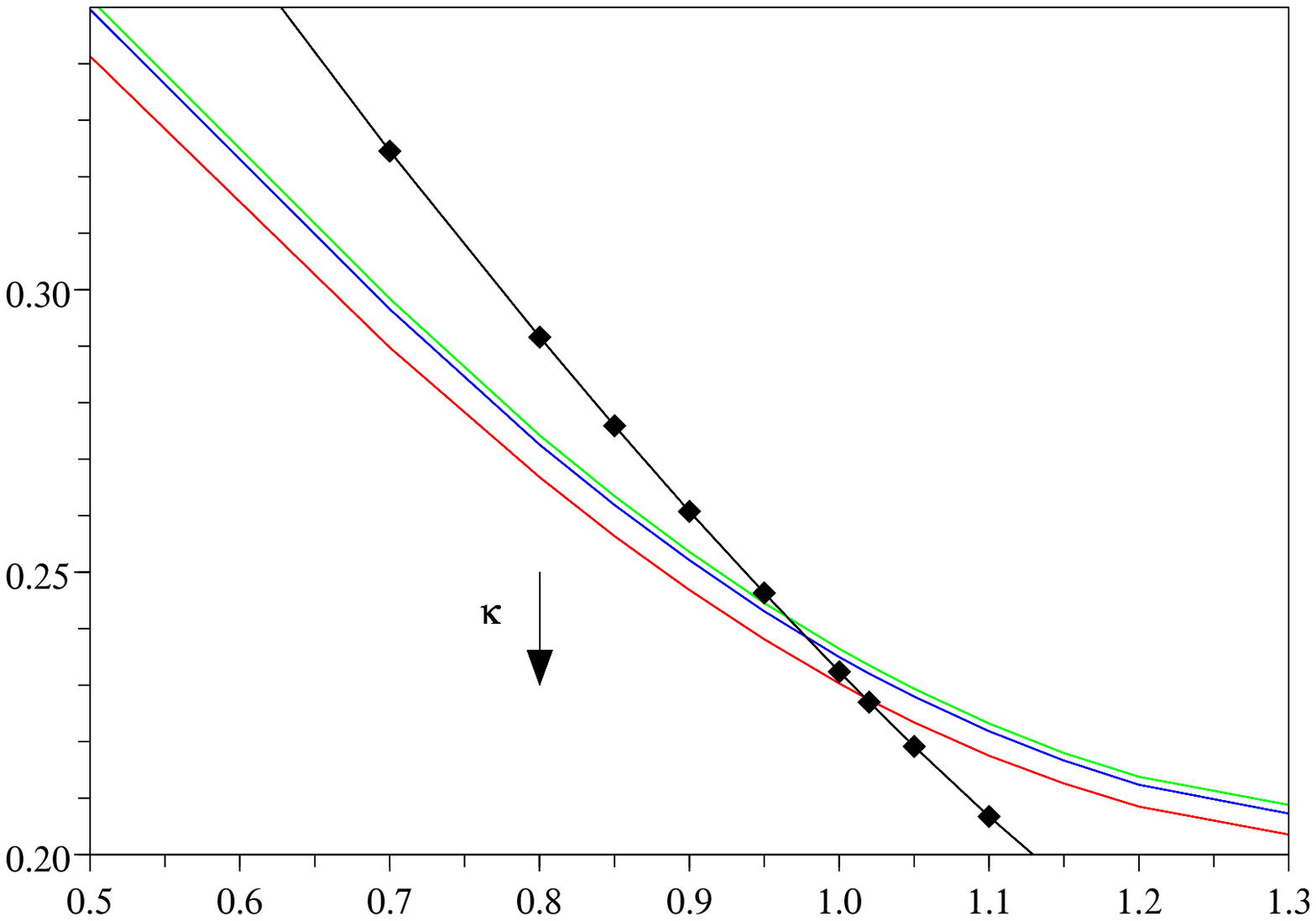,width=8cm,height=7cm}\\
  \end{tabular}
  \caption{\small{ In both graphics, the line with diamonds indicate the market
      $1$~year implied volatility. We plot the implied given by
      model~\eqref{MRC_model} with $\rho=0.67968$ and use the parametrization
      $a=\sqrt{\frac{2\kp \varepsilon}{d-1}(1-\rho)},$ with $\varepsilon \in
      [0,1]$. {\bf Left:}~$\kappa=100$ and $\varepsilon \in \left \lbrace 0,  0.6, 1 \right \rbrace$. {\bf  Right:} $\varepsilon=0.5$ and  $\kappa \in \left \lbrace 1,  20, 100 \right \rbrace.$ These curves are obtained with  $10^6$ Monte-Carlo samples and discretization time step $0.025.$}}\label{too_flat_smile}  
      %%       $d=30$, $10^6$ Monte-Carlo samples, $T=1$ and discretization time step $0.025.$  Both graphs correspond to the index implied volatility  w.r.t to strike moneyness in the $MRC$ model.   Left:  $\sigma=0.5$ and  $\kappa = \left \lbrace 1,  20, 100 \right \rbrace.$ Right: $\kappa=100$ and $\sigma= \left \lbrace 0,  0.6, 1 \right \rbrace$. In both graphes, the black line is the market implied volatility and the initial and the lon term correlation matrix are given by the non diagonal component $\rho= 0.67968.$
%%  }} \label{too_flat_smile}  
\end{figure}

The reason of this too flat smile is that the correlation is not
related to the index price, while the market would expect that the correlation
is high (resp. low) when the index is low (resp. high).
There are at least two ways to correct this. First, one may assume some dependence between the
Brownian motions $W$ and $B$, which would be very analogous to what is made
for stochastic volatility models. This requires to find an adequate way to
correlate these Brownian motion from a financial point of view that is also
tractable for simulation purposes. We have left this for further
research.

The second way to make depend the correlation in function of the
index value is simply to assume a local correlation model $C(t,I_t)$ with
$C:\R_+\times \R_+^* \rightarrow \corr$ and keep $W$ and $B$ independent. This
approach has been considered by Reghai~\cite{Reghai}. In Ahdida~\cite{these}, it
is shown that the following parametric form
\begin{equation}\label{local_corr} C(t,I_t)_{i,j}=\indi{i=j}+\rho(t,I_t)\indi{i \not =j}, \text{ with }
\rho(t,I_t)=\max\left(\frac{1}{1+\eta\left(I_t/I_0 \right)^\gamma},
  \rho_{\min} \right),
\end{equation}
with $\eta, \gamma,\rho_{min} \ge 0$ is very tractable to calibrate the index
smile at a given maturity. Indeed, it is shown that $\eta$ tunes the implied
volatility at the money, $\gamma$ tunes the skew (i.e. the slope at the money
of the implied volatility), and $\rho_{min}$ tunes the right tail of the
smile. The left hand side of Figure~\ref{lc_extension} shows the index smile
data and the implied volatility given by this model. Also, an extension of this model with time-dependent
parameters can be used to fit the index smile for different maturities.
\begin{figure}[t]
\centering
\begin{tabular}{rl}
    \hspace{-0.8cm}
    \psfig{file=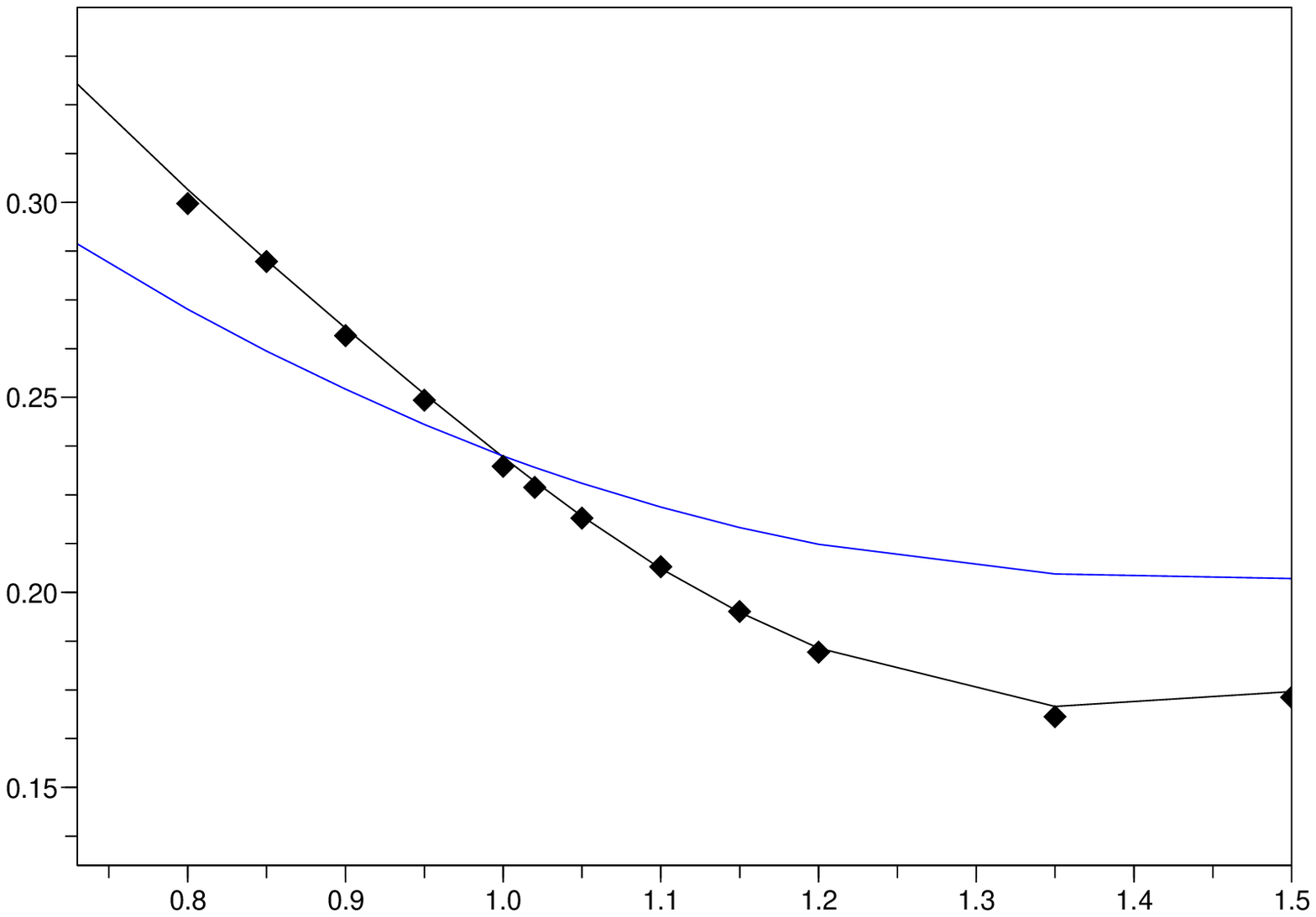,width=8cm,height=7cm}
    &\hspace{-0.2cm}
    \psfig{file=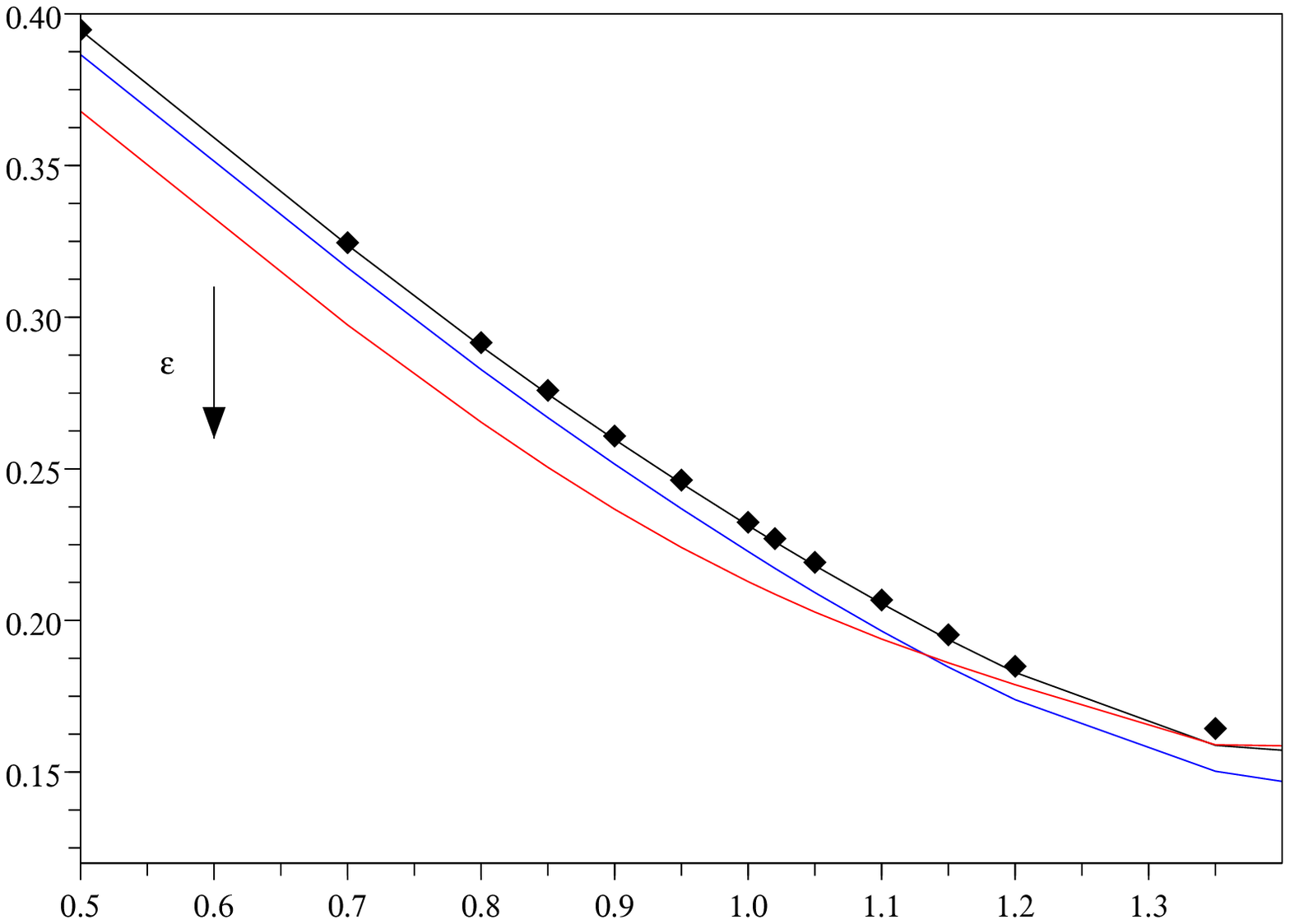,width=8cm,height=7cm}\\
  \end{tabular}  
  \caption{\small{{\bf Left:} diamonds indicate market $1$~year implied volatility for the
      DAX. The curve fitting this data is the implied volatility given by
      the local correlation model~\eqref{local_corr} with $\gamma=8.672568,$
      $\eta=0.500714$ and $\rho_{\min}= 0.1.$ The other curve is the implied
      volatility given by a constant correlation model $(C_t)_{i,j}=\indi{i=j}+\rho\indi{i \not =j}$ that fits the at
      the money implied volatility: $\rho=0.67968$. {\bf Right:} With the same
      values for $\gamma$, $\eta$ and $\rho_{\min}$, we have plotted for $\kp=100$ and
      $\varepsilon \in \left \lbrace 0, 0.3, 0.9 \right \rbrace$ the implied
      volatility given by model~\eqref{SLC}. These curves are obtained with  $10^6$ Monte-Carlo samples and discretization time step $0.025.$}}\label{lc_extension} %%  The graph corresponds to the index implied volatility  w.r.t to strike moneyness in the $MRC$ model perturbed by the local one.  $\kappa=100$ and $\sigma= \left \lbrace 0, 0.3, 0.8, 0.9 \right \rbrace$. black line ($\sigma=0$), black line ($\sigma=0.3$), green line ($\sigma=0.8$) and red line ($\sigma=0.9$)  The dash line is the  local corelation model calibrated and with $\gamma=8.672568,$ $\eta=0.500714$ and $\rho_{\min}= 0.1.$ The initial condition and the long term correlation matrix are given by $\rho= 0.67968$ on the non diagonal componenet. 
%%  }
%% \small{$d=30$, $10^6$ Monte-Carlo samples, $T=1$ and discretization time step $0.025.$ Comparaison between  Local calibrated model (Dash line), the  DAX market data (solid line), and constant correlation model (blue line).
%%  }   
\end{figure}

We want to illustrate now how dynamics such as MRC processes could be used to
extend local correlation models and add a new source of randomness. Namely, we
consider the following dynamics
\begin{equation}\label{SLC} C_t=C_0 +2 \kp \int_0^t (C_s-C(s,I_s))ds +  \int_0^t a(s,I_s) \sum_{n=1}^d\left( \sqrt{C_s-C_s e_d^{n} C_s} dW_se_d^{n}
  +e_d^{n} dW_s^T \sqrt{C_s-C_s e_d^{n} C_s} \right),
\end{equation}
where $\kp>0$, $a:\R_+\times \R_+^* \rightarrow \R_+$ and~$C(t,I)$ is defined by~\eqref{local_corr}.
The limit case $a(t,x)\equiv 0$, $\kp=+\infty$ corresponds to the local correlation model.
Following the same lines as in the proof of Theorem~\ref{thm_we}, we could show
under some rather mild assumptions on~$\rho(t,x)$ and $\sigma^i(t,x)$ that the whole
SDE on $(C_t,S^1_t,\dots,S^d_t)$ has a weak solution if $2\kp(1-\rho(t,x))\ge
(d-2)a^2(t,x)$ for all $t\ge 0, x>0$, and a unique strong solution if
$2\kp(1-\rho(t,x))\ge d a^2(t,x)$. To simulate such an SDE, we will simply use
the Euler-Maruyama scheme for the stocks and use our scheme for the MRC
process with coefficients $\kp$, $C(t_i^N,I_{t_i^N})$ and
$a(t_i^N,I_{t_i^N})$ on the time-step $[t^N_i,t^N_{i+1}]$. More precisely, we
will assume moreover that $2\kp(1-\rho(t,x))\ge (d-1) a^2(t,x)$ and even set
$$a(t,x)=\sqrt{\frac{2\kp \varepsilon}{d-1}(1-\rho(t,x))},$$
where $\varepsilon \in [0,1]$ is a free parameter. This choice allows to use the
$O(d^3)$ discretization scheme~\eqref{fast_second_order_sch} for the MRC process.
 Starting from the calibrated local correlation model, we have plotted in
the right-hand side of Figure~\ref{lc_extension} the effect of the volatility
on the index smile. We have chosen a large value for $\kp$ so that the
model~\eqref{SLC} fits the data for $\varepsilon=0$. We see that the volatility
of the correlation tends to reduce prices of call option on the index.  The
same monotonicity was already observed in~Figure~\ref{too_flat_smile}. This
indicates some concavity of the index option prices with respect to the
correlation.

Last, a natural question is to wonder if this is really necessary to sample
 a whole correlation process. For example, we could consider the following one-dimensional model
 \begin{equation}\label{SLC2} C_t=\indi{i=j}+\rho_t \indi{i \not =j}, \text{
     with } \rho_t=\rho_0+ \kp \int_0^t (\rho_s-\rho(s,I_s))ds+ a
   \int_0^t \sqrt{\rho_s(1-\rho_s)}dW_s,
\end{equation}
with $\rho_0\in[0,1]$, $\kp >0$, $a \ge 0$. This dynamics would have rather close
qualitative features to~\eqref{SLC} and is much less demanding in terms of
computational effort. To be fair, as far as index modeling is concerned it may
be sufficient to parametrize the correlation matrix by a single
parameter~$\rho_t$.  The heuristic reason is that index options do not really
depend on the individual pairwise correlations $(C_t)_{i,j}$ but rather depend
on an average correlation in the basket. Instead, if the aim is then to price
and hedge exotic products on the dependence, it may be relevant to model all
the pairwise correlations. To give a caricatural example, an option on the
difference of two correlation swaps that pays $(\frac{1}{T}\int_0^T
(C_t)_{i,j}dt-\frac{1}{T}\int_0^T (C_t)_{k,l}dt)^+$ is almost surely equal to
zero in model~\eqref{SLC2} or~\eqref{local_corr}, which may basically give an
arbitrage. It has instead a non trivial price if $(C_t)_{t\ge 0} \sim
MRC_{d}(C_0,\kp,c,a)$ or in model~\eqref{SLC}.

We have tested MRC type dynamics on index options because they are the only
liquid quoted options that bring on dependence. The tractability offered by
these processes is not really exploited for such options. Unfortunately today,
there is no quoted options that could give the market view on pairwise
correlations. However, if an investor has some personal  views on correlations
between some companies or some industry sectors, processes such as MRC can be
a relevant tool to take into account these views and price exotic
products. Generally speaking, modelling precisely the dependence between the
stocks in order to get a model that prices consistently single-name and basket
products is an important challenge in finance, and we hope that processes such
as MRC may be tool to achieve it.

%\pagebreak

\bibliographystyle{plain}
\bibliography{Biblio2}

\begin{thebibliography}{10}

\bibitem{these}
A.~Ahdida.
\newblock {\em Processus matriciels : simulation et mod\'elisation de la
  d\'ependance en finance}.
\newblock PhD thesis, Universit\'e Paris-Est, 2011.

\bibitem{AA}
A.~Ahdida and A.~Alfonsi.
\newblock Exact and high order discretization schemes for {W}ishart processes
  and their affine extensions.
\newblock {\em Arxiv Preprint}, 2010.

\bibitem{Alexander}
C.~Alexander and L.~Nogueira.
\newblock Stochastic local volatility.
\newblock Technical report, 2004.

\bibitem{Alfonsi}
A.~Alfonsi.
\newblock High order discretization schemes for the {CIR} process: application
  to affine term structure and {H}eston models.
\newblock {\em Math. Comp.}, 79(269):209--237, 2010.

\bibitem{Bru}
M.F. Bru.
\newblock Wishart processes.
\newblock {\em J. Theoret. Probab.}, 4(4):725--751, 1991.

\bibitem{CM}
P.~Carr and A.~Madan.
\newblock Option pricing and the fast fourier transform.
\newblock {\em Journal of Computational Finance}, 2(4):61--73, 1999.

\bibitem{ChenStroock}
L.~Chen and D.~W. Stroock.
\newblock The fundamental solution to the {W}right-{F}isher equation.
\newblock {\em SIAM J. Math. Anal.}, 42(2):539--567, 2010.

\bibitem{Teichmann}
C.~Cuchiero, D.~Filipovi\'c, E.~Mayerhofer, and J.~Teichmann.
\newblock {Affine processes on positive semidefinite matrices.}
\newblock {\em Ann. Appl. Probab.}, 21(2):397--463, 2011.

\bibitem{Dafonseca}
J.~Da~Fonseca, M.~Grasselli, and C.~Tebaldi.
\newblock Option pricing when correlations are stochastic: an analytical
  framework.
\newblock {\em Review of Derivatives Research}, 10:151--180, 2008.

\bibitem{EpsteinMazzeo}
C.~L. Epstein and R.~Mazzeo.
\newblock Wright-{F}isher diffusion in one dimension.
\newblock {\em SIAM J. Math. Anal.}, 42(2):568--608, 2010.

\bibitem{Etheridge}
A.~Etheridge.
\newblock {\em Some mathematical models from population genetics}, volume 2012
  of {\em Lecture Notes in Mathematics}.
\newblock Springer, Heidelberg, 2011.
\newblock Lectures from the 39th Probability Summer School held in Saint-Flour,
  2009.

\bibitem{Fernholz_Karatzas}
R.~Fernholz and I.~Karatzas.
\newblock Relative arbitrage in volatility-stabilized markets.
\newblock {\em Annals of Finance}, 1:149--177, 2005.
\newblock 10.1007/s10436-004-0011-6.

\bibitem{Golub}
G.H. Golub and C.F. Van~Loan.
\newblock {\em Matrix computations}.
\newblock Johns Hopkins Studies in the Mathematical Sciences. Johns Hopkins
  University Press, Baltimore, MD, third edition, 1996.

\bibitem{Gourieroux2}
C.~Gourieroux and J.~Jasiak.
\newblock Multivariate {J}acobi process with application to smooth transitions.
\newblock {\em J. Econometrics}, 131(1-2):475--505, 2006.

\bibitem{Gourieroux}
C.~Gourieroux and R.~Sufana.
\newblock Wishart quadratic term structure models.
\newblock {\em Working paper.}, 2003.

\bibitem{Heston}
Steven~L. Heston.
\newblock A closed-form solution for options with stochastic volatility with
  applications to bond and currency options.
\newblock {\em The Review of Financial Studies}, 6(2):pp. 327--343, 1993.

\bibitem{Yor4}
M.~Jeanblanc, M.~Yor, and M.~Chesney.
\newblock {\em Mathematical methods for financial markets}.
\newblock Springer Finance. Springer-Verlag London Ltd., London, 2009.

\bibitem{Jourdain}
B.~Jourdain and M.~Sbai.
\newblock {Coupling Index and Stocks}.
\newblock {\em Quantitative Finance}, 2010.

\bibitem{Karatzas}
I.~Karatzas and S.E. Shreve.
\newblock {\em Brownian motion and stochastic calculus}, volume 113 of {\em
  Graduate Texts in Mathematics}.
\newblock Springer-Verlag, New York, second edition, 1991.

\bibitem{KT}
S.~Karlin and H.~M. Taylor.
\newblock {\em A second course in stochastic processes}.
\newblock Academic Press Inc. [Harcourt Brace Jovanovich Publishers], New York,
  1981.

\bibitem{Kaya}
C.~Kaya~Boortz.
\newblock Modelling correlation risk.
\newblock {\em Diplomarbeit, preprint}, 2008.

\bibitem{Langnau}
A.~Langnau.
\newblock A dynamic model for correlation.
\newblock {\em Risk}, (April):74--78, 2010.

\bibitem{Mazet}
O.~Mazet.
\newblock Classification des semi-groupes de diffusion sur {${\bf R}$}
  associ\'es \`a une famille de polyn\^omes orthogonaux.
\newblock In {\em S\'eminaire de {P}robabilit\'es, {XXXI}}, volume 1655 of {\em
  Lecture Notes in Math.}, pages 40--53. Springer, Berlin, 1997.

\bibitem{NV}
S.~Ninomiya and N.~Victoir.
\newblock Weak approximation of stochastic differential equations and
  application to derivative pricing.
\newblock {\em Appl. Math. Finance}, 15(1-2):107--121, 2008.

\bibitem{Reghai}
A.~Reghai.
\newblock Breaking correlation breaks.
\newblock {\em Risk}, (October):90--95, 2010.

\bibitem{Rydberg}
T.~H. Rydberg.
\newblock A note on the existence of unique equivalent martingale measures in a
  markovian setting.
\newblock {\em Finance and Stochastics}, 1:251--257, 1997.
\newblock 10.1007/s007800050024.

\bibitem{Strang}
G.~Strang.
\newblock On the construction and comparison of difference schemes.
\newblock {\em SIAM J. Numer. Anal.}, 5:506--517, 1968.

\bibitem{Talay}
D.~Talay and L.~Tubaro.
\newblock Expansion of the global error for numerical schemes solving
  stochastic differential equations.
\newblock {\em Stochastic Anal. Appl.}, 8(4):483--509 (1991), 1990.

\bibitem{Yor5}
M.~Yor.
\newblock {\em Exponential functionals of {B}rownian motion and related
  processes}.
\newblock Springer Finance. Springer-Verlag, Berlin, 2001.

\end{thebibliography}

\pagebreak

%%***********************************************************************
\appendix
%%***********************************************************************

\section{Some results on correlation matrices}\label{App_Notations_matrices}

\subsection{Linear ODEs on correlation matrices}

Let $b \in \symm$ and $\kp \in \genm$. In this section, we consider the
following linear ODE
\begin{equation}\label{ODE_general}
x'(t)=b-(\kp x(t)+x(t) \kp ^T), \ x(0)=x \in \corr,
\end{equation}
and we are interested in necessary and
sufficient conditions on~$\kp$ and $b$ such
that \begin{equation}\label{ODE_corr_stable}
  \forall x \in \corr, \forall t \ge 0, x(t) \in \corr.
\end{equation}
Let us first look at necessary conditions. We have for
$1 \le i,j\le d$:
$$ x'_{i,j}(t)=b_{i,j} -  \sum_{k=1}^d
\kp_{i,k}x_{k,j}(t)+x_{i,k}(t)\kp_{j,k}.$$
In particular, we necessarily have $x'_{i,i}(t)=0$. This gives for $t=0$,
$l\not= i$ and $x(0)=I_d+\rho(e^{i,l}_d+e^{l,i}_d)$ that $ b_{i,i}-2\kp_{i,i}- 2\rho \kp_{i,l} =0$ for any $ \rho \in
[-1,1]$. It comes out that:
$$ \kp_{i,l}=0 \text{ if } l \not = i, \ b_{i,i}=2\kp_{i,i}.$$
Thus, the matrix $\kp$ is diagonal and we denote $\kp_i=\kp_{i,i}$. We get
$x'_{i,j}(t)=b_{i,j} -  (\kp_i+\kp_j)x_{i,j}(t)$ for $i\not = j$. If
$\kp_i+\kp_j=0$, we have $x_{i,j}(t)=x_{i,j}+b_{i,j}t$, which implies that
$b_{i,j}=0$. Otherwise, $\kp_i+\kp_j \not =0$ and we get:
$$ x_{i,j}(t)=x_{i,j} \exp \left(-(\kp_i+\kp_j) t \right)+ \frac{
  b_{i,j}}{\kp_i+\kp_j} \left[1-\exp \left(-(\kp_i+\kp_j) t \right)
\right] .$$ Once again, this implies that $\kp_i+\kp_j>0$ since the initial
value $x\in \corr$ is arbitrary. We set for $1\le i,j\le d$,
\begin{equation}\label{ODE_intermed}c_{i,i}=1, \text{ and for } i \not = j, \ c_{i,j}=
\begin{cases}
  \frac{b_{i,j}}{\kp_i+\kp_j} \text{ if } \kp_i+\kp_j>0 \\ 0 \text{ if } \kp_i+\kp_j=0.
\end{cases}
\end{equation}
We have $b=\kp c + c \kp$ and for $x=I_d$, $c=\lim_{t\rightarrow +\infty}
x(t) \in \corr$, and deduce the following result.
\begin{proposition}\label{CN_ODE_correl} Let $b \in\symm$ and $\kp \in
  \genm$. If the linear ODE~\eqref{ODE_general} satisfies~\eqref{ODE_corr_stable}, then we have necessarily:
\begin{equation}\label{CN_ODE}\exists c \in \corr, \exists \kp_1,\dots,\kp_d \in \R, \forall i\not = j,
\kp_i+\kp_j \ge 0, \kp=diag(\kp_1,\dots,\kp_d) \text{ and } b=\kp c+c\kp.
\end{equation}
\end{proposition}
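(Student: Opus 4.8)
The plan is to obtain every assertion in~\eqref{CN_ODE} as a \emph{necessary} consequence of the stability property~\eqref{ODE_corr_stable}, by writing the ODE~\eqref{ODE_general} entrywise and probing it with suitably chosen initial data in $\corr$. First I would record the entrywise form
$$x'_{i,j}(t) = b_{i,j} - \sum_{k=1}^d \left( \kp_{i,k} x_{k,j}(t) + x_{i,k}(t) \kp_{j,k} \right),$$
and observe that, since every trajectory must remain in $\corr$, the diagonal entries are frozen: $x_{i,i}(t) \equiv 1$, whence $x'_{i,i}(t) = 0$ for all $t \ge 0$.

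Next I would exploit this diagonal constraint at $t=0$ against the one-parameter family $x(0) = I_d + \rho(e^{i,l}_d + e^{l,i}_d) \in \corr$ for $\rho \in [-1,1]$ and $l \ne i$. Evaluating $x'_{i,i}(0) = 0$ yields $b_{i,i} - 2\kp_{i,i} - 2\rho\,\kp_{i,l} = 0$ for every $\rho \in [-1,1]$; as $\rho$ is free this forces $\kp_{i,l} = 0$ whenever $l \ne i$, so that $\kp$ is diagonal, and we denote $\kp_i = \kp_{i,i}$, together with $b_{i,i} = 2\kp_i$. With $\kp$ diagonal the off-diagonal equations decouple into scalar linear ODEs $x'_{i,j}(t) = b_{i,j} - (\kp_i + \kp_j) x_{i,j}(t)$, which I would solve in closed form. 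The sign conditions then come from boundedness, using that the entries of a correlation matrix lie in $[-1,1]$: the case $\kp_i + \kp_j = 0$ gives the affine solution $x_{i,j}(t) = x_{i,j} + b_{i,j} t$ and forces $b_{i,j} = 0$, while $\kp_i + \kp_j < 0$ would make the explicit solution $x_{i,j}(t) = x_{i,j} e^{-(\kp_i+\kp_j)t} + \frac{b_{i,j}}{\kp_i+\kp_j}\bigl(1 - e^{-(\kp_i+\kp_j)t}\bigr)$ blow up for a suitable choice of initial off-diagonal value within $\corr$, contradicting stability. Hence $\kp_i + \kp_j \ge 0$ for all $i \ne j$.

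Finally I would define $c$ by~\eqref{ODE_intermed} and check by direct substitution that $b = \kp c + c\kp$. The one genuinely non-formal point, which I expect to be the main obstacle, is verifying $c \in \corr$, i.e. that $c$ is symmetric positive semidefinite rather than merely a symmetric matrix with unit diagonal and entries in $[-1,1]$. The clean way around this is to avoid arguing about eigenvalues directly: running the trajectory from the initial condition $x = I_d \in \corr$, the explicit solutions show that $x(t) \to c$ entrywise as $t \to +\infty$, and since each $x(t)$ lies in the closed set $\corr$, the limit $c$ belongs to $\corr$ as well. This simultaneously confirms $c \in \corr$ and closes the proof.
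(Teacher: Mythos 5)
Your proposal is correct and follows essentially the same route as the paper's own proof: probing the frozen diagonal with the initial conditions $I_d+\rho(e^{i,l}_d+e^{l,i}_d)$ to force $\kp$ diagonal and $b_{i,i}=2\kp_i$, using boundedness of the decoupled scalar solutions to get $b_{i,j}=0$ when $\kp_i+\kp_j=0$ and to exclude $\kp_i+\kp_j<0$, and finally obtaining $c\in\corr$ as the long-time limit of the trajectory started at $x=I_d$, which stays in the closed set $\corr$. No gaps.
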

Conversely, let us assume that~\eqref{CN_ODE} holds and $b\in \posm$. We get that $\kp_i
=b_{i,i}/2\ge 0$ and for $t \ge 0$,  $\exp(\kp t) x(t)\exp(\kp t) = x + \int_0^t \exp(\kp
s) b\exp(\kp s) ds$ is clearly positive
semidefinite. Therefore,~\eqref{ODE_corr_stable} holds. We get the following
result.

\begin{proposition}\label{CS_ODE_correl} Let $ \kp_1,\dots,\kp_d  \ge 0$, $\kp=diag(\kp_1,\dots,\kp_d)$ and
  $c \in \corr$. If $\kp c+c\kp \in \posm$ or $d=2$,  the ODE
\begin{equation}\label{ODE_corr}
  x'(t)=\kp(c-x)+(c-x)\kp, \ x(0)=x \in \corr
\end{equation}
satisfies~\eqref{ODE_corr_stable}.
\end{proposition}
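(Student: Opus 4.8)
The plan is to verify that the three defining properties of a correlation matrix---symmetry, unit diagonal, and positive semidefiniteness---are all preserved along the flow of~\eqref{ODE_corr}, treating the generic hypothesis $\kp c + c\kp \in \posm$ and the special case $d=2$ by different means. First I would set $b = \kp c + c \kp$ and recast~\eqref{ODE_corr} in the form $x'(t) = b - (\kp x(t) + x(t)\kp)$ already considered in~\eqref{ODE_general}, noting that $\kp$ is symmetric because it is diagonal. Symmetry is then immediate: since $b \in \symm$ and $\kp = \kp^T$, the transpose $x(t)^T$ solves the same linear ODE with the same initial datum $x \in \corr \subset \symm$, so uniqueness forces $x(t)^T = x(t)$. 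For the diagonal, since $c_{i,i}=1$ one has $b_{i,i} = 2\kp_i$, whence $x'_{i,i}(t) = 2\kp_i(1 - x_{i,i}(t))$; the constant function $x_{i,i}\equiv 1$ solves this with the correct initial value, so $x_{i,i}(t)=1$ for all $t$.

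The heart of the argument, under $b = \kp c + c\kp \in \posm$, is positive semidefiniteness, which I would obtain by the congruence trick foreshadowed in the converse computation above. Setting $y(t) = \exp(\kp t)\, x(t)\, \exp(\kp t)$ and differentiating, the substitution of the ODE cancels the drift terms $\kp x + x\kp$ and leaves $y'(t) = \exp(\kp t)\, b\, \exp(\kp t)$, so that $y(t) = x + \int_0^t \exp(\kp s)\, b\, \exp(\kp s)\, ds$. Each integrand is a congruence of $b \in \posm$ (here $\exp(\kp s)$ is symmetric) and hence lies in $\posm$, and $x \in \posm$, so $y(t) \in \posm$; conjugating back by the invertible diagonal matrix $\exp(-\kp t)$ gives $x(t) = \exp(-\kp t)\, y(t)\, \exp(-\kp t) \in \posm$. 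Together with symmetry and the unit diagonal, this yields $x(t) \in \corr$ for all $t \ge 0$.

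Finally, the case $d=2$ must be handled separately, because there $b = \kp c + c\kp$ need not be positive semidefinite (take $\kp_1 > 0$, $\kp_2 = 0$, $c_{1,2}\neq 0$), so the congruence argument does not apply. But in dimension two the only free coordinate is $x_{1,2}$, which solves the scalar linear ODE $x'_{1,2} = (\kp_1+\kp_2)(c_{1,2} - x_{1,2})$; its solution is the convex combination $x_{1,2}(t) = (1-\lambda_t)\, x_{1,2}(0) + \lambda_t\, c_{1,2}$ with $\lambda_t = 1 - e^{-(\kp_1+\kp_2)t} \in [0,1]$. Since $x_{1,2}(0), c_{1,2} \in [-1,1]$, so is $x_{1,2}(t)$, and for $d=2$ this is exactly the condition that $x(t)$ be a correlation matrix. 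The main obstacle is recognizing that the Lyapunov-type drift is linearized by the congruence $x \mapsto \exp(\kp t)\, x\, \exp(\kp t)$, and keeping in mind that this device requires $b \in \posm$ and therefore genuinely excludes the $d=2$ regime, which is why the two-dimensional case needs its own (elementary) treatment.
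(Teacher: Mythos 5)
Your proof is correct and its core argument is exactly the paper's: the congruence $y(t)=\exp(\kp t)\,x(t)\,\exp(\kp t)$ turns the flow into $y(t)=x+\int_0^t \exp(\kp s)\,b\,\exp(\kp s)\,ds \in \posm$ when $b=\kp c+c\kp\in\posm$, with the unit diagonal preserved since $x'_{i,i}=2\kp_i(1-x_{i,i})$. The only (minor) divergence is the $d=2$ case, which you settle by writing $x_{1,2}(t)$ as a convex combination of $x_{1,2}(0)$ and $c_{1,2}$, whereas the paper observes that the parametrization is redundant in dimension two and one may take $\kp_1=\kp_2=(\kp_1+\kp_2)/2$ without changing the ODE, which makes $\kp c+c\kp=(\kp_1+\kp_2)c\in\posm$ and reduces to the generic case; both are equally valid.
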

Let us note here that the parametrization of the ODE~\eqref{ODE_corr} is redundant when $d=2$, and we can
assume without loss of generality that $\kp_1=\kp_2$ for which $\kp c+c\kp \in
\posm$ is clearly satisfied.

\begin{remark}\label{rem_CNS} The condition given by Proposition~\ref{CN_ODE_correl} is
  necessary but not sufficient, and the condition given by Proposition~\ref{CS_ODE_correl} is
  sufficient but not necessary.  Let $d=3$ and $c=I_3$. We can check that for $\kp=(1,\frac{1}{2},-\frac{1}{2})$,
  \eqref{CN_ODE} holds but \eqref{ODE_corr_stable} is not true. Also,  we can
  check that for $\kp=(1,1,-\frac{1}{2})$, \eqref{ODE_corr_stable} holds. 
\end{remark}

\begin{lemma}\label{lemm_somme_ODE} Let $\kp^1,\kp^2 $ be diagonal matrices
  and $c^1,c^2 \in \corr$ such that $\kp^1 c^1+c^1 \kp^1+\kp^2 c^2+c^2 \kp^2
  \in \posm$. Then, the ODE
  $$x'=\kp^1(c^1-x)+(c^1-x)\kp^1+\kp^2(c^2-x)+(c^2-x)\kp^2$$
  satisfies~\eqref{ODE_corr_stable}. Besides, $x'=\kp(c-x)+(c-x)\kp$ with
  $\kp=\kp^1+\kp^2 \in \posm $ and $c\in \corr$ defined by:
  $$c_{i,i}=1, \text{ and for } i \not = j, \ c_{i,j}=
\begin{cases}
  \frac{(\kp^1_i+\kp^1_j)c^1_{i,j}+ (\kp^2_i+\kp^2_j)c^2_{i,j}}{\kp_i+\kp_j} \text{ if } \kp_i+\kp_j>0 \\ 0 \text{ if } \kp_i+\kp_j=0.
\end{cases}
$$
\end{lemma}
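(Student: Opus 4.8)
The plan is to reduce the two-matrix ODE to the canonical form \eqref{ODE_general} and then reuse the analysis already carried out for Proposition~\ref{CS_ODE_correl}. Write $b=\kp^1 c^1+c^1\kp^1+\kp^2 c^2+c^2\kp^2$ and set $\kp=\kp^1+\kp^2$, a diagonal matrix. Collecting the linear terms, the right-hand side of the ODE becomes $x'=b-(\kp x+x\kp)$, which is exactly \eqref{ODE_general} since $\kp^T=\kp$. The hypothesis is $b\in\posm$, and inspecting its diagonal gives $b_{i,i}=2(\kp^1_i+\kp^2_i)=2\kp_i$; since a semidefinite matrix has a nonnegative diagonal, this forces $\kp_i\ge 0$, i.e.\ $\kp=\mathrm{diag}(\kp_1,\dots,\kp_d)\in\posm$.

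Next I would verify the algebraic identity $b=\kp c+c\kp$ for the stated $c$. On the diagonal both sides equal $2\kp_i$. For $i\neq j$ one has $(\kp c+c\kp)_{i,j}=(\kp_i+\kp_j)c_{i,j}$ while $b_{i,j}=(\kp^1_i+\kp^1_j)c^1_{i,j}+(\kp^2_i+\kp^2_j)c^2_{i,j}$; when $\kp_i+\kp_j>0$ the very definition of $c_{i,j}$ makes these equal. The delicate case is $\kp_i+\kp_j=0$: since $\kp_i,\kp_j\ge 0$ this means $\kp_i=\kp_j=0$, hence $b_{i,i}=b_{j,j}=0$, and for $b\in\posm$ a vanishing diagonal entry kills the whole row, because the $2\times 2$ principal minor indexed by $i,j$ has determinant $-b_{i,j}^2\ge 0$. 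Thus $b_{i,j}=0=(\kp_i+\kp_j)c_{i,j}$, so the identity holds in all cases and the original ODE coincides with $x'=\kp(c-x)+(c-x)\kp$.

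The main point that remains is to prove $c\in\corr$, after which Proposition~\ref{CS_ODE_correl} applies directly (its hypotheses $\kp\ge 0$, $c\in\corr$, and $\kp c+c\kp=b\in\posm$ are then all met) and yields \eqref{ODE_corr_stable}. To obtain $c\in\corr$ I would run the flow under study from $x(0)=I_d$. Symmetry and unit diagonal of $c$ are immediate from its formula and from the scalar equation $x'_{i,i}=2\kp_i(1-x_{i,i})$, which keeps $x_{i,i}(t)\equiv 1$. For positivity, the integrating-factor identity $\exp(\kp t)\,x(t)\,\exp(\kp t)=I_d+\int_0^t \exp(\kp s)\,b\,\exp(\kp s)\,ds$ (valid because $\kp$ commutes with $\exp(\kp s)$) exhibits the right-hand side as a sum of semidefinite matrices, each $\exp(\kp s)\,b\,\exp(\kp s)$ being a congruence of $b\in\posm$; hence $x(t)\in\posm$ and so $x(t)\in\corr$ for every $t$. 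Solving \eqref{ODE_general} entrywise shows $x(t)\to c$ as $t\to+\infty$, and since $\corr$ is closed the limit $c$ lies in $\corr$.

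The expected obstacle is precisely the degenerate directions $\kp_i+\kp_j=0$: they are what make the naive representation $c=\int_0^\infty \exp(-\kp s)\,b\,\exp(-\kp s)\,ds$ diverge, and they force the case analysis in the identity $b=\kp c+c\kp$. Circumventing this through the semidefiniteness argument on the diagonal (to annihilate $b_{i,j}$) together with the finite-time flow from $I_d$ and the closedness of $\corr$ (rather than an improper integral) is the route I would take.
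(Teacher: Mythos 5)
Your proof is correct and follows essentially the same route as the paper: rewrite the ODE in the canonical form $x'=b-(\kp x+x\kp)$ with $b=\kp^1c^1+c^1\kp^1+\kp^2c^2+c^2\kp^2\in\posm$, invoke the integrating-factor identity $\exp(\kp t)x(t)\exp(\kp t)=x+\int_0^t\exp(\kp s)b\exp(\kp s)\,ds$ to get~\eqref{ODE_corr_stable}, and identify $c$ as the limit of the flow started from $I_d$ via~\eqref{ODE_intermed}. The paper compresses all of this into two sentences by citing the preceding discussion; your only additions are explicit verifications the paper leaves implicit (the $2\times 2$ minor argument killing $b_{i,j}$ when $\kp_i+\kp_j=0$, and the closedness of $\corr$), and these are sound.
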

\begin{proof} Since $b=\kp^1 c^1+c^1 \kp^1+\kp^2 c^2+c^2 \kp^2 \in
  \posm$,~\eqref{ODE_corr_stable} holds for $x'=b-\kp x +x \kp$. Then, we know by~\eqref{ODE_intermed} that
  $c$ is a correlation matrix.  
\end{proof}

\subsection{Some algebraic results on correlation matrices}

\begin{lemma}\label{lemma_correlmatrix}
Let $c\in \corr$ and $1 \le i \le d$. Then we have: $c-ce_d^ic \in \posm$,
$(c-ce_d^ic)_{i,j}=0$ for $1\le j\le d$, $\Subm{\left({c-ce_d^ic}\right)}{i}={\Subm{c}{i} - c^i(c^i)^T} $ and:
$$\Subm{\left(\sqrt{c-ce_d^ic}\right)}{i}=\sqrt{\Subm{c}{i} - c^i(c^i)^T}
\text{ and }\left(\sqrt{c-ce_d^ic}\right)_{i,j}=0.$$
Besides,
if $c \in \corri$, $\Subm{c}{i} - c^i(c^i)^T \in \dposs{d-1}$. 
\end{lemma}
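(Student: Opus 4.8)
The plan is to reduce everything to a single entrywise computation of $c-ce_d^ic$, combine it with a Schur-complement argument for the definiteness claims, and then read off the square-root identities from a block decomposition.

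First I would compute the product explicitly. Since $e_d^i=e_d^{i,i}$ has a single nonzero entry, one gets $(ce_d^ic)_{k,l}=c_{k,i}c_{i,l}=c_{i,k}c_{i,l}$, hence
\begin{equation*}
(c-ce_d^ic)_{k,l}=c_{k,l}-c_{i,k}c_{i,l},\qquad 1\le k,l\le d.
\end{equation*}
Taking $k=i$ and using $c_{i,i}=1$ gives $(c-ce_d^ic)_{i,l}=c_{i,l}-c_{i,l}=0$ for every $l$, which is the second claim; by symmetry the $i$-th row and column of $c-ce_d^ic$ both vanish. Restricting the displayed formula to indices $k,l\neq i$ yields $\Subm{(c-ce_d^ic)}{i}=\Subm{c}{i}-c^i(c^i)^T$, the third claim (this is also the identity recorded in the notations for $x\in\corr$).

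Next I would establish positive (semi)definiteness via a Schur complement. Let $P$ be the permutation matrix that moves the index $i$ to the first position; conjugation by the orthogonal matrix $P$ preserves $\posm$ and $\dpos$ and commutes with the positive semidefinite square root. Writing
\begin{equation*}
PcP^T=\begin{pmatrix} 1 & (c^i)^T\\ c^i & \Subm{c}{i}\end{pmatrix},
\end{equation*}
the $(1,1)$ block equals $c_{i,i}=1>0$, so the Schur complement of this block is exactly $B:=\Subm{c}{i}-c^i(c^i)^T$. The standard Schur-complement criterion then gives $c\in\posm\iff B\in\posm$ and $c\in\dpos\iff B\in\dposs{d-1}$. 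Since $c\in\corr\subset\posm$ we obtain $B\in\posm$; and because $c-ce_d^ic$ has vanishing $i$-th row and column, its conjugate $P(c-ce_d^ic)P^T=\mathrm{diag}(0,B)$ is positive semidefinite, which is the first claim. If moreover $c\in\corri$, then $c$ is an invertible element of $\posm$, hence $c\in\dpos$, and the criterion yields $B\in\dposs{d-1}$, the final claim.

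The remaining, and most delicate, step is the identification of the square root. From the previous paragraph, $P(c-ce_d^ic)P^T=\mathrm{diag}(0,B)$ with $B\in\posm$. The matrix $\mathrm{diag}(0,\sqrt{B})$ is symmetric positive semidefinite and squares to $\mathrm{diag}(0,B)$, so by uniqueness of the positive semidefinite square root it equals $\sqrt{\mathrm{diag}(0,B)}=P\sqrt{c-ce_d^ic}\,P^T$. Conjugating back by $P^T$, the $i$-th row and column of $\sqrt{c-ce_d^ic}$ vanish and $\Subm{(\sqrt{c-ce_d^ic})}{i}=\sqrt{B}=\sqrt{\Subm{c}{i}-c^i(c^i)^T}$, as asserted. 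The only point requiring genuine care is invoking uniqueness of the positive semidefinite square root to pass from ``squares to $\mathrm{diag}(0,B)$'' to ``equals the square root''; the rest is the bookkeeping of the permutation $P$ and of the notation $\Subm{\cdot}{i}$ and $c^i$.
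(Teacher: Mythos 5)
Your proof is correct, and its overall skeleton matches the paper's: permute index $i$ to the front, identify $c-ce_d^ic$ with the block-diagonal matrix $\mathrm{diag}\big(0,\Subm{c}{i}-c^i(c^i)^T\big)$, and read the square-root identities off the block structure. Where you diverge is in the tool used for the definiteness claims. The paper exhibits the explicit factorization $c-ce_d^1c=aca^T$ with $a=\left(\begin{smallmatrix}0&0\\ -c^1&I_{d-1}\end{smallmatrix}\right)$, which gives $c-ce_d^ic\in\posm$ at once and obtains $\Subm{c}{i}-c^i(c^i)^T\in\dposs{d-1}$ for $c\in\corri$ from the rank count $\Rg(aca^T)=\Rg(a\sqrt{c})=d-1$; you instead invoke the Schur-complement criterion relative to the $(1,1)$ block $c_{i,i}=1$, which delivers the semidefinite and the strictly definite statements in one stroke. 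These are two faces of the same congruence (the paper's $a$ is essentially the elimination matrix underlying the Schur complement), so neither buys much over the other: yours leans on a standard citation, the paper's is self-contained and makes the rank argument explicit. You are also more careful than the paper about the square-root step — uniqueness of the positive semidefinite square root of a block-diagonal matrix and its compatibility with conjugation by the permutation — which the paper leaves implicit; that is the one place in the lemma where something genuinely needs to be said, and you say it.
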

\begin{proof}
Up to a permutation, it is sufficient to prove the result for $i=1$. We have
$$c-ce_d^1c = \left( \begin{array}{cc} 0&0_{d-1}^T\\0_{d-1}&\Subm{c}{1} -
    c^1(c^1)^T \end{array} \right)=a c a^T, \text{ with } a=\left(\begin{array}{cc}
    0 &0_{d-1}\\ -c^1&
    I_{d-1} \end{array} \right)\in \posm.  $$
Besides, we have $\Rg(aca^T)=\Rg(a \sqrt{c})=d-1$ when $c \in \corri$, which
gives $\Subm{c}{i} - c^i(c^i)^T \in \dposs{d-1}$.
\end{proof}

\begin{lemma}\label{lemma_correlmatrix2}
Let $c\in \corr$ and $1 \le n \le d$. Then $I_d-\sqrt{c}e_d^n\sqrt{c}\in
\posm$ and is such that $$\sqrt{I_d-\sqrt{c}e_d^n\sqrt{c}}=I_d-\sqrt{c}e_d^n\sqrt{c}.$$
\end{lemma}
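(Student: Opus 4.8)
The plan is to recognize $\sqrt{c}\,e_d^n\sqrt{c}$ as a rank-one orthogonal projection, so that $I_d-\sqrt{c}\,e_d^n\sqrt{c}$ is itself an orthogonal projection and therefore coincides with its own positive semidefinite square root.

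First I would write $e_d^n=v v^T$, where $v\in\R^d$ denotes the $n$-th vector of the canonical basis. Since $\sqrt{c}$ is symmetric, this gives $\sqrt{c}\,e_d^n\sqrt{c}=(\sqrt{c}\,v)(\sqrt{c}\,v)^T=u\,u^T$ with $u:=\sqrt{c}\,v$. In particular $\sqrt{c}\,e_d^n\sqrt{c}\in\posm$, being of the form $u u^T$.

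The key step, and really the only substantive point of the lemma, is the norm computation $u^T u=v^T(\sqrt{c})^2 v=v^T c\,v=c_{n,n}=1$, which uses precisely that $c\in\corr$ has unit diagonal entries. Hence $u$ is a unit vector, $u u^T$ is the orthogonal projection onto the line $\R u$, and $M:=I_d-u u^T$ is the orthogonal projection onto $u^\perp$. I expect no genuine obstacle here beyond spotting that the unit-diagonal hypothesis forces $\|u\|=1$.

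Finally I would verify $M\in\posm$ and $M^2=M$ by a one-line computation: $M$ is symmetric, and $M^2=(I_d-u u^T)^2=I_d-2u u^T+u(u^T u)u^T=I_d-2u u^T+u u^T=M$ since $u^T u=1$. A symmetric matrix satisfying $M^2=M$ has all its eigenvalues in $\{0,1\}$, so $M\in\posm$, and its unique positive semidefinite square root is $M$ itself. This yields $I_d-\sqrt{c}\,e_d^n\sqrt{c}\in\posm$ together with $\sqrt{I_d-\sqrt{c}\,e_d^n\sqrt{c}}=I_d-\sqrt{c}\,e_d^n\sqrt{c}$, which is the desired statement.
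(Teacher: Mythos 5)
Your proof is correct and follows essentially the same route as the paper: both identify $\sqrt{c}\,e_d^n\sqrt{c}$ as $uu^T$ with $u$ the $n$-th column of $\sqrt{c}$, use the unit diagonal of $c$ to get $\|u\|=1$, and conclude that $I_d-uu^T$ is an orthogonal projection, hence equal to its own positive semidefinite square root. The only cosmetic difference is that you verify idempotence directly while the paper reads off the eigenvalues $0$ and $1$.
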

\begin{proof}
The matrix $(\sqrt{c}e_d^n\sqrt{c})_{i,j}=(\sqrt{c})_{i,n}(\sqrt{c})_{j,n}$ is
of rank~$1$ and
$\sum_{j=1}^d(\sqrt{c}e_d^n\sqrt{c})_{i,j}(\sqrt{c})_{j,n}=(\sqrt{c})_{i,n}$
since $\sum_{j=1}^d(\sqrt{c})_{j,n}^2= c_{j,j}=1$. Therefore
$((\sqrt{c})_{i,n})_{1 \le i\le d}$ is an eigenvector, and the eigenvalues of
$I_d-\sqrt{c}e_d^n\sqrt{c}$ are $0$ and $1$ (with multiplicity $d-1$).
\end{proof}

\begin{lemma}\label{OuterProdDec}
Let  $q \in \mathcal{S}^+_{d}(\mathbb R)$ be a matrix with rank $r$. Then
there is a permutation matrix $p$, an invertible lower triangular matrix~$m_r
\in \mathcal{G}_r(\mathbb R)$ and $k_r\in
\mathcal{M}_{d-r\times r}(\mathbb R)$ such that:
$$p q p^T = m m^T,\,\,
m =  \left(\begin{array}{cc}
m_r & 0\\
k_r & 0 \\
\end{array}\right)
.$$
The triplet $(m_r,k_r,p)$ is called an extended Cholesky decomposition of
$q$.
%% Besides, $\tilde{c}=\left(\begin{array}{cc}
%% m_r & 0\\
%% k_r & I_{d-r} \\
%% \end{array}\right) \in \nsing$, and we have:
%% $$ q=(\tilde{c}^T p)^T  I^r_d \tilde{c}^Tp. $$ 
\end{lemma}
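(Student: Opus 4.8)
The plan is to reduce the statement to the classical Cholesky factorization of a positive definite matrix, all of the rank deficiency being absorbed into a pivoting permutation. First I would record the elementary factorization of a rank-$r$ matrix of $\posm$: starting from a spectral decomposition $q=o\,\mathrm{diag}(\lambda_1,\dots,\lambda_d)\,o^T$ with $o$ orthogonal and exactly $r$ strictly positive eigenvalues, I keep only the $r$ columns of $o$ attached to the nonzero eigenvalues, each scaled by the corresponding $\sqrt{\lambda}$, to obtain a matrix $V\in\mathcal{M}_{d\times r}(\R)$ with $\Rg(V)=r$ and $q=VV^T$.

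Next I would build the permutation. Since $V$ has full column rank $r$, its row rank is also $r$, so it possesses $r$ linearly independent rows; I let $p$ be a permutation matrix that brings these rows to the top, so that $pV=\left(\begin{smallmatrix} V_r\\ W\end{smallmatrix}\right)$ with $V_r\in\mathcal{G}_r(\R)$ and $W\in\mathcal{M}_{d-r\times r}(\R)$. Then $pqp^T=(pV)(pV)^T$ has leading $r\times r$ block $V_rV_r^T$, which lies in $\dposs{r}$ because $V_r$ is invertible.

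Then I would apply the classical Cholesky theorem to the positive definite matrix $V_rV_r^T$, producing a lower triangular invertible $m_r\in\mathcal{G}_r(\R)$ with $m_rm_r^T=V_rV_r^T$, and set $k_r=W V_r^T (m_r^{-1})^T$. Writing $m=\left(\begin{smallmatrix} m_r & 0\\ k_r & 0\end{smallmatrix}\right)$, a direct block computation finishes the proof: the $(1,1)$ block of $mm^T$ is $m_rm_r^T=V_rV_r^T$ by construction, the $(2,1)$ block is $k_rm_r^T=WV_r^T$ (using $(m_r^{-1})^Tm_r^T=I_r$), the $(1,2)$ block is its transpose, and the $(2,2)$ block is $k_rk_r^T=WV_r^T(V_rV_r^T)^{-1}V_rW^T=WW^T$, the last equality using the identity $V_r^T(V_rV_r^T)^{-1}V_r=I_r$, valid since $V_r$ is invertible. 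These four blocks are exactly those of $pqp^T=(pV)(pV)^T$, so $pqp^T=mm^T$ with $m$ of the required form.

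Every step is elementary; the only point that needs care is the pivoting, namely that the linearly independent rows of $V$ can be selected and permuted to the top so that the leading block $V_rV_r^T$ is genuinely invertible. This is precisely where the hypothesis $\Rg(q)=r$ enters, through $\Rg(V)=r$, and it is what turns the ordinary positive-definite Cholesky decomposition into the extended one. The computation of the $(2,2)$ block, $WW^T=WV_r^T(V_rV_r^T)^{-1}V_rW^T$, is implicitly the statement that the Schur complement vanishes, which reflects that $pqp^T$ has rank exactly $r$.
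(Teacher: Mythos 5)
Your proof is correct. Note, however, that the paper does not actually prove this lemma: it simply refers to Golub and Van Loan (Algorithm 4.2.4), where the decomposition is produced by a symmetric, pivoted outer-product Cholesky algorithm, i.e.\ a Gaussian-elimination-type procedure that at each step selects a positive pivot on the diagonal and records the permutation. Your argument is a genuinely different and self-contained route: you factor $q=VV^T$ via the spectral decomposition, use the rank hypothesis to pivot $r$ independent rows of $V$ to the top, apply the classical Cholesky theorem to the positive definite block $V_rV_r^T$, and verify the remaining blocks by the computation $k_rk_r^T=WV_r^T(V_rV_r^T)^{-1}V_rW^T=WW^T$, which as you say is exactly the vanishing of the Schur complement forced by $\Rg(q)=r$. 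What your approach buys is a short, purely structural existence proof with no algorithmic bookkeeping; what the cited algorithm buys is a finite, rational, directly implementable procedure (no eigendecomposition), which matters here because the paper actually uses this decomposition numerically in Lemma~\ref{Lemma_Decompo} and in the simulation schemes. All the individual steps check out: $V_r$ is invertible because it consists of $r$ linearly independent rows of a rank-$r$ matrix, $(m_r^{-1})^Tm_r^T=I_r$, and $V_r^T(V_rV_r^T)^{-1}V_r=I_r$ since $V_r$ is square and invertible.
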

The proof of this result and a numerical procedure to get such a decomposition
can be found in Golub and Van Loan~(\cite{Golub}, Algorithm 4.2.4). When $r=d$, we can take $p=I_{d}$, and $m_r$ is the usual
Cholesky decomposition.

\begin{lemma}\label{Lemma_Decompo}
Let $c \in \corr$, $r=\Rg((c_{i,j})_{2\leq i,j \leq d})$ and 
$(m_r,k_r,\tilde{p})$ an extended Cholesky decomposition of~$(c_{i,j})_{2\leq
  i,j \leq d}$. We set $p= \left( \begin{array}{c c}
1 & 0\\
%\hline
0 & \tilde{p}^T \\
\end{array}\right)$, $m=\left(\begin{array}{c|c c}
1& 0 & 0 \\
\hline
0 & m_r & 0\\
0 & k_r & 0 \\
\end{array}\right)$ and $\check{c}=\left(\begin{array}{c|c c}
1 & (m^{-1}_r c_1^r)^T & 0 \\
\hline
m^{-1}_r c_1^r &I_r & 0\\
0 &0 & I_{d-r-1} \\
\end{array}\right)$, where $c^r_1 \in \R^r$, with $(c^r_1)_i=(p^Tcp)_{1,i+1}$ for $1\le i\le
r$.
 We have:
 $$ c=p m \check{c} m^T p^T \text{ and } \check{c} \in \corr.$$
\end{lemma}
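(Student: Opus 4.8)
The plan is to pass to the equivalent identity $p^T c p = m\check c m^T$, which is legitimate because $p$ is a permutation matrix and hence orthogonal, and then to verify it by a block computation, the only subtle point being one off-diagonal block whose evaluation uses positive semidefiniteness of $c$. Writing $c=\left(\begin{smallmatrix}1 & v^T\\ v& C\end{smallmatrix}\right)$ with $C=(c_{i,j})_{2\le i,j\le d}$ and $v\in\R^{d-1}$, I first obtain $p^Tcp=\left(\begin{smallmatrix}1 & w^T\\ w& \tilde pC\tilde p^T\end{smallmatrix}\right)$ with $w=\tilde pv$, and by Lemma~\ref{OuterProdDec} the lower block is $\tilde pC\tilde p^T=\tilde m\tilde m^T$ where $\tilde m=\left(\begin{smallmatrix}m_r&0\\ k_r&0\end{smallmatrix}\right)$. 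Splitting $w=(c_1^r;w')$ with $c_1^r\in\R^r$ its first $r$ entries (matching the definition $(c_1^r)_i=(p^Tcp)_{1,i+1}$) and $w'\in\R^{d-1-r}$, and setting $u:=m_r^{-1}c_1^r$, the goal becomes to match the blocks of $m\check c m^T$ with those of this matrix.

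Next I would carry out the product $m\check c m^T$ block by block. A direct computation gives $m\check c=\left(\begin{smallmatrix}1 & u^T&0\\ c_1^r& m_r&0\\ k_ru& k_r&0\end{smallmatrix}\right)$, and multiplying on the right by $m^T$ yields a matrix whose blocks coincide with those of $p^Tcp$ in every position except the $(3,1)$ block (and its symmetric $(1,3)$): the computation produces $k_ru$ there, whereas $p^Tcp$ has $w'$. Thus everything reduces to the single identity $w'=k_rm_r^{-1}c_1^r$. This is the one step that genuinely uses positivity: since $p^Tcp\in\posm$, the range condition for block positive semidefinite matrices forces the off-diagonal column $w$ to lie in $\mathrm{Range}(\tilde m\tilde m^T)=\mathrm{Range}(\tilde m)$. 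Concretely, for any $\zeta\in\ker\tilde m^T$ the vector $(0;\zeta)$ satisfies $(0;\zeta)^T(p^Tcp)(0;\zeta)=\zeta^T\tilde m\tilde m^T\zeta=0$, so it lies in $\ker(p^Tcp)$, whence $w^T\zeta=0$; therefore $w\perp\ker\tilde m^T$, i.e. $w\in\mathrm{Range}(\tilde m)$. Writing $w=\tilde m(\xi;0)$ then gives $c_1^r=m_r\xi$, so $\xi=u$ and $w'=k_r\xi=k_ru$, as required. This off-diagonal identity is the main obstacle; the rest is block bookkeeping.

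It remains to show $\check c\in\corr$. The diagonal of $\check c$ is $1$ everywhere by inspection, so I only need $\check c\in\posm$. The last $I_{d-r-1}$ block decouples, so it suffices to treat $\left(\begin{smallmatrix}1 & u^T\\ u& I_r\end{smallmatrix}\right)$, which by the Schur complement with respect to the invertible block $I_r$ is positive semidefinite iff $1-\|u\|^2\ge0$. To obtain this bound I would use that the principal submatrix of the positive semidefinite matrix $p^Tcp$ indexed by the first coordinate and the next $r$ coordinates, namely $\left(\begin{smallmatrix}1 & (c_1^r)^T\\ c_1^r& m_rm_r^T\end{smallmatrix}\right)$, is positive semidefinite; applying the Schur complement with respect to the invertible block $m_rm_r^T$ gives $1-(c_1^r)^T(m_rm_r^T)^{-1}c_1^r\ge0$, and $(c_1^r)^T(m_rm_r^T)^{-1}c_1^r=\|m_r^{-1}c_1^r\|^2=\|u\|^2$. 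Hence $\|u\|\le1$, so $\check c\in\posm$ and therefore $\check c\in\corr$, completing the proof.
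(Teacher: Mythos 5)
Your proof is correct and follows essentially the same route as the paper's: both reduce the identity $c=pm\check c m^Tp^T$ by block computation to the single nontrivial relation $w'=k_rm_r^{-1}c_1^r$, and both extract that relation from the positive semidefiniteness of $c$. The only cosmetic difference is that you invoke the range condition ($w\in\mathrm{Range}(\tilde m)$ via vectors in $\ker\tilde m^T$) and Schur complements, whereas the paper conjugates $p^Tcp$ by $q^{-1}$ and reads the same facts off the resulting positive semidefinite matrix.
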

\begin{proof}
By straightforward block-matrix calculations, on has to check that the vector
$c_1^{r,d}\in \R^{d-(r+1)}$ defined by $(c_1^{r,d})_i=(p^Tcp)_{1,i}$ for
$r+1\le i \le d$ is equal to $k_rm_r^{-1}c_1^r$. To get this,  
we introduce the matrix  $q=\left( \begin{array}{c|cc}
1&0&0\\
\hline
0&m_r & 0\\
0&k_r & I_{d-r-1} \\
\end{array}\right)$ and have  $q^{-1}=\left( \begin{array}{c|cc}
1&0&0\\
\hline
0&m^{-1}_r & 0\\
0&-k_rm^{-1}_r & I_{d-r-1} \\
\end{array}\right)$. Since the matrix
$$ q^{-1}p^Tcp(q^{-1})^T=  \left(\begin{array}{c|c c}
1 & (m^{-1}_r c_1^r)^T & (c_1^{r,d}-k_r m_r^{-1} c_1^r)^T \\
\hline
m^{-1}_r c_1^r&I_r & 0\\
c_1^{r,d}-k_r m_r^{-1} c_1^r&0 & 0 \\
\end{array}\right)
$$
is positive semidefinite, we  have $c_1^{r,d}=k_r m_r^{-1}
c_1^r$, $ \left(\begin{array}{c c}
1 & (m^{-1}_r c_1^r)^T  \\
m^{-1}_r c_1^r&I_r
\end{array}
\right) \in \mathcal{S}_{r+1}^+(\R)$ and thus $\check{c} \in \corr$.
\end{proof}

\section{Some auxiliary results}
\subsection{Calculation of quadratic variations}

\begin{lemma}\label{calcul_crochet}
Let $(\mathcal{F}_t)_{t \ge 0}$ denote the filtration generated by
$(W_t,t\ge0)$. We consider a process $(Y_t)_{t \geq 0}$ valued in $\symm$ such
that
\begin{equation*}
 dY_t = B_t dt + \sum_{n=1}^d (A_t^n dW_t e_d^{n}  +  e_d^{n}dW_t^T (A_t^n)^T), %\label{semi_mg}
\end{equation*}
where  $(A^n_t)_{t \geq 0}$, $(B_t)_{t\ge 0}$ are continuous
$(\mathcal{F}_t)$-adapted processes respectively valued in $\genm$,  and
$\symm$. Then, we have for $1\le i, j,k ,l \le d$:
\begin{equation}
  d\langle Y_{i,j},Y_{k,l} \rangle_t=\left[\indi{i=k}(A^i_t(A^i_t)^T)_{j,l} +
    \indi{i=l}(A^i_t(A^i_t)^T)_{j,k}+ \indi{j=k}(A^j_t(A^j_t)^T)_{i,l} +
    \indi{j=l}(A^j_t(A^j_t)^T)_{i,k}\right]dt \label{crochet_semimg}
\end{equation}
\end{lemma}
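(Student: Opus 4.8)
The plan is to read off the martingale part of $(dY_t)_{i,j}$ entrywise and then apply the elementary rule $d\langle (W_t)_{p,q},(W_t)_{r,s}\rangle=\indi{p=r}\indi{q=s}\,dt$, which encodes the independence of the entries of the Brownian matrix $W$. Since the finite-variation term $B_t\,dt$ contributes nothing to quadratic covariations, only the stochastic integral part matters, so I would drop $B_t\,dt$ from the start.

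First I would expand the $(i,j)$ entry of the noise, exploiting that $e_d^{n}$ has a single nonzero entry, equal to $1$ in position $(n,n)$, which forces the column index of $W$. Namely $(A_t^n dW_t e_d^{n})_{i,j}=\indi{j=n}\sum_p (A_t^n)_{i,p}(dW_t)_{p,n}$ and $(e_d^{n} dW_t^T (A_t^n)^T)_{i,j}=\indi{i=n}\sum_p (A_t^n)_{j,p}(dW_t)_{p,n}$. Summing over $n$, the indicators collapse the sum to $n=j$ and $n=i$ respectively, so the martingale part of $(dY_t)_{i,j}$ is
$$dM_{i,j}=\sum_{p}(A_t^i)_{j,p}(dW_t)_{p,i}+\sum_{p}(A_t^j)_{i,p}(dW_t)_{p,j}.$$
This expression is symmetric in $(i,j)$, as it must be since $Y_t\in\symm$, which is a useful consistency check. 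The structural point I would emphasize is that the $(i,j)$ entry involves only the columns $i$ and $j$ of $W$.

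Then I would compute $d\langle Y_{i,j},Y_{k,l}\rangle_t=d\langle M_{i,j},M_{k,l}\rangle_t$ by pairing the two summands of $dM_{i,j}$ with the two summands of $dM_{k,l}$. Each of the four products survives only when the two column indices agree: the first--first pairing needs $i=k$ and gives $\indi{i=k}\sum_p(A_t^i)_{j,p}(A_t^i)_{l,p}=\indi{i=k}(A_t^i(A_t^i)^T)_{j,l}$; the first--second needs $i=l$ and gives $\indi{i=l}(A_t^i(A_t^i)^T)_{j,k}$; the second--first needs $j=k$ and gives $\indi{j=k}(A_t^j(A_t^j)^T)_{i,l}$; the second--second needs $j=l$ and gives $\indi{j=l}(A_t^j(A_t^j)^T)_{i,k}$. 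Adding the four contributions yields exactly \eqref{crochet_semimg}. There is no genuine obstacle here: the computation is elementary once the entrywise form of the noise is written down. The only point demanding care is the index bookkeeping—using that $e_d^{n}$ fixes the column index and collapses the sum over $n$, and then tracking which indicator survives in each cross term while identifying each surviving inner product $\sum_p(A_t^i)_{j,p}(A_t^i)_{l,p}$ with the matrix entry $(A_t^i(A_t^i)^T)_{j,l}$.
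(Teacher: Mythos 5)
Your proof is correct and follows essentially the same route as the paper: write the $(i,j)$ entry of the noise by letting $e_d^{n}$ collapse the sum over $n$ to $n=i$ and $n=j$, then pair the four cross terms using the independence of the entries of $W$. Nothing is missing.
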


\begin{proof}
  Since $(A^n_tdW_te^n_d)_{i,j}=\indi{j=n}(A^j_t
  dW_t)_{i,j}$ and $(e^n_d dW_t^T(A^n_t)^T)_{i,j}=\indi{i=n}(A^i_t
  dW_t)_{j,i}$, we get:
$$d(Y_t)_{i,j}=(B_t)_{i,j}dt +  \sum_{n=1}^d (A^j_t)_{i,n} (dW_t)_{n,j} +
(A^i_t)_{j,n} (dW_t)_{n,i}.$$
Then, $d\langle Y_{i,j},Y_{k,l} \rangle_t=\left[ \indi{j=l} \sum_{n=1}^d
  (A^j_t)_{i,n}(A^j_t)_{k,n} + \indi{j=k} \sum_{n=1}^d
  (A^j_t)_{i,n}(A^j_t)_{l,n} +  \indi{i=l} \sum_{n=1}^d
  (A^i_t)_{j,n}(A^i_t)_{k,n}\right.$ \newline
$\left.+\indi{i=k} \sum_{n=1}^d
  (A^i_t)_{j,n}(A^i_t)_{l,n}\right]dt $, which precisely gives~\eqref{crochet_semimg}.
\end{proof}

\begin{lemma}\label{lemma_det_sde}
Let us consider  $x \in \corri$, and $(X_t)_{t \geq 0}$ a solution of the
SDE $\eqref{SDE_CORR}$. Let $\tau$ denote the stopping time defined as $\tau=
\lbrace t \geq 0, X_t \not \in \corri \rbrace$. Then, there exists a real Brownian motion
$(\beta_t)_{t \geq 0}$ such that for $0\le t < \tau$,
% Then the derminant of $X_t$ is a solution of the following SDE, till the stopping time $\tau$:
% \begin{equation}
% \frac{d(\det(X_t))}{\det(X_t)}= \sum_{i=1}^n a_{i,i}^2 \left(  \frac{(\alpha-1)}{2} \Tr\left[ \left( (e_d^{i,i}-X_te_d^{i,i}) +(e_d^{i,i}- e_d^{i,i}X_t)\right)X_t^{-1}\right]-\left( \Tr(e_d^{i,i}X_t^{-1})-(d-1)\right) \right) dt
% \end{equation}
\begin{eqnarray}\label{SDE_DET}
\frac{d(\det(X_t))}{\det(X_t)}&=& \Tr[X_t^{-1}  ( \kp c+c \kp-(d-2)a^2) ]
  dt-\Tr(2\kp+a^2)dt +2 \sqrt{\Tr\left[a^2(X_t^{-1}-I_d)\right]} d\beta_t, \\
d \log( \det(X_t)) &=&\Tr[X_t^{-1}  ( \kp c+c \kp-d a^2) ]
  dt-\Tr(2\kp-a^2)dt +2 \sqrt{\Tr\left[a^2(X_t^{-1}-I_d)\right]} d\beta_t. \label{SDE_LOGDET}
  \end{eqnarray}
\end{lemma}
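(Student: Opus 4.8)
The plan is to apply It\^o's formula to the smooth function $g(x)=\log\det(x)$, which is well defined on the open set of invertible matrices, and then to recover the determinant via $\det(X_t)=\exp(g(X_t))$. Everything is carried out on the stochastic interval $[0,\tau)$, where $X_t\in\corri$, so that $g$ and all the inverses $X_t^{-1}$ below make sense. I would start from the standard matrix-calculus identities $\partial_{x_{i,j}}\log\det(x)=(x^{-1})_{j,i}$ and $\partial_{x_{i,j}}\partial_{x_{k,l}}\log\det(x)=-(x^{-1})_{j,k}(x^{-1})_{l,i}$. Writing the SDE~\eqref{SDE_CORR} as $dX_t=B_t\,dt+\sum_{n=1}^d(A^n_t\,dW_te^n_d+e^n_d\,dW_t^T(A^n_t)^T)$ with $B_t=\kp(c-X_t)+(c-X_t)\kp$ and $A^n_t=a_n\sqrt{X_t-X_te^n_dX_t}$, It\^o's formula then reads $d\log\det(X_t)=\Tr(X_t^{-1}\,dX_t)-\tfrac12\sum_{i,j,k,l}(X_t^{-1})_{j,k}(X_t^{-1})_{l,i}\,d\langle(X)_{i,j},(X)_{k,l}\rangle_t$.

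The first-order drift is $\Tr(X_t^{-1}B_t)$, and since $\Tr(X_t^{-1}\kp X_t)=\Tr(\kp)$ it equals $\Tr[X_t^{-1}(\kp c+c\kp)]-2\Tr(\kp)$. The martingale part of $\Tr(X_t^{-1}\,dX_t)$ is a scalar continuous local martingale $N_t$; I would treat its bracket and the second-order drift by the same contraction, both fed by Lemma~\ref{calcul_crochet}. Setting $M^i:=A^i_t(A^i_t)^T=a_i^2(X_t-X_te^i_dX_t)$ (symmetric, as $A^i_t$ is symmetric), the bracket formula expresses $d\langle(X)_{i,j},(X)_{k,l}\rangle_t$ as a sum of four indicator terms in $M^i$ and $M^j$.

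The main computational step, and the only real obstacle, is the careful bookkeeping of this four-index contraction. After relabelling indices it collapses onto the two scalar quantities $\sum_i(X_t^{-1}M^iX_t^{-1})_{i,i}$ and $\sum_i(X_t^{-1})_{i,i}\Tr(X_t^{-1}M^i)$, and here two algebraic identities do all the work: $X_t^{-1}(X_t-X_te^i_dX_t)X_t^{-1}=X_t^{-1}-e^i_d$ and $\Tr[X_t^{-1}(X_t-X_te^i_dX_t)]=d-(X_t)_{i,i}=d-1$, the latter using $(X_t)_{i,i}=1$. These give $\sum_i(X_t^{-1}M^iX_t^{-1})_{i,i}=\Tr[a^2(X_t^{-1}-I_d)]$ and $\sum_i(X_t^{-1})_{i,i}\Tr(X_t^{-1}M^i)=(d-1)\Tr(a^2X_t^{-1})$; everything beyond this is substitution.

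Combining, the second-order term contributes $[\Tr(a^2)-d\,\Tr(a^2X_t^{-1})]\,dt$ to the drift, and adding the first-order drift produces exactly $\Tr[X_t^{-1}(\kp c+c\kp-d a^2)]-\Tr(2\kp-a^2)$, which is~\eqref{SDE_LOGDET}. The same contraction, now weighted by $(X_t^{-1})_{j,i}(X_t^{-1})_{l,k}$, yields $d\langle N\rangle_t=4\sum_i(X_t^{-1}M^iX_t^{-1})_{i,i}\,dt=4\Tr[a^2(X_t^{-1}-I_d)]\,dt$; since $(X_t^{-1})_{i,i}\ge 1$ for a positive definite matrix with unit diagonal, this bracket is nonnegative and the square root is meaningful, so L\'evy's characterization furnishes a real Brownian motion $\beta_t$ with $dN_t=2\sqrt{\Tr[a^2(X_t^{-1}-I_d)]}\,d\beta_t$, giving the martingale part of both formulas. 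Finally, \eqref{SDE_DET} follows by It\^o's formula applied to $\det(X_t)=\exp(\log\det(X_t))$: this adds $\tfrac12\,d\langle\log\det(X)\rangle_t=2\Tr[a^2(X_t^{-1}-I_d)]\,dt$ to the drift, which turns $-d a^2$ into $-(d-2)a^2$ inside the trace and $-\Tr(2\kp-a^2)$ into $-\Tr(2\kp+a^2)$, as claimed.
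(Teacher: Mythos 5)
Your proposal is correct and follows essentially the same route as the paper: a direct It\^o computation fed by Lemma~\ref{calcul_crochet}, collapsed via the two identities $X_t^{-1}(X_t-X_te^i_dX_t)X_t^{-1}=X_t^{-1}-e^i_d$ and $\Tr[X_t^{-1}(X_t-X_te^i_dX_t)]=d-1$, followed by the representation of the scalar local martingale with bracket $4\Tr[a^2(X_t^{-1}-I_d)]\,dt$ as an integral against a real Brownian motion. The only difference is the order (you start from $\log\det$ and exponentiate, the paper starts from $\det$ and takes the logarithm), which is immaterial; just note that since the bracket density may vanish, the Brownian motion is obtained from the martingale representation theorem (Theorem~3.4.2 in~\cite{Karatzas}, possibly on an enlarged space) rather than from L\'evy's characterization applied naively.
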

\begin{proof}
 First, let us recall that $\forall i,j,k,l
\in \interv{1}{d},\,\,\forall x\in \dpos\,\,$
$\partial_{i,j}\det(x)=(\adj(x))_{i,j}=\det(x)x^{-1}_{i,j},$ $ \partial_{k,l}\partial_{i,j}(\det(x))=\det(x)(x^{-1}_{l,k}x^{-1}_{i,j}-x^{-1}_{l,j}x^{-1}_{i,k})$.
Since $x$ is symmetric, we have in particular that
$\partial_{k,l}\partial_{i,j}(\det(x))=0$ if $i=l$ or $j=k$. It\^o's
Formula gives for $t<\tau$:
%\vspace{-4mm}
\begin{eqnarray*}
\frac{d(\det(X_t))}{\det(X_t)}&=& \sum_{1\leq i,j\leq d} (X_t^{-1})_{i,j}
d(X_t)_{i,j}
+\frac{1}{2}\sum_{\substack{1\leq i,j \leq d\\ 1 \leq k,l\leq
      d}} \left((X_t^{-1})_{i,j}(X_t^{-1})_{k,l}-(X_t^{-1})_{i,k}(X_t^{-1})_{j,l} 
\right) \langle
  d(X_t)_{i,j},d(X_t)_{k,l} \rangle. 
\end{eqnarray*}
On the one hand we have
\begin{eqnarray*}
\sum_{1\leq i,j\leq d} (X_t^{-1})_{i,j}
d(X_t)_{i,j} &=& \Tr[X_t^{-1}  (\kp c+c \kp) ]
  dt-\Tr(2 \kp)dt+2\sum_{i=1}^d a_i \Tr\left[X_t^{-1}e_d^{i}
      dW_s^T\sqrt{X_t-X_te_d^i X_t}\right].\\
\end{eqnarray*}
On the other hand we get by~\eqref{crochet_MRC}:
\begin{eqnarray*}
&& \sum_{\substack{1\leq i,j \leq d\\ 1 \leq k,l\leq
      d}} \left((X_t^{-1})_{i,j}(X_t^{-1})_{k,l}-(X_t^{-1})_{i,k}(X_t^{-1})_{j,l} 
\right) \langle
  d(X_t)_{i,j},d(X_t)_{k,l} \rangle
 \\ &=&  \sum_{\substack{1\leq i,j\leq d\\ 1\leq k,l\leq
    d}}\left((X_t^{-1})_{i,j}(X_t^{-1})_{k,l}-(X_t^{-1})_{i,k}(X_t^{-1})_{j,l}
\right) \times\left \lbrace a_{j}^2\indi{j=k}(X_t-X_te_d^{j}X_t)_{i,l}
\right. \\ 
&&\,\,\,\,\,\,\,+a_{j}^2\indi{j=l}(X_t-X_te_d^{j}X_t)_{i,k}
+a_{i}^2\indi{i=l}(X_t-X_te_d^{i}X_t)_{j,k} \left.+a_{i}^2\indi{i=k}(X_t-X_te_d^{i}X_t)_{j,l}\right \rbrace \\
%% &=& \sum_{j=1}^d\left(\sum_{1 \leq i,l \leq d}  a_{j}^2(X_t-X_te_d^{j}X_t)_{i,l}\left(\iunderset{=0}{ (X_t^{-1})_{i,j}(X_t^{-1})_{j,l}-(X_t^{-1})_{i,j}(X_t^{-1})_{j,l}}\right)\right)\\
&=&\sum_{j=1}^d\left(\sum_{1 \leq i,k \leq d}  a_{j}^2(X_t-X_te_d^{j}X_t)_{i,k}\left( (X_t^{-1})_{i,j}(X_t^{-1})_{k,j}-(X_t^{-1})_{i,k}(X_t^{-1})_{j,j}\right)\right)\\
%% &&+\sum_{i=1}^d\left(\sum_{1 \leq j,k \leq n} a_{i}^2(X_t-X_te_d^{i,i}X_t)_{j,k}\left(\iunderset{=0}{ (X_t^{-1})_{i,j}(X_t^{-1})_{k,i}-(X_t^{-1})_{i,k}(X_t^{-1})_{j,i}}\right)\right)\\
&&+\sum_{i=1}^d\left(\sum_{1 \leq j,l \leq d}  a_{i}^2(X_t-X_te_d^{i}X_t)_{j,l}\left( (X_t^{-1})_{i,j}(X_t^{-1})_{i,l}-(X_t^{-1})_{i,i}(X_t^{-1})_{j,l}\right)\right)\\
&=& 2\sum_{i=1}^d a_{i}^2\left( \Tr\left[ (X_t-X_te_d^{i}X_t)X_t^{-1}e_d^{i}X_t^{-1}\right] -(X_t^{-1})_{i,i}\Tr\left[(X_t-X_te_d^{i}X_t)X_t^{-1}\right] \right).
\end{eqnarray*}
Since $X_t \in \corri$, we obtain that
 $\Tr\left[(X_t-X_te_d^{i}X_t)X_t^{-1}e_d^{i}X_t^{-1}\right]=(X_t^{-1})_{i,i}-1$
 and $\Tr\left[X_t^{-1}(X_t-X_te_d^{i}X_t)\right]=d-(X_t)_{i,i}=d-1$. We
 finally get:
\begin{equation}\label{dyn_det}
\frac{d(\det(X_t))}{\det(X_t)}= \Tr[X_t^{-1}  ( \kp c+c \kp-(d-2)a^2) ]
  dt-\Tr(2\kp+a^2)dt +2 \sum_{i=1}^d a_i \Tr\left[X_t^{-1}e_d^{i}
      dW_s^T\sqrt{X_t-X_te_d^i X_t}\right].
  \end{equation}
Now, we compute the quadratic variation of~$\det(X_t)$ by using~\eqref{crochet_MRC}:
\begin{eqnarray*}
\frac{d\langle \det(X) \rangle_t}{\det(X_t)^2}  &=&\sum_{\substack{1\leq i,j\leq d\\ 1\leq k,l\leq
    d}}(X_t^{-1})_{i,j}(X_t^{-1})_{k,l}\left \lbrace
  a_{j}^2\indi{j=k}(X_t-X_te_d^{j}X_t)_{i,l}
  +a_{j}^2\indi{j=l}(X_t-X_te_d^{j}X_t)_{i,k} \right. \\ && \ \left.
+a_{i}^2\indi{i=l}(X_t-X_te_d^{i}X_t)_{j,k}+a_{i}^2 \indi{i=k}(X_t-X_te_d^{i}X_t)_{j,l}\right \rbrace dt\\
&=& 4\sum_{i=1}^da_{i}^2\Tr\left[X_t^{-1}e_d^{i}X_t^{-1}(X_t-X_te_d^{i}X_t)
\right]dt \\
&=& 4\sum_{i=1}^da_{i}^2 ((X_t^{-1})_{i,i}-1)dt= 4[\Tr(a^2X_t^{-1})-\Tr(a^2)]dt.
\end{eqnarray*}
It is indeed nonnegative: we can show by diagonalizing and using the convexity of $x\mapsto 1/x$
that $x^{-1}_{i,i}\ge 1/x_{i,i}=1$. Then, there is a Brownian
motion~$(\beta_t,t\ge 0)$ such that~\eqref{SDE_DET} holds (see Theorem
3.4.2 in~\cite{Karatzas}).
\end{proof}

\begin{proposition}\label{moments_jacobi}
Let $k,\theta,\eta \ge 0$. For a given $x \in [-1.1],$ let us consider a
process $(X_t^x)_{ t \geq 0},$ starting from $x,$  and defined as the solution of the following SDE
\begin{equation}
 dX_t^x = k(\theta-X_t^x)dt + \eta \sqrt{1-(X_t^x)^2}dB_t,
\end{equation}
where $(B_t)_{t \geq 0}$ is a real Brownian motion. Then there exists a positive constant $K>0,$ such that 
$$\forall t \geq 0, \forall x \in [-1,1], \,\, \E\left[ (X_t^x-x)^4 \right]\leq K t^2 $$
\end{proposition}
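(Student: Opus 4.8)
The plan is to exploit the fact that the diffusion lives in the compact interval $[-1,1]$, which makes both coefficients bounded, and then to run a \emph{two-step} It\^o argument: first control the second moment of the increment to order $t$, then feed this into the equation for the fourth moment to gain the extra power of $t$. First I would record that $X_t^x\in[-1,1]$ for all $t\ge 0$ (the standard boundary behaviour of the one-dimensional Wright--Fisher/Jacobi diffusion, which is exactly the $d=2$ case discussed after \eqref{SDE_CORR}). Writing $Y_t=X_t^x-x$, this gives $Y_0=0$ and $|Y_t|\le 2$ almost surely, and it makes the drift $k(\theta-X_t^x)$ and the diffusion coefficient $\eta\sqrt{1-(X_t^x)^2}$ uniformly bounded. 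In particular every stochastic integral against $dB$ appearing below has a bounded integrand, hence is a true martingale with zero expectation; this is what lets us drop the martingale parts after taking $\E$.

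Second, I would apply It\^o's formula to $Y_t^2$ and take expectations to obtain
\[
\E[Y_t^2]=\E\!\int_0^t\Big(2kY_s(\theta-X_s^x)+\eta^2\big(1-(X_s^x)^2\big)\Big)\,ds.
\]
Using $|Y_s|\le 2$, $|\theta-X_s^x|\le \theta+1$ and $1-(X_s^x)^2\le 1$, the integrand is bounded by a constant $C_3$ depending only on $k,\theta,\eta$, so that $\E[Y_t^2]\le C_3\,t$ uniformly in $x\in[-1,1]$.

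Third, I would apply It\^o's formula to $Y_t^4$ and take expectations, giving
\[
\E[Y_t^4]=\E\!\int_0^t\Big(4kY_s^3(\theta-X_s^x)+6\eta^2 Y_s^2\big(1-(X_s^x)^2\big)\Big)\,ds.
\]
Now I would use the elementary bound $|Y_s|^3\le 2\,Y_s^2$ (again from $|Y_s|\le 2$) together with $|\theta-X_s^x|\le\theta+1$ and $1-(X_s^x)^2\le 1$ to dominate the integrand by $C_2\,Y_s^2$ for a constant $C_2=C_2(k,\theta,\eta)$. Substituting the second-moment bound from the previous step yields
\[
\E[Y_t^4]\le C_2\int_0^t\E[Y_s^2]\,ds\le C_2C_3\int_0^t s\,ds=\tfrac12\,C_2C_3\,t^2,
\]
which is the claim with $K=\tfrac12\,C_2C_3$, a constant independent of $x$.

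The point I want to flag is that this is where the argument must be done carefully rather than crudely: the naive estimate $Y_t^4\le 4\,Y_t^2$ (valid since $|Y_t|\le 2$) only gives $\E[Y_t^4]\le 4C_3\,t$, which is $O(t)$ and \emph{not} sufficient. The extra factor of $t$ is recovered precisely by integrating the $O(s)$ bound on $\E[Y_s^2]$ over $[0,t]$ in the fourth-moment identity. Apart from this, the only genuine things to check are the martingale property of the It\^o integrals (automatic from boundedness) and the uniformity of all constants in $x$, which holds because every pointwise bound used is uniform on $[-1,1]$; both are routine.
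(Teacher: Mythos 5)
Your proof is correct and follows essentially the same route as the paper: two successive applications of It\^o's formula combined with the fact that the diffusion stays in the compact interval $[-1,1]$, so that all coefficients and integrands are bounded and the stochastic integrals are true martingales. The only (cosmetic) difference is that the paper writes the iterated It\^o formula as a double integral and bounds $L^2 f^x$ by continuity on $[-1,1]^2$, whereas you bound $Lf^x(y)$ by a multiple of $(y-x)^2$ and feed a separately derived $O(s)$ estimate for the second moment into the fourth-moment identity; both yield the same uniform constant.
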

\begin{proof}
 For a given $x \in [-1,1],$ we set $f^x(y)=(y-x)^4$. If we denote $L$
 the infinitesimal operator of the process $X_t^x,$ then we notice that
 $f^x(x) =Lf^x(x)=0$. Besides, $(x,y) \in [-1,1]^2\mapsto L^2f^x(y)$
%%  and obtain by  straightforward calculus  that
%% \begin{eqnarray*}
%% L^2f^x(y) &=& \kappa(a-y)\left \lbrace  -\kappa \partial_y f^x(y) + \kappa(a-y)\partial_y^2 f^x(y) - \sigma^2y\partial_y^2 f^x(y)+\frac{\sigma^2}{2}(1-y^2)\partial_y^3 f^x(y) \right \rbrace   \\
%% && + \frac{\sigma^2}{2}(1-y^2) \left \lbrace -2\kappa \partial_y^2 f^x(y) +\kappa(a-y)\partial_y^3 f^x(y)-\sigma^2\partial_y^2 f^x(y) -2\sigma^2 y^2 \partial_y^3 f^x(y) + \frac{\sigma^2}{2}(1-y^2)\partial_y f^4(y)\right. \rbrace 
%% \end{eqnarray*}
%% Since $x$  is defined on $[-1,1],$ it yields that
is continuous and therefore bounded:  \vspace{-3mm}\begin{equation}\label{ineq_L_2} \exists K>0, \forall x, y \in[-1,1], \, |L^2f^x(y)| \leq 2K.\end{equation}

Since the process $(X_t^x)_{t \geq 0}$ is defined on $[-1,1],$ we get by
applying twice It\^o's formula: 
%\vspace{-4mm}
\begin{equation*}
 \E\left[f^x(X_t^x) \right]  =  \int_0^t \int_0^s \E\left[ L^2f^x(X_u^x) \right]duds.
\end{equation*}
From $\eqref{ineq_L_2},$ one can deduce that ${\Big{|}}\int_0^t \int_0^s
\E\left[ L^2f^x(X_u^x) \right]duds{\Big{|}} \leq K t^2,$ and obtain the final result.

\end{proof}

\subsection{Some basic results on squared Bessel processes}

\begin{lemma}\label{lemma_change_time}
 Let $\beta \geq  2$ and  $Z_t = z+ \beta t + 2 \int_0^t \sqrt{Z_s} dB_s$ be a squared Bessel process of
 dimension $\beta$ starting from $z> 0$. Then we have 
\begin{eqnarray*} 
 \mathbb P(\forall t \geq 0, \int_0^t \frac{ds}{Z_s} < \infty ) =1 & \text{and}&   \int_0^{+\infty} \frac{ds}{Z_s } = + \infty \,\,a.s.
\end{eqnarray*}
\end{lemma}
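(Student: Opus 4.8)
The plan is to treat the two assertions separately, the first being elementary and the second relying on a logarithmic Itô transform.

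For the finiteness statement, I would first invoke the classical fact that a squared Bessel process of dimension $\beta \ge 2$ started at $z>0$ stays strictly positive, i.e. $\mathbb P(\forall t\ge 0,\ Z_t>0)=1$ (this is the same fact already used in the proof of Proposition~\ref{WIS_MRC}; see also \cite{Karatzas}). On this almost sure event the path $s\mapsto Z_s$ is continuous and never vanishes, so on any compact interval $[0,T]$ it attains a strictly positive minimum $m_T=\min_{s\in[0,T]}Z_s>0$. Hence $\int_0^T ds/Z_s\le T/m_T<\infty$ for every $T$ simultaneously, which is exactly the first claim.

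For the divergence, the key idea is to apply Itô's formula to $\log Z_t$. Since $d\langle Z\rangle_t=4Z_t\,dt$, one obtains
\begin{equation*}
\log Z_t=\log z+(\beta-2)\int_0^t\frac{ds}{Z_s}+M_t,\qquad M_t=2\int_0^t\frac{dB_s}{\sqrt{Z_s}},
\end{equation*}
where $M$ is a continuous local martingale with $\langle M\rangle_t=4\int_0^t ds/Z_s$. The strategy is then a proof by contradiction: on the event $\{\int_0^\infty ds/Z_s<\infty\}=\{\langle M\rangle_\infty<\infty\}$ the local martingale $M$ converges almost surely to a finite limit (a continuous local martingale with finite quadratic variation converges, via the Dambis--Dubins--Schwarz time change, \cite{Karatzas}), and the drift term $(\beta-2)\int_0^t ds/Z_s$ also has a finite limit; hence $\log Z_t$, and therefore $Z_t$, would converge to a finite strictly positive limit on that event.

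It then remains to rule this out, i.e. to show that $\limsup_{t\to\infty}Z_t=+\infty$ almost surely; I expect this to be the only non-routine point. It can be handled with the very same representation: if $\sup_{t\ge0}Z_t\le K$ on some event, then on it $\langle M\rangle_t\ge 4t/K\to\infty$, while the displayed identity together with $\beta\ge2$ (so that the drift is nonnegative) forces $M_t\le \log K-\log z$ for all $t$; since a continuous local martingale with $\langle M\rangle_\infty=\infty$ satisfies $\limsup_t M_t=+\infty$ a.s., such an event is null. Taking a union over $K\in\mathbb N$ gives $\limsup_t Z_t=+\infty$ a.s., contradicting the convergence obtained above and yielding $\int_0^\infty ds/Z_s=+\infty$ a.s. Alternatively, one may conclude $\limsup_t Z_t=+\infty$ directly from transience ($Z_t\to+\infty$) when $\beta>2$ and from the neighbourhood recurrence of planar Brownian motion when $\beta=2$.
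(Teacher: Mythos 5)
Your proof is correct, but it follows a genuinely different route from the paper's for the divergence claim. The paper reduces to integer dimensions via the comparison theorem for squared Bessel processes, writes the process explicitly as $(W^1_t+\sqrt{z})^2+\sum_{k=2}^n(W^k_t)^2$, and uses the law of the iterated logarithm to get the quantitative growth bound $Z_t=O(t\log\log t)$, whence $\int^\infty dt/Z_t$ diverges by comparison with $\int^\infty \frac{dt}{t\log\log t}$. You instead work intrinsically: the identity $\log Z_t=\log z+(\beta-2)\int_0^t ds/Z_s+M_t$ with $\langle M\rangle_t=4\int_0^t ds/Z_s$ identifies the event $\{\int_0^\infty ds/Z_s<\infty\}$ with $\{\langle M\rangle_\infty<\infty\}$, on which $M$ (and hence $Z$) converges, and your Dambis--Dubins--Schwarz argument correctly rules out bounded paths: on $\{\sup_t Z_t\le K\}$ one has $\langle M\rangle_\infty=\infty$ yet $M_t\le\log K-\log z$ because the drift is nonnegative for $\beta\ge 2$, which is a null event. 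Your approach avoids both the comparison theorem and the Gaussian representation, and would extend to more general one-dimensional diffusions, at the cost of invoking slightly heavier martingale machinery (convergence of local martingales with finite bracket, DDS time change); the paper's argument is more elementary and in addition yields an explicit almost sure growth rate for $Z_t$. The treatment of the first claim (strict positivity plus continuity on compacts) is the same in both.
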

\begin{proof}
 The first claim is obvious, since the square Bessel process does never touch
 zero under the condition of $\beta \geq 2.$ (see for instance~\cite{Yor4},
 part 6.1.3). By using a comparison theorem ($\forall t \ge 0, Z_t \le Z'_t$ a.s.
 if $\beta \le \beta'$), it is sufficient to prove the second claim for $\beta
 \in \N$. In this case, it is well known that  $(W^1_t+\sqrt{z})^2
 +\sum_{k=2}^n (W^k_t)^2 $ follows a square Bessel process of dimension~$n$,
 where $(W^k_t, t\ge 0)$ are independent Brownian motion. By the law of the
 iterated logarithm, $\limsup_{t\rightarrow + \infty} \frac{(W^k_t)^2}{2t
   \log(\log(t))}=1$, which gives the desired result since $\int_1^\infty \frac{dt}{t
   \log(\log(t))}=+\infty$.
\end{proof}

\begin{lemma}\label{lemma_change_time2}
Let $\beta \geq  6$.  Let $Z_t = 1+ \beta t + 2 \int_0^t \sqrt{Z_s} dB_s$ be a squared Bessel process of
 dimension $\beta$ starting from $1$ and $\phi(t)=\int_0^t \frac{1}{Z_s}ds$. Then we have 
$$\E[\phi(t)]=t+\frac{4-\beta}{2}t^2+O(t^3),\ \E[\phi(t)^2]=t^2+O(t^3), \ \E[\phi(t)^3]=O(t^3).$$
\end{lemma}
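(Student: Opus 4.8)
The plan is to treat $\phi(t)=\int_0^t Z_s^{-1}\,ds$ as an additive functional and to read off its small-time moment asymptotics from the negative moments of the squared Bessel process. Writing $g(z)=z^{-1}$ and using $\phi(t)^k=k!\int_{0\le s_1\le\cdots\le s_k\le t}\prod_{i=1}^k g(Z_{s_i})\,ds_1\cdots ds_k$, nonnegativity and Fubini give $\E[\phi(t)^k]=k!\int_{\Delta_k(t)}\E[\prod_{i=1}^k Z_{s_i}^{-1}]\,ds$, where $\Delta_k(t)=\{0<s_1<\cdots<s_k<t\}$. Everything then reduces to controlling the (joint) negative moments $\E_x[Z_v^{-p}]$. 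The generator of $Z$ is $\mathcal{L}f(z)=\beta f'(z)+2zf''(z)$, and a one-line computation gives $\mathcal{L}(z^{-p})=p(2p+2-\beta)z^{-p-1}$; in particular $\mathcal{L}(z^{-1})=(4-\beta)z^{-2}$ and $\mathcal{L}(z^{-2})=(12-2\beta)z^{-3}$. Two facts will be used repeatedly: first, $\E_x[Z_v^{-p}]<\infty$ for $p<\beta/2$, which for $\beta\ge 6$ covers $p\le 2$, so that $m_p(s):=\E_1[Z_s^{-p}]$ is well defined and continuous with $m_p(0)=1$ for $p\in\{1,2\}$; second, since $2p+2-\beta\le 0$ exactly when $\beta\ge 2p+2$, the maps $z\mapsto z^{-1}$ (for $\beta\ge 4$) and $z\mapsto z^{-2}$ (for $\beta\ge 6$) are superharmonic, whence $\E_x[Z_v^{-p}]\le x^{-p}$ and $v\mapsto \E_x[Z_v^{-p}]$ is nonincreasing.

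First I would settle the first moment. Dynkin's formula for $z^{-1}$ gives $m_1(s)=1+(4-\beta)\int_0^s m_2(r)\,dr$, and for $z^{-2}$ it gives $m_2(r)=1+(12-2\beta)\int_0^r m_3(w)\,dw$ when $\beta>6$, so $m_2(r)=1+O(r)$ (at the endpoint $\beta=6$ the same expansion holds because $z^{-2}$ is then a local martingale, forcing $m_2(r)\le 1$, while the explicit transition density shows $1-m_2(r)$ is exponentially small). Hence $m_1(s)=1+(4-\beta)s+O(s^2)$ and $\E[\phi(t)]=\int_0^t m_1(s)\,ds=t+\tfrac{4-\beta}{2}t^2+O(t^3)$. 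For the second moment I would sandwich the integrand. Setting $\Phi_v(z)=\E_z[Z_v^{-1}]$, the Markov property gives $\E[Z_r^{-1}Z_s^{-1}]=\E[Z_r^{-1}\Phi_{s-r}(Z_r)]$ for $r\le s$. Superharmonicity $\Phi_{s-r}(z)\le z^{-1}$ yields the upper bound $\E[Z_r^{-1}Z_s^{-1}]\le m_2(r)$; for the lower bound, $z\mapsto z^{-1}$ and $z\mapsto\Phi_{s-r}(z)$ are both nonincreasing in $z$ (the latter by the comparison theorem for squared Bessel processes), so their covariance under the law of $Z_r$ is nonnegative and $\E[Z_r^{-1}Z_s^{-1}]\ge m_1(r)m_1(s)$. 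Integrating over $\Delta_2(t)$, the lower bound gives $(\int_0^t m_1)^2=\E[\phi(t)]^2=t^2+O(t^3)$ and the upper bound gives $2\int_0^t\!\int_0^s m_2(r)\,dr\,ds=t^2+O(t^3)$, so $\E[\phi(t)^2]=t^2+O(t^3)$.

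For the third moment only the upper bound $\E[\phi(t)^3]\le Ct^3$ is needed. The naive route collapses one time at a time via the Markov property and the superharmonic bounds, $\E[Z_u^{-1}Z_r^{-1}Z_s^{-1}]\le\E[Z_u^{-1}Z_r^{-2}]\le\E[Z_u^{-3}]$, and this is the main obstacle: $\E_1[Z_s^{-3}]$ is finite only for $\beta>6$ and diverges at the endpoint $\beta=6$ (which does occur, since the lemma is applied with $\beta=\alpha+1$ and $\alpha\ge 5$). Thus for $\beta>6$ the estimate $\E[\phi(t)^3]=O(t^3)$ is immediate, whereas $\beta=6$ demands a sharper argument. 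There I would replace the crude pointwise bound by the two-sided estimate $\E_x[Z_v^{-p}]\le\min(x^{-p},C_p v^{-p})$ for $p\in\{1,2\}$, the second inequality coming from monotonicity in the starting point together with the explicit density, namely $\E_x[Z_v^{-p}]\le\E_0[Z_v^{-p}]=C_p v^{-p}$. The $\min$ is exactly what makes the remaining simplex integral converge: interpolating $\min(z^{-1},C_1/w)\le C_\theta\,z^{-(1-\theta)}w^{-\theta}$ and exploiting that the time-averaging in $\phi$ regularizes the non-integrable singularity of the time-$s$ marginal, one bounds $\E[\phi(t)^3]$ by a convergent Beta-type integral of the right order. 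I expect this endpoint estimate at $\beta=6$, where the relevant exponents are precisely critical and one must invoke the exponentially small defect $1-m_2(r)$ from the explicit squared Bessel density, to be the only genuinely delicate step; everywhere else the generator expansion suffices.
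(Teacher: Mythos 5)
Your route is genuinely different from the paper's: the paper computes the negative moments $\E[Z_t^{-\gamma}]$, $\gamma=1,2,3$, explicitly from the Poisson--Gamma series for the transition density and concludes by Jensen ($\E[\phi(t)^3]\le t^2\,\E[\int_0^t Z_s^{-3}ds]$), whereas you work with the generator, the supermartingale bounds $\E_x[Z_v^{-p}]\le x^{-p}$, and the simplex representation $\E[\phi(t)^k]=k!\int_{\Delta_k(t)}\E[\prod_i Z_{s_i}^{-1}]$. For $\beta>6$ your argument is sound (the second-moment lower bound is in fact just Jensen, $\E[\phi(t)^2]\ge\E[\phi(t)]^2$, as the paper uses), modulo two points you should make explicit: for $6<\beta\le 8$ the boundedness of $m_3$ near $0$ does not follow from Dynkin applied to $z^{-3}$ and needs the series or a separate argument; and at $\beta=6$ the identity $m_1(s)=1+(4-\beta)\int_0^s m_2(r)\,dr$ requires checking that $\int_0^{\cdot}Z_r^{-3/2}dB_r$ is a true martingale (localize at $T_n=\inf\{s:Z_s\le 1/n\}$ and use the uniform $L^2$-bound $\E[Z_{s\wedge T_n}^{-2}]\le 1$).

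You have also correctly spotted that the endpoint $\beta=6$ is where the paper's own proof silently fails: there $\E[Z_s^{-3}]=+\infty$ (the $k=0$ term of the series is infinite), so the bound $\E[Z_t^{-3}]=O(1)$ and the Jensen step are vacuous, and the endpoint does occur since $\beta=\alpha+1$ with $\alpha\ge 5$. However, your proposed repair is not carried out and, as described, does not close the gap: interpolating $\min(z^{-2},C_2v^{-2})\le C_\theta z^{-(2-\theta)}v^{-\theta}$ and integrating over the simplex yields only $\E[\phi(t)^3]=O(t^{3-\theta})$ for each $\theta\in(0,1)$ --- a convergent Beta-type integral, but of the wrong order --- and the quantity you invoke, the defect $1-m_2(r)$, is not the one that saves you. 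What is needed is the exponential smallness of $\Px(\inf_{s\le t}Z_s\le 1/2)$. One correct assembly: the interpolation already gives mere finiteness, $\E_1[\phi(S)^3]<\infty$ for fixed $S$; then with $\tau=\inf\{s:Z_s\le 1/2\}$ one has $\phi(t)\le 2t+\indi{\tau<t}\int_\tau^{\tau+t}Z_s^{-1}ds$, and by the strong Markov property and BESQ scaling the last integral is distributed as $\phi(2t)$ started from $1$, whence $\E[\phi(t)^3]\le C\bigl(t^3+\Px(\tau<t)\,\E_1[\phi(2T)^3]\bigr)=O(t^3)$. (For the only application, in the proof of Theorem~\ref{theorem_spliopper}, an error $o(t^2)$ would in fact suffice, and one could alternatively restrict that proof to $\alpha>5$.)
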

\begin{proof} For a fixed time $t > 0$, the density of $Z_t$ is given by:

\vspace{-3mm}\begin{equation*}z>0, p(t,z)=\sum_{k=0}^{+\infty}
  \frac{e^{-\frac{1}{2t}}(\frac{1}{2t})^{k}}{k!}
  \frac{1}{2t\Gamma(k+\frac{\beta}{2})}
  (\frac{z}{2t})^{k-1+\frac{\beta}{2}}e^{-\frac{z}{2t}}.\end{equation*}

Let us consider that $\gamma \in \left \lbrace  1,2,3 \right \rbrace, $ then all negative moments can be written as 
\begin{eqnarray*}
 \E\left[ \frac{1}{Z_t^{\gamma}}\right]=   \sum_{k=0}^{+\infty} \
 \frac{e^{-\frac{1}{2t}}(\frac{1}{2t})^{k+\gamma}}{k!}
 \frac{\Gamma(k+\frac{\beta}{2}-\gamma)}{\Gamma(k+\frac{\beta}{2})}=
 \sum_{k=0}^{+\infty} \ \frac{e^{-\frac{1}{2t}}(\frac{1}{2t})^{k+\gamma}}{k!}
 \frac{1}{(k+\frac{\beta}{2}-1)\times \dots \times(k+\frac{\beta}{2}-\gamma)}.
\end{eqnarray*}

We have $\frac{1}{(k+\frac{\beta}{2}-1)} = \frac{1}{k+1} -
\frac{\beta-4}{2(k+2)(k+1)} + O(\frac{1}{k^3})$, which yields to the following expansion: 
\begin{eqnarray}\label{equ1}
 \E\left[ \frac{1}{Z_t}\right] &=& \sum_{k=0}^{+\infty}
 \frac{e^{-\frac{1}{2t}}(\frac{1}{2t})^{k+1}}{(k +1)!}  -(\beta-4)  t
 \sum_{k=0}^{+\infty}  \frac{e^{-\frac{1}{2t}}(\frac{1}{2t})^{k+2}}{(k +2)!}
 +O \left(\frac{t^2}{2} \sum_{k=0}^{+\infty}  \frac{e^{-\frac{1}{2t}}(\frac{1}{2t})^{k+3}}{(k +3)!} \right)  \nonumber \\
&=& 1 - (\beta-4)t + O(t^2)
\end{eqnarray}
The first equality is thus obtained. We use the same argument to get: 
\vspace{-2mm}\begin{eqnarray}\label{equ2}
\E\left[ \frac{1}{Z_t^2}\right]&=& \sum_{k=0}^{+\infty}  \frac{e^{-\frac{1}{2t}}(\frac{1}{2t})^{k+2}}{(k +2)!} + O\left(t \sum_{k=0}^{+\infty}  \frac{e^{-\frac{1}{2t}}(\frac{1}{2t})^{k+3}}{(k +3)!}\right)= 1  + O(t) \nonumber \\
\E\left[ \frac{1}{Z_t^3}\right]&=&  O\left(\sum_{k=0}^{+\infty}  \frac{e^{-\frac{1}{2t}}(\frac{1}{2t})^{k+3}}{(k +3)!} \right)  = O(1). 
 \end{eqnarray}
 By Jensen's inequality, one can deduce that $\E\left[ \left(\int_0^t  \frac{ds}{Z_s} \right)^3\right] \leq t^2 \E\left[\int_0^t \frac{ds}{(Z_s)^3}\right].$ Thanks to the moment expansion  in $\eqref{equ2},$ we find the third equality. Finally, by Jensen's equality, we obtain that 
\vspace{-3mm}\begin{eqnarray*}\E\left[ \left(\int_0^t  \left[\frac{1}{Z_s}-1\right]ds \right)^2\right] \leq t \E\left[\int_0^t \left( \frac{1}{Z_s}-1\right)^2 ds\right]&=& t\E\left[\int_0^t \frac{ds}{(Z_s)^2} \right]-2t\E\left[\int_0^t \frac{ds}{(Z_s)} \right]  + t^2 \\
 &= & t^2  -2t^2 + t^2 + O(t^3)=O(t^3).
\end{eqnarray*}
It yields that 
\vspace{-3mm}\begin{eqnarray*}\E\left[ \left(\int_0^t  \left[\frac{1}{Z_s}\right]ds \right)^2\right] &=& \E\left[ \left(\int_0^t  \left[\frac{1}{Z_s}-1\right]ds \right)^2\right] -t^2 + 2t \int_0^t \E\left[ \frac{1}{Z_s}\right]ds =t^2 + O(t^3).
\end{eqnarray*}

\end{proof}
\section{A direct proof of Theorem \ref{theorem_spliopper}}\label{proof_theorem_spliopper}
\begin{proof}
From~\eqref{EQUATION_OPERATOR_C} we have $2L_i=-\alpha L_i^D+L_i^M$, with:
$$ L^D_i= \sum_{\substack{1 \le j \le d \\ j \not = i}}
x_{\set{i,j}} \partial_{\{i,j\}}, \ \ L_i^M=\sum_{\substack{1 \le
  j,k \le d \\ j \not = i, k \not = i}} 
(x_{\{j,k\}}-x_{\{i,j\}}x_{\{i,k\}}) \partial_{\{i,j\}}\partial_{\{i,k\}}. $$
We want to show that $L_iL_j=L_jL_i$ for $i \not = j$. Up to a permutation of
the coordinates, $L_i$ and $L_j$ are the same operators as $L_1$ and $L_2$. It
is therefore sufficient to check that $L_1L_2=L_2L_1$. Since $L_1L_2=
L_1^ML_2^M -\alpha(L_1^DL_2^M+L_1^ML_2^D)+\alpha^2 L_1^DL_2^D$, it is
sufficient to check that the three terms remain unchanged when we exchange
indices~$1$ and~$2$. To do so we write:
\begin{eqnarray*}
\begin{split}
L_1^M &= {\sum_{\substack{3\leq i,j \leq d}}
  (\ivec{x}{i}{j}-\ivec{x}{1}{i}\ivec{x}{1}{j})\ipartial{1}{i}\ipartial{1}{j}+2\sum_{\substack{3
      \leq i \leq d
    }}(\ivec{x}{2}{i}-\ivec{x}{1}{2}\ivec{x}{1}{i})\ipartial{1}{2}\ipartial{1}{i}+(1-\ivec{x}{1}{2}^2)\ipartial{1}{2}^2}\\
L_2^M &= {\sum_{\substack{3\leq k,l \leq d }} (\ivec{x}{k}{l}-\ivec{x}{2}{k}\ivec{x}{2}{l})\ipartial{2}{k}\ipartial{2}{l}+2\sum_{\substack{3 \leq l \leq d }}(\ivec{x}{1}{l}-\ivec{x}{1}{2}\ivec{x}{2}{l})\ipartial{1}{2}\ipartial{2}{l}+(1-\ivec{x}{1}{2}^2)\ipartial{1}{2}^2}\\
 L_1^D&=\ivec{x}{1}{2} \ipartial{1}{2} + \sum_{\substack{3\leq i\leq
     d}}\ivec{x}{1}{i}\ipartial{1}{i}, \ \ \ \ L_2^D=\ivec{x}{1}{2} \ipartial{1}{2} + \sum_{\substack{3\leq l\leq
     d}}\ivec{x}{2}{l}\ipartial{2}{l}.
\end{split}
\end{eqnarray*}
By a straightforward but tedious calculation, we get :\\
$\displaystyle
L_1^ML_2^M=\iunderset{\overline{1}}{\sum_{\substack{3 \leq i,j,k,l \leq d}}(\ivec{x}{i}{j}-\ivec{x}{1}{i}\ivec{x}{1}{j})(\ivec{x}{k}{l}-\ivec{x}{2}{k}\ivec{x}{2}{l})\ipartial{1}{i}\ipartial{1}{j}\ipartial{2}{k}\ipartial{2}{l}}$\\
$\displaystyle+ \iunderset{\tilde{2}}{\sum_{\substack{3\leq i,j \leq
      d}}(\ivec{x}{i}{j}-\ivec{x}{1}{i}\ivec{x}{1}{j})\left(
    2\ipartial{1}{2}\ipartial{2}{i}\ipartial{1}{j}+2\ipartial{1}{2}\ipartial{2}{j}\ipartial{1}{i}\right)}
$ \\
%\end{split}
%\end{eqnarray*}
$\displaystyle + 2\iunderset{3}{\sum_{\substack{3\leq i,j,l \leq d }} (\ivec{x}{i}{j}-\ivec{x}{1}{i}\ivec{x}{1}{j})(\ivec{x}{1}{l}-\ivec{x}{1}{2}\ivec{x}{1}{l})\ipartial{1}{2}\ipartial{2}{l}\ipartial{1}{i}\ipartial{1}{j}}$\\
$\displaystyle +\iunderset{4}{\sum_{\substack{3\leq i,j \leq d
    }}(\ivec{x}{i}{j}-\ivec{x}{1}{i}\ivec{x}{1}{j})(1-\ivec{x}{1}{2}^2)\ipartial{1}{i}\ipartial{1}{j}\ipartial{1}{2}^2}$
\\
$\displaystyle+2\iunderset{3}{\sum_{\substack{3\leq i,k,l \leq
      d}}(\ivec{x}{2}{i}-\ivec{x}{1}{2}\ivec{x}{1}{i})(\ivec{x}{k}{l}-\ivec{x}{2}{k}\ivec{x}{2}{l})\ipartial{2}{k}\ipartial{2}{l}\ipartial{1}{2}\ipartial{1}{i}}$\\
$\displaystyle+4\sum_{\substack{3\leq i \leq d
  }}(\ivec{x}{2}{i}-\ivec{x}{1}{2}\ivec{x}{1}{i})\left(\iunderset{\tilde{5}}{\ipartial{1}{2}^2\ipartial{2}{i}}-\iunderset{\tilde{6}}{\sum_{\substack{3
        \leq l\leq d
      }}\ivec{x}{2}{l}\ipartial{1}{2}\ipartial{2}{l}\ipartial{1}{i}}
  +\iunderset{\overline{7}}{\sum_{\substack{3\leq l \leq
        d}}(\ivec{x}{1}{l}-\ivec{x}{1}{2}\ivec{x}{2}{l})\ipartial{1}{2}^2\ipartial{2}{l}\ipartial{1}{i}}\right)$\\
$\displaystyle +2\sum_{\substack{3 \leq i \leq d
  }}(\ivec{x}{2}{i}-\ivec{x}{1}{2}\ivec{x}{1}{i})\left(
  \iunderset{\tilde{8}}{-2\ivec{x}{1}{2}\ipartial{1}{2}^2\ipartial{1}{i}}+\iunderset{9}{(1-\ivec{x}{1}{2}^2)\ipartial{1}{2}^3\ipartial{1}{i}}\right)$\\
$\displaystyle +\iunderset{4}{\sum_{\substack{3\leq k,l \leq d }}(\ivec{x}{k}{l}-\ivec{x}{2}{k}\ivec{x}{2}{m})(1-\ivec{x}{1}{2}^2)\ipartial{2}{k}\ipartial{2}{m}\ipartial{1}{2}^2}$\\
$\displaystyle + (1-\ivec{x}{1}{2}^2) \left(\sum_{\substack{3 \leq l \leq d}} \iunderset{9}{2(\ivec{x}{1}{l}-\ivec{x}{1}{2}\ivec{x}{2}{l})\ipartial{1}{2}^3\ipartial{2}{l}} -\iunderset{\tilde{10}}{4\sum_{\substack{3 \leq l \leq d}} \ivec{x}{2}{l}\ipartial{1}{2}^2\ipartial{2}{l}}\right)$\\
$\displaystyle + \iunderset{\overline{11}}{(1-\ivec{x}{1}{2}^2)\ipartial{1}{2}^2((1-\ivec{x}{1}{2}^2)\ipartial{1}{2}^2)}$\\
In this formula, the terms~$\bar{n}$ are already symmetric by exchanging~$1$
and~$2$. The terms~$n$ are paired with the corresponding symmetric term. To
analyse the terms~$\tilde{n}$, we have to do further calculations. On the one hand,
%\vspace{-3.5mm}
\begin{eqnarray}\label{tilde_equ_1}\tilde{10}+\tilde{5}%% &=&\sum_{\substack{1\leq l \leq d\\ l\neq 1,2}} \left \lbrace -4 \ivec{x}{2}{l}(1-\ivec{x}{1}{2}^2)+4(\ivec{x}{2}{l}-\ivec{x}{1}{2}\ivec{x}{1}{l}) \right \rbrace \ipartial{1}{2}^2\ipartial{2}{l}\nonumber\\
&=& \sum_{\substack{3\leq l \leq d}}  4\ivec{x}{1}{2}(\ivec{x}{1}{2}\ivec{x}{2}{l}-\ivec{x}{1}{l}) \ipartial{1}{2}^2\ipartial{2}{l}\nonumber \\
\tilde{8}%%&=& -2\sum_{\substack{1 \leq i \leq d \\ i \neq 1}}(\ivec{x}{2}{i}-\ivec{x}{1}{2}\ivec{x}{1}{i})2\ivec{x}{1}{2}\ipartial{1}{2}^2\ipartial{1}{i}\nonumber\\
&=& \sum_{\substack{3 \leq l\leq d
  }}4\ivec{x}{1}{2}(\ivec{x}{1}{2}\ivec{x}{1}{l}-\ivec{x}{2}{l})\ipartial{1}{2}^2\ipartial{1}{l},\nonumber\end{eqnarray}
are symmetric together. On the other hand we have
%\vspace{-3.5mm}
\begin{eqnarray}\label{tilde_equ_2}
\tilde{2}+\tilde{6}= \sum_{\substack{1\leq i,j \leq d\\i\neq 1,2, j  \neq 1,2}}\left \lbrace  4\ivec{x}{i}{j}-4\ivec{x}{1}{i}\ivec{x}{1}{j}-4\ivec{x}{2}{i}\ivec{x}{2}{j}+4\ivec{x}{1}{i}\ivec{x}{2}{j}\ivec{x}{1}{2} \right \rbrace \ipartial{1}{2}\ipartial{1}{i}\ipartial{2}{j},\nonumber
\end{eqnarray}
which is symmetric.

Now we focus on $L_1^DL_2^M+L_1^ML_2^D$. We number the terms with the same
rule as above, and get:
\begin{equation*}
\begin{split}
&L_1^DL_2^M+L_1^ML_2^D= \iunderset{1}{\sum_{\substack{3\leq k,l \leq d}} ( \ivec{x}{k}{l}-\ivec{x}{2}{l}\ivec{x}{2}{k})\ivec{x}{1}{2}\ipartial{2}{l}\ipartial{2}{k}\ipartial{1}{2}} \\
&+2\iunderset{2}{\sum_{\substack{3\leq l \leq d
    }}\ivec{x}{1}{2}(\ivec{x}{1}{l}-\ivec{x}{1}{2}\ivec{x}{2}{l})\ipartial{1}{2}^2\ipartial{2}{l}}-2\iunderset{{3}}{\sum_{\substack{3\leq
      l\leq d }}
  \ivec{x}{1}{2}\ivec{x}{2}{l}\ipartial{2}{l}\ipartial{1}{2}}+\iunderset{\overline{4}}{\ivec{x}{1}{2}\ipartial{1}{2}\left
    \lbrace  (1-\ivec{x}{1}{2}^2)\ipartial{1}{2}^2\right \rbrace}\\
& +\iunderset{5}{\sum_{\substack{3\leq i,k,l \leq d}}\ivec{x}{1}{i}(\ivec{x}{l}{k}-\ivec{x}{2}{k}\ivec{x}{2}{l})\ipartial{2}{k}\ipartial{2}{l}\ipartial{1}{i}}+\iunderset{6}{2\sum_{\substack{3\leq i,l \leq d }} \ivec{x}{1}{i}(\ivec{x}{1}{l}-\ivec{x}{1}{2}\ivec{x}{2}{l})\ipartial{1}{2}\ipartial{2}{l}\ipartial{1}{i}}\\
&+2\iunderset{{7}}{\sum_{\substack{3 \leq i \leq d }} \ivec{x}{1}{i}\ipartial{1}{2}\ipartial{2}{i}}+\iunderset{8}{\sum_{\substack{3 \leq i \leq d}}\ivec{x}{1}{i}(1-\ivec{x}{1}{2}^2)\ipartial{1}{2}^2\ipartial{1}{i}}+\iunderset{1}{\sum_{\substack{3\leq i,j  \leq d}} (\ivec{x}{i}{j}-\ivec{x}{1}{i}\ivec{x}{1}{j})\ivec{x}{1}{2}\ipartial{1}{2}\ipartial{1}{i}\ipartial{1}{j}}\\
&+\iunderset{5}{\sum_{\substack{3 \leq i,j,l \leq d }}\ivec{x}{2}{l}(\ivec{x}{i}{j}-\ivec{x}{1}{i}\ivec{x}{1}{j})\ipartial{1}{i}\ipartial{1}{j}\ipartial{2}{l}}+2\iunderset{2}{\sum_{\substack{3 \leq i \leq d}}\ivec{x}{1}{2}(\ivec{x}{2}{i}-\ivec{x}{1}{2}\ivec{x}{1}{i})\ipartial{1}{2}^2\ipartial{1}{i}}\\
&+2 \sum_{\substack{3\leq i \leq d }}(\iunderset{7}{\ivec{x}{2}{i}}-\iunderset{3}{\ivec{x}{1}{i}\ivec{x}{1}{2}})\ipartial{1}{i}\ipartial{1}{2}+2\iunderset{6}{\sum_{\substack{ 3 \leq i,l \leq d }}\ivec{x}{2}{l}(\ivec{x}{2}{i}-\ivec{x}{1}{2}\ivec{x}{1}{i})\ipartial{1}{i}\ipartial{1}{2}\ipartial{2}{l}}\\
&+\iunderset{\overline{9}}{(1-\ivec{x}{1}{2}^2)\ipartial{1}{2}^2\lbrace \ivec{x}{1}{2} \ipartial{1}{2}\rbrace} + \iunderset{8}{\sum_{\substack{3\leq l \leq d }} (1-\ivec{x}{1}{2}^2)\ivec{x}{2}{l}\ipartial{1}{2}^2\ipartial{2}{l}}.
\end{split}
\end{equation*}
Therefore, $L_1^DL_2^M+L_1^ML_2^D$ is symmetric when we exchange $1$ and
$2$. Last, it is easy to check that $L_1^DL_2^D= L_2^DL_1^D$, which concludes
the proof.
\end{proof}

\section{A direct construction of a second order scheme for MRC processes}\label{schema_original}
 %% Under the same notation as in Proposition ~\ref{prop_reduction}, we consider that we are able to derive a second potential scheme, denoted by $\corrpsh{d}{\check{x}}{ \frac{d-2}{2} e^1_d}{I_d}{ e^1_d},$ for the process $\corrps{d}{\check{x}}{ \frac{d-2}{2} e^1_d}{I_d}{ e^1_d}.$ %where $\check{x}\in \corr,$ and $(\check{x})_{2 \leq i,j \leq d}=I_{d-1}.$ 
%% It yields  then, by following  the same result as in Theorem 23 in ~\cite{Alfonsi2}, that
%%  \begin{equation*} 
%%   \corrpsh{d}{x}{ \frac{d-2}{2} e^1_d}{I_d}{ e^1_d}:=\left( \begin{array}{cc} 1& 0\\ 0 & u \end{array}
%%  \right)\corrpsh{d}{\check{x}}{ \frac{d-2}{2} e^1_d}{I_d}{ e^1_d}  \left( \begin{array}{cc} 1& 0\\ 0 & u^T \end{array} \right). 
%% \end{equation*}
%% is well a potential second order scheme for $\corrpst{d}{{x}}{\frac{d-2}{2}e_d^1}{I_d}{e_d^1}.$
In Section~\ref{Sec_simu}, we have presented a second order scheme for Mean-Reverting
Correlation processes that is obtained from a second order scheme for Wishart
processes. In this section, we propose a second order scheme that is
constructed directly by a splitting of the generator of Mean-Reverting
Correlation processes. As pointed in~\eqref{second_order_sch}, it is
sufficient to construct a potential second
order scheme for $ \corrpst{d}{{x}}{\frac{d-2}{2}e_d^1}{I_d}{e_d^1}$. Thanks
to the transformation given by Proposition~\ref{prop_reduction}, it is even sufficient to construct such a
scheme when $(x)_{2 \leq i,j \leq d}=I_{d-1}$.

Consequently, in the rest of this section, we focus on getting a potential second
order scheme for $ \corrpst{d}{{x}}{\frac{d-2}{2}e_d^1}{I_d}{e_d^1},$  where
$(x)_{2 \leq i,j \leq d}=I_{d-1}$. By~\eqref{cns_corr}, the matrix $x$ is a
correlation matrix if  $\sum_{i=2}^dx_{1,i}^2 \leq 1$. Besides, the only non
constant elements are on the first row (or the first column) and the vector
$((X_t)_{1,i})_{2 \ldots d}$ is thus defined on the unit ball $\D$: \begin{equation}
\D = \left \lbrace x \in \R^{d-1},\,\, \sum_{i=1}^{d-1}x_{i}^2 \leq 1 \right \rbrace.                                                              \end{equation}

With a slight abuse of notation, the process $((X_t)_{i})_{ 1 \ldots   d-1  }$
will denote the vector $((X_t)_{1,i+1})_{1\ldots d-1}.$ Its quadratic
covariance is given by $d\langle (X_t)_i, d(X_t)_j\rangle =\left(
  \indi{i=j}-(X_t)_i (X_t)_j\right)dt$, and the  infinitesimal generator $L^1$
of~$\corrps{d}{{x}}{\frac{d-2}{2}e_d^1}{I_d}{e_d^1}$ can be rewritten on $\D,$ as 
\begin{eqnarray}\label{L_1_Basic}
L^1 = -\frac{d-2}{2} \sum_{i=1}^{d-1}x_i\partial_{i} + \frac{1}{2} \sum_{1 \leq i,j \leq d-1}(\indi{i=j}-x_ix_j)\partial_{i}\partial_{j}.
             \end{eqnarray}
One can prove that the following stochastic differential equation  \begin{equation*}\label{SDE_BASIC}
 \forall 1 \leq i \leq d-1,\,\,dM_t^i = -\frac{d-2}{2} M_t^i + M_t^i\sqrt{1 - \sum_{j=1}^{d-1} (M_t^j)^2}dB_t^1 + (1-(M_t^i)^2)dB_t^{i+1} - M_t^i\sum_{\substack{1 \leq j \leq {d-1}\\ j \neq i }}M_t^jdB_t^{j+1}
\end{equation*}
 is associated to the martingale problem of $L^1$, where $(B_t)_{t \geq 0}$
 denotes a standard Brownian motion in dimension~$d$. By
 Theorem~\eqref{thm_we}, there is a unique weak solution $(M_t)_{t \geq 0}$ that is  defined on $\D.$ 

 The scope of this section is to
derive a potential second order discretization for the operator $L^1,$ by
using an ad-hoc splitting and the results of
Proposition~\ref{prop_compo_schemas}.  We consider the following splitting
\begin{equation}\label{splitting_equation}L^1= {\opr}^1+\sum_{m=1}^{d-1}\opr^{m+1},\end{equation} 
where we have, for $1 \le m \le d-1$:
\begin{eqnarray*}
{\opr}^1\,\, &=&  \frac{1}{2}\left(1-\sum_{i=1}^{d-1} x_i^2\right)\sum_{\substack{1 \leq
    l,k \leq d-1}} x_k x_l \partial_k\partial_l, \\
\opr^{m+1}&=& \frac{1}{2}\left(-\sum_{1 \leq k \neq m \leq {d-1}
  }x_k\partial_k+ (1- x_m^2)^2\partial_m^2- 2x_m(1-x_m^2) \sum_{1 \leq k \neq
    m \leq {d-1} } x_k \partial_k\partial_m + \sum_{{\small \substack{1 \leq
        k \neq m \leq {d-1} \\ 1 \leq l \neq m \leq
        {d-1}}}}x_kx_l x_m^2\partial_k\partial_l  \right).
\end{eqnarray*}
Thanks to Proposition~\ref{prop_compo_schemas}, it is sufficient to focus on getting
potential second-order schemes for the operators~$\opr^1,\dots,\opr^d$.

%% \begin{proposition}\label{prop_other_split_L1}
%%  All operators in $\eqref{splitting_equation}$ are well defined on $\D.$ If we consider that $\hat{p}_x^1(t)(dz), \ldots \hat{p}_x^d(t)(dz)$ are potential second order scheme associated respectively to $\opr^1,\ldots \opr^d.$ Then both schemes
%% \vspace{-2mm}
%% \begin{eqnarray*}
%%  \frac{1}{2} \left( \hat{p}_x^1(t)\circ \dots \circ \hat{p}_x^d(t) + \hat{p}_x^d(t)\circ \ldots \circ \hat{p}_x^1(t)\right)&,& \hat{p}_x^d(t/2)\circ \dots \circ \hat{p}_x^2(t/2)\circ \hat{p}_x^1(t)\circ \hat{p}_x^2(t/2) \circ\ldots \circ \hat{p}_x^d(t/2).
%% \end{eqnarray*}
%% are well defined on $\D$ and are potential second order scheme for the operator $L^1.$
%% \end{proposition}

%% \begin{proof}
%%  We will prove in  Subsections ~\ref{SubsL_2} and ~\ref{SubsL_1} that all operators given by $\eqref{splitting_equation}$ are well defined on $\D.$ The second assertion comes from the composition technique. 
%% \end{proof}
%% In the next subsections, we will study all operators given in $\ref{splitting_equation},$ and will propose a second order potential scheme for each one.

\subsection{Potential second order schemes for  $\opr^2,\ldots, \opr^d$} \label{SubsL_2}
All the generators $\opr^{l+1}$, $l=1, \dots, d-1$ have the same solution as
$\opr^2$ up to the permutation of the first coordinate and the $l$-th one. It
is then sufficient to focus on the first operator $\opr^2.$ By straightforward
calculus, we find that the following SDE
\begin{eqnarray}\label{SDE_opL2}
              d(X_t)_1= (1-(X_t)_{1}^2)dB_t&,&\,\, \forall 2 \leq i \leq
              {d-1}\,\, d(X_t)_{i}=-(X_t)_{i}\left(\frac{dt}{2} +
                (X_t)_{1}dB_t\right), \ X_0=x\in \D, 
             \end{eqnarray}
 is well a solution of the martingale problem for the generator $\opr^2$. The
 SDE that defines~$(X_t)_1$ is autonomous. Since $x_1 \in [-1,1]$, it has
 clearly a  unique strong valued in $[-1,1]$. It yields that the SDE  $\eqref{SDE_opL2}$ has
 a unique strong solution on $\R^{d}.$ To prove that $(X_t)_{t \geq 0}$ 
 takes values in $\D$ we consider $V_t = \sum_{i=1}^d (X_t)_i^2$. By It\^o calculus, it follows that 
\begin{equation*}dV_t = (1-V_t)(1-(X_t)_1^2)dt
  +2(X_t)_1(1-V_t)dB_t.\end{equation*}
Thus, $1-V_t$ can be written as a stochastic exponential starting from
$1-V_0\ge 0$ and is therefore nonnegative. We now introduce the
Ninomiya-Victoir scheme for the SDE~\eqref{SDE_opL2}.

\begin{proposition}\label{prop_L_2}
Let us consider $x\in \mathbb D$. Let $Y$ be sampled according to $\mathbb
P(Y=\sqrt{3})=\mathbb  P(Y=-\sqrt{3})=\frac{1}{6}$, so that it fits the first five moments of a standard Gaussian variable.  Then $\hat{X}_t^x=X^0(\frac{t}{2},X^1(\sqrt{t}Y,X^0(\frac{t}{2},x)))$ is well defined on $\mathbb D$ and is a potential second order scheme for the infinitesimal operator $\opr^2$, where:
\begin{eqnarray*} 
\forall t \geq 0, \forall x \in \mathbb D,&& X^0_1(t,x)=\frac{x_1{
    e^t}}{\sqrt{e^{2t}x_1^2+(1-x_1^2)}},\,\, \forall 2\leq l \leq
d-1,\,\,X^0_l(t,x) =  \frac{x_l}{\sqrt{e^{2t}x_1^2+(1-x_1^2)}},\\
\forall y \in \R , \forall x \in \mathbb D,&&  X^1_1(y,x)=\frac{e^{2y}(1+x_1)-(1-x_1)}{e^{2y}(1+x_1)+(1-x_1)},\,\, \forall 2\leq l \leq d-1,\,\,X^1_l(y,x) =  \frac{2e^yx_l}{e^{2y}(1+x_1)+(1-x_1)}.
\end{eqnarray*}
\end{proposition}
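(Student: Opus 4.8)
The scheme $\hat X^x_t$ is a Ninomiya--Victoir scheme for the SDE~\eqref{SDE_opL2}, and the plan is to exploit this structure. First I would rewrite~\eqref{SDE_opL2} in Stratonovich form. Writing the diffusion vector field as $V_1=(1-x_1^2)\partial_1-x_1\sum_{l=2}^{d-1}x_l\partial_l$ and computing the It\^o--Stratonovich correction $-\tfrac12(V_1\cdot\nabla)V_1$, the drift turns into the vector field $V_0=x_1(1-x_1^2)\partial_1-x_1^2\sum_{l=2}^{d-1}x_l\partial_l$. Reading vector fields as first order differential operators, a direct comparison with the explicit form of $\opr^2$ (the case $m=1$ of the splitting) then yields the operator identity $\opr^2=V_0+\tfrac12 V_1^2$ on $\mathcal C^\infty(\D)$; checking the second order part just amounts to expanding $V_1^2$ and matching the coefficients of $\partial_1^2$, $\partial_1\partial_l$ and $\partial_k\partial_l$ with~\eqref{L_1_Basic}.

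Next I would identify $X^0(t,\cdot)$ and $X^1(y,\cdot)$ as the exact flows of $V_0$ and $V_1$, respectively. Both are verified by differentiating the closed-form expressions and matching the initial condition $X^0(0,x)=X^1(0,x)=x$. For $X^0$, setting $g(t)=e^{2t}x_1^2+(1-x_1^2)$ and using the identity $1-X^0_1(t,x)^2=(1-x_1^2)/g(t)$ reduces $\dot y=V_0(y)$ to a routine check; for $X^1$, one recognises $X^1_1(y,x)=\tanh\!\big(y+\mathrm{arctanh}\,x_1\big)$ as the solution of the Riccati equation $\dot y_1=1-y_1^2$, while the remaining coordinates solve the linear equations $\dot y_l=-y_1 y_l$. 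To see that $\hat X^x_t$ is well defined on $\D$ it then suffices that each flow preserves the unit ball: for $X^0$ one computes $\sum_l X^0_l(t,x)^2=\big(e^{2t}x_1^2+\sum_{l\ge2}x_l^2\big)/g(t)\le 1$ using $\sum_{l\ge 2}x_l^2\le 1-x_1^2$, and for $X^1$, writing $a=\mathrm{arctanh}\,x_1$ and using $1-x_1^2=\operatorname{sech}^2 a$ gives $\sum_{l\ge2}X^1_l(y,x)^2\le \operatorname{sech}^2(y+a)=1-X^1_1(y,x)^2$. Composing, $\hat X^x_t\in\D$ for every $x\in\D$.

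The heart of the proof is the weak order analysis, for which I would use that, for a smooth $f$ and a vector field $V$ with flow $\Phi_V$, one has $f\circ\Phi_V(s)=e^{sV}f$ in the sense of a Taylor expansion with integral remainder. Composing the three flows yields $f(\hat X^x_t)=\big[e^{(t/2)V_0}e^{\sqrt t\,Y V_1}e^{(t/2)V_0}f\big](x)$. Taking the expectation over $Y$ and using that $Y$ matches the first five moments of $\mathcal N(0,1)$ (so that $\E Y=\E Y^3=\E Y^5=0$, $\E Y^2=1$, $\E Y^4=3$), the odd powers of $V_1$ drop out and $\E[e^{\sqrt t\,Y V_1}]=1+\tfrac t2 V_1^2+\tfrac{t^2}{8}V_1^4+O(t^3)=e^{(t/2)V_1^2}+O(t^3)$. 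Hence $\E[f(\hat X^x_t)]=\big[e^{(t/2)V_0}e^{(t/2)V_1^2}e^{(t/2)V_0}f\big](x)+O(t^3)$, which is the symmetric Strang splitting of $e^{t\opr^2}=e^{t(V_0+\frac12 V_1^2)}$; a Baker--Campbell--Hausdorff expansion shows the two agree up to $O(t^3)$, giving exactly the expansion required by~\eqref{def_potential} with $\nu=2$.

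The main obstacle is to turn these operator-exponential manipulations into rigorous estimates with a constant $C$ and threshold $\eta$ depending only on a good sequence of $f$. I would do this by replacing each $e^{sV}f$ with a finite Taylor polynomial plus a controlled remainder, exactly in the spirit of~\eqref{controle_erreur}, the uniformity in $x$ following from the compactness of $\D$ together with the smoothness of the (polynomial) coefficients of $V_0$ and $V_1$. Alternatively, one may phrase the conclusion through Proposition~\ref{prop_compo_schemas}, viewing $\hat X^x_t$ as the order-two composition (b) of the two exact flow schemes for $V_0$ and for $V_1$ with the moment-matched Gaussian substitute.
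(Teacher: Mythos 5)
Your proposal is correct and follows essentially the same route as the paper: identify the Stratonovich vector fields $V_0,V_1$ whose exact flows are $X^0$ and $X^1$, verify the explicit solutions and that each flow preserves $\D$, and conclude by the Ninomiya--Victoir second-order property. The only difference is that you unpack the weak-order analysis (moment matching of $Y$ and the Strang/BCH expansion of $e^{(t/2)V_0}e^{(t/2)V_1^2}e^{(t/2)V_0}$) where the paper simply invokes Theorem~1.18 of~\cite{Alfonsi}, which encapsulates exactly that argument.
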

\begin{proof}
 The proof is a direct application of the Ninomiya-Victoir's scheme~\cite{NV}
 and we introduce the following ODEs:
\begin{equation*}
\begin{array}{ll}
  \partial_t X^0_1(t,x)= X^0_1(t,x)(1-(X^0_1(t,x))^2),&\,\, \forall 2 \leq l \leq d-1,\,\partial_t X^0_l(t,x)=-X^0_l(t,x)(X^0_1(t,x))^2\\
\partial_y   X^1_1(y,x) = (1-(X^1_1(y,x))^2),&\,\, \forall 2 \leq l \leq d-1,\, \partial_yX^1_l(y,x) = -X^1_1(y,x)X^1_l(y,x).
\end{array}
\end{equation*}
These ODEs can be solved explicitly as stated above. We have to check that
they are well defined on $\mathbb D$. This can be checked with the explicit
formulas or by observing that  $\partial_t (\sum_{l=1}^{d-1}(X^0_l(t,x))^2)=2(X^0_1(t,x))^2(1-\sum_{l=1}^{d-1}(X^0_l(t,x))^2),\,\,\partial_t (\sum_{l=1}^{d-1}(X^1_l(t,x))^2)=2X^1_1(t,x)(1-\sum_{l=1}^{d-1}(X^1_l(t,x))^2).$
Last, Theorem 1.18 in Alfonsi~\cite{Alfonsi} ensures that $\hat{X}_t^x$ is a
potential second order scheme for~$\opr^2$.\end{proof}

\subsection{Potential second order scheme for  $\opr^1$}\label{SubsL_1}

Let   $(B_t)_{t \geq 0}$ be a real a Brownian motion. We consider the following SDE:
\begin{equation}\label{SDE_opL1}
\forall 1 \leq i \leq {d-1},\,\,
d(X_t)_i=(X_t)_i\sqrt{1-\sum_{m=1}^{d-1}(X_t)_m^2}dB_t, \ X_0=x \in \D 
\end{equation}  
Its infinitesimal generator is~$\opr^1$, and we claim that it has a unique
strong solution. To check this, we set $Z_t = \sqrt{\sum_{i=1}^{d-1}
  (X_t)_i^2}.$ By It\^o calculus, we get that the process $(Z_t)_{t \geq 0}$ is solution of the following SDE
\begin{equation}\label{SDE_LV}
 dZ_t =  Z_t\sqrt{1-Z_t^2}dB_t,\,\, Z_0= \sqrt{\sum_{i=1}^{d-1}x_i^2}.  \end{equation} 
Since the SDE $\eqref{SDE_LV}$ satisfies the Yamada-Watanabe conditions
(Proposition 2.13, Chapter~5 of~\cite{Karatzas}),   it has a unique strong
solution defined on $[0,1]$. If $Z_0=0$, we necessarily have $Z_t=0$ and thus
$(X_t)_i=0$ for any $t\ge 0$. Otherwise, we have by Itô calculus
$d\ln((X_t)_i)=d \ln(Z_t)$, and then
\begin{equation}\forall 1 \leq i \leq d-1,\,\,
  (X_t)_{i}=\label{TransfX}\begin{cases} 0, \text{ if } Z_0=0 \\
     \frac{x_i}{Z_0}Z_t \text{ otherwise.}\end{cases}\end{equation}
Conversely, we check easily that~\eqref{TransfX} is a strong solution
of~\eqref{SDE_LV}, which proves our claim. The explicit
solution~\eqref{TransfX} indicates that the SDE~\eqref{SDE_LV} is
one-dimensional up to a basic transformation. Thanks to the next proposition,
it is sufficient to construct a potential second order scheme for~$Z_t$ in
order to get a potential second order scheme for~\eqref{SDE_LV}.

\begin{proposition}
Let us consider $x \in \D$, and  $\hat{Z}_t^z$ denote the  second potential order scheme  for $(Z_t)_{t \geq 0},$ starting from a given value $z \in [0,1].$ Then the following scheme  $\hat{X}_t^x$ 
\begin{equation*}
\forall 1 \leq i \leq d-1,\,\, (\hat{X}_t^x)_{i}= \begin{cases} 
 0 &\text{ if } \sum_{j=1}^{d-1}x_j^2=0,  \\
\frac{x_i}{\sqrt{\sum_{j=1}^{d-1}x_j^2}}\hat{Z}_t^{\sqrt{\sum_{j=1}^{d-1}x_j^2}}
&\text{ otherwise,} \end{cases}
\end{equation*}
is a second potential order scheme for $\opr^1$ which is well defined on $\D.$
\end{proposition}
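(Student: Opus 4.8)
The plan is to exploit the radial structure of $\opr^1$ and reduce everything to the one-dimensional second order scheme $\hat{Z}_t^z$ that is already available. Fix $x\in\D$ with $\sum_{j}x_j^2\neq 0$, set $z=\sqrt{\sum_{j=1}^{d-1}x_j^2}\in(0,1]$ and let $u=x/z$, so that $u$ is a unit vector and, by the very definition of the scheme, $\hat{X}_t^x=u\,\hat{Z}_t^z$. The first thing I would check is well-definedness: since $\sum_i u_i^2=1$, we have $\sum_i(\hat{X}_t^x)_i^2=(\hat{Z}_t^z)^2$, which lies in $[0,1]$ because $\hat{Z}_t^z$ is valued in $[0,1]$; hence $\hat{X}_t^x\in\D$. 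The case $\sum_j x_j^2=0$, where the scheme is frozen at $0$, is treated separately at the end. The crucial observation is that for $f\in\mathcal{C}^\infty(\D)$ one has $f(\hat{X}_t^x)=g(\hat{Z}_t^z)$, where $g(w)=f(uw)$ for $w\in[0,1]$, so that $\E[f(\hat{X}_t^x)]=\E[g(\hat{Z}_t^z)]$.

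The key algebraic step is an intertwining relation between the generator $\mathcal{A}h(z)=\tfrac12 z^2(1-z^2)h''(z)$ of $(Z_t)_{t\ge 0}$ and $\opr^1$. Writing $\Phi_u(z)=uz$, I would compute directly that $g'(w)=\sum_k u_k\partial_k f(uw)$ and $g''(w)=\sum_{k,l}u_k u_l\,\partial_k\partial_l f(uw)$, whence
\begin{equation*}
\mathcal{A}(f\circ\Phi_u)(z)=\tfrac12 z^2(1-z^2)\sum_{k,l}u_k u_l\,\partial_k\partial_l f(uz).
\end{equation*}
On the other hand, at the point $uz$ we have $(uz)_k=u_k z$ and $1-\sum_k(uz)_k^2=1-z^2$, so by the definition of $\opr^1$ in the splitting~\eqref{splitting_equation},
\begin{equation*}
\opr^1 f(uz)=\tfrac12(1-z^2)\sum_{k,l}(u_k z)(u_l z)\,\partial_k\partial_l f(uz)=\tfrac12 z^2(1-z^2)\sum_{k,l}u_k u_l\,\partial_k\partial_l f(uz).
\end{equation*}
These coincide, so $\mathcal{A}(f\circ\Phi_u)=(\opr^1 f)\circ\Phi_u$. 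Since this identity holds for every smooth $f$, applying it to $\opr^1 f$ in place of $f$ gives $\mathcal{A}^2(f\circ\Phi_u)=((\opr^1)^2 f)\circ\Phi_u$, and by induction $\mathcal{A}^k(f\circ\Phi_u)=((\opr^1)^k f)\circ\Phi_u$.

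With the intertwining in hand, the conclusion is immediate. Because $\hat{Z}_t^z$ is a potential second order scheme for $\mathcal{A}$, applied to $g=f\circ\Phi_u$ it yields $|\E[g(\hat{Z}_t^z)]-(g(z)+t\mathcal{A}g(z)+\tfrac{t^2}{2}\mathcal{A}^2 g(z))|\le C t^3$; translating through $g(z)=f(x)$, $\mathcal{A}g(z)=\opr^1 f(x)$ and $\mathcal{A}^2 g(z)=(\opr^1)^2 f(x)$ gives exactly~\eqref{def_potential} with $\nu=2$ for $\opr^1$. The one point requiring care is that the constants $C,\eta$ must be uniform in $x$: here I would note that from $|g^{(j)}(w)|\le (d-1)^j\max_{|\gamma|=j}\max_{\D}|\partial_\gamma f|$ a good sequence for $g$ can be chosen independently of the direction $u$, so the constants furnished by the scheme $\hat{Z}$ are uniform over all $x\neq 0$. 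Finally, for $x=0$ the scheme stays at $0$ and, since every term of $\opr^1$ carries a factor $x_k x_l$, one has $\opr^1 f(0)=(\opr^1)^2 f(0)=0$, so the estimate holds trivially. The main obstacle is really just identifying and verifying the intertwining relation; once established, it collapses the $(d-1)$-dimensional analysis onto the scalar scheme, and the only remaining subtlety is the uniformity of the error constant across the directions $u$.
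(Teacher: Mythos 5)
Your proof is correct, and it rests on the same reduction as the paper's: collapse the $(d-1)$-dimensional scheme onto the scalar scheme $\hat{Z}$ along the ray through $x$, and observe that a good sequence for the composed function can be chosen uniformly in the direction $u$ because $|u_k|\le 1$. The route you take to justify the reduction is, however, slightly different. The paper invokes the explicit pathwise representation~\eqref{TransfX} of the exact solution, writes $f(X_t^x)=g^x(Z_t^z)$ with $g^x(y)=f(x y/z)$, and then bounds $|\E[f(X_t^x)]-\E[f(\hat{X}_t^x)]|$ — implicitly using that the exact flow itself satisfies the order-two expansion so that comparison with the exact process is equivalent to~\eqref{def_potential}. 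You instead never mention the exact process: you prove the generator intertwining $\mathcal{A}(f\circ\Phi_u)=(\opr^1 f)\circ\Phi_u$ (and its iterate) by direct computation from~\eqref{L_1_Basic} and~\eqref{SDE_LV}, and then verify the definition~\eqref{def_potential} for $\opr^1$ head-on. Your version is marginally more self-contained relative to the definition of a potential scheme and makes the $x=0$ case transparent (every term of $(\opr^1)^k f$ carries a factor $x_kx_l$, so the expansion is trivially $f(0)$); the paper's version is shorter given that~\eqref{TransfX} has already been established in the preceding paragraph. Both arguments handle the uniformity of the constants in $x$ identically, which is the only genuinely delicate point.
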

\begin{proof}
 For a given $x \in \D$ and $f \in \mathcal{C}^{\infty}(\D),$ let $(X_t^x)_{t \geq 0}$ denote a process defined by $\eqref{TransfX}$ and starting from $x\in \D.$ It is  sufficient to prove that \begin{equation*}{\Big{|}}\E\left[f(X_t^x)\right]-\E\left[f(\hat{X}_t^x)\right]{\Big{|}}\leq Kt^3.\end{equation*}
The case where $x=0$ is trivial, and we assume thus that
$\sum_{i=1}^{d-1}x_i^2 > 0.$ Let $f \in \mathcal{C}^{\infty}(\D)$. We define
$g^x: [0,1] \rightarrow \R$ by $ \forall y \in [0,1],\,\,
g^x(y)=f(\frac{x_1}{\sqrt{\sum_{j=1}^{d-1}x_j^2}}
y,\ldots,\frac{x_i}{\sqrt{\sum_{j=1}^{d-1}x_j^2}}  y).$ Since for every $1
\leq i \leq
d-1,\,\,{\big{|}}\frac{x_i}{\sqrt{\sum_{j=1}^{d-1}x_j^2}}{\big{|}}\leq 1 ,$
it  follows we can construct from a good sequence of~$f$ a good sequence
for~$g^x$ that does not depend on~$x$.  By the defintion of the second
potential scheme, there exist  positive constants $K>0$ and $\eta>0,$
depending  only on a good sequence of~$f$ such that $\forall t \in [0,\eta]$
\begin{equation*}
{\Big{|}}\E\left[g^x(Z_t^{\sqrt{\sum_{j=1}^{d-1}x_j^2}})\right]-\E\left[g^x(\hat{Z}_t^{\sqrt{\sum_{j=1}^{d-1}x_j^2}})\right]{\Big{|}}\leq Kt^3 ,
             \end{equation*}
which gives the desired result.
\end{proof}

We now focus on finding a potential second order scheme for $(Z_t)_{t \geq
  0}$. To do so, we try the Ninomiya-Victoir's scheme~\cite{NV} and consider the following ODEs
for $z\in[0,1]$,
\begin{equation*}
\forall t \geq 0,\,\partial_t
 Z_0(t,z)=Z_0(t,z)(Z_0(t,z)-\frac{1}{2}), \ \ \forall x\in \R,\, \partial_xZ_1(x,z)= Z_1(x,z)\sqrt{1-Z_1(x,z)^2}.
\end{equation*}
These ODEs can be solved explicitly. On the one hand, it follows that for every $t \geq 0$ and $z \in [0,1]$
\begin{equation*} Z_0(t,z)=\frac{z\exp(-t/2)}{\sqrt{1-2z^2(1-\exp(-t))}}.
\end{equation*}
On the other hand, we get by considering the change of variable
$\sqrt{1-Z_1^2}$ that for every  $x \in \R$ and $z \in [0,1]$,
\begin{equation*}
 Z_1(x,z)= \begin{cases}
\frac{2z\exp(-x)}{1-\sqrt{1-z^2}+\exp(-2x)(1+\sqrt{1-z^2})} & \text{ if } x \le  \frac{1}{2} \ln(\frac{1+\sqrt{1-z^2}}{1-\sqrt{1-z^2}}),
 \\
1   & \text{ otherwise.}
 \end{cases}
\end{equation*}
Then, the Ninomiya-Victoir scheme is given
by~$Z_0(t/2,Z_1(\sqrt{t}Y,Z_0(t/2,z)))$, where $Y$ is a random variable that
matches the five first moments of the standard Gaussian
variable. Unfortunately, the composition $Z_0(t/2,Z_1(\sqrt{t}Y,Z_0(t/2,z)))$
may not be defined if $z$ is close to~$1$. To correct this, we proceed like
Alfonsi~\cite{Alfonsi} for the CIR diffusion. First, we consider $Y$ that has
a bounded support so that $Z_0(t/2,Z_1(\sqrt{t}Y,Z_0(t/2,z)))$ is well defined
when $z$ is far enough from $1$ (namely when $0\le z\le K(t)\le 1$ with
$K(t)=1+O(t)$). When the initial value~$z$ is close to~$1$, we instead use a
moment-matching scheme, and then we prove that the whole scheme is potentially
of order~$2$ (Propositions~\ref{prop_NV_Z} and~\ref{prop_NV_Z_App}).

\subsubsection{Ninomiya-Victoir's scheme for $(Z_t)_{t \geq 0}$ away from~$1$}
\begin{proposition}\label{prop_NV_Z}
 Let us consider a discrete random variable $Y$ that follows $\mathbb
 P(Y=\sqrt{3})=\mathbb  P(Y=-\sqrt{3})=\frac{1}{6}$, and $\mathbb
 P(Y=0)=\frac{2}{3}$, so that it matches  the five first moments of a standard Gaussian.
\begin{itemize}
 \item For a given $z \in [0,1],$ the map  $z \mapsto Z_0(t/2,Z_1(\sqrt{t}Y,Z_0(t/2,z))))$ is well defined on $[0,1],$ if and only if $z\in [0,K(t)],$ where  the threshold function $K(t)$ is given in $\eqref{K_tb}.$  
\item For a given function $f \in \mathcal C^{\infty}([0,1])$, there are
  constants $\eta,C>0$ depending only on a good sequence of~$f$ such that $\forall t\in [0,\eta],\,\, \forall z \in [0,K(t)]$,
\begin{equation}\label{O2_away_from_1}
\left| \E\left[Z_0(t/2,Z_1(\sqrt{t}Y,Z_0(t/2,z)))) \right] - \left(f(z)-tL_Zf
    (z)+ \frac{t^2}{t}L_Z^2f(z) \right) \right|\leq Ct^3,
\end{equation}
where $L_Z$ is the infinitesimal operator associated to the SDE $\eqref{SDE_LV}.$
\end{itemize}
For every $t \geq 0$  the function $K(t)$ is valued on $[0,1]$ such that \begin{equation}\label{K_tb}
 K(t)=\sqrt{\frac{1}{2-e^{-t/2}}}\wedge  \frac{\sqrt{1-D(t,\sqrt{3})^2}}{\sqrt{e^{-t/2}+2(1-D(t,\sqrt{3})^2)(1-e^{-t/2})}}, \,\,\, \lim_{t \rightarrow 0 }\frac{1-K(t)}{t} = \frac{{\sqrt{3}}}{2}(1+{\sqrt{3}}),\\
\end{equation}
with $ \forall y \in \R^+\, D(t,y)= \frac{1-e^{-2\sqrt{t}y}+\sqrt{\frac{1-e^{-t/2}}{2-e^{-t/2}}}(1+e^{-2\sqrt{t}y})}{e^{-2\sqrt{t}y}+1+  \sqrt{\frac{1-e^{-t/2}}{2-e^{-t/2}}}(1-e^{-2\sqrt{t}y})}.$

\end{proposition}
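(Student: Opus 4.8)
The plan is to recognize $z\mapsto Z_0(t/2,Z_1(\sqrt t\,Y,Z_0(t/2,z)))$ as the one–dimensional Ninomiya--Victoir scheme for the SDE~\eqref{SDE_LV} and to derive~\eqref{O2_away_from_1} from the general second order result (Theorem~1.18 in~\cite{Alfonsi}) already used in Proposition~\ref{prop_L_2}, the only genuinely new feature being that the composition is not defined on all of~$[0,1]$. I would first record the splitting: writing~\eqref{SDE_LV} in Stratonovich form gives $L_Z=V_0+\tfrac12 V_1^2$ with the vector fields $V_0=z(z^2-\tfrac12)\partial_z$ and $V_1=z\sqrt{1-z^2}\,\partial_z$, and the maps $Z_0$ and $Z_1$ are exactly the flows of $V_0$ and $V_1$ (a direct check shows the explicit expressions solve the corresponding ODEs and are consistent with~\eqref{SDE_LV}). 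Since $Y$ charges $\{-\sqrt3,0,\sqrt3\}$ and matches the first five moments of $\mathcal N(0,1)$, the composite is precisely the symmetric Ninomiya--Victoir scheme.

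For the first item (well-definedness) I would trace the three maps monotonically. Both $z\mapsto Z_0(t/2,z)$ and $z\mapsto Z_1(x,z)$ are nondecreasing on $[0,1]$, and $Z_1(x,\cdot)$ pushes toward $1$ for $x>0$, so the binding case is $Y=\sqrt3$. The first step requires $w_1:=Z_0(t/2,z)\le1$, i.e. $z\le\theta(t):=\sqrt{1/(2-e^{-t/2})}$, the value at which $Z_0(t/2,\cdot)$ reaches $1$; this is the first term of~\eqref{K_tb}. The last map then stays in $[0,1]$ iff $w_2:=Z_1(\sqrt t\,\sqrt3,w_1)\le\theta(t)$; inverting through the explicit formulas for $Z_1$ and $Z_0$, and using the identity $\sqrt{1-\theta(t)^2}=\sqrt{(1-e^{-t/2})/(2-e^{-t/2})}$, this reads $z\le Z_0(-t/2,\sqrt{1-D(t,\sqrt3)^2})$, which is exactly the second term of~\eqref{K_tb}. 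Hence the admissible set is $[0,K(t)]$, and the first order behaviour of $K(t)$ as $t\to0^+$ follows from a Taylor expansion of the two explicit terms.

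For the second item I would run the Ninomiya--Victoir expansion on this restricted domain. The key observation is that on $[0,K(t)]$ all intermediate values remain in $[0,1]$ and stay strictly below the singularity $z^2=1/(2(1-e^{-t/2}))$ of $Z_0(t/2,\cdot)$: at the worst point $w_2=\theta(t)$ one computes $1-2\theta(t)^2(1-e^{-t/2})=e^{-t/2}/(2-e^{-t/2})$, which is bounded away from $0$ for small $t$. Consequently the flows $Z_0,Z_1$, their derivatives, and the composite $z\mapsto\hat Z_t^z$ have derivatives bounded uniformly in $(t,z)$, so from a good sequence of $f$ one obtains a good sequence, independent of $t$ and $z$, for the compositions $f\circ Z_0$ and $f\circ Z_1$. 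Estimate~\eqref{O2_away_from_1} is then the standard Taylor analysis: expanding each flow around $z$ and taking the expectation in $Y$, the five matched moments cancel all contributions up to order $t^2$ and reproduce $f(z)+tL_Zf(z)+\tfrac{t^2}{2}L_Z^2f(z)$, with remainder $O(t^3)$ whose constant depends only on the good sequence of $f$; this is Theorem~1.18 of~\cite{Alfonsi} applied on $[0,K(t)]$.

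The main obstacle is the interplay between well-definedness and uniformity: the scheme lives only on the moving domain $[0,K(t)]$ with $K(t)\uparrow1$ as $t\to0$, so one must ensure that the second order remainder constant does not degenerate as $z\to K(t)$, where the output approaches the boundary value $1$. This is exactly resolved by the computation above, which shows that $K(t)$ keeps every intermediate quantity a fixed distance from the singularity of $Z_0$, so the derivative bounds, and hence the constant in~\eqref{O2_away_from_1}, remain uniform. The remaining work is the careful but routine manipulation of the explicit flow formulas to produce the exact form of $K(t)$ and its asymptotics.
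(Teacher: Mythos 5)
Your proposal is correct and follows essentially the same route as the paper: the well-definedness is obtained by inverting the explicit flows $Z_0$ and $Z_1$ with the binding case $Y=\sqrt{3}$ (your monotonicity-of-flows argument is just a cleaner packaging of the paper's chain of equivalences involving $D(t,Y)$ and its monotonicity in $Y$), and the second-order estimate is, as in the paper, a direct appeal to Theorem~1.18 of~\cite{Alfonsi} once one notes that $[0,K(t)]$ keeps all intermediate values away from the singularity of $Z_0(t/2,\cdot)$. Your identification of the Stratonovich drift $V_0=z(z^2-\tfrac12)\partial_z$ is the correct one (consistent with the explicit formula for $Z_0$), and the only step you leave implicit, the Taylor expansion yielding $\lim_{t\to 0}(1-K(t))/t$, is routine.
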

\begin{proof}
  The main technical thing here is to check the first point. Then,
  \eqref{O2_away_from_1} is a direct consequence of Theorem 1.18 in Alfonsi~\cite{Alfonsi}.
By construction, we have $Z_0(t/2,z) \in [0,1] \Leftrightarrow z \leq
\frac{1}{\sqrt{2 - \exp{(t/2)}}}.$ We conclude that the whole scheme
$Z_0(t/2,Z_1(\sqrt{t}Y,Z_0(t/2,z)))) $ is well defined on $[0,1],$ if and only
if $Z_1(\sqrt{t}Y,Z_0(t/2,z)))\leq \frac{1}{\sqrt{2 - \exp{(t/2)}}}.$ By
slight abuse of notation, we denote in the following $Z_0(t/2,z)$ by the shorthand~$Z_0$. 
Let us
assume for a while that we have:
\begin{equation}\label{Const1}
\sqrt{1-Z_0^2}(1+e^{-2\sqrt{t}Y}) + e^{-2\sqrt{t}Y}-1 \geq 0 , a.s.
% \sqrt{t}Y \leq \frac{1}{2} \ln(\frac{1+\sqrt{1-Z_0(t/2,z)^2}}{1-\sqrt{1-Z_0(t/2,z)^2}}) &\implies& 2\sqrt{3t} \leq \ln(\frac{1-\sqrt{1-Z_0(t/2,z)^2}}{1+\sqrt{1-Z_0(t/2,z)^2}})  \nonumber\\
%& \Longleftrightarrow & Z_0(t/2,z) \leq \frac{2e^{\sqrt{3t}}}{1+e^{2 \sqrt{3t}}} \nonumber\\
%& \Longleftrightarrow & z\leq \frac{2e^{\sqrt{3t}}}{\sqrt{e^{-t/2}(1+e^{2\sqrt{3t}})^2 + 8 e^{2\sqrt{3t}}(1-e^{-t/2})}}
\end{equation}
It yields then to 
\begin{eqnarray} \label{Const2}
 Z_0(t/2,Z_1(\sqrt{t}Y,Z_0(t/2,z))))  \in [0,1]& \Longleftrightarrow &  \sqrt{ 1-\left[Z_1(\sqrt{t}Y,Z_0(t/2,z)))\right]^2 }\geq \sqrt{\frac{1-e^{-t/2}}{2-e^{-t/2}}}   \nonumber \\
& \Longleftrightarrow & \sqrt{\left(\frac{e^{-2\sqrt{t}Y}(1+\sqrt{1-Z_0^2} )-(1-\sqrt{1-Z_0^2}) }{e^{-2\sqrt{t}Y}(1+\sqrt{1-Z_0^2} )+(1-\sqrt{1-Z_0^2}) }\right)^2}\geq \sqrt{\frac{1-e^{-t/2}}{2-e^{-t/2}}} \nonumber\\
& \underset{\text{By}\,\,\eqref{Const1}}{ \implies }& \frac{\sqrt{1-Z_0^2}(1+e^{-2\sqrt{t}Y}) + e^{-2\sqrt{t}Y}-1 }{\sqrt{1-Z_0^2}(e^{-2\sqrt{t}Y}-1) + 1+ e^{-2\sqrt{t}Y}} \geq \sqrt{\frac{1-e^{-t/2}}{2-e^{-t/2}}} \\
&\Longleftrightarrow & \sqrt{1-Z_0^2} \geq \frac{1-e^{-2\sqrt{t}Y}+\sqrt{\frac{1-e^{-t/2}}{2-e^{-t/2}}}(1+e^{-2\sqrt{t}Y})}{e^{-2\sqrt{t}Y}+1+  \sqrt{\frac{1-e^{-t/2}}{2-e^{-t/2}}}(1-e^{-2\sqrt{t}Y})}:=D(t,Y). \nonumber
\end{eqnarray}
We can check that the mapping $D : (t,x)\in \R_+\times \R \mapsto D(t,x) =-1+
\frac{2(1+\sqrt{\frac{1-e^{-t/2}}{2-e^{-t/2}}} )}{e^{-2\sqrt{t}x}+1+
  \sqrt{\frac{1-e^{-t/2}}{2-e^{-t/2}}}(1-e^{-2\sqrt{t}x})}$ is non decreasing
on $x$, and $D(t,x)\leq 1.$ Since $Y \in \left \lbrace -\sqrt{3}, 0, \sqrt{3}
\right \rbrace,$ it yields thus that the last condition is equivalent to:
\begin{equation}\label{Const3}Z_0(t/2,z) \leq
  \sqrt{1-D(t,\sqrt{3})^2}\Leftrightarrow z \leq
  \frac{\sqrt{1-D(t,\sqrt{3})^2}}{\sqrt{e^{t/2}+2(1-e^{-t/2})(1-D(t,\sqrt{3})^2)}}.\end{equation}
Conversely, if~\eqref{Const3} is satisfied, we can check that
$D(t,Y)(1+e^{-2\sqrt{t}Y}) + e^{-2\sqrt{t}Y}-1 \geq
0$. Therefore~\eqref{Const1} holds. To sum up, when $z\in [0,K(t)]$, we both
have  $Z_0(t/2,z), Z_0(t/2,Z_1(\sqrt{t}Y,Z_0(t/2,z))))  \in [0,1]$.

Last, it remains to compute the limit of $(1-K(t))/t$. First, it is obvious
that $\lim_{t \rightarrow 0 }K(t)=1.$  We can check that
$\sqrt{1-D^2(t,\sqrt{3})} =
\frac{2e^{-\sqrt{3}\sqrt{t}}}{\sqrt{2-e^{-t/2}}+\sqrt{1-e^{-t/2}}(1-e^{-\sqrt{3}\sqrt{t}})}=
1+t(\frac{1}{4}-\frac{\sqrt{3}}{2}(1+\sqrt{3})) + o(t)$, and therefore $ 1- \frac{\sqrt{1-D(t,\sqrt{3})^2}}{\sqrt{1+2(1-D(t,\sqrt{3})^2)(1-e^{-t/2})}} = t(\frac{\sqrt{3}}{2}(1+\sqrt{3})) + o(t).$ It yields that $\lim_{t \rightarrow 0 }\frac{1-K(t)}{t}=\frac{{\sqrt{3}}}{2}(1+{\sqrt{3}}) \vee \frac{1}{2}.$

\end{proof}

\subsubsection{Potential second order scheme for $(Z_t)_{t \geq 0}$ in a
  neighbourhood of 1}
Let $(Z_t)_{t \geq 0}$ be solution of the SDE $\eqref{SDE_LV}.$ By Itô
calculus, its moments satisfy the following induction:
$$\forall k \geq 2, \,\E\left[ Z_t^k\right]=\left( z^k - \int_0^t
  \frac{k(k-1)}{2}e^{-\frac{k(k-1)}{2}s}\E\left[
    Z_s^{k+2}\right]ds\right)\exp(\frac{k(k-1)}{2}t).$$ We obtain first that
$\E\left[ Z_t^6\right] = z^6+ O(t)$, then  $\E\left[
  Z_t^4\right]=z^4+6z^4t(1-z^2)+ O(t^2)$ and last   \begin{equation}\label{moment_Z}
 \E\left[ Z_t^2\right] = z^2 + tz^2(1-z^2)+ \frac{t^2}{2}z^2(1-z^2)(1-6z^2) + \mathcal O(t^3).
\end{equation}
Moreover, by straightforward calculus, one can check that if $t \leq \frac{2}{5}$ and for every $z \in [0,1]$\vspace{-3mm}\begin{eqnarray}\label{cond_moment_Z}
 z^2 + tz^2(1-z^2)+ \frac{t^2}{2}z^2(1-z^2)(1-6z^2) \leq 1 &, & tz^2(1-z^2)+ \frac{t^2}{2}z^2(1-z^2)(1-6z^2)\geq 0.
\end{eqnarray}
Since~$\E(Z_t)=z$,  the right hand side  of~\eqref{cond_moment_Z} corresponds
to the asymptotic variance of $Z_t$. To approximate the process $(Z_t)_{t \geq
  0}$ near to one, we use a discrete random variable, denoted by
$\hat{Z}_t^z,$ that fits both the exact first moment and the asymptotic second
given by $\eqref{moment_Z}$. We assume that $\hat{Z}_t^z$
takes two possible values $0 \leq z^+< z^-\leq 1$, with probability $p(t,z)$
and $1-p(t,z)$ respectively. We introduce two positive variables $(m^+, m^-)$,
defined as $z^+ = z+ m^+$ and $z^-=z-m^-$. Since we are looking to match the
moment, we get the following equations: 
%\vspace{-3mm}
\begin{equation}\label{moment_condition}
 \begin{array}{cll}
 \E\left[ \hat{Z}_t^z\right] &=z&\equi m^+p(t,z)=m^-(1-p(t,z))\\
 \E\left[ (\hat{Z}_t^z)^2\right] &=z^2 + tz^2(1-z^2)+ \frac{t^2}{2}z^2(1-z^2)(1-6z^2)&\equi (m^+)^2\frac{p}{1-p}=	tz^2(1-z^2)+ \frac{t^2}{2}z^2(1-z^2)(1-6z^2)\\
 \end{array}
\end{equation}
We choose 
 \begin{equation*}m^+=z(1-z) \text{ and then have }  p(t,z) =1-\frac{1}{1+\frac{t(1+z)(1+\frac{t}{2}(1-6z^2))}{1-z}}, \,\, m^-=tz(1+z)(1+\frac{t}{2}(1-6z^2)).\end{equation*}
 The random variable $\hat{Z}_t^z$ is well defined on $[0,1]$ if and only if
 $z^+ \leq 1$ and $z^- \geq 0,$ which is respectively equivalent to $z(1-z)\leq (1-z)$ and $  t(1+z)(1+\frac{t}{2}(1-6z^2))\leq 1.$
By straightforward calculus, we can check that these conditions are
satisfied. Since $1-K(t)\underset{t\rightarrow 0}{=}O(t)$ by Proposition~\ref{prop_NV_Z}, we deduce that there is $C>0$ such that \begin{equation}\label{moment_asymptotic_value}
 \forall t \in [0,\frac{2}{5}], \forall z \in [K(t),1],\,\, \forall q \in \mathbb N^*,\,\, \E\left[ (1-\hat{Z}_t^z)^q\right] \leq C^q t^q
\end{equation}
\begin{proposition}\label{prop_NV_Z_App}
 Let $U\sim \mathcal U([0,1]).$ The scheme $\hat{Z}_t^z=z^+\indi{ \left \lbrace U\leq p(t,z)\right \rbrace } + z^-\indi{\left \lbrace U > p(t,z) \right \rbrace } $ is a potential second order scheme on $z \in [K(t),1]$: for any function $f \in \mathcal C^{\infty}([0,1])$,  there are positive constants $C$ and $\mu $ that depend  on a good sequence of the function $f,$ such that
\vspace{-3mm}\begin{equation}
 \forall t \in [0,\eta \wedge \frac{2}{5}],\,\,\forall z \in [K(t),1],\,\,{\Big{|}}\E\left[f(\hat{Z}_t^z) \right]-f(z)-tL_Zf(z)-\frac{t^2}{2}(L_Z)^2f(z){\Big{|}}\leq Ct^3,
\end{equation}
where $L_Z$ is the infinitesimal operator associated to the SDE $\eqref{SDE_LV}.$
\end{proposition}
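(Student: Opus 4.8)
The plan is to obtain the expansion directly by Taylor's formula, taking advantage of the fact that on the boundary layer $z\in[K(t),1]$ the quantity $z^2(1-z^2)$ that governs the diffusion is itself of order~$t$. Recall that the generator of~\eqref{SDE_LV} is $L_Zf(z)=\frac12 z^2(1-z^2)f''(z)$, and that by construction the increment $\hat{Z}_t^z-z$ takes only the two values $m^+=z(1-z)$ and $-m^-$, with $m^-=tz(1+z)(1+\frac{t}{2}(1-6z^2))$.

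First I would record the smallness estimates that hold uniformly on $z\in[K(t),1]$. Since $\lim_{t\to0^+}(1-K(t))/t$ is finite by Proposition~\ref{prop_NV_Z}, there is a constant $C$ with $1-z\le 1-K(t)\le Ct$ on this range; hence $1-z^2\le 2(1-z)\le 2Ct$ and therefore $z^2(1-z^2)\le Ct$. Moreover $m^+\le 1-z\le Ct$ and $m^-\le Ct$, so that $|\hat{Z}_t^z-z|\le Ct$ almost surely, whence $\E[|\hat{Z}_t^z-z|^k]\le (Ct)^k$ for every $k\ge1$ (compare~\eqref{moment_asymptotic_value}).

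Next I would write Taylor's formula to second order with remainder and take expectations:
\[
\E[f(\hat{Z}_t^z)]=f(z)+f'(z)\,\E[\hat{Z}_t^z-z]+\tfrac12 f''(z)\,\E[(\hat{Z}_t^z-z)^2]+\E[R_2],\qquad |R_2|\le\tfrac16\|f'''\|_\infty|\hat{Z}_t^z-z|^3.
\]
The moment-matching conditions~\eqref{moment_condition} give $\E[\hat{Z}_t^z-z]=0$ and $\E[(\hat{Z}_t^z-z)^2]=tz^2(1-z^2)+\frac{t^2}{2}z^2(1-z^2)(1-6z^2)$, while $\E[R_2]=O(t^3)$ by the cubic bound above. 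Reading the terms in order, the linear-in-$t$ contribution $\frac12 f''(z)\,t\,z^2(1-z^2)$ is exactly $tL_Zf(z)$, whereas the $t^2$-contribution $\frac14 f''(z)\,t^2\,z^2(1-z^2)(1-6z^2)$ carries the factor $z^2(1-z^2)=O(t)$ and is thus genuinely $O(t^3)$. Hence $\E[f(\hat{Z}_t^z)]=f(z)+tL_Zf(z)+O(t^3)$.

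It then remains to check that the term $\frac{t^2}{2}L_Z^2f(z)$ appearing in the target is itself $O(t^3)$ on $[K(t),1]$, and this is the step I expect to be the main obstacle, since differentiation does not preserve the $O(t)$ smallness and one cannot argue term by term. The way around it is to keep the generator in factored form: writing $L_Z^2f(z)=\frac12 z^2(1-z^2)\,(L_Zf)''(z)$ and observing that $(L_Zf)''$ is a fixed $\mathcal{C}^\infty$ function on the compact $[0,1]$, bounded by a quantity depending only on a good sequence of~$f$ (through the derivatives of $f$ up to order four), the outermost factor $z^2(1-z^2)=O(t)$ forces $L_Z^2f(z)=O(t)$ uniformly, so $\frac{t^2}{2}L_Z^2f(z)=O(t^3)$. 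Consequently $f(z)+tL_Zf(z)+\frac{t^2}{2}L_Z^2f(z)$ also equals $f(z)+tL_Zf(z)+O(t^3)$, and subtracting the two expansions yields the claimed bound $Ct^3$, with $C$ depending only on $\|f''\|_\infty$, $\|f'''\|_\infty$ and $\|(L_Zf)''\|_\infty$, i.e. only on a good sequence of~$f$. The underlying reason the degenerate two-point moment-matching scheme is of potential second order here is precisely that $[K(t),1]$ is a thin boundary layer on which all the genuinely second-order terms collapse into the $O(t^3)$ error.
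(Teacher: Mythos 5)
Your proof is correct, but it takes a genuinely different route from the paper's. The paper first invokes the fact that the \emph{exact} scheme is a potential second order scheme (Theorem 1.18 of~\cite{Alfonsi}), so that it only remains to bound $|\E[f(Z_t^z)]-\E[f(\hat{Z}_t^z)]|$ by $Ct^3$ on $[K(t),1]$; this is then done by a third-order Taylor expansion of $f$ around the point $z=1$, reducing everything to the matching of the first two moments (via~\eqref{moment_Z} and~\eqref{moment_condition}) together with the cubic bounds $\E[(1-\hat{Z}_t^z)^3]=O(t^3)$ from~\eqref{moment_asymptotic_value} and $\E[(1-Z_t^z)^q]\le R_q t^q$, the latter obtained by It\^o's formula and induction. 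You instead verify the defining inequality~\eqref{def_potential} directly, never referring to the exact process: you Taylor-expand $f$ around the current point $z$, use the exact moment-matching identities for $\E[\hat Z_t^z-z]$ and $\E[(\hat Z_t^z-z)^2]$, and absorb everything beyond $tL_Zf(z)$ into the $O(t^3)$ error thanks to the boundary-layer estimate $z^2(1-z^2)\le 2(1-K(t))=O(t)$. The one step that could have failed --- showing that the target term $\frac{t^2}{2}L_Z^2f(z)$ is itself $O(t^3)$ --- you handle correctly by keeping $L_Z^2f(z)=\frac12 z^2(1-z^2)(L_Zf)''(z)$ in factored form, so that the outer degenerate factor again contributes an $O(t)$; the inner factor $(L_Zf)''$ is controlled by derivatives of $f$ up to order four, hence by a good sequence of $f$, as required. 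Your argument is more elementary and self-contained (it needs neither the comparison with the exact flow nor the induction bounding $\E[(1-Z_t^z)^q]$), at the price of being tied to the specific one-dimensional, explicitly degenerate form of $L_Z$; the paper's route is more systematic and generalizes more readily to schemes where the increments are not uniformly $O(t)$.
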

\begin{proof}
 Let us consider a function $f \in \mathcal C^{\infty}([0,1])$. Since the
 exact scheme is a potential second order scheme (see Alfonsi
 ~\cite{Alfonsi}), there exist then two positive constants $\eta$ and $C$,
 such that $\forall t \in [0,\mu ],\,\,\forall z \in
 [0,1],\,\,|\E\left[f(Z_t^z)
 \right]-f(z)-tL_Zf(z)-\frac{t^2}{2}(L_Z)^2f(z)|\leq Ct^3$. We conclude that
 it is sufficient to prove that $\forall z \in [K(t),1],\,\,|\E\left[f(Z_t^z)
 \right]-\E[f(\hat{Z}_t^z) ]|\leq Ct^3$, for a constant positive variable
 $C$. By a third order Taylor expansion of $f$ near to one, we obtain that \begin{equation*}
 \forall z \in [0,1],\,\,{\Big{|}} f(z)- \left(f(1) - f'(1)(1-z)+ \frac{(1-z)^2}{2}f''(1)\right){\Big{|}} \leq \|f^{(3)}\|_{\infty}(1-z)^3.\\
\end{equation*}
Thus, there is a constant $C>0$ depending on a good sequence of~$f$ such that 
\begin {eqnarray*}
 |\E\left[f(Z_t^z) \right]-\E[f(\hat{Z}_t^z) ]| &\leq& C\left(
   {\E[(1-\hat{Z}_t^z)^3 ]} +  {\E[(1-Z_t^z)^3 ]} +{\Big{|}}
   \E\left[(1-Z_t^z)^2 \right]-\E\left[(1-\hat{Z}_t^z)^2 \right]{\Big{|}}
 \right)%%  \\ 
%% & \leq & C\left({\E[(1-\hat{Z}_t^z)^3 ]}    +  {\left|\E[({Z}_t^z)^2 ]-
%%       z^2-tz^2(1-z^2)-\frac{t^2}{2}z^2(1-z^2)(1-6z^2)\right|} +{\E[(1-Z_t^z)^3
%%     ]} \right). 
\end {eqnarray*}

By $\eqref{moment_asymptotic_value}$, the first term is of order~$O(t^3)$. The
last term is equal to $|\E[({Z}_t^z)^2 ]-
       z^2-tz^2(1-z^2)-\frac{t^2}{2}z^2(1-z^2)(1-6z^2)|$ and is also of
       order~$O(t^3)$ by~\eqref{moment_Z}. Last, we  have by It\^o calculus
       that  $\forall q \geq 2,\,\,  \E[(1-Z_t^z)^q]\leq
       (1-z)^q+q(q-1)\int_0^t \E[(1-Z_s^z)^{q-1}]ds$. By induction, we get
       that there is a  constant $R_q>0,$ such that $\forall z\in
       [K(t),1],\,\, \E[(1-Z_t^z)^q]\leq R_qt^q$, which finally gives the
       claimed result.
\end{proof}

\end{document}